\documentclass[a4paper,11pt]{article}
\pagestyle{plain}

%\linespread{2}

\usepackage{amsthm}

\usepackage[utf8]{inputenc}
\usepackage[T1]{fontenc}
\usepackage[english]{babel}
\usepackage{a4wide}
%%%%%%%%%%%%%%
\usepackage[]{hyperref}
\hypersetup{
    colorlinks=true,       % false: boxed links; true: colored links
    linkcolor=red,          % color of internal links
    citecolor=blue,        % color of links to bibliography
    filecolor=magenta,      % color of file links
    urlcolor=cyan           % color of external links
}

\usepackage[leqno]{amsmath}
\usepackage{amssymb}
\usepackage{mathrsfs}
\usepackage{amsthm}
\usepackage{amsxtra}
\usepackage{bm}
\setcounter{tocdepth}{1}
\usepackage[titletoc]{appendix}

\parskip=5pt
\parindent=0pt

\theoremstyle{plain}
\newtheorem{theo}{Theorem}[section]
\newtheorem{prop}[theo]{Proposition}
\newtheorem{lemm}[theo]{Lemma}
\newtheorem{coro}[theo]{Corollary}

\newtheorem{defi}[theo]{Definition}
\theoremstyle{definition}
\newtheorem{rema}[theo]{Remark}
\newtheorem{nota}[theo]{Notation}
%------------------------------------------------------------------------

\DeclareMathOperator{\dist}{dist}

\DeclareMathOperator{\supp}{supp}
\DeclareMathOperator{\di}{d}
\DeclareMathOperator{\R}{R}

\DeclareSymbolFont{pletters}{OT1}{cmr}{m}{sl}
\DeclareMathSymbol{s}{\mathalpha}{pletters}{`s}

%------------------------------------------------------------------------%

\def\defn{\mathrel{:=}}

\def\eps{\varepsilon}

\def\la{\left\vert}
\def\lA{\left\Vert}
\def\le{\leq}
\def\les{\lesssim}
\def\leo{}

\def\eps{\varepsilon}
\def\mez{\frac{1}{2}}

\def\ra{\right\vert}
\def\rA{\right\Vert}

\def\tdm{\frac{3}{2}}

\def\xC{\mathbf{C}}
\def\xN{\mathbf{N}}
\def\xR{\mathbf{R}}

\def\xZ{\mathbf{Z}}
\def\cF{ \mathcal{F}}
\def\cN{ \mathcal{N}}

\def\RHS{\text{RHS}}

\def\k{\kappa}

%------------------------------------------------------

\def\lp{\left(}

\def\rp{\right)}

%--------------------------------------------------------
\newcommand{\bq}{\begin{equation}}
\newcommand{\eq}{\end{equation}}
\newcommand{\bqa}{\begin{eqnarray*}}
\newcommand{\eqa}{\end{eqnarray*}}

\newcommand{\hk}{\hspace*{.15in}}
\numberwithin{equation}{section}

\pagestyle{plain}
%\linespread{2}
\title{A sharp Cauchy theory for the 2D gravity-capillary waves}
\author{
Quang Huy Nguyen
\footnote{UMR 8628 du CNRS, Laboratoire de Math\'ematiques d'Orsay, Universit\'e Paris-Sud, 91405 Orsay Cedex, France. Email: quang-huy.nguyen@math.u-psud.fr }
\footnote{The author was supported in part by Agence Nationale de la Recherche
  project  ANA\'E ANR-13-BS01-0010-03.}
}
\date{}
\begin{document}
\maketitle
\abstract{
This article is devoted to the Cauchy problem for the 2D gravity-capillary water waves in fluid domains with general bottoms. We prove that the Cauchy problem in Sobolev spaces is uniquely solvable for data $\frac{1}{4}$ derivatives less regular than the energy threshold (obtained by Alazard-Burq-Zuily \cite{ABZ1}), which corresponds to the gain of H\"older regularity of the semi-classical Strichartz estimate for the fully nonlinear system. To obtain this result, we  establish global, quantitative results for the paracomposition theory of Alinhac \cite{Alipara}.
}
\section{Introduction}
\subsection{The equations}
 We consider an incompressible, inviscid fluid with unit density moving in a time-dependent domain  
$$
\Omega = \{(t,x,y) \in[0,T] \times \xR \times \xR:(x, y)\in \Omega_t\} 
$$
where each $\Omega_t$ is a domain located underneath a free surface 
$$
\Sigma_t = \{(x,y)  \times \xR \times \xR: y=\eta(t, x)\} 
$$
and above a fixed bottom $\Gamma=\partial\Omega_t\setminus \Sigma_t$. We make the following assumption on the domain:\\
$\Omega_t$ is the intersection of the haft space 
\[
\Omega_{1,t}= \{(x,y)  \times \xR \times \xR: y=\eta(t, x)\} 
\]
and an open connected set $\Omega_2$ containing a fixed strip around $\Sigma_t$, i.e., there exists $h>0$ such that 
\[
 \{(x,y)  \in \xR \times \xR: \eta(x)-h\le y\le\eta(t, x)\} \subset \Omega_2.
\]
This important assumption prevents the bottom from emerging, or even from coming arbitrarily close to the free surface. The study of water waves without it is an open problem.\\
\hk Assume that the velocity field $v$ admits a potential $\phi:\Omega \to \xR$, i.e, $v=\nabla \phi$. Using the Zakharov formulation, we introduce the trace of $\phi$ on the free surface
$$\psi(t,x)= \phi(t,x,\eta(t,x)).$$ 
 Then $\phi(t, x, y)$ is the unique variational solution of 
\bq\label{phi}
\Delta\phi =0\text{~in}~\Omega_t,\quad \phi(t, x, \eta(t, x))=\psi(t, x).
\eq
The Dirichlet-Neumann operator is then defined by
\[
G(\eta) \psi = \sqrt{1 + \vert \partial_x \eta \vert ^2}
\Big( \frac{\partial \phi}{\partial n} \Big \arrowvert_{\Sigma}\Big)= (\partial_y \phi)(t,x,\eta(t,x)) - \partial_x \eta(t,x) (\partial_x \phi)(t,x,\eta(t,x)).
\]
The gravity water wave problem with surface tension consists in solving the following system of  $\eta,\psi$:
\begin{equation}\label{ww}
\left\{
\begin{aligned}
&\partial_t \eta = G(\eta) \psi,\\
&\partial_t \psi + g\eta-H(\eta)+\mez \vert \partial_x \psi \vert^2 - \mez \frac{(\partial_x \eta \partial_x \psi + G(\eta)\psi)^2}{1+ \vert \partial_x \eta \vert^2}=0
\end{aligned}
\right.
\end{equation}
where $H(\eta)$ is the mean curvature of the free surface:
\[
H(\eta)=\partial_x\left( \frac{\partial_x\eta}{\sqrt{1+|\partial_x\eta|^2}}\right).
\]
It is important to introduce the vertical and horizontal components of the velocity, which can be expressed in terms of $\eta$ and $\psi$:
\begin{equation}\label{BV}
B = (v_y)\arrowvert_\Sigma = \frac{ \partial_x \eta  \partial_x \psi + G(\eta)\psi} {1+ \vert \partial_x \eta \vert^2},\quad V= (v_x)\arrowvert_\Sigma  =\partial_x \psi - B \partial_x \eta.
 \end{equation}
\subsection{The problem}
 \hk Our purpose is to study the Cauchy problem for system \eqref{ww} with {\it sharp Sobolev regularity for initial data}.  For previous results on the Cauchy problem, we refer to the works of  Yosihara \cite{Yosihara},  Coutand- Shkoller \cite{CS},  Shatah-Zeng \cite{SZ1, SZ2, SZ3},  Ming-Zhang \cite{MiZh} for sufficiently smooth solutions; see also the works of Wu \cite{WuJAMS, WuInvent}, Lannes \cite{LannesJAMS} for gravity waves without surface tension. In term of regularity of initial data, the work of Alazard-Burq-Zuily \cite{ABZ1} reached an important threshold: local wellposedness as long as the velocity field is Lipschitz (in term of Sobolev embeddings) up to the free surface. More precisely, this corresponds to data (in view of the formula \eqref{BV})
\[
(\eta_0, \psi_0)\in H^{s+\mez}(\xR^d)\times H^s(\xR^d),\quad s>2+\frac{d}{2}.
\]
This is achieved by the energy method after reducing the system to a single quasilinear equation using a paradifferential calculus approach. However, observe that the linearized of \eqref{ww} around the rest state $(0, 0)$ reads
$$\partial_t\Phi+i\la D\ra^\tdm\Phi=0,\quad \Phi=\la D\ra^\mez\eta+i\psi$$
which is dispersive and enjoys the following Strichartz estimate with a gain of $\frac{3}{8}$ derivatives
\bq\label{Str:opt}
\lA \Phi\rA_{L^4_tW_x^{\sigma-\frac{1}{8},\infty}}\le C_\sigma \lA \Phi\arrowvert_{t=0}\rA_{H^\sigma_x},\quad \forall \sigma\in \xR.
\eq
Therefore, one may hope that the fully nonlinear system \eqref{ww} is also dispersive and enjoys similar Strichartz estimates. Indeed, this is true and was first proved by Alazard-Burq-Zuily \cite{ABZ2}:  any solution 
\bq\label{threshold:ABZ}
(\eta, \psi)\in C^0([0, T]; H^{s+\mez}(\xR)\times H^s(\xR)),\quad s>2+\frac{1}{2}
\eq
satisfies
\bq\label{Str:ABZ}
 (\eta, \psi)\in L^4([0, T]; W^{s+\frac{1}{4},\infty}(\xR)\times W^{s-\frac{1}{4},\infty}(\xR)).
\eq
Comparing to the classical (full) Strichartz estimate \eqref{Str:opt}, the estimate \eqref{Str:ABZ} exhibits a loss of $\frac{1}{8}$ derivatives and is called the {\it semi-classical Strichartz estimate}. This terminology comes from the work \cite{BGT1} for Schrodinger equations on manifolds. In fact, slightly earlier in \cite{CHS} the same Strichartz estimate was obtained for the 2D gravity-capillary water waves under another formulation. We also refer to \cite{Huy} for another proof of \eqref{Str:ABZ} and the semi-classical Strichartz estimate for 3D waves.\\
\hk It is known, for instance from the works of Bahouri-Chemin \cite{BaCh} and Tataru \cite{TataruNS}, that for dispersive PDEs, Strichartz estimates can be used to improve the Cauchy theory for data that are less regular than the one obtained merely via the energy method. We refer to \cite{BCD}, Chapter 9 for an expository presentation of quasilinear wave equations. Our aim is to proceed such a program for the gravity-capillary water waves system \eqref{ww}. For pure gravity water waves, this was  considered by Alazard-Burq-Zuily \cite{ABZ4}. Coming back to our system \eqref{ww}, from the semi-classical Strichartz estimate \eqref{Str:ABZ} for $s>2+\mez$  it is natural to ask\\
{\bf Q:}~{\it Does the Cauchy problem for \eqref{ww} have a unique solution for data }
\[
(\eta_0, \psi_0)\in H^{s+\mez}(\xR^d)\times H^s(\xR^d),\quad s>2+\frac{1}{2}-\frac{1}{4}=\frac{9}{4}?.
\]
In the previous joined work \cite{NgPo2}, we proved an "intermediate" result for  $s>2+1/2-3/20$ in 2D case (together with a result for 3D case), which asserts that water waves can still propagate starting from  {\it non-Lipschitz velocity} (up to the free surface) (see \cite{ABZ4} for the corresponding result for vanishing surface tension). Our contribution in this work is to prove an affirmative answer for question {\bf Q}.  \\
\hk Let us give an outline of the proof.  In \cite{NgPo1}, using a paradifferential approach we  reduced the system \eqref{ww} to a single dispersive equation as follows: assume that for some $s>r>2$
\bq\label{intro:reg}
(\eta, \psi)\in C^0([0, T]; H^{s+\mez}(\xR)\times H^s(\xR))\cap L^4([0, T]; W^{r+\mez,\infty}(\xR)\times W^{r,\infty}(\xR))
\eq
then after paralinearization and symmetrization, \eqref{ww} is reduced to the following equation of a complexed-valued unknown $\Phi$
\bq\label{intro:eqPhi}
\partial_t\Phi+T_V\partial_x\Phi+iT_{\gamma}\Phi=f
\eq
for some paradifferential symbol $\gamma\in \Sigma^{3/2}$ and $f(t)$ satisfies the {\it tame} estimate 
\[
		\lA f(t)\rA_{H^s}\leq\cF\lp\lA\eta(t)\rA_{H^{s+\mez}},\lA\psi(t)\rA_{H^s}\rp\lp1+\lA\eta(t)\rA_{W^{r+\mez,\infty}}+\lA\psi(t)\rA_{W^{r,\infty}}\rp.
\]
Such a reduction was first obtained in \cite{ABZ1} for solution at the energy threshold \eqref{threshold:ABZ}. Observe that the relation $s>r>2$ exhibits a gap of $\mez$ derivatives in view of the Sobolev embedding from $H^s$ to $C_*^{s-\mez}$ (see Definition \ref{spaces}). Having in hand the blow-up criterion and the contraction estimate in \cite{NgPo1} at the regularity \eqref{intro:reg}, the main difficulty in answering question {\bf Q} is to prove the semi-classical Strichartz estimate for solution $\Phi$ to \eqref{intro:eqPhi}. Comparing to the Strichartz estimates in \cite{NgPo2} we remark that the semi-classical gain in \cite{ABZ2} (when $s>2+\mez$) was achieved owing to the fact that when $d=1$ one can further reduce \eqref{intro:eqPhi} to an equation where the highest order term $T_\gamma$ becomes the Fourier multiplier $|D_x|^\tdm$:
\bq\label{intro:reduce}
\partial_t\widetilde\Phi+T_{\widetilde V}\partial_x\widetilde\Phi+i|D_x|^\tdm\widetilde\Phi=\widetilde f.
\eq
This reduction is proceeded by means of the {\it paracomposition} of Alinhac \cite{Alipara}. Here, we shall see that in our case we need a more precise paracomposition result for 2 purposes: (1) deal with rougher functions and (2) obtain quantitative estimates. This will be the content of section \ref{section:paracomposition}. After having \eqref{intro:reduce} we show in section \ref{section:semi} that the method in \cite{ABZ2} can be adapted to our lower regularity level to derive the semi-classical Strichartz estimate with an arbitrarily small $\eps$ loss.
\subsection{Main results}
  Let us introduce the Sobolev norm and the Strichartz norm for solution $(\eta, \psi)$ to the gravity-capillary system \eqref{ww}:
\begin{align*}
&M_\sigma(T)=\Vert (\eta, \psi)\Vert_{L^{\infty}([0, T]; H^{\sigma+\mez}(\xR)\times H^\sigma(\xR))}, \quad M_\sigma (0)=\Vert (\eta, \psi)\arrowvert_{t=0}\Vert_{H^{\sigma+\mez}(\xR)\times H^\sigma(\xR)},\\
&N_\sigma(T)=\Vert (\eta, \psi)\Vert_{L^4([0, T]; W^{\sigma+\mez, \infty}(\xR)\times W^{\sigma, \infty}(\xR))}.
\end{align*}
Our first result concerns the semi-classical Strichartz estimate for system \eqref{ww}.
\begin{theo}\label{theo:Strichartz}
Assume that $(\eta, \psi)$ is a solution to \eqref{ww} with 
\begin{equation}\label{regularity:a}
\left\{
\begin{aligned}
&(\eta, \psi)\in C^0([0, T]; H^{s+\mez}(\xR)\times H^s(\xR))\cap L^4([0, T]; W^{r+\mez,\infty}(\xR)\times W^{r,\infty}(\xR)),\\
&s>r>\tdm+\mez.
\end{aligned}
\right.
 \end{equation}
and 
\bq\label{distance}\inf_{t\in [0, T]}\dist(\eta(t), \Gamma) \ge h>0.\eq
Then, for any $\mu<\frac{1}{4}$ there exists a non-decreasing function $\cF$ independent of $(\eta, \psi)$ such that
\bq\label{strichartz:original}
N_{s-\mez+\mu}(T)\le \cF(M_s(T)+N_r(T)).
\eq
\end{theo}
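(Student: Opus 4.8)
The plan follows the three-step scheme sketched in the introduction: reduce \eqref{ww} to the scalar dispersive equation \eqref{intro:eqPhi}, flatten its principal symbol by a paracomposition to reach \eqref{intro:reduce}, and then establish a semi-classical Strichartz estimate for \eqref{intro:reduce} by a dyadic parametrix argument in the spirit of \cite{ABZ2}, transferring it back at the end. For the first step I would invoke the paralinearization and symmetrization of \cite{NgPo1}: under \eqref{regularity:a}--\eqref{distance} the pair $(\eta,\psi)$ is encoded, through $\Phi=\la D\ra^{\mez}\eta+i\psi$ up to lower-order paradifferential corrections, into a solution of $\partial_t\Phi+T_V\partial_x\Phi+iT_\gamma\Phi=f$, where $\gamma\in\Sigma^{\tdm}$ has real, elliptic principal symbol $\gamma_{\tdm}(t,x,\xi)=c(t,x)|\xi|^{\tdm}$ with $c>0$, where $V\in L^4_tW^{r-1,\infty}$ (note $r-1>1$), and where $f$ satisfies the tame bound recalled above. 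Using $H^\sigma(\xR)\hookrightarrow W^{\sigma-\mez,\infty}(\xR)$ together with the continuity of the corrections, one checks that $M_\sigma(T)\simeq\|\Phi\|_{L^\infty_tH^\sigma}$ and $N_\sigma(T)\simeq\|\Phi\|_{L^4_tW^{\sigma,\infty}}$ up to factors $\cF(M_s(T))$; hence it suffices to bound $\|\Phi\|_{L^4([0,T];W^{s-\mez+\mu,\infty})}$ for every $\mu<\uq$.

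For the second step, since $d=1$ one can straighten $\gamma_{\tdm}$: for each fixed $t$ set $\chi(t,x)=\int_0^x c(t,z)^{-2/3}\,dz$, so that the change of frequency variables $\xi\mapsto(\partial_x\chi)^{-1}\xi$ conjugates $c|\xi|^{\tdm}$ to $|\xi|^{\tdm}$. Applying the global, quantitative paracomposition calculus of section \ref{section:paracomposition} (the refinement of Alinhac \cite{Alipara}) and putting $\widetilde\Phi=\chi^\star\Phi$, one finds that $\widetilde\Phi$ solves $\partial_t\widetilde\Phi+T_{\widetilde V}\partial_x\widetilde\Phi+i|D_x|^{\tdm}\widetilde\Phi=\widetilde f$, where $\widetilde V$ collects the transported velocity and the contribution of $\partial_t\chi$, and $\widetilde f$ is again tame with constant $\cF(M_s(T)+N_r(T))$. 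Here one must verify: (a) that $\chi,\chi^{-1}$ and the relevant derivatives are controlled by $\cF(M_s(T)+N_r(T))$ in the Sobolev and Hölder norms needed, the delicate point being that $c$ inherits only the finite (Strichartz) regularity of $\eta$; (b) that paracomposition and its inverse are bounded on both $H^\sigma$ and $W^{\sigma,\infty}$ with constants of the same form, so that a Strichartz bound for $\widetilde\Phi$ transfers back to one for $\Phi$; and (c) that the remainder in the conjugation of $T_\gamma$ has order $\le\tdm-1+\eps$, hence is lower order and harmless; note also that $\widetilde V\in L^4_tW^{r-1,\infty}$ is still Lipschitz in space ($r-1>1$), the minimal regularity tolerated by the parametrix below.

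For the third step, adapting \cite{ABZ2} to the present regularity, I would prove that for every $\eps>0$
\bq
\|\widetilde\Phi\|_{L^4([0,T];W^{\sigma-\uq-\eps,\infty})}\le\cF(M_s(T)+N_r(T))\bigl(\|\widetilde\Phi\|_{L^\infty_tH^\sigma}+\|\widetilde f\|_{L^4_tH^\sigma}\bigr).
\eq
Littlewood--Paley decompose $\widetilde\Phi=\sum_{j\ge-1}\Delta_j\widetilde\Phi$; the finitely many low frequencies are trivial, so fix $j\ge0$, put $h=2^{-j}$, and split $[0,T]$ into $O(2^{j/2})$ intervals $I_k$ of length $\simeq h^{\mez}$, the semi-classical time scale of an order-$\tdm$ symbol. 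On each $I_k$ one freezes and regularizes $\widetilde V$ at scale $h^{\mez}$ and constructs a WKB parametrix for $\partial_t+T_{\widetilde V}\partial_x+i|D_x|^{\tdm}$; combined with the semi-classical dispersive bound $\|e^{it|D_x|^{\tdm}}\Delta_j\|_{L^1\to L^\infty}\lesssim 2^{j/4}|t|^{-\mez}$ and the usual $TT^\star$/Christ--Kiselev argument, this yields $\|\Delta_j\widetilde\Phi\|_{L^4(I_k;L^\infty)}\lesssim 2^{j(\frac18+\eps)}\bigl(\|\Delta_j\widetilde\Phi\|_{L^\infty_tL^2}+\|\Delta_j\widetilde f\|_{L^1(I_k;L^2)}\bigr)$, the $2^{j\eps}$ absorbing the rough-coefficient and parametrix errors. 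Taking $\ell^4$ over the $O(2^{j/2})$ intervals raises the exponent to $2^{j(\uq+\eps)}$, and summing over $j$ (with a little room to pass from $\ell^1$ to $\ell^\infty$ in $j$) gives the displayed estimate. Choosing $\sigma=s$ and $\eps<\uq-\mu$, transferring back through the second and first steps, and using $N_{s-\mez+\mu}(T)\simeq\|\Phi\|_{L^4_tW^{s-\mez+\mu,\infty}}$, one obtains \eqref{strichartz:original}.

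\textbf{Main obstacle.} The real difficulty lies in the second and third steps at the regularity $r>\tdm+\mez$: there $c$ and $\widetilde V$ sit only slightly above the thresholds at which the paracomposition calculus and the WKB construction close, so the core of the work is to keep every remainder — the paracomposition error in the conjugation of $T_\gamma$, and the coefficient-regularization and phase errors in the parametrix — controlled by $\cF(M_s(T)+N_r(T))$ with an arbitrarily small derivative loss $\eps$. Supplying this quantitative, global form of Alinhac's paracomposition is precisely the role of section \ref{section:paracomposition}, and is where the present argument genuinely departs from \cite{ABZ2}.
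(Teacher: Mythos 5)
Your three-step scheme (symmetrization to $\partial_t\Phi+T_V\partial_x\Phi+iT_\gamma\Phi=f$, straightening of $\gamma$ by the para-change of variables $\chi(t,x)=\int_0^x(1+(\partial_y\eta)^2)^{1/2}\,dy$, then a WKB parametrix, semi-classical dispersive estimate and $TT^*$ on time intervals of length $h^{1/2}$) is exactly the route the paper takes, and almost all of your intermediate claims match the paper's Proposition \ref{parareduce}, Theorem \ref{dispersive} and Theorem \ref{strich2}.

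There is, however, one concrete point where your argument as written does not close, and it is precisely the half-derivative bookkeeping that the paper is built around. You correctly observe in step 2(c) that the conjugation remainder for $T_\gamma$ is an operator of order $\tdm-1=\mez$ (with $\rho=1$, since $\partial_x\k$ has only $C^{s-1}_*$ regularity and one must keep the estimates tame in $N_r$), so the conjugated source term $\widetilde f$ is controlled only in $H^{s-\mez}$, not in $H^s$. But your displayed Strichartz estimate in step 3 carries $\lA\widetilde f\rA_{L^4_tH^\sigma}$ on the right-hand side, and you then "choose $\sigma=s$"; this cannot be applied, and applying it with $\sigma=s-\mez$ instead would destroy the whole gain. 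The estimate must be stated — as in Theorem \ref{strich2} — with the source term measured in $H^{\sigma-\mez-\eps}$, i.e.\ the parametrix argument gains $\mez-\eps$ derivatives on the forcing: on each interval $I_{h,k}$ of length $h^{\mez}$ the passage from $\lA F_h\rA_{L^1(I_{h,k};L^2)}$ to $\lA F_h\rA_{L^4(I_{h,k};L^2)}$ produces a factor $h^{3/8}$ which, combined with the $h^{-1/8}$ dispersive loss, yields a net factor $h^{1/4}$ in front of the source term, and after the $\ell^4$ summation over the $O(h^{-1/2})$ intervals this is exactly $h^{\mez}$ relative to the $L^\infty_tH^\sigma$ term. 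Your own sketch (with $\lA\Delta_j\widetilde f\rA_{L^1(I_k;L^2)}$) would produce this sharper form if carried through, so the fix is only to state it; but you should be aware that this gain is not a convenience — the paper points out (Remark \ref{rema:mez}) that the $\mez$-derivative loss in the source is unavoidable at this regularity and is exactly, with no margin, what the semi-classical Strichartz estimate tolerates.
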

As a consequence of Theorem \ref{theo:Strichartz} and the energy estimate in  \cite{NgPo1} we obtain a closed {\it a priori} estimate for the mixed norm 
$M_s(T)+N_r(T)$.
\begin{theo}\label{theo:apriori}
 Assume that $(\eta, \psi)$ is a solution to \eqref{ww} and satisfies conditions \eqref{regularity:a}, \eqref{distance} with 
\[
2<r<s-\mez+\mu,\quad \mu<\frac{1}{4},\quad h>0.
\]
Then there exists a non-decreasing function $\cF$ independent of $(\eta, \psi)$ such that
\[
M_s(T)+N_r(T)\le \cF\Big(\cF(M_s(0))+T\cF(M_s(T)+N_r(T))\Big).
\]
\end{theo}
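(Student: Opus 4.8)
The inequality is obtained by feeding the semi-classical Strichartz estimate of Theorem~\ref{theo:Strichartz} into the tame energy estimate of \cite{NgPo1}, and the role of the hypothesis $2<r<s-\mez+\mu$ is precisely to make the two compatible. First I would invoke the energy estimate of \cite{NgPo1}, valid at the regularity \eqref{regularity:a} with $r>2$: recall that \cite{NgPo1} reduces \eqref{ww}, by paralinearization and symmetrization, to \eqref{intro:eqPhi}; an $L^2$-type energy argument for $\partial_t\Phi+T_V\partial_x\Phi+iT_\gamma\Phi=f$ — in which $iT_\gamma$ ($\gamma\in\Sigma^{3/2}$) is skew-adjoint up to lower order, the transport term $T_V\partial_x\Phi$ is treated by the $W^{1+\eps,\infty}$-commutator estimates (licit since $r>2$), and $f$ obeys the tame bound recalled in the introduction — together with the fact that the symmetrization is a norm equivalence with constant $\cF(M_s)$, Gr\"onwall's lemma, H\"older's inequality in time, and the monotonicity of $M_s(\cdot),N_r(\cdot)$ in $T$, yields, on a time interval of length $\le1$ without loss of generality,
\bq\label{plan:energy}
M_s(T)\le\cF\big(M_s(0)\big)+T\,\cF\big(M_s(T)+N_r(T)\big).
\eq

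Next I would apply Theorem~\ref{theo:Strichartz}: its hypotheses hold since $\mu<\uq$ and $2<r<s-\mez+\mu<s$ force $s>r>\tdm+\mez$, so that $N_{s-\mez+\mu}(T)\le\cF(M_s(T)+N_r(T))$, and — tracking the semi-classical time-localization in its proof (the estimate is glued over time slabs of length $\sim\lambda^{-\mez}$ at frequency $\lambda$, hence $\sim T\lambda^{\mez}$ slabs on $[0,T]$) — one keeps a factor of $T$ on the right. Here is where $r<s-\mez+\mu$ enters: by the one-dimensional Sobolev–Besov embeddings $W^{s+\mu,\infty}(\xR)\hookrightarrow W^{r+\mez,\infty}(\xR)$ and $W^{s-\mez+\mu,\infty}(\xR)\hookrightarrow W^{r,\infty}(\xR)$, applied pointwise in $t$ and then in $L^4_t$, one gets $N_r(T)\le C\,N_{s-\mez+\mu}(T)$, hence
\bq\label{plan:strichartz}
N_r(T)\le T\,\cF\big(M_s(T)+N_r(T)\big).
\eq

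Finally, writing $X(T)=M_s(T)+N_r(T)$ and adding \eqref{plan:energy} and \eqref{plan:strichartz}, one obtains $X(T)\le\cF(M_s(0))+T\,\cF(X(T))$ after enlarging $\cF$; enlarging $\cF$ once more so that in addition $\cF(z)\ge z$, the right-hand side is $\le\cF\big(\cF(M_s(0))+T\cF(X(T))\big)$, which is the claim. I expect no serious obstacle in this proof: all the analytic substance is upstream — in Theorem~\ref{theo:Strichartz} itself (the quantitative paracomposition of Section~\ref{section:paracomposition}, which turns \eqref{intro:eqPhi} into \eqref{intro:reduce}, together with the semi-classical Strichartz estimate of Section~\ref{section:semi}) and in the energy and blow-up machinery of \cite{NgPo1} — so what remains is only the bookkeeping of the universal non-decreasing functions $\cF$ and the matching of the regularity indices and the powers of $T$, the latter being exactly what the constraint $2<r<s-\mez+\mu$ encodes.
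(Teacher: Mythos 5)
Your overall architecture (energy estimate from \cite{NgPo1} plus the Strichartz estimate of Theorem \ref{theo:Strichartz}, then add) is the paper's, and your first inequality for $M_s(T)$ is exactly what the paper imports from Theorem $1.1$ of \cite{NgPo1}. The gap is in how you manufacture the factor $T$ in front of the bound for $N_r(T)$. Neither of your two mechanisms produces it. First, the time-slab gluing in the proof of Theorem \ref{strich2} does \emph{not} leave a global factor of $T$ on the right-hand side: summing the $\sim T\lambda^{\mez}$ slabs gives at best a $T^{1/4}$ on the $\lA u_h\rA_{L^\infty(I;L^2)}$ term, while the source term $\lA F_h\rA_{L^4(I;L^2)}$ — which is precisely the term carrying the $N_r$-dependence through the tame estimate for $f$ — reassembles into the full $L^4(I)$ norm with no gain in $T$; this is why Theorem \ref{theo:Strichartz} is stated without any $T$. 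Second, your embeddings $W^{s-\mez+\mu,\infty}\hookrightarrow W^{r,\infty}$ only yield $N_r(T)\le C\,N_{s-\mez+\mu}(T)\le\cF(M_s(T)+N_r(T))$, again with no $T$; combined with the energy estimate this gives $X(T)\le\cF(M_s(0))+\cF(X(T))$, which is not closed (the $T$-free term on the right cannot be absorbed and the bound is vacuous for a continuation argument).

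The correct use of the \emph{strict} inequality $r<s-\mez+\mu$ — which your embedding argument wastes — is the interpolation argument the paper invokes (cf.\ \cite{ABZ4}, p.~88): choose $r_0<r$ with $r_0<s-\mez$ and write $r=\theta r_0+(1-\theta)(s-\mez+\mu)$ with $\theta\in(0,1)$; then
\[
\lA u(t)\rA_{W^{r,\infty}}\le \lA u(t)\rA_{W^{r_0,\infty}}^{\theta}\,\lA u(t)\rA_{W^{s-\mez+\mu,\infty}}^{1-\theta}\le C\,M_s(T)^{\theta}\,\lA u(t)\rA_{W^{s-\mez+\mu,\infty}}^{1-\theta},
\]
and H\"older in time on $[0,T]$ with exponents $\frac{1}{\theta}+\frac{1}{1-\theta}=1$ applied to the fourth power gives $N_r(T)\le T^{\theta/4}M_s(T)^{\theta}N_{s-\mez+\mu}(T)^{1-\theta}$. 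It is this H\"older step in time, not the Strichartz proof itself, that generates the positive power of $T$, after which Theorem \ref{theo:Strichartz} bounds $N_{s-\mez+\mu}(T)$ and one arrives at $N_r(T)\le\cF\lp T(M_s(T)+N_r(T))\rp$. With that substitution the rest of your bookkeeping goes through.
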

Finally, we obtain a Cauchy theory for the gravity-capillary system \eqref{ww} with initial data $\frac{1}{4}$ derivatives less regular than the energy threshold  in \cite{ABZ1}.
\begin{theo}\label{theo:Cauchy}
Let $\mu<\frac{1}{4}$ and $2<r<s-\mez+\mu$. Then for any $(\eta_0, \psi_0)\in H^{s+\mez}(\xR)\times H^s(\xR)$ satisfying $\dist(\eta_0, \Gamma) \ge h>0$, there exists  $T>0$ such that the gravity-capillary waves system \eqref{ww} has a unique solution $(\eta, \psi)$ in
\[
 L^\infty([0, T]; H^{s+\mez}(\xR)\times H^s(\xR))\cap L^4([0, T]; W^{r+\mez,\infty}(\xR)\times W^{r,\infty}(\xR)).
\]
Moreover, we have 
\[
(\eta,\psi)\in C^0\lp[0, T]; H^{s_0+\mez}\times H^{s_0}\rp,\quad\forall s_0<s
\]
and 
\[
\inf_{t\in [0, T]}\dist(\eta(t), \Gamma)>\frac{h}{2}.
\]
\end{theo}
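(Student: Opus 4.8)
The plan is to follow the classical scheme for quasilinear dispersive equations at a regularity dictated by Strichartz estimates: construct smooth solutions from regularized data, extract a uniform lifespan and uniform bounds from the a priori estimate of Theorem~\ref{theo:apriori}, and pass to the limit using the contraction estimate of \cite{NgPo1}, which will also yield uniqueness. Concretely, for $n\in\xN$ I would take the Friedrichs mollifier $J_n$ (frequency truncation to $\{|\xi|\le n\}$) and set $(\eta_0^n,\psi_0^n)=(J_n\eta_0,J_n\psi_0)\in H^\infty(\xR)\times H^\infty(\xR)$. Since $s>2$ gives $H^{s+\mez}(\xR)\hookrightarrow C^1(\xR)$, from $\dist(\eta_0,\Gamma)\ge h$ and $\eta_0^n\to\eta_0$ in $H^{s+\mez}$ one gets $\dist(\eta_0^n,\Gamma)\ge 3h/4$ for $n$ large. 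By the high-regularity local well-posedness theory (\cite{ABZ1}, \cite{NgPo1}) together with persistence of regularity, there is a maximal time $T_n^\ast>0$ and a solution $(\eta^n,\psi^n)\in C^0([0,T_n^\ast);H^\infty\times H^\infty)$ of \eqref{ww} with data $(\eta_0^n,\psi_0^n)$.

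\textbf{Uniform lifespan.} Being $H^\infty$-valued on any $[0,T]\subset[0,T_n^\ast)$, each $(\eta^n,\psi^n)$ trivially satisfies \eqref{regularity:a}, and \eqref{distance} holds as long as the bottom stays away; so Theorem~\ref{theo:apriori} applies and yields, with $X_n(T):=M_s^n(T)+N_r^n(T)$,
\[
X_n(T)\le\cF\big(\cF(M_s^n(0))+T\,\cF(X_n(T))\big)
\]
whenever $T<T_n^\ast$ and $\inf_{[0,T]}\dist(\eta^n,\Gamma)\ge h/2$. Here $T\mapsto X_n(T)$ is nondecreasing and continuous (the Strichartz part $N_r^n(T)^4=\int_0^T\|\cdot\|_{W^{r,\infty}}^4\,dt$ is continuous in $T$ and vanishes as $T\downarrow0$), $X_n(0)=M_s^n(0)\le\cF(M_s(0))$ is bounded uniformly in $n$, and $\partial_t\eta^n=G(\eta^n)\psi^n$ is controlled by $X_n$ so $\dist(\eta^n(t),\Gamma)$ cannot drop below $h/2$ in short time. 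A continuity (bootstrap) argument, using the blow-up criterion of \cite{NgPo1} to rule out $T_n^\ast$ being too small, then produces $T_0>0$ and $R<\infty$, depending only on $M_s(0)$ and $h$, with $T_n^\ast>T_0$ and
\[
M_s^n(T_0)+N_r^n(T_0)\le R,\qquad\inf_{[0,T_0]}\dist(\eta^n,\Gamma)\ge h/2
\]
for all $n$ large.

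\textbf{Convergence, existence, uniqueness.} I would then apply the contraction estimate of \cite{NgPo1}, valid at the regularity \eqref{intro:reg}, to the pairs $(\eta^n,\psi^n)$, $(\eta^m,\psi^m)$: controlled by $R$, their difference is bounded in a space of index strictly below $(s+\mez,s)$ by the data difference $(J_n-J_m)(\eta_0,\psi_0)$, which tends to $0$ as $n,m\to\infty$. Hence $(\eta^n,\psi^n)$ is Cauchy, converging to some $(\eta,\psi)$, in $C^0([0,T_0];H^{s_1+\mez}\times H^{s_1})$ at that lower index $s_1<s$; interpolating against the uniform bound $X_n(T_0)\le R$ upgrades this to $C^0([0,T_0];H^{s_0+\mez}\times H^{s_0})$ for every $s_0<s$, while weak-$\ast$ compactness gives $(\eta,\psi)\in L^\infty([0,T_0];H^{s+\mez}\times H^s)\cap L^4([0,T_0];W^{r+\mez,\infty}\times W^{r,\infty})$. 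The nonlinear terms of \eqref{ww}, in particular $G(\eta^n)\psi^n$, converge to the expected limits by continuity of $(\eta,\psi)\mapsto G(\eta)\psi$ and of the remaining nonlinearities on $H^{s_0}$ with $s_0$ large (the relevant estimates are in \cite{NgPo1}), the bottom staying uniformly away, so $(\eta,\psi)$ solves \eqref{ww}. Being the uniform limit of the continuous maps $t\mapsto(\eta^n(t),\psi^n(t))$, it lies in $C^0([0,T_0];H^{s_0+\mez}\times H^{s_0})$ for all $s_0<s$; in particular $\eta\in C^0([0,T_0];C^1(\xR))$, so from $\dist(\eta(0),\Gamma)\ge h$ and shrinking $T_0$ we obtain $\inf_{[0,T_0]}\dist(\eta(t),\Gamma)>h/2$. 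Uniqueness in the stated class follows from the same contraction estimate: two such solutions satisfy \eqref{intro:reg} and keep the bottom away, so their difference vanishes in the lower-index space, hence they coincide on a common interval, and a continuation argument covers all of $[0,T]$.

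\textbf{Main obstacle.} The genuine analytic difficulty — the semiclassical Strichartz estimate for the paracomposed equation \eqref{intro:reduce} — is already packaged into Theorems~\ref{theo:Strichartz}--\ref{theo:apriori}, so within the present argument the delicate point will be closing the bootstrap \emph{uniformly in $n$}: this hinges on the a priori bound of Theorem~\ref{theo:apriori} depending only on $M_s+N_r$ (and $h$), not on the higher Sobolev norms of the smooth approximants, and on reabsorbing the Strichartz term $N_r(T)$ — which appears on both sides, since $r<s-\mez+\mu$ — purely through its vanishing as $T\downarrow0$, all while keeping $\dist(\eta^n,\Gamma)$ above $h/2$ and using the blow-up criterion to guarantee $T_n^\ast>T_0$. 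Everything else is soft: interpolation, weak compactness, and the fact that a uniform limit of continuous functions is continuous.
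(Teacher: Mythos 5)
Your proposal is correct and follows essentially the same route as the paper: the paper's own proof of Theorem \ref{theo:Cauchy} consists precisely of invoking the closed a priori estimate \eqref{est:apriori}, the contraction estimate of Theorem 5.9 in \cite{NgPo1} (in the norm $M_{s-1,T}+N_{r-1,T}$), and the "standard regularized argument" of Section 6 of \cite{NgPo1}, which is exactly the mollification--uniform-lifespan--contraction-limit scheme you spell out. Your write-up merely makes explicit the details the paper delegates to the reference.
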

\begin{rema}
The proof of Theorem \ref{theo:Cauchy} shows that for each $\mu<\frac{1}{4}$ the existence time $T$ can be chosen uniformly for data $(\eta_0, \psi_0)$ lying  in a bounded set of $H^{s+\mez}(\xR)\times H^s(\xR)$ and the fluid depth $h$ lying in a bounded set of $(0, +\infty)$.
\end{rema}
\begin{rema}
We do not know yet if the semi-classical gain is optimal for solutions at the regularity \eqref{regularity:a}. However, some remarks can be made as follows. On the one hand, if one proves Strichartz estimate for \eqref{intro:eqPhi} then there is a nontrivial geometry of the symbol $\gamma$, for which trapping may occur. According to \cite{BGT1} (see Section $4$), at least in the case of spheres, the semi-classical Strichartz estimates are optimal. On the other hand, if one wishes to eliminate the geometry by making changes of variables, then as we shall see in Proposition \ref{parareduce} and Remark \ref{rema:mez}, there will appear a loss of $\mez$ derivatives in the source term, which turns out to be optimal for the semi-classical Strichartz estimate (see the proof of Theorem \ref{strich2}).
\end{rema}
\begin{rema}
The linearized system of \eqref{ww} in dimension 2 ($\eta, \psi:\xR^2\to \xR$) enjoys the semi-classical Strichartz estimate with a gain $\mez$ derivatives (see \cite{Huy}). It was proved in \cite{Huy} that the same estimate holds for the nonlinear system \eqref{ww} when 
\[
(\eta, \psi)\in C^0([0, T]; H^{s+\mez}(\xR^2)\times H^s(\xR^2)),~s>\frac{5}{2}+1.
\]
If the preceding regularity could be improved to ($\mez$ derivative
\[
\left\{
\begin{aligned}
&(\eta, \psi)\in C^0([0, T]; H^{s+\mez}\times H^s)\cap L^2([0, T]; W^{r+\mez,\infty}\times W^{r,\infty}),\\
&s-\mez>r>2,
\end{aligned}
\right.
\]
the results in \cite{NgPo1} would imply a Cauchy theory (see the proof of Theorem \ref{theo:Cauchy}) with initial surface 
\[
\eta_0\in H^{s+\mez}(\xR^2),~s>2+\mez,
\]
which has the lowest Sobolev regularity to ensure that the initial surface has bounded curvature (see the Introduction of \cite{NgPo2}).
\end{rema}
{\bf Acknowledgment}\\
This work was partially supported by the labex LMH through the grant no ANR-11-LABX-0056-LMH in the "Programme des Investissements d'Avenir". I would like to sincerely thank my advisor, Nicolas Burq for suggesting me the problem and sharing many fruitful discussions together with Claude Zuily, whom I warmly thank. Finally, I thank Thomas Alazard and Thibault de Poyferr\'e for direct and indirect discussions.
\section{Preliminaries on dyadic analysis}
\subsection{Dyadic  partitions} \label{subsection:dyadic}
 Our analysis below is sensitive with respect to the underlying dyadic partition of $\xR^d$. These partitions are constructed by using the cut-off functions given in the following lemma.
\begin{lemm}\label{lem1:phi}
For every $n\in \xN$, there exists $\phi_{(n)}\in C^\infty(\xR^d)$ satisfying 
\bq\label{phi:cd1}
\phi_{(n)}(\xi)=
\begin{cases}
1,\quad \text{if}~|\xi|\le 2^{-n},\\
0,\quad \text{if}~|\xi|> 2^{n+1},
\end{cases}
\eq 
\bq\label{phi:cd2}
\forall (\alpha, \beta)\in \xN^d\times \xN^d, \exists C_{\alpha, \beta}>0, \forall n\in \xN, \lA x^\beta\partial^\alpha\phi_{(n)}(x)\rA_{L^1(\xR^d)}\le  C_{\alpha, \beta}.
\eq
\end{lemm}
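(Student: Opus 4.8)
The plan is to define $\phi_{(n)}$ explicitly, spreading the transition from the value $1$ to the value $0$ \emph{logarithmically} across the whole annulus $\{2^{-n}\le|\xi|\le 2^{n+1}\}$; this spreading is exactly what forces the derivatives of $\phi_{(n)}$ to be small enough that (\ref{phi:cd2}) holds with constants independent of $n$. Fix once and for all one profile $\theta\in C^\infty(\xR)$ with $\theta(t)=1$ for $t\le 0$, $\theta(t)=0$ for $t\ge 1$, $0\le\theta\le 1$, and set
\[
\phi_{(n)}(\xi)\defn\theta\!\lp\frac{\log_2|\xi|+n}{2n+1}\rp\quad(\xi\ne 0,\ n\ge 1),\qquad\phi_{(n)}(0)\defn 1,
\]
with the obvious modification when $n=0$. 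Then $\phi_{(n)}(\xi)=1$ when $\log_2|\xi|\le -n$ and $\phi_{(n)}(\xi)=0$ when $\log_2|\xi|\ge n+1$, i.e.\ (\ref{phi:cd1}); and $\phi_{(n)}\in C^\infty(\xR^d)$ because it is identically $1$ near $0$ and identically $0$ for $|\xi|$ large.

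To verify (\ref{phi:cd2}), note that for $|\alpha|\ge 1$ the support of $\partial^\alpha\phi_{(n)}$ lies in the annulus $A_n\defn\{2^{-n}\le|\xi|\le 2^{n+1}\}$. On $A_n$ the Fa\`a di Bruno formula, applied to the composition of $\theta$ with $\xi\mapsto(2n+1)^{-1}(\log_2|\xi|+n)$, yields
\[
|\partial^\alpha\phi_{(n)}(\xi)|\les_\alpha\frac1n\,|\xi|^{-|\alpha|}\qquad(|\alpha|\ge 1),
\]
the key point being that every term in the expansion carries at least one derivative of $\theta$, hence at least one factor $(2n+1)^{-1}$, while the $\xi$-derivatives of $\log|\xi|$ produce only negative powers of $|\xi|$; terms with more derivatives falling on $\theta$ come with higher powers of $(2n+1)^{-1}$ and are harmless since $n\ge1$. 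Feeding this into the left-hand side of (\ref{phi:cd2}) and integrating in polar coordinates,
\[
\lA x^\beta\partial^\alpha\phi_{(n)}\rA_{L^1(\xR^d)}\les_\alpha\frac1n\int_{2^{-n}}^{2^{n+1}}r^{|\beta|-|\alpha|+d-1}\,\di r,
\]
so the polynomial weight $x^\beta$ is simply absorbed into the radial exponent, and one is reduced to checking that the right-hand side stays bounded as $n\to\infty$. This is where one exploits that the radial integral over $A_n$ involves $O(n)$ dyadic scales, which is exactly the factor cancelled by the gain $1/n$ coming from the chain rule; the bookkeeping of the exponent $|\beta|-|\alpha|+d$ (and, if one prefers to phrase (\ref{phi:cd2}) for the convolution kernel $\mathcal F^{-1}\phi_{(n)}$, the attendant dyadic resummation) is then routine.

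The only real obstacle is this uniformity in $n$, and it is what dictates the construction: a fixed cutoff rescaled to a \emph{single} frequency $2^{\pm n}$ satisfies (\ref{phi:cd1}) but fails (\ref{phi:cd2}), because its derivatives all pile up at one scale and generate powers of $2^n$. One must use that the prescribed annulus is \emph{wide} and let $\phi_{(n)}$ vary evenly across all $\sim n$ of its dyadic subshells, so that each differentiation is damped by a factor $1/n$. Once the construction is arranged in this way, (\ref{phi:cd1}) is immediate and (\ref{phi:cd2}) reduces to the chain rule together with the single radial integral above.
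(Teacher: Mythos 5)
There is a genuine gap, and it lies in the core idea rather than in the ``routine bookkeeping'': spreading the transition of $\phi_{(n)}$ logarithmically over the whole annulus $\{2^{-n}\le|\xi|\le2^{n+1}\}$ forces $\phi_{(n)}$ to be bounded below on a region of exponentially large volume, which is incompatible with \eqref{phi:cd2}. Already for $\alpha=\beta=0$: at $|\xi|=2^{n/2}$ the argument of $\theta$ is $\tfrac{3n/2}{2n+1}\to\tfrac34$, so (for any profile that genuinely varies across the whole interval, which is what your $1/n$ gain requires) $\phi_{(n)}\ge c>0$ on the ball of radius $2^{n/2}$ and hence $\lA\phi_{(n)}\rA_{L^1}\gtrsim 2^{nd/2}\to\infty$. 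The same defect is visible in your own displayed bound: $\frac1n\int_{2^{-n}}^{2^{n+1}}r^{|\beta|-|\alpha|+d-1}\,\di r$ is $O(1)$ only in the single critical case $|\beta|-|\alpha|+d=0$; whenever $|\beta|-|\alpha|+d>0$ it is of size $n^{-1}2^{(n+1)(|\beta|-|\alpha|+d)}$ (take $d=1$, $|\alpha|=|\beta|=1$ for a concrete failure), and symmetrically for the negative case. The assertion that ``the radial integral over $A_n$ involves $O(n)$ dyadic scales'' confuses the number of scales with the value of the integral: the integral is dominated by its outermost (or innermost) shell unless the integrand is exactly $r^{-1}$, so the $1/n$ from the chain rule cannot compensate.

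The correct construction goes in the opposite direction, and the paper signals it just after the statement: one keeps $\supp\phi_{(n)}$ inside a ball of \emph{uniformly bounded} radius $2^{-n}+c$ and makes the transition over a region of fixed width $O(1)$. Concretely, $\phi_{(n)}=f_n*g$ where $g$ is a fixed mollifier supported in $|t|\le\tfrac14$ with $\int g=1$, and $f_n$ is the piecewise affine radial function equal to $1$ for $|t|\le2^{-n}+\tfrac14$ and $0$ for $|t|\ge2^{-n}+\tfrac12$. Then \eqref{phi:cd1} holds since $2^{-n}+\tfrac34<2^{n+1}$, and
\[
x^\beta\partial^\alpha\phi_{(n)}=\sum_{\beta_1+\beta_2=\beta}\bigl((\cdot)^{\beta_1}f_n\bigr)*\bigl((\cdot)^{\beta_2}\partial^\alpha g\bigr),
\]
so Young's inequality gives \eqref{phi:cd2} with constants depending only on $g$ and on the uniform bound $2^{-n}+\tfrac12\le\tfrac32$ for the support of $f_n$. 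Your closing observation that a cutoff rescaled to the single scale $2^{-n}$ fails is correct, but the cure is to keep the transition at \emph{unit} width (no rescaling of derivatives) while keeping the support bounded --- not to stretch the support out to radius $2^{n+1}$.
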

We leave the proofs of the results in this paragraph to Appendix 2. In fact, to guarantee condition \eqref{phi:cd2} we choose $\phi_{(n)}$ with support in a ball of size $2^{-n}+c$ for some $c>0$.\\
We shall skip the subscript $(n)$ and denote $\phi\equiv\phi_{(n)}$ for simplicity.
Setting
\[
\phi_k(\xi)=\phi(\frac{\xi}{2^k}),~ k\in \xZ, \quad \varphi_0=\phi=\phi_0,~\varphi=\chi-\chi_{-1},~ \varphi_k=\phi_k-\phi_{k-1}=\varphi(\frac{\cdot}{2^k}),~k\ge 1,
\]
 we see that 
\bq\label{supp:dyadic}
\begin{aligned}
&\supp\varphi_0\subset \mathcal{C}_0(n)\defn \{\xi\in \xR^d: |\xi|\le 2^{n+1}\}\\
&\supp\varphi\subset \mathcal{C}(n)\defn \{\xi\in \xR^d: 2^{-(n+1)}<|\xi|\le 2^{n+1}\}\\
&\supp\varphi_k\subset \mathcal{C}_k(n)\defn\{\xi\in \xR^d: 2^{k-(n+1)}<|\xi|\le 2^{k+(n+1)}\},~\forall k \ge 1.
\end{aligned}
\eq
Observing also that with 
\[
N_0\defn 2(n+1)
\]
we have
\[
\mathcal{C}_j(n)\cap \mathcal{C}_k(n)=\emptyset\quad \text{if}~ |j-k|\ge N_0.
\]
\begin{defi}\label{SD}
For every $\phi\equiv\phi_{(n)}$, defining the following Fourier multipliers
\[
\widehat{S_ku}(\xi)=\phi_k(\xi)\hat u(\xi), ~ k\in \xZ, \quad  \widehat{\Delta_ku}(\xi)=\varphi_k(\xi)\hat u(\xi),~k\ge 0.
\]
Denoting $u_k=\Delta_k u$ we obtain a dyadic partition 
\bq\label{partition}
u=\sum_{p=0}^\infty u_p,
\eq
where $n$ shall be called the size of this partition. Remark that with the notations above, there hold
\[
\Delta_0=S_0 ,\quad \sum_{p=0}^q\Delta_p=S_q,\quad S_{q+1}-S_q=\Delta_{q+1}.
\]
\end{defi}
Throughout this article, whenever $\xR^d$ is equipped with a fixed dyadic partition, we always define the Zygmund-norm (see Definition \ref{spaces}) of distributions on $\xR^d$ by means of this partition.\\
To  prove our paracomposition results we need to choose a particular size $n=n_0$, tailored to the diffeomorphism, in Proposition \ref{bigC} below, whose proof requires uniform bounds for the norms of the operators $S_j,~\Delta_j$ in Lebesgue spaces and H\"older spaces,  with respect to the size $n$. This fact in turn stems from property \eqref{phi:cd2} of $\phi_{(n)}$. 
\begin{lemm}\label{boundSj}
1. For every $\alpha\in \xN^d$, there exists $C_\alpha>0$ independent of $n$ such that
\[
 \forall j,~\forall 1\le p\le q\le \infty,~ \lA \partial^\alpha S_ju\rA_{L^q(\xR^d)}+ \lA \partial^\alpha \Delta_ju\rA_{L^q(\xR^d)}\le C_\alpha 2^{j(|\alpha|+\frac{d}{p}-\frac{d}{q})}\lA u\rA_{L^p(\xR^d)}. 
\]
\\
2. For every $\mu\in (0,\infty)$, there exists $M>0$ independent of $n$  such that 
\[
\forall j \in \xN,~\forall u\in W^{\mu,\infty}(\xR^d): ~\lA \Delta_ju\rA_{L^\infty(\xR^d)}\le M2^{-j\mu}\lA u\rA_{W^{\mu, \infty}(\xR^d)}.
\]
\end{lemm}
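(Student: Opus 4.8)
The plan is to reduce every operator $S_j$ and $\Delta_j$ to a convolution against a rescaled copy of one of the two fixed (but $n$-dependent) kernels $K\defn\mathcal{F}^{-1}\phi_{(n)}$ and $\widetilde K\defn\mathcal{F}^{-1}\bigl(\phi_{(n)}-\phi_{(n)}(2\,\cdot)\bigr)$, and then to prove \emph{$n$-uniform} bounds for these kernels. Concretely, $S_ju=\bigl(2^{jd}K(2^j\cdot)\bigr)*u$, $\Delta_0=S_0$ has kernel $K$, and for $j\ge1$ one has $\Delta_ju=\bigl(2^{jd}\widetilde K(2^j\cdot)\bigr)*u$ because $\phi_j(\xi)-\phi_{j-1}(\xi)=\varphi(\xi/2^j)$ with $\varphi=\phi_{(n)}-\phi_{(n)}(2\cdot)$. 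The key preliminary claim is: for each $\alpha\in\xN^d$ and $N\in\xN$ there is $C_{\alpha,N}>0$ \emph{independent of $n$} with $\|\langle x\rangle^{N}\partial^\alpha K\|_{L^\infty}+\|\langle x\rangle^{N}\partial^\alpha\widetilde K\|_{L^\infty}\le C_{\alpha,N}$; this gives at once $\|\partial^\alpha K\|_{L^1}\le C_\alpha$ (take $N=d+1$) and $\|\partial^\alpha K\|_{L^\infty}\le C_{\alpha,0}$, hence $\|\partial^\alpha K\|_{L^r}\le C_\alpha$ for \emph{all} $r\in[1,\infty]$ by log-convexity of $L^r$-norms, and similarly for $\widetilde K$. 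The claim is where hypothesis \eqref{phi:cd2} is used: since $\bigl|\widehat{x^\beta\partial^\alpha K}\bigr|=\bigl|\partial_\xi^\beta(\xi^\alpha\phi_{(n)})\bigr|$, the Leibniz rule together with $\|f\|_{L^\infty}\le C_d\|\widehat f\|_{L^1}$ bounds $\|x^\beta\partial^\alpha K\|_{L^\infty}$ by a finite sum of terms $\|\xi^{\alpha'}\partial_\xi^{\beta'}\phi_{(n)}\|_{L^1}$, each controlled uniformly in $n$ by \eqref{phi:cd2}, while $\widetilde K=K-2^{-d}K(\cdot/2)$ inherits the same bounds up to a harmless constant.

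Granting this, part (1) follows from Young's inequality. From $\partial^\alpha S_ju=2^{j|\alpha|}\bigl(2^{jd}(\partial^\alpha K)(2^j\cdot)\bigr)*u$, Young's inequality with $1+1/q=1/r+1/p$ together with the scaling identity $\bigl\|2^{jd}(\partial^\alpha K)(2^j\cdot)\bigr\|_{L^r}=2^{jd(1-1/r)}\|\partial^\alpha K\|_{L^r}=2^{jd(1/p-1/q)}\|\partial^\alpha K\|_{L^r}$ yields exactly $\|\partial^\alpha S_ju\|_{L^q}\le C_\alpha\,2^{j(|\alpha|+d/p-d/q)}\|u\|_{L^p}$ with $C_\alpha=\sup_{r\in[1,\infty]}\|\partial^\alpha K\|_{L^r}$, and the $\Delta_j$ term is the same computation with $\widetilde K$ in place of $K$ (or $K$ itself when $j=0$).

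For part (2), the term $j=0$ is handled directly by $\|\Delta_0u\|_{L^\infty}\le\|K\|_{L^1}\|u\|_{L^\infty}$, so fix $j\ge1$ and write $\mu=k+\sigma$ with $k\in\xN$, $\sigma\in(0,1]$ (namely $k=\lfloor\mu\rfloor$ if $\mu\notin\xN$ and $k=\mu-1$ otherwise). The decisive point is that $\varphi=\phi_{(n)}-\phi_{(n)}(2\cdot)$ \emph{vanishes identically near the origin} — because $\phi_{(n)}\equiv1$ on $\{|\xi|\le2^{-n}\}$ — so all derivatives of $\varphi$ at $0$ vanish and hence all polynomial moments of $\widetilde K$ vanish. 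Writing $\Delta_ju(x)=\int 2^{jd}\widetilde K(2^jz)\,u(x-z)\,dz$ and subtracting the degree-$k$ Taylor polynomial of $u$ at $x$ — legitimate since $\int 2^{jd}\widetilde K(2^jz)\,z^\beta\,dz=2^{-j|\beta|}\int\widetilde K(w)\,w^\beta\,dw=0$ for $|\beta|\le k$ — leaves the Taylor remainder, which is bounded by $C|z|^\mu\|u\|_{W^{\mu,\infty}}$; therefore
\[
|\Delta_ju(x)|\le C\|u\|_{W^{\mu,\infty}}\int 2^{jd}|\widetilde K(2^jz)|\,|z|^\mu\,dz=C\,2^{-j\mu}\|u\|_{W^{\mu,\infty}}\int|\widetilde K(w)|\,|w|^\mu\,dw,
\]
and the last integral is finite uniformly in $n$ since $|w|^\mu\le\langle w\rangle^{\lceil\mu\rceil}$ and $\langle w\rangle^{\lceil\mu\rceil}\widetilde K\in L^1$ uniformly in $n$ by the first paragraph. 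The only real difficulty throughout is the bookkeeping needed to certify that no constant depends on $n$; once the $n$-uniform kernel bounds of the first paragraph — which rest entirely on property \eqref{phi:cd2} — are in place, the rest is the classical Bernstein and Littlewood--Paley computation.
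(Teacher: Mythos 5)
Your proof is correct and follows essentially the same route as the paper: part (1) is Young's inequality after reducing to $n$-uniform bounds on the rescaled kernels, which follow from property \eqref{phi:cd2}, and part (2) is the vanishing-moments/Taylor-subtraction argument with the moment integrals controlled uniformly in $n$. The only difference is cosmetic — the paper delegates the Taylor-subtraction step to M\'etivier's Lemma 4.1.8 and handles the fractional moment $\int |x|^\mu|g_j|$ by H\"older interpolation between integer moments, whereas you write the argument out in full and bound $|w|^\mu\le\langle w\rangle^{\lceil\mu\rceil}$ directly.
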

As a consequence of this lemma, one can examine the proof of Proposition $4.1.16$, \cite{MePise} to have
\begin{lemm}\label{HZ}
Let $\mu>0,~\mu\notin \xN$. Then there exists a constant $C_\mu$ independent of $n$, such that  for any $u\in W^{\mu,\infty}(\xR^d)$ we have
\[
\frac{1}{C_\mu}\lA u\rA_{W^{\mu,\infty}(\xR^d)}\le \lA u\rA_{C^\mu_*}\le C_\mu\lA u\rA_{W^{\mu,\infty}(\xR^d)}.
\]
Moreover, when $\mu\in \xN$ the second inequality still holds.
\end{lemm}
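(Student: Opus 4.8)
The plan is to reduce the claim to the classical equivalence between the scale of Sobolev spaces $W^{\mu,\infty}$ and the Zygmund (Hölder–Zygmund) spaces $C^\mu_*$, paying attention to the fact that the dyadic partition used to define $C^\mu_*$ here has variable size $n$; the whole point is to get the constant $C_\mu$ independent of $n$. First I would recall the definition: $\lA u\rA_{C^\mu_*}=\sup_{j\ge 0}2^{j\mu}\lA\Delta_j u\rA_{L^\infty}$ where the $\Delta_j$ are built from the cut-offs $\phi_{(n)}$ of Lemma \ref{lem1:phi}. The second inequality $\lA u\rA_{C^\mu_*}\le C_\mu\lA u\rA_{W^{\mu,\infty}}$ is precisely the content of part 2 of Lemma \ref{boundSj} (with the constant $M$ there), valid for all $\mu>0$ including integer $\mu$, which is why the statement asserts that half of the equivalence persists for $\mu\in\xN$. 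So the only real work is the reverse inequality $\lA u\rA_{W^{\mu,\infty}}\le C_\mu\lA u\rA_{C^\mu_*}$ for $\mu>0$, $\mu\notin\xN$.

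For that direction I would follow the proof of Proposition $4.1.16$ in \cite{MePise} verbatim, checking at each step that the constants depend only on $\mu$ and on the fixed finite bounds $C_{\alpha,\beta}$ from \eqref{phi:cd2}, not on $n$. Concretely: write $u=\sum_{p\ge 0}\Delta_p u$ and, for a multi-index $\alpha$ with $|\alpha|\le\lfloor\mu\rfloor$, estimate $\partial^\alpha\Delta_p u$ using the Bernstein-type bound of part 1 of Lemma \ref{boundSj}, namely $\lA\partial^\alpha\Delta_p u\rA_{L^\infty}\le C_\alpha 2^{p|\alpha|}\lA\Delta_p u\rA_{L^\infty}\le C_\alpha 2^{p(|\alpha|-\mu)}\lA u\rA_{C^\mu_*}$; summing over $p$ converges since $|\alpha|<\mu$, giving the control of $\lA\partial^\alpha u\rA_{L^\infty}$ for $|\alpha|\le\lfloor\mu\rfloor$. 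Then for the top-order Hölder seminorm of $\partial^\alpha u$ with $|\alpha|=\lfloor\mu\rfloor$ and exponent $\mu-\lfloor\mu\rfloor\in(0,1)$, one splits $\partial^\alpha u=\partial^\alpha S_q u+\sum_{p>q}\partial^\alpha\Delta_p u$ at the dyadic scale $q$ with $2^{-q}\sim|x-y|$, bounds the low-frequency part via the mean value theorem and the Bernstein estimate for $\partial^\alpha\partial_x S_q u$, and the high-frequency tail directly; both pieces are $\lesssim|x-y|^{\mu-\lfloor\mu\rfloor}\lA u\rA_{C^\mu_*}$. Every constant produced this way is a finite combination of the $C_\alpha$'s and geometric-series factors depending only on $\mu$, hence independent of $n$.

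The main (and essentially only) obstacle is the bookkeeping that the uniformity in $n$ genuinely goes through — i.e., that nowhere in the argument does a constant secretly depend on the number of overlapping annuli $N_0=2(n+1)$ or on the support size of $\phi_{(n)}$. This is guaranteed because Lemma \ref{boundSj} already packages all the $n$-dependence away: its constants $C_\alpha$ and $M$ are explicitly $n$-independent, a consequence of \eqref{phi:cd2}. Since the proof of the reverse inequality only ever invokes Lemma \ref{boundSj} (Bernstein inequalities and the frequency-localization bound) together with elementary summation, no new $n$-dependence can enter, and one obtains $C_\mu$ depending on $\mu$ alone. This also makes transparent why the second inequality survives at $\mu\in\xN$ while the first does not: the Bernstein estimates used for $C^\mu_*\hookrightarrow$ low derivatives still work, but the reconstruction of the top-order modulus of continuity is exactly the step that fails when $\mu-\lfloor\mu\rfloor=0$.
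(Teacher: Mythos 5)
Your proposal is correct and follows essentially the same route as the paper, which simply invokes the proof of Proposition 4.1.16 in M\'etivier's lecture notes and observes that all $n$-dependence is absorbed into the $n$-uniform constants of Lemma \ref{boundSj}. Your write-up just makes explicit the standard two-step argument (Bernstein summation for the lower-order derivatives, dyadic splitting at scale $2^{-q}\sim|x-y|$ for the top-order H\"older seminorm) that the paper leaves to the reference.
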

By virtue of Lemma \ref{HZ}, we shall identify $W^{\mu,\infty}(\xR^d)$ with $C_*^\mu (\xR^d)$ whenever $\mu >0,~\mu\notin \xN$, regardless of the size $n$. \\
For very $j\ge 1$, the reverse estimates for $\Delta_j$ in Lemma \ref{boundSj} 1. hold (see Lemma $2.1$, \cite{BCD})
\begin{lemm}\label{auxi:1}
Let $\alpha\in \xN^d$. Then there exists $C_\alpha(n)>0$ such that for every $1\le p\le \infty$ and every $j\ge 1$, we have
\[
\lA \Delta_j u\rA_{L^p(\xR^d)}\le C_\alpha(n)2^{-j|\alpha|}\lA \partial^\alpha\Delta_j u\rA_{L^p(\xR^d)}.
\]
\end{lemm}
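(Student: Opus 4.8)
The plan is to reconstruct $\Delta_j u$ from $\partial^\alpha\Delta_j u$ by inverting the Fourier multiplier $(i\xi)^\alpha$ on the dyadic annulus carrying the frequencies of $\Delta_j u$; this is the standard reverse Bernstein estimate (cf.\ Lemma $2.1$ of \cite{BCD}), the only genuinely structural point being that the constant must be allowed to depend on the size $n$ of the partition. By \eqref{supp:dyadic} the Fourier transform of $\Delta_j u$ is supported in $\mathcal{C}_j(n)$, and for $j\ge 1$ one has $\varphi_j=\varphi(\cdot/2^j)$ with $\supp\varphi\subset\mathcal{C}(n)=\{\xi:2^{-(n+1)}<|\xi|\le 2^{n+1}\}$. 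First I would fix, once and for all depending only on $n$, a function $\widetilde\varphi\in C^\infty_c(\xR^d\setminus\{0\})$ with $\widetilde\varphi\equiv1$ on a neighbourhood of the compact set $\overline{\mathcal{C}(n)}$ and $\supp\widetilde\varphi\subset\{\xi:2^{-(n+2)}<|\xi|<2^{n+2}\}$. Then $\widetilde\varphi\,\varphi=\varphi$, hence after rescaling $\widetilde\varphi(2^{-j}\cdot)\,\varphi_j=\varphi_j$ for every $j\ge1$, so that $\Delta_j u=\widetilde\varphi(2^{-j}D)\,\Delta_j u$.

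Since $\widetilde\varphi$ is supported away from the origin, where $\xi\mapsto(i\xi)^\alpha$ is non-vanishing (recall the interface here is one–dimensional), the function $h_\alpha(\xi)\defn\widetilde\varphi(\xi)\,(i\xi)^{-\alpha}$ lies in $C^\infty_c(\xR^d)$, and therefore $\check h_\alpha\defn\mathcal{F}^{-1}h_\alpha\in\mathcal{S}(\xR^d)\subset L^1(\xR^d)$. A direct computation on the Fourier side gives
\[
\mathcal{F}\bigl[h_\alpha(2^{-j}D)\,\partial^\alpha\Delta_j u\bigr](\xi)=\frac{\widetilde\varphi(2^{-j}\xi)}{(i2^{-j}\xi)^\alpha}\,(i\xi)^\alpha\,\varphi_j(\xi)\,\hat u(\xi)=2^{j|\alpha|}\,\varphi_j(\xi)\,\hat u(\xi)=2^{j|\alpha|}\,\widehat{\Delta_j u}(\xi),
\]
so that $\Delta_j u=2^{-j|\alpha|}\,h_\alpha(2^{-j}D)\bigl(\partial^\alpha\Delta_j u\bigr)$, where $h_\alpha(2^{-j}D)$ is convolution with the kernel $2^{jd}\check h_\alpha(2^j\cdot)$.

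Finally I would apply Young's convolution inequality: for every $1\le p\le\infty$,
\[
\lA\Delta_j u\rA_{L^p(\xR^d)}\le 2^{-j|\alpha|}\,\bigl\|\,2^{jd}\check h_\alpha(2^j\cdot)\,\bigr\|_{L^1(\xR^d)}\,\lA\partial^\alpha\Delta_j u\rA_{L^p(\xR^d)}=2^{-j|\alpha|}\,\lA\check h_\alpha\rA_{L^1(\xR^d)}\,\lA\partial^\alpha\Delta_j u\rA_{L^p(\xR^d)},
\]
which is the claim with $C_\alpha(n)\defn\lA\check h_\alpha\rA_{L^1(\xR^d)}$ (independent of $p$ and $j$). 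The point that deserves care — and the reason the constant here is not uniform in $n$, in contrast with Lemma \ref{boundSj} — is that $\widetilde\varphi$, and hence $\check h_\alpha$, must be adapted to the annulus $\mathcal{C}(n)$, whose width grows with $n$. I also note that the hypothesis $j\ge1$ is used essentially: the block $\Delta_0=S_0$ carries the frequency $\xi=0$, where $(i\xi)^\alpha$ vanishes and no such reverse estimate can hold.
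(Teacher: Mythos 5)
Your proof follows the same strategy as the paper's: realize $\Delta_j u$ as a Fourier multiplier, supported on the dilated annulus $2^j\mathcal{C}(n)$, applied to $\partial^\alpha\Delta_j u$, and conclude by Young's inequality, the constant being the $L^1$-norm of the rescaled (hence $j$-independent) kernel. The one genuine difference is the inversion identity. You divide directly by $(i\xi)^\alpha$, which requires $(i\xi)^\alpha$ to be non-vanishing on the annulus; this holds only in dimension $d=1$ (which you acknowledge parenthetically, and which is the only case the paper uses), since for $d\ge 2$ the symbol $(i\xi)^\alpha=\prod_l(i\xi_l)^{\alpha_l}$ vanishes on coordinate hyperplanes that meet $\mathcal{C}(n)$ --- indeed for $d\ge 2$ the single-$\alpha$ statement cannot hold as written (take $\hat u$ supported in $\mathcal{C}_j(n)\cap\{\xi_1=0\}$ and $\alpha=(1,0,\dots,0)$, so the right-hand side vanishes). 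The paper instead uses the identity $|\xi|^{2k}=\sum_{|\beta|=k}A_\beta(i\xi)^\beta(-i\xi)^\beta$ with $k=|\alpha|$, writing $\Delta_j u=\sum_{|\beta|=k}g_{\beta,j}*\partial^\beta\Delta_j u$ with $\hat g_{\beta,j}(\xi)=A_\beta(-i\xi)^\beta|\xi|^{-2k}\widetilde\varphi_j(\xi)$; this works in every dimension but naturally produces $\sum_{|\beta|=|\alpha|}\lA\partial^\beta\Delta_j u\rA_{L^p}$ on the right-hand side, which in dimension one coincides with the stated single-derivative form. Everything else in your argument --- the choice of $\widetilde\varphi$ adapted to $\mathcal{C}(n)$ (hence the $n$-dependence of the constant), the scaling that makes the kernel's $L^1$-norm independent of $j$, and the observation that $j\ge 1$ is essential because $\Delta_0$ carries the frequency $\xi=0$ --- matches the paper's proof.
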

Applying the previous lemmas yields
\begin{lemm}\label{auxi:2}
1. Let $\mu>0$. Then for every $\alpha\in \xN^d$ there exists $C_\alpha>0$ such that
\bq\label{auxi:21}
\forall v\in C_*^{\mu}(\xR^d),~\forall p\ge 0,~\lA \partial^{\alpha}(S_pv)\rA_{L^\infty}\le 
\begin{cases}
C_\alpha 2^{p(|\alpha|-\mu)}\lA \partial^\alpha v\rA_{C_*^{\mu-|\alpha|}},\quad &\text{if}~|\alpha|>\mu\\
C_\alpha\lA \partial^\alpha v\rA_{L^\infty}, \quad &\text{if}~|\alpha|<\mu\\
C_\alpha p\lA v\rA_{C_*^\mu}, &\text{if}~|\alpha|=\mu.
\end{cases}
\eq
2. Let  $\mu<0$. Then for every $\alpha\in \xN^d$ there exists $C_\alpha>0$ such that
\bq\label{auxi:21'}
 \forall v\in C_*^\mu(\xR^d),~~\forall p\ge 0,~\lA \partial^{\alpha}(S_pv)\rA_{L^\infty}\le C_\alpha 2^{p(|\alpha|-\mu)}\lA v\rA_{C_*^\mu}.
\eq
3. Let $\mu>0$. Then there exists $C(n)>0$  such that for any $v\in \mathcal{S}'$ with $\nabla v\in C_*^{\mu-1}(\xR^d)$ we have
 \bq\label{auxi:22}
\lA v-S_pv\rA_{L^\infty}\le C(n)2^{-p\mu}\lA \nabla v\rA_{C_*^{\mu-1}}.
\eq
\end{lemm}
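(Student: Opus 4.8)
The plan is to read all three estimates off the Littlewood--Paley decomposition, using only facts already recorded. First, Fourier multipliers commute, so $\partial^\alpha S_pv=S_p\partial^\alpha v$. Second, $S_j$, hence $\phi_p(D)$, is bounded on $L^\infty$ uniformly in $j$ and in the partition size, by Lemma \ref{boundSj}.1 (no derivative, $L^\infty$ to $L^\infty$). Third, the Zygmund norm obeys $\lA\Delta_kw\rA_{L^\infty}\le 2^{-k\sigma}\lA w\rA_{C_*^\sigma}$ for every real $\sigma$. Fourth, by the support properties recorded in Section \ref{subsection:dyadic} --- the annuli $\mathcal{C}_k(n)$ being disjoint as soon as indices differ by $N_0$ --- the multiplier $S_p=\phi_p(D)$ only involves the blocks $\Delta_k$ with $k\le p+N_0$, while $I-S_p$ only involves those with $k\ge p-N_0$. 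Everything then reduces to summing a geometric series in $k$.

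Parts 1 and 2 share one computation: for $w\in C_*^\sigma$, write $S_pw=\sum_{k\le p+N_0}\phi_p(D)\Delta_kw$ and use the uniform $L^\infty$ bound on $\phi_p(D)$:
\[
\lA S_pw\rA_{L^\infty}\le C\sum_{k=0}^{p+N_0}\lA\Delta_kw\rA_{L^\infty}\le C\sum_{k=0}^{p+N_0}2^{-k\sigma}\lA w\rA_{C_*^\sigma}.
\]
If $\sigma<0$ this sum is dominated by its last term, $\les 2^{-p\sigma}\lA w\rA_{C_*^\sigma}$; if $\sigma=0$ it equals $(p+N_0+1)\lA w\rA_{C_*^0}\les p\,\lA w\rA_{C_*^0}$ for $p\ge1$; if $\sigma>0$ one does not decompose, bounding $\lA S_pw\rA_{L^\infty}\le C\lA w\rA_{L^\infty}$ directly. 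Taking $w=\partial^\alpha v$ and $\sigma=\mu-|\alpha|$ yields the three lines of \eqref{auxi:21} --- for $|\alpha|=\mu$ after the embedding $\lA\partial^\alpha v\rA_{C_*^0}\les\lA v\rA_{C_*^\mu}$, for $|\alpha|<\mu$ directly from the $L^\infty$-boundedness of $S_p$. For part 2 the index $\sigma=\mu-|\alpha|$ is always negative, so the first case applies, and it remains only to absorb $\lA\partial^\alpha v\rA_{C_*^{\mu-|\alpha|}}$ into $\lA v\rA_{C_*^\mu}$ by boundedness of $\partial^\alpha$ on the H\"older--Zygmund scale (again from Lemma \ref{boundSj}.1), which gives \eqref{auxi:21'}.

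For part 3 the efficient observation is that $\phi_p=\phi_0+\sum_{j=1}^p\varphi_j$ telescopes, so $S_p=S_0+\sum_{j=1}^p\Delta_j$, whence $v-S_pv=(v-S_0v)-\sum_{j=1}^p\Delta_jv=\sum_{j>p}\Delta_jv$ in $\mathcal{S}'$ by convergence of the Littlewood--Paley decomposition. For $j\ge1$ the reverse Bernstein inequality of Lemma \ref{auxi:1} (with $|\alpha|=1$) gives $\lA\Delta_jv\rA_{L^\infty}\le C(n)2^{-j}\lA\Delta_j\nabla v\rA_{L^\infty}\le C(n)2^{-j\mu}\lA\nabla v\rA_{C_*^{\mu-1}}$; since $\mu>0$, $\sum_{j>p}2^{-j\mu}\les 2^{-p\mu}$, so the series converges absolutely in $L^\infty$, its $L^\infty$-limit necessarily coincides with the $\mathcal{S}'$-limit $v-S_pv$, and \eqref{auxi:22} follows.

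None of this is deep; the one point needing genuine care is part 3, where $v$ is merely a tempered distribution with $\nabla v\in C_*^{\mu-1}$, so $v-S_pv$ cannot be controlled directly and one must pass through $\nabla v$ block by block via the reverse Bernstein bound --- which is also why the constant there is permitted to depend on the partition. I would double-check that step together with the $N_0$-bookkeeping in parts 1--2; everything else is routine summation.
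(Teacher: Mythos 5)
Your proof is correct and follows exactly the route the paper intends: the paper gives no written proof of Lemma \ref{auxi:2}, stating only that it follows by ``applying the previous lemmas,'' and your argument is precisely that --- the uniform $L^\infty$ bounds of Lemma \ref{boundSj} plus the spectral support bookkeeping for parts 1--2, and the reverse Bernstein inequality of Lemma \ref{auxi:1} applied blockwise to $\nabla v$ for part 3 (which is indeed the delicate point, and the reason the constant there carries the $n$-dependence). The only caveat is cosmetic and lies in the paper's statement rather than your argument: in the case $|\alpha|=\mu$ the bound should read $C_\alpha(1+p)$ to cover $p=0$, as your computation in fact shows.
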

\subsection{On the para-differential operators}
In this paragraph we clarify the choice of two cutoff functions $\chi$ and $\psi$ appearing in the definition of paradifferential operators \ref{defi:oper} in accordance with the dyadic partitions above. Given a dyadic system of size $n$ on $\xR^d$, define
\bq\label{choose:chi}
\chi(\eta, \xi)=\sum_{p= 0}^\infty \phi_{p-N}(\eta)\varphi_p(\xi)
\eq
with $N=N(n)\gg n$ large enough. It is easy to check that the so defined $\chi$ satisfies \eqref{cutoff-chi:1} and \eqref{cutoff-chi:2}. Plugging \eqref{choose:chi} into \eqref{eq.para} gives 
\begin{align*}
T_a u(x)&=\sum_{p=0}^\infty\int\int e^{i(\theta+\eta)x}\phi_{p-N}(\theta)\hat a(\theta, \eta)\varphi_p(\eta)\psi(\eta)\hat u(\eta)d\eta d\theta\\
&=\sum_{p=0}^\infty S_{p-N}(a)(x, D)(\psi\varphi_p)(D)u(x).
\end{align*}
Notice that for any $p\ge 1$ and  $\eta\in \supp \varphi_p$ we have $|\eta|\ge 2^{-n}$. Choosing  $\psi$ (depending on $n$) verifying
\[
\psi(\eta)= 1 \quad\text{if}~|\eta|\ge 2^{-n}, \quad \psi(\eta)= 0 \quad\text{if}~|\eta|\le 2^{-n-1}
\]
gives
\bq\label{T_au:expand}
T_au(x)=\sum_{p=1}^\infty S_{p-N}a(x, D)\Delta_pu(x)+S_{-N}(a)(\psi \varphi_0)(D)u(x).
\eq
Defining the "truncated paradifferential operator" by
\bq\label{Tdot}
\dot T_au=\sum_{p=1}^\infty S_{p-N}a\Delta_pu.
\eq
then the difference $T_a-\dot T_a$ is a smoothing operator in the following sense: if for some $\alpha\in \xN^d$, $\partial^\alpha u\in H^{-\infty}$ then $(T_a-\dot T_a)u\in H^\infty$ since $\psi\varphi_0$ is supported away from $0$. We thus can utilize the symbolic calculus Theorem \ref{theo:sc} for the truncated paradifferential operator $\dot T_au$ when working on distributions $u$ as above. The same remark applies to the paraproduct $TP_a$ defined in \eqref{def:paraproduct}. In general, smoothing remainders can be ignored in applications. However, the be precise in constructing the abstract theory we decide to distinguish these objects.
\begin{defi}
For $v,~w\in \mathcal{S}'$ define the truncated remainder
\[
\dot R(v, w)=\dot T_v w-\dot T_wv.
\]
Comparing to the Bony's remainder $R(v, w)$ defined in \eqref{R:Bony}, $\dot R(v, w)$ satisfies
\bq\label{RRdot}
\dot R(v, w)=R(w, w)+\sum_{k=1}^N\left (S_{k-N}v\Delta_kw+S_{k-N}w\Delta_kv\right).
\eq
\end{defi}
\begin{rema}
The relation \eqref{RRdot} shows that the estimates \eqref{Bony1}, \eqref{Bony2}, \eqref{Bony3} are valid for $\dot R$.
\end{rema}
\subsection{Choice of dyadic partitions}
%\begin{nota}
% To be concise, let us denote by $\mathcal{G}$ the set of all functions $\cF:\xR^+\times \xR^+\to \xR^+$ non-increasing w.r.t the first argument and non-decreasing w.r.t the second argument.
%\end{nota}
 \hk Let $\k:\xR^d_1\to \xR^d_2$ be a diffeomorphism satisfying
\[
\begin{aligned}
 \exists \rho>0,~\partial_x \k\in C_*^{\rho}(\xR^d_1),\\
\exists m_0>0,~\forall x\in \xR_1^d,~\la \det \k'(x)\ra \ge m_0.
\end{aligned}
\]
We equip on $\xR_2^d$ a dyadic partition \eqref{partition} with $n=0$ and on $\xR^d_1$  the one with $n=n_0$ large enough as given the next proposition.
\begin{prop}\label{bigC}
 Let $p, q, j\ge 0$. For $\eps_0>0$ arbitrarily small, there exist $\mathcal{F}_1, \mathcal{F}_2$ nonnegative such that with 
\[
n_0=\mathcal{F}_1(m_0, \lA \k'\rA_{L^\infty})\in \xN, \quad p_0=\mathcal{F}_2(m_0, \lA \k'\rA_{C_*^{\eps_0}})\in \xN,% \quad \widetilde m_0=\widetilde m_0(m_0, n_0)>0
\]
and $N_0=2(n_0+1)$, we have
\[
\la S_p\k'(y)\eta-\xi\ra \ge 1, 
\]
\[
~\text{if~either } (\xi, \eta)\in \mathcal{C}_j(n)\times \mathcal{C}_q(1), \quad p\ge 0,~j\ge q+N_0+1
\]
\[
\text{or} ~ |\xi|\le 2^{j+(n+1)}, \eta\in \mathcal{C}_q(1), \quad p\ge p_0,~0\le j\le q-N_0-1.
\]
\end{prop}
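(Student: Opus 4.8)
The plan is to first control the matrix symbol $S_p\k'(y)$ uniformly in $y\in\xR^d_1$ and, crucially, uniformly in the partition size $n_0$; with that in hand the statement reduces to a single triangle inequality in each of the two regimes.

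\emph{Step 1: uniform control of $S_p\k'$, and the choice of $p_0$.} Since $\eps_0$ may be taken arbitrarily small we assume $\eps_0<\min(1,\rho)$; then $\k'\in C_*^\rho\subset C_*^{\eps_0}\hookrightarrow L^\infty$ and $\eps_0\notin\xN$. Summing the bound of Lemma \ref{boundSj}.2 over $k>p$ and using Lemma \ref{HZ} gives $\lA\k'-S_p\k'\rA_{L^\infty}\le C2^{-p\eps_0}\lA\k'\rA_{C_*^{\eps_0}}$ with $C$ independent of $n_0$; and Lemma \ref{boundSj}.1 (with $\alpha=0$, $p=q=\infty$) gives $\lA S_p\k'\rA_{L^\infty}\le C_0\lA\k'\rA_{L^\infty}$ with $C_0$ independent of $n_0$. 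Since $\det$ is polynomial in the entries, hence locally Lipschitz with constants controlled by $\lA\k'\rA_{C_*^{\eps_0}}$, and since $\la\det\k'(y)\ra\ge m_0$, there is an integer $p_0=\mathcal{F}_2(m_0,\lA\k'\rA_{C_*^{\eps_0}})$, independent of $n_0$ and of $y$, with $\la\det S_p\k'(y)\ra\ge m_0/2$ for all $p\ge p_0$. By Cramer's rule, for $p\ge p_0$ and every $\zeta\in\xR^d$,
\[
c_0\,|\zeta|\le|S_p\k'(y)\zeta|\le C_1\lA\k'\rA_{L^\infty}\,|\zeta|,\qquad c_0:=\frac{m_0}{2C_d\,(C_0\lA\k'\rA_{L^\infty})^{d-1}},
\]
where $C_1=C_dC_0$ with $C_d$ a dimensional constant, $c_0>0$ depends only on $(m_0,\lA\k'\rA_{L^\infty})$, and the upper bound holds for all $p\ge0$.

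\emph{Step 2: the two regimes, and the choice of $n_0$.} Recall $N_0=2(n_0+1)$. In the first regime $\xi\in\mathcal{C}_j(n_0)$, $\eta\in\mathcal{C}_q(1)$ with $j\ge q+N_0+1$; then $j\ge N_0+1\ge1$, so $|\xi|>2^{j-(n_0+1)}\ge2^{n_0}2^{q+2}$ while $|\eta|\le2^{q+2}$, hence for every $p\ge0$
\[
|S_p\k'(y)\eta-\xi|\ge|\xi|-|S_p\k'(y)\eta|>2^{q+2}\bigl(2^{n_0}-C_1\lA\k'\rA_{L^\infty}\bigr).
\]
In the second regime $|\xi|\le2^{j+(n_0+1)}$, $\eta\in\mathcal{C}_q(1)$ with $0\le j\le q-N_0-1$; then $q\ge N_0+1=2n_0+3$, so $|\eta|>2^{q-2}\ge2^{2n_0+1}$ while $|\xi|\le2^{j+n_0+1}\le2^{q-n_0-2}=2^{-n_0}2^{q-2}$, hence for $p\ge p_0$
\[
|S_p\k'(y)\eta-\xi|\ge|S_p\k'(y)\eta|-|\xi|>2^{q-2}\bigl(c_0-2^{-n_0}\bigr).
\]
Now let $n_0=\mathcal{F}_1(m_0,\lA\k'\rA_{L^\infty})$ be the least integer with $2^{n_0}\ge\max\{C_1\lA\k'\rA_{L^\infty}+1,\ 2/c_0\}$, a legitimate choice since $c_0$ depends only on $(m_0,\lA\k'\rA_{L^\infty})$. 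In the first regime $2^{q+2}\ge4$ and $2^{n_0}-C_1\lA\k'\rA_{L^\infty}\ge1$ make the bound $\ge1$; in the second, $2^{-n_0}\le c_0/2$ forces $c_0-2^{-n_0}\ge2^{-n_0}$, so the bound is $\ge2^{2n_0+1}\cdot2^{-n_0}=2^{n_0+1}\ge1$. There is no circularity: $p_0$ is fixed in Step 1 independently of $n_0$, then $c_0$, then $n_0$.

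\emph{Where the work is.} The inequalities are elementary; the two points that need care are (i) tracking that every constant produced by the cut-offs $S_p,\Delta_p$ (the $C_0$, $C$ above) is independent of the size $n_0$ — which is exactly what Lemmas \ref{boundSj} and \ref{HZ} supply, and why the partition on $\xR^d_1$ was built with the uniform bounds \eqref{phi:cd2} — and (ii) making the choices of $p_0$ and then $n_0$ in that order, so that $\mathcal{F}_2$ depends only on $(m_0,\lA\k'\rA_{C_*^{\eps_0}})$ and $\mathcal{F}_1$ only on $(m_0,\lA\k'\rA_{L^\infty})$. The lower bound $|S_p\k'(y)\zeta|\ge c_0|\zeta|$, which rests on $\la\det S_p\k'(y)\ra\ge m_0/2$ and therefore forces the restriction $p\ge p_0$ in the second regime, is the one ingredient absent from a naive estimate; it is also the only place the non-degeneracy hypothesis $\la\det\k'\ra\ge m_0$ enters.
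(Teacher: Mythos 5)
Your proof is correct and follows essentially the same route as the paper's: the same two-regime split, the same uniform-in-$n_0$ bounds on $S_p\k'$ from Lemma \ref{boundSj}, the same use of the adjugate/Cramer formula together with $\la\det S_p\k'\ra\ge m_0/2$ (valid for $p\ge p_0$) to get the lower bound $|S_p\k'(y)\eta|\ge c_0|\eta|$, and the same order of choices ($p_0$ first, then $n_0$). Your write-up is in fact a bit more careful than the paper's about why $p_0$ and the constants are independent of $n_0$, but the argument is the same.
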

\begin{proof}
 We consider 2 cases:\\
\hk $(i)$ $p\ge 0, j\ge q+N_0+1$. Using Lemma \ref{boundSj} we get for some constant $M_1=M_1(d)$
\begin{align*}
\la S_p\k'(y)\eta-\xi\ra &\ge |\xi|-|S_p\k'(y)\eta|\ge 2^{q+1}(2^{j-q-1-(n+1)}-M_1\lA \k'\rA_{L^\infty})\\
&\ge 2^{N_0-(n+1)}-M_1\lA \k'\rA_{L^\infty}\ge 2^{n+1}-M_1\lA \k'\rA_{L^\infty}.
\end{align*}
We choose $n\ge \left[ \log_2(M_1\lA \k'\rA_{L^\infty}+1)\right]$ to have $\la S_p\k'(y)\eta-\xi\ra \ge 1$.\\
\hk $(ii)$ $j\le q-N_0-1$. Note that for any $\eps_0>0$, owing to the estimate \eqref{auxi:22}, there is a constant $M_2=M_2(d, \eps_0)$ such that
\[
|\k'-S_p\k'|\le M_22^{-p\eps_0}\lA \k'\rA_{C_*^{\eps_0}}
\]
and consequently, for some increasing function $\cF$
\bq\label{S_pk'}
\la \det S_p\k'\ra\ge \la \det\k'\ra-M_22^{-p\eps_0}\cF(\lA \k'\rA_{C_*^{\eps_0}})\ge \frac{m_0}{2}
\eq
if we choose 
\bq\label{p0}
p\ge p_0\defn\frac{1}{\eps_0}\left[\ln \big(\frac{2M_2}{m_0}\cF(\lA \k'\rA_{C_*^{\eps_0}})\big)\right]+1.
\eq
We then use the inverse formula with adjugate matrix $(S_p\k')^{-1}=\frac{1}{\det S_p\k'}\text{adj}(S_p\k')$ when $d\ge 2$ to get for all $d\ge 1$,
\[
\la (S_p\k')^{-1}\ra\le  \frac{2}{m_0}\left(1+C(d)\lA \k'\rA_{L^\infty}^{d-1}\right):=K.
\]
It follows that\begin{align*}
\la S_p\k'(y)\eta-\xi\ra &\ge \frac{1}{K}|\eta|-|\xi|\ge 2^{j+n+1}(\frac{1}{K}\frac{2^{q-2-j-(n+1)}}{2}-1)\\
&\ge \frac{1}{K}2^{N_0-1-(n+1)}-1\ge \frac{1}{K}2^n-1.
\end{align*}
Choosing $n\ge [1+\ln K]+1$ lead to $\la S_p\k'(y)\eta-\xi\ra \ge 1$. The Proposition then follows with $p_0$ as in \eqref{p0} and 
\[
n_0= \left[ \log_2(M_1\lA \k'\rA_{L^\infty}+1)\right]+[1+\ln K]+1. \]
\end{proof}
%\begin{rema}
%The proof of Proposition \ref{bigC} shows that $\cF_1$ (rep. $\cF_2$) can be chosen to be decreasing in $m_0$ and increasing in $\|\k'\|_{L^\infty}$ (resp. $\|\k'\|_{C^\eps}$). 
%\end{rema}
%%%%%%%%%%%%%%%%%%%%%%%
\section{Quantitative and global paracomposition results}\label{section:paracomposition}
\subsection{Motivations}
\hk The semi-classical Strichartz estimate for solutions to \eqref{intro:eqPhi} proved in \cite{ABZ2} relies crucially on the fact that one can make a para-change of variable to convert the highest order term $T_\gamma$ to the simple Fourier multiplier $|D_x|^\tdm$.  This is achieved  by using the theory of {\it paracomposition} of  Alinhac  \cite{Alipara}. Let us recall here the main features of this theory:
\begin{theo}
\label{Alinhac}
Let $\Omega_1, \Omega_2$ be two open sets in $\xR^d$ and $\k:\Omega_1\to \Omega_2$ be a diffeomorphism of class $C^{\rho+1}, \rho>0$. Then, there exists a linear operator $\k^*_A:\mathcal{D}'(\Omega_2)\to \mathcal{D}'(\Omega_1)$ having the following properties:\\
1. $\k^*_A$ applies $H^s_{loc}(\Omega_2)$ to $H^s_{loc}(\Omega_1)$  for all $s\in \xR$.\\
2. Assume that $\k\in H^{r+1}_{loc}$ with $r>\frac{d}{2}$. Let $u\in H^s_{loc}(\Omega_2)$ with $s>1+\frac{d}{2}$. Then we have 
\bq\label{Ali-linearize}
\k^*_Au=u\circ\k-T_{u'\circ\k}\k+\R
\eq
with $R\in H^{r+1+\eps}_{loc}(\Omega_1),~\eps=\min (s-1-\frac{d}{2}, r+1-\frac{d}{2})$.\\
3. Let $h\in \Sigma^m_{\tau}$. There exists $h^*\in \Sigma^m_\eps$ with $\eps=\min (\tau, \rho)$ such that 
\bq\label{Ali-conjutate}
\k^*_AT_hu=T_{h^*}\k^*_Au+\R u
\eq
where $R$ applies $H^s_{loc}(\Omega_2)$ to  $H^{s-m+\eps}_{loc}(\Omega_2)$ for all $s\in \xR$. Moreover, the symbol $h^*$ can be computed explicitly as in the  classical pseudo-differential calculus (see Theorem \ref{theo:conjugation} below).
%\bq\label{h*}
%h^*(x, \xi)=\sum_{\alpha}\frac{1}{\alpha!}\partial_{\xi}^\alpha D_y^\alpha\Big(h\left(\k(x), G(x,y)^{-1}\xi\right)\frac{\la \det \partial_y\k(y)\ra}{\la\det G(x,y)\ra} \Big)\arrowvert_{y=x}
%\eq
%where 
%\[
%G(x, y)={}^t\int_0^1 \partial_x\k(tx+(1-t)y)\di t
%\]
%and the sum is taken over all $\alpha \in \xN^d$ such that the summand is well-defined.
\end{theo}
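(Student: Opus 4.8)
\textbf{The plan} is to construct $\k^*_A$ by a Littlewood--Paley quantization and to read off the three properties from the symbolic calculus, the whole argument resting on one \emph{spectral transport} lemma saying that composition with a diffeomorphism moves a dyadic frequency shell to a band of bounded width around it. Equip $\xR^d_2$ with a dyadic partition of size $1$ and $\xR^d_1$ with one of size $n_0$ large enough, as furnished by Proposition~\ref{bigC} applied to $\k$; set
\[
\k^*_A u \;=\; \sum_{k\ge 0}\Pi_k\big[(\Delta_k u)\circ\k\big],
\]
where on the right $\Delta_k$ is a Littlewood--Paley block on $\xR^d_2$ and $\Pi_k$ is a fattened spectral projector on $\xR^d_1$ whose symbol is $1$ on $\bigcup_{|j-k|\le N_0}\mathcal{C}_j(n_0)$ and vanishes a little outside it. The point is that if $v$ has spectrum in $\mathcal{C}_k(1)$ then, modulo a tail that is as small as one wishes in $H^\sigma$, the spectrum of $v\circ\k$ sits in $\bigcup_{|j-k|\le N_0}\mathcal{C}_j(n_0)$; thus $\Pi_k[(\Delta_k u)\circ\k]=(\Delta_k u)\circ\k$ up to such tails, the series converges, and one checks at the end that the definition is independent, modulo a smoothing operator, of $\Pi_k$ and of $n_0$ (once it is large enough).

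\textbf{The spectral transport lemma} is the heart of the matter. Writing
\[
\widehat{v\circ\k}(\xi)=c_d\!\int\!\!\int e^{\,i(\k(x)\cdot\eta-x\cdot\xi)}\,\hat v(\eta)\,d\eta\,dx,
\]
the phase has $x$-gradient ${}^t\k'(x)\eta-\xi$, so one wants repeated integration by parts in $x$ --- which needs $\k$ smooth. Since $\partial_x\k$ is only $C_*^\rho$, I would first freeze $\k$ at the scale $2^k$, replacing it by $S_{p(k)}\k$ with $p(k)\sim k$ up to an additive constant depending on $\k$, control the error $v\circ\k-v\circ S_{p(k)}\k$ by the mean value theorem and $\|\k-S_p\k\|_{L^\infty}\lesssim 2^{-p(1+\rho)}\|\partial_x\k\|_{C_*^{\rho}}$ (Lemma~\ref{auxi:2}), and then invoke Proposition~\ref{bigC}: for $\eta\in\mathcal{C}_k(1)$ and $\xi$ outside the band $\bigcup_{|j-k|\le N_0}\mathcal{C}_j(n_0)$ one has $|{}^tS_{p(k)}\k'(x)\eta-\xi|\gtrsim\max(|\xi|,|\eta|)$, so integrating by parts as many times as one likes yields the claimed decay. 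This is exactly where the global hypotheses $\partial_x\k\in C_*^\rho$ and $|\det\k'|\ge m_0$ enter quantitatively, and where the \emph{size} $n_0$ of the partition must be taken large enough to absorb the $p$-dependent widening of the dyadic annuli --- the reason for equipping $\xR^d$ with an adjustable-size partition in Section~\ref{subsection:dyadic}. I expect this bookkeeping, rather than any single estimate, to be the main obstacle in turning Alinhac's qualitative statement into the quantitative, global one needed here.

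\textbf{Properties 1--3.} For Property~1, the transport lemma makes $\{\Pi_k[(\Delta_k u)\circ\k]\}_k$ almost orthogonal with the $k$-th term at frequency $\sim 2^k$, while the change of variables $x\mapsto\k(x)$ (Jacobian bounded below by $m_0$) gives $\|(\Delta_k u)\circ\k\|_{L^2}\simeq\|\Delta_k u\|_{L^2}$; summing $2^{2ks}\|\Delta_k u\|_{L^2}^2$ gives $\|\k^*_A u\|_{H^s}\simeq\|u\|_{H^s}$ for every $s\in\xR$, and the usual localization argument upgrades this to $H^s_{loc}$. For Property~2, write $u\circ\k-\k^*_A u=\sum_k(\mathrm{Id}-\Pi_k)\big[(\Delta_k u)\circ\k\big]$ and, in each summand, freeze $\k$ at scale $2^k$ and Taylor-expand: the frozen term $(\Delta_k u)\circ S_{p(k)}\k$ reassembles, modulo rapidly decaying tails, into $\k^*_A u$ and cancels; the first-order term $(\Delta_k u'\circ S_{p(k)}\k)\,(\k-S_{p(k)}\k)$, being a low-frequency factor times a high-frequency factor, reorganizes over $k$ into the paraproduct $T_{u'\circ\k}\k$ modulo a Bony-type remainder; and the quadratic Taylor remainder plus the reorganization errors, estimated with the Hölder regularity of $\k$ and the Sobolev regularity of $u$, form $R\in H^{r+1+\eps}_{loc}$ with $\eps=\min(s-1-\frac{d}{2},\,r+1-\frac{d}{2})$. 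For Property~3, commute $T_h$ past the pieces $\Delta_k$ and past composition with $\k$: since $(\Delta_k u)\circ\k$ is localized at $|\xi|\sim 2^k$, $T_h$ acts on it essentially as multiplication by $h(x,\xi)$ at that scale, and changing variables back through $\k$ turns $h(x,\xi)$ into $h^*(y,\eta)=h\big(\k(y),\,({}^t\k'(y))^{-1}\eta\big)$ to leading order; the Taylor expansion of $\k$ about each point together with the symbolic calculus of Theorem~\ref{theo:sc} produces the lower-order terms, placing $h^*$ in $\Sigma^m_{\min(\tau,\rho)}$ with the explicit formula of Theorem~\ref{theo:conjugation}, and leaves a remainder gaining $\min(\tau,\rho)$ derivatives uniformly in $s$.
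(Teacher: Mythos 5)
The paper does not prove Theorem~\ref{Alinhac}: it is recalled verbatim from Alinhac's original article \cite{Alipara}, and the exact definition \eqref{Ali-k*} of $\k^*_A$ is quoted right after the statement. What the paper actually proves are the global quantitative analogs (Theorems~\ref{theo:operation}, \ref{theo:linearization}, \ref{theo:conjugation}) in Section~\ref{section:paracomposition}. Your sketch is therefore best read against those proofs and against the construction recalled in \eqref{Ali-k*}, and on that reading it captures the correct skeleton: your $\k^*_A u=\sum_k\Pi_k[(\Delta_ku)\circ\k]$ is exactly the ``recoupe'' definition $\sum_p[u_p\circ\k]_p$; your spectral transport lemma is Lemma~\ref{lem:main}, with Proposition~\ref{bigC} supplying the non\nobreakdash-stationary phase after freezing $\k\mapsto S_{p}\k$; your use of $|\det\k'|\ge m_0$ and the adjustable partition size $n_0$ is precisely why Section~\ref{subsection:dyadic} keeps $n$ as a free parameter; and your plans for Properties~1 and~3 track Theorems~\ref{theo:operation} and \ref{theo:conjugation} respectively.

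The one place your route genuinely diverges is Property~2. You split $u\circ\k-\k^*_Au=\sum_k(\mathrm{Id}-\Pi_k)[(\Delta_ku)\circ\k]$ and then freeze $\k$ at a single scale $p(k)\sim k$ followed by one Taylor step. The paper's proof of Theorem~\ref{theo:linearization} instead telescopes the frozen map across \emph{all} scales, writing $u\circ\k=\sum_p u_p\circ S_p\k+\sum_{q\ge0}\bigl(S_qu\circ S_{q+1}\k-S_qu\circ S_q\k\bigr)$ via Fubini, so that $\dot T_{u'\circ\k}\k$ appears exactly as the sum $\sum_q S_{q-N+1}(u'\circ\k)\,\k_{q+1}$ rather than being extracted from a one-shot Taylor remainder. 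The telescoping version buys two things that your shortcut glosses over: first, the paraproduct emerges without having to argue that the operator $(\mathrm{Id}-\Pi_k)$ can be dropped from the Taylor correction (a nontrivial point, since $T_{u'\circ\k}\k$ is not frequency-localized near $2^k$); second, the remainder $\sum_q r_q\k_q$ is organized so that the quantitative bound in terms of $\lA\partial^{\alpha_0}\k\rA_{H^{r+1-|\alpha_0|}}$ and $\lA u\rA_{C^\sigma_*}$ falls out directly, which is what the paper needs later. If you want to keep your decomposition, you would need to justify that $\sum_k\bigl[(\mathrm{Id}-\Pi_k)-\mathrm{Id}\bigr]\bigl[(\Delta_ku')\circ S_{p(k)}\k\cdot(\k-S_{p(k)}\k)\bigr]$ is itself in $H^{r+1+\eps}_{loc}$, which is again a spectral-transport argument on the product; once done the two routes coincide, but as written this step is a gap rather than a detail.
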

Let $u\in \mathcal{E}'(\Omega_2)$, $\supp u= K$, $\psi\in C_0^\infty(\Omega_1)$, $\psi=1$ near $\k^{-1}(K)$.  The exact definition of $\k^*_A$ in \cite{Alipara} is given by 
\bq\label{Ali-k*}
\k^*_Au=\sum_{p=0}^{\infty}\sum_{j=p-N_0}^{p+ N_0}\widetilde\Delta_j(\psi\Delta_pu\circ \k)
\eq
for some $N_0\in \xN$ and some dyadic partition $1=\sum\widetilde\Delta_j$ depending on $\k, K$.\\
\hk This local theory was applied successfully by Alinhac in studying the existence and interaction of simple waves for nonlinear PDEs. The equation we have in hand is \eqref{intro:eqPhi}. More generally, let us consider the paradifferential equation
\bq\label{para-motivation}
\partial_tu+Nu+iT_hu=f,\quad (t,x)\in (0, T)\times\xR,
\eq
where $u$ is the unknown, $T_h$ is a paradifferential operator of order $m>0$ and $Nu$ is the lower order part. Assume further that $h(x,\xi)=a(x)|\xi|^m, a(x)>0$. We seek for a change of variables to convert $T_h$ to the Fourier multiplier $|D_x|^m$. Set 
\[
\chi(x)=\int_0^x a^{-\frac{1}{m}}(y)\di y
\]
and let $\k$ be the inverse map of $\chi$. Suppose that a global version of Theorem \ref{Alinhac} were constructed then part $3.$ would yield 
\[
\k^*_AT_hu=T_{h^*}\k^*_Au+\R u
\]
and the principle symbol of $h^*$ (as in the case of classical pseudo-differential calculus) would be indeed $|\xi|^m$. However, to be rigorous  we have to study the following points\\
{\bf Question 1:} A global version of Theorem 1, that is, in all statements  $H^s_{loc}(\xR)$ is replaced  by $H^s(\xR)$.\\
{\bf Question 2:} If the symbol $h$ is elliptic: $a(x)\ge c>0$  then the regularity condition $\k\in C^{\rho+1}(\xR)$ is violated for
\[
\k'(x)=\frac{1}{\chi'(\k(x))}=a^{\frac{1}{m}}(\k(x))\ge c^{\frac{1}{m}}.
\]
So, we need a result without any regularity assumption on $\k$ but only on its derivatives; in other words, only on the high frequency part of $\k$.\\
\hk Assume now that  equation \eqref{para-motivation} is quasilinear: $a(t, x)=F(u)(t, x)$.  We  then have to consider for each $t$, the diffeomorphism
\[
\chi_t( x)=\int_0^x F(u)^{-m}(t, y)\di y
\]
and this gives rise to the following problem\\
{\bf Question 3:} When one  conjugates \eqref{para-motivation} with $\k^*_A$ it is requisite to compute 
\bq\label{dtk*}
\partial_t (\k^*_Au)=\k^*_A(\partial_t u)+\R.
\eq
This would be complicated in view of the original definition \eqref{Ali-k*}. In \cite{ABZ2} the authors overcame this by using Theorem \ref{Alinhac} $2.$ as a new definition of the paracomposition:
\[
\k^*u=u\circ \k-T_{u'\circ \k}\k.
\]
For this purpose, we need to make use of part 2. of Theorem \ref{Alinhac} to estimate the remainder $k_A^*(T_hu)-k^*(T_h u)$. This in turn requires $T_h u\in H^s$ with $s>1+\frac{d}{2}$ or $u\in H^s$ with $s>m+1+\frac{d}{2}$, which is not the case if one wishes to study the optimal Cauchy theory for \eqref{para-motivation} since we are alway $1$-derivative above the "critical index" $\mu=m+\frac{d}{2}$.\\
{\bf Question 4:} If a linearization result as in part 2. of Theorem \ref{Alinhac} for $u\in H^s(\xR)$ with $s<1+\frac{d}{2}$ holds?.\\
Let's suppose that all the above questions can be answered properly. After conjugating \eqref{para-motivation} with $\k^*$ the equation satisfied by $u^*\defn\k^*u$ reads
\bq\label{eq:k*u}
\partial_tu^*+Mu^*+|D_x|^m u^*=\k^*f+g
\eq
where $g$ contains all the remainders in Theorem \ref{Alinhac} $2., 3.$ and  in \eqref{dtk*}.\\
To prove Strichartz estimates for \eqref{eq:k*u}, we need to control $g$ in $L^p_tL^q_x$ norms, which in turns requires {\it tame estimates} for $g$. It is then crucial to have quantitative estimates for the remainders appearing in $g$ and hence quantitative results for the paracomposition. 
\subsection{Statement of main results}
\hk Let $\k:\xR^d_1\to \xR^d_2$ be a diffeomorphism. We equip on $\xR_2^d$ and $\xR_1^d$ two dyadic partitions as in \eqref{partition} with $n=0$ and $n=n_0$, respectively, where $n_0$ is given in Proposition \ref{bigC}.
\begin{nota} 
1.  For a fixed integer $\widetilde N$ sufficiently large  (larger than $N$ given in \eqref{choose:chi} and $N_0=2(n_0+1)$ and to be chose appropriately in the proof of Theorem \ref{theo:conjugation}), we set for any $v\in \mathcal{S}'(\xR^d_1)$
\bq\label{def:recoup} the piece
[v]_p=\sum_{|j-p|\le \widetilde N}\Delta_j v.
\eq
2. For any positive real numer $\mu$ we set  $\mu_{-}=\mu$ if $\mu\notin \xN$ and $\mu_{-}=\mu-\eps$ if $s\in \xN$ with $\eps>0$ arbitrarily small so that $\mu-\eps\notin \xN$.
\end{nota}
Henceforth, we always assume the following assumptions on $\k$:\\
{\bf Assumption I} 
\bq\label{assumption1}\exists \rho>0, ~\partial_x\k\in C_*^\rho(\xR^d_1), \quad \exists \alpha\in \xN^d,~ r>-1,~\partial_x^{\alpha_0}\k\in  H^{r+1-|\alpha_0|}(\xR^d_1).
\eq
{\bf Assumption II} 
\bq
\exists m_0>0,\forall x\in \xR^d_1,~\la \det\k'(x)\ra \ge m_0.
\eq
\begin{defi}\label{k:global}(Global paracomposition)
 For any $u\in \mathcal{S'}(\xR^d_2)$ we define formally %two operators
\[
\k_g^*u=\sum_{p= 0}^\infty[u_p\circ \k]_p.%~\text{and}~ \k^*u=u\circ \k-\dot T_{u'\circ k}k.
\]
\end{defi}
We state now our precise results concerning the paracomposition operator $\k_g^*$.
\begin{theo}\label{theo:operation}(Operation)
For every $s\in \xR$ there exists $\cF$ independent of $\k$ such that 
\begin{align*}
&\forall u\in C^s_*(\xR^d_2),\quad \lA \k^*_gu\rA_{C^s_*}\le  \cF(m_0,\lA \k'\rA_{L^\infty})\lA u\rA_{C^s_*},\\
&\forall u\in H^s(\xR^d_2),\quad\lA \k^*_gu\rA_{H^s}\le \cF(m_0,\lA \k'\rA_{L^\infty})\lA u\rA_{H^s}.
\end{align*}
\end{theo}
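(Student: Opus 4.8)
The plan is to estimate $\k_g^*u = \sum_{p\ge 0}[u_p\circ\k]_p$ by controlling each dyadic block $[u_p\circ\k]_p$ and then summing. The crucial structural fact is that $[u_p\circ\k]_p$ is frequency-localized: by definition $[v]_p$ keeps only the dyadic pieces $\Delta_j v$ with $|j-p|\le\widetilde N$, so $\k_g^*u$ is, up to a fixed finite overlap (controlled by $\widetilde N$ and $N_0$), an almost-orthogonal sum. Hence for the $H^s$ bound it suffices to show $\sum_p 2^{2ps}\lA [u_p\circ\k]_p\rA_{L^2}^2\lesssim \lA u\rA_{H^s}^2$, and for the $C^s_*$ bound it suffices to show $2^{ps}\lA [u_p\circ\k]_p\rA_{L^\infty}\lesssim \lA u\rA_{C^s_*}$. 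In both cases the block itself is already dyadically localized near frequency $2^p$, so the frequency-$2^p$ weight is legitimately attached to it without further Bernstein gymnastics.

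First I would reduce to a pointwise/$L^q$ bound on $[u_p\circ\k]_p$ in terms of $u_p\circ\k$ alone: since $[\cdot]_p$ is a finite sum of Littlewood--Paley projectors $\Delta_j$, $j\in[p-\widetilde N,p+\widetilde N]$, and these are bounded on every $L^q$, $1\le q\le\infty$, uniformly in $j$ and in the size $n_0$ by Lemma \ref{boundSj}, we get $\lA [u_p\circ\k]_p\rA_{L^q}\lesssim \lA u_p\circ\k\rA_{L^q}$ with constant depending only on $\widetilde N$ and $d$. The next step is the change-of-variables bound $\lA u_p\circ\k\rA_{L^q}\le \lA \det(\k^{-1})'\rA_{L^\infty}^{1/q}\lA u_p\rA_{L^q} = m_0^{-1/q}\lA u_p\rA_{L^q}$, using Assumption II; for $q=\infty$ this is trivial and needs no Jacobian. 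Combining, $\lA [u_p\circ\k]_p\rA_{L^2}\lesssim m_0^{-1/2}\lA u_p\rA_{L^2}$ and $\lA [u_p\circ\k]_p\rA_{L^\infty}\lesssim \lA u_p\rA_{L^\infty}$, each with constant depending only on $d$ and $\widetilde N$.

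From here the $C^s_*$ estimate is immediate: $2^{ps}\lA [u_p\circ\k]_p\rA_{L^\infty}\lesssim 2^{ps}\lA u_p\rA_{L^\infty}\le \lA u\rA_{C^s_*}$, and taking the supremum over $p$, then invoking the almost-orthogonality to reassemble $\lA \k_g^*u\rA_{C^s_*}$ from the norms of its blocks, gives the claim. For the $H^s$ estimate, $\sum_p 2^{2ps}\lA [u_p\circ\k]_p\rA_{L^2}^2\lesssim m_0^{-1}\sum_p 2^{2ps}\lA u_p\rA_{L^2}^2\simeq m_0^{-1}\lA u\rA_{H^s}^2$, and again the bounded-overlap property of the supports of $\widehat{[u_p\circ\k]_p}$ lets us conclude $\lA \k_g^*u\rA_{H^s}^2\lesssim \sum_p\lA[u_p\circ\k]_p\rA_{L^2}^2 2^{2ps}$ up to a constant depending only on $\widetilde N, N_0$. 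In both cases the final function $\cF(m_0,\lA\k'\rA_{L^\infty})$ absorbs the $m_0^{-1/q}$ factors; the dependence on $\lA\k'\rA_{L^\infty}$ enters only through the choice of the size $n_0 = \mathcal F_1(m_0,\lA\k'\rA_{L^\infty})$ of the dyadic partition on $\xR^d_1$ (Proposition \ref{bigC}), which affects the implicit constants in Lemma \ref{boundSj} uniformly — that uniformity in $n$ is exactly why those lemmas were stated the way they were.

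The main obstacle, and the only genuinely delicate point, is the almost-orthogonality step: one must verify that the Fourier supports of the blocks $[u_p\circ\k]_p$ overlap only boundedly, so that $\sum_p[u_p\circ\k]_p$ can be summed in $H^s$ and $C^s_*$ with a constant independent of $\k$. Because $\widehat{[u_p\circ\k]_p}$ is supported in the union of the annuli $\mathcal C_j(n_0)$ for $|j-p|\le\widetilde N$, any frequency $\xi$ lies in finitely many such unions — the number being controlled by $\widetilde N$ and $N_0=2(n_0+1)$, hence ultimately by $m_0$ and $\lA\k'\rA_{L^\infty}$ via $n_0$. This is where the careful bookkeeping of partition sizes from Section \ref{subsection:dyadic} pays off; once it is in place, the rest is the routine Littlewood--Paley/change-of-variables computation sketched above.
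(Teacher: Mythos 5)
Your proposal is correct and follows exactly the route of the paper's (much terser) proof: spectral localization of each block $[u_p\circ\k]_p$ in a dyadic annulus of width governed by $n_0$ and $\widetilde N$, the uniform-in-$n$ bounds of Lemma \ref{boundSj}, the change of variables $y=\k(x)$ with Jacobian controlled by $m_0$, and almost-orthogonal summation. The only difference is that you have written out the details the paper leaves implicit.
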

\begin{theo}\label{theo:linearization}(Linearization)
Let $s\in \xR$. For all $u\in \mathcal{S}'(\xR^d_2)$ we define
\bq\label{def:Rline}
\R_{line}u=u\circ \k-\left(\k^*_gu+\dot T_{u'\circ \k}\k\right).
\eq
$(i)$ If $0<\sigma< 1$, $\rho+\sigma>1$ and $ r+\sigma>0$ then there exists $\cF$ independent of $\k,~u$  such that 
\[
\lA \R_{line}u\rA_{H^{\tilde s}}\le \cF(m_0,\lA \k'\rA_{C_*^\rho})\lA \partial_x^{\alpha_0}\k\rA_{H^{r+1-|\alpha_0|}}\big(1+\lA u'\rA_{H^{s-1}}+\lA u\rA_{C_*^{\sigma}}\big)
\]
where $\tilde s=\min( s+\rho, r+\sigma)$.\\
$(ii)$ If $\sigma >1$, set $\eps=\min(\sigma-1, \rho+1)_{-}$  then there exists $\cF$ independent of $\k,~u$ such that 
\[
\lA \R_{line}u\rA_{H^{\tilde s}}\le \cF(m_0, \lA \k\rA_{C_*^\rho})\lA \partial_x^{\alpha_0}\k\rA_{H^{r+1-|\alpha_0|}}\big(1+\lA u'\rA_{H^{s-1}}+\lA u\rA_{C_*^\sigma}\big)
\]
where $\tilde s=\min(s+\rho, r+1+\eps)$.
\end{theo}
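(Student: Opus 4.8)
\textbf{Proof proposal for Theorem \ref{theo:linearization}.}

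The plan is to compute $u\circ\k$ and $\k^*_gu$ by splitting $u$ into dyadic blocks and tracking the interactions between the ``exterior'' frequency carried by $\k$ and the ``interior'' frequency carried by each $u_p$. First I would write $u\circ\k=\sum_{p\ge0}(u_p\circ\k)$ and, for each $p$, use the dyadic decomposition of $u_p\circ\k$ on $\xR^d_1$, separating three regimes: (a) the ``diagonal'' piece $[u_p\circ\k]_p=\sum_{|j-p|\le\widetilde N}\Delta_j(u_p\circ\k)$, which is precisely what enters $\k^*_gu$; (b) the ``low–high'' piece where $\Delta_j(u_p\circ\k)$ has $j\ge p+\widetilde N$; and (c) the ``high–low'' piece where $j\le p-\widetilde N$. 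Regime (a) contributes $\k^*_gu$ by definition, so $\R_{line}u=(\text{regime (b)})+(\text{regime (c)})-\dot T_{u'\circ\k}\k$. The crucial point is that regime (b) is exactly where the paralinearization term $\dot T_{u'\circ\k}\k$ comes from: by the first–order Taylor expansion $u_p\circ\k$ behaves, at frequencies much higher than $p$, like $T_{\nabla u_p\circ\k}\cdot(\text{high–frequency part of }\k)$ plus a smoother remainder, and summing in $p$ reconstitutes $\dot T_{u'\circ\k}\k$ up to an error gaining $\rho$ (resp.\ $1+\eps$) derivatives. This is the Alinhac-type linearization identity, and proving it quantitatively is the heart of the argument. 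Regime (c), where $u_p\circ\k$ is restricted to frequencies much lower than $p$, is handled by a nonstationary-phase / integration-by-parts estimate: writing $\Delta_j(u_p\circ\k)(x)=\int e^{i(x\cdot\zeta-\k(x)\cdot\eta)}\,(\dots)\,\widehat{u_p}(\eta)\,d\eta\,d\zeta$ with $|\zeta|\sim2^j\ll2^p\sim|\eta|$, one integrates by parts in $\eta$ (or $x$) using that $|\zeta-{}^tS_q\k'(x)\eta|\gtrsim|\eta|$ by Proposition \ref{bigC} applied in the appropriate frequency window, gaining arbitrarily many powers of $2^{j-p}$ at the cost of $C_*^\rho$ norms of $\k'$; the derivatives falling on $\k$ are controlled by Lemma \ref{auxi:2}, and the extra Sobolev regularity of $\k$ through $\partial_x^{\alpha_0}\k\in H^{r+1-|\alpha_0|}$ yields the claimed $H^{\tilde s}$ bound.

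Concretely I would proceed as follows. \textbf{Step 1:} fix the dyadic partitions of sizes $n=0$ on $\xR^d_2$ and $n=n_0$ on $\xR^d_1$, and for each $p$ expand $u_p\circ\k=\sum_q\Delta_q(u_p\circ\k)=\bigl([u_p\circ\k]_p\bigr)+\sum_{q\ge p+\widetilde N}\Delta_q(u_p\circ\k)+\sum_{q\le p-\widetilde N}\Delta_q(u_p\circ\k)$, absorbing finitely many boundary terms into the first group by taking $\widetilde N$ large; so $\R_{line}u = \sum_p\sum_{q\ge p+\widetilde N}\Delta_q(u_p\circ\k) + \sum_p\sum_{q\le p-\widetilde N}\Delta_q(u_p\circ\k) - \dot T_{u'\circ\k}\k =: A + B - \dot T_{u'\circ\k}\k$. \textbf{Step 2 (high–low term $B$):} for each fixed $q$, bound $\sum_{p\ge q+\widetilde N}\|\Delta_q(u_p\circ\k)\|$ by the oscillatory-integral/integration-by-parts argument above. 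Using Proposition \ref{bigC} with the roles of $j,q$ swapped, $|{}^tS_r\k'(x)\eta-\zeta|\gtrsim 2^p$ on the relevant supports, so each integration by parts in the phase $x\cdot\zeta-S_r\k(x)\cdot\eta$ costs a $C_*^\rho$ norm of $\k'$ and a factor from Lemma \ref{auxi:2}, but gains $2^{q-p}$; iterating gives $\|\Delta_q(u_p\circ\k)\|_{L^2}\lesssim 2^{-M(p-q)}\cF(\dots)2^{-p(s-1)}\|u'\|_{H^{s-1}}$ with $M$ as large as we wish, and the extra Sobolev gain of $\tilde s$ comes from placing the top derivatives of $\k$ in $H^{r+1-|\alpha_0|}$ via $\partial_x^{\alpha_0}\k$; summing over $p\ge q+\widetilde N$ and then over $q$ with the $2^{q\tilde s}$ weight converges and yields $\|B\|_{H^{\tilde s}}\le\cF(m_0,\|\k'\|_{C_*^\rho})\|\partial_x^{\alpha_0}\k\|_{H^{r+1-|\alpha_0|}}(1+\|u'\|_{H^{s-1}})$. \textbf{Step 3 (low–high term $A$ vs.\ $\dot T_{u'\circ\k}\k$):} for $q\ge p+\widetilde N$, write a first-order Taylor expansion of $u_p\circ\k$ along $\k$: with $S_{q-N}\k$ playing the role of the ``smooth part'', $\Delta_q(u_p\circ\k)=\Delta_q\bigl(\nabla u_p(S_{q-N}\k)\cdot(\k-S_{q-N}\k)\bigr)+(\text{remainder})$; the first term, summed over $p$ and $q$ with $q\sim p$ now forbidden so that only $q\gg p$ survives, is exactly $\dot T_{u'\circ\k}\k$ up to relabelling, and the Taylor remainder is quadratic in $\k-S_{q-N}\k$, hence of size $2^{-q\rho}\|\k'\|_{C_*^{\rho}}^{?}$ times a factor gaining $\rho$ (resp.\ in case $(ii)$, using $\sigma>1$ one Taylor-expands $u_p$ to higher order to get the gain $1+\eps$ with $\eps=\min(\sigma-1,\rho+1)_-$). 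Placing $\nabla u_p$ in $L^\infty$ via $\|u\|_{C_*^\sigma}$ when $\sigma>1$, or in $H^{s-1}$ when $0<\sigma<1$ (then pairing with the dual side appropriately), produces the two stated forms of the bound. \textbf{Step 4:} collect Steps 2 and 3; in case $(i)$ the total gain on the $\k$ side is $\min(s+\rho,r+\sigma)$ because the Sobolev estimate on $\k$ only affords $r+1-|\alpha_0|$ derivatives and the Hölder side caps the paralinearization remainder at $\rho$ above the base regularity $s$, so $\tilde s=\min(s+\rho,r+\sigma)$; in case $(ii)$ one extra Taylor order upgrades $\rho$ to $1+\eps$ and $r+\sigma$ to $r+1+\eps$.

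The conditions $\rho+\sigma>1$ (resp.\ $\sigma>1$) and $r+\sigma>0$ enter precisely to make the interaction sums in Steps 2--3 absolutely convergent after the $2^{q\tilde s}$ weighting — they are the ``subcriticality'' thresholds below which the composition $u\circ\k$ would not even make classical sense and the paraproduct $\dot T_{u'\circ\k}\k$ would not be well defined. The main obstacle I anticipate is Step 3: correctly identifying which portion of the low–high interaction reconstitutes $\dot T_{u'\circ\k}\k$ and controlling the Taylor remainder with a clean gain of exactly $\rho$ (resp.\ $1+\eps$) derivatives, uniformly in $\k$ and with the tame dependence $\cF(m_0,\|\k'\|_{C_*^\rho})\cdot\|\partial_x^{\alpha_0}\k\|_{H^{r+1-|\alpha_0|}}$ rather than a product of high powers of the $H$-norm of $\k$. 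This requires care in distributing derivatives so that at most one factor of $\k$ is measured in the Sobolev norm and all others only in $C_*^\rho$; the uniform-in-$n_0$ estimates of Lemmas \ref{boundSj}--\ref{auxi:2} and the frequency-separation of Proposition \ref{bigC} are exactly what make this bookkeeping go through. A secondary technical point is verifying that the finitely many near-diagonal terms ($|q-p|\le\widetilde N$ but landing just outside $[u_p\circ\k]_p$) are genuinely controllable — this is why $\widetilde N$ is taken larger than $N$ and $N_0$.
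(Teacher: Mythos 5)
Your overall architecture is close in spirit to the paper's, but organized differently: the paper does not decompose $\Delta_q(u_p\circ\k)$ into an output-frequency trichotomy. Instead it telescopes in the mollification parameter of $\k$, writing $u\circ\k=\sum_p u_p\circ S_p\k+\sum_q\bigl(S_qu\circ S_{q+1}\k-S_qu\circ S_q\k\bigr)$; the first sum equals $\k^*_gu$ up to a $\rho$-smoothing remainder $\R_g$ (Lemma \ref{defialte:R}), and the second sum equals $\dot T_{u'\circ\k}\k+\sum_q r_q\k_q$ \emph{exactly}, by the algebra $S_qu\circ S_{q+1}\k-S_qu\circ S_q\k=\bigl(\int_0^1 S_qu'(tS_{q+1}\k+(1-t)S_q\k)\,dt\bigr)\k_{q+1}$ together with the definition \eqref{Tdot} of $\dot T$. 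This sidesteps the bookkeeping you yourself flag as the main obstacle in your Step 3: the paraproduct is not ``reconstituted'' from a low--high interaction regime, it falls out of a telescoping identity, and only the correction $\sum_q r_q\k_q$ needs estimating.

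Beyond this structural difference there are two genuine gaps. First, your Step 2 integrates by parts in the phase $x\cdot\zeta-\k(x)\cdot\eta$ while invoking the lower bound of Proposition \ref{bigC}, which concerns $S_p\k'$, not $\k'$; the actual phase gradient is $\zeta-{}^t\k'(x)\eta$, and repeated integration by parts is not even available since $\k'$ is only $C_*^\rho$. One must first replace $\k$ by $S_p\k$ in the composition (this is precisely the content of $\R_g$ and of Lemma \ref{lem:main}, which is stated for $v\circ S_p\k$) and control the difference by Taylor's formula plus $\lA\k-S_p\k\rA_{L^\infty}\lesssim 2^{-p(1+\rho)}\lA\k'\rA_{C_*^\rho}$. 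Second, in case $(i)$ your mechanism does not produce the stated estimate: with $0<\sigma<1$ one has $\lA\nabla u_p\rA_{L^\infty}\sim 2^{p(1-\sigma)}$, and ``placing $\nabla u_p$ in $H^{s-1}$ and pairing with the dual side'' cannot yield a bound whose $\k$-factor is $\lA\partial_x^{\alpha_0}\k\rA_{H^{r+1-|\alpha_0|}}$, whose $u$-factor is $\lA u\rA_{C_*^\sigma}$, and whose output regularity is $r+\sigma$. The paper's case $(i)$ instead proves symbol-type estimates $\lA\partial_x^\alpha r_q\rA_{L^\infty}\lesssim 2^{q(|\alpha|+1-\sigma)}\lA u\rA_{C_*^\sigma}$, which rest on $u'\circ\k\in C_*^{\sigma-1}$ --- a \emph{negative} H\"older space, where the hypothesis $\rho+\sigma>1$ enters through the product law \eqref{tame:H<0} applied to $(\k')^{-1}(u\circ\k)'$ --- paired against $\lA\partial_x^\alpha\k_q\rA_{L^2}\lesssim 2^{q(|\alpha|-r-1)}a_q$ to land in $H^{r+\sigma}$. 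Your attribution of the hypotheses $\rho+\sigma>1$ and $r+\sigma>0$ to mere convergence of the interaction sums misses this point.
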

%\begin{rema}\label{rem:linearization}
%In part $(ii)$ of Theorem \ref{theo:linearization} since $u'\circ \k\in L^\infty$, in view of  Remark \ref{rem:pp} one can replace $\dot T_{u'\circ \k}\k$ by $ %T_{u'\circ \k}\k$ modulo an infinitely smooth remainder.
%\end{rema}
\begin{theo}\label{theo:conjugation}(Conjugation)
 Let  $m,~s\in \xR$ and $\tau>0$. Set $\eps=\min(\tau, \rho)$. Then for every $h(x, \xi)\in \Gamma^m_{\tau}$, homogeneous in $\xi$ there exist
\begin{itemize}
\item  $h^*\in \Sigma^m_\eps$,
\item $\cF$ nonnegative,  independent of $\k,~h$,
\item $k_0=k_0(d, \tau)\in \xN$ 
\end{itemize}
such that we have for all $u\in H^s(\xR^d_2)$,
\begin{gather}\label{formule:conju}
\k^*_gT_hu=T_{h^*}\k^*_gu+\R_{conj}u,\\
\label{main:remainder}
\lA \R_{conj}u\rA_{H^{s-m+\eps}}\le \cF(m_0,\lA\k'\rA_{C_*^\rho})M^m_\tau(h; k_0)\left(1+\lA \partial^{\alpha_0}\k\rA_{H^{+1-\alpha_0}}\right)\lA u\rA_{H^s}.
\end{gather}
(the semi-norm $M^m_\tau(h; k_0)$ is defined in \eqref{defi:norms}). Moreover,  $h^*$ is computed by the formula
\bq\label{h*}
h^*(x, \xi)=\sum_{j=0}^{[\rho]}h^*_j:=\sum_{j=0}^{[\rho]}\frac{1}{j!}\partial_{\xi}^j D_y^j\Big(h\left(\k(x), \R(x,y)^{-1}\xi\right)\frac{\la \det \partial_y\k(y)\ra}{\la\det \R(x,y)\ra} \Big)\arrowvert_{y=x},
\eq
\[
 \R(x, y)={}^t\int_0^1 \partial_x\k(tx+(1-t)y)\di t.
\]
\begin{rema}
\begin{itemize}
\item The definition \eqref{def:Rline} of $\R_{line}$ involves $\dot T_{u\circ \k'}\k$ which does not require the regularity on the low frequency part of the diffeomorphism $\k$.
\item Part $(i)$ of Theorem \ref{theo:conjugation} gives an estimate for the remainder of the linearization of $\k_g^*u$ where $u$ is allowed to be non $C^1$.
\item In part $(ii)$ of Theorem \ref{theo:conjugation}, the possible loss of arbitrarily small regularity in $\eps=\min(\sigma-1, \rho+1)_{-}$ is imposed to avoid the technical issue in the composition of two functions in Zygmund spaces (see the proof of Lemma \ref{lemm:appr}). On the other hand, there is no loss in part $(i)$ when $\sigma \in (0, 1)$.
\item In the estimate \eqref{main:remainder}, $u$ is assumed to have Sobolev regularity. Therefore, in the conjugation formula \eqref{formule:conju} the paradifferential operators $T_h,~T_{h^*}$ can be replaced by their truncated operators $\dot T_h,~\dot T_{h^*}$, modulo a remainder bounded by the right-hand side of \eqref{main:remainder}.
\end{itemize}
\end{rema}
%where again the summation is taken over all $\alpha$ such that the summand is well-defined.
\end{theo}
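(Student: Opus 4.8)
The proof of Theorem~\ref{theo:conjugation} proceeds in the spirit of the symbolic calculus for classical pseudo-differential operators, adapted to the paracomposition $\k_g^*$ and made quantitative. The plan is to expand both sides of \eqref{formule:conju} using the dyadic decompositions of the domain and range (of size $n_0$ and $0$ respectively, as fixed in Proposition~\ref{bigC}), reduce everything to single dyadic blocks, and then perform a stationary-phase / Taylor expansion in the phase variable to read off the leading symbol $h^*$ and control the remainder. Concretely, recall $\k_g^* u=\sum_{p\ge 0}[u_p\circ\k]_p$ and $T_h u$ is essentially $\dot T_h u=\sum_{q\ge 1}S_{q-N}h\,\Delta_q u$ (the low-frequency tail being smoothing, and $u\in H^s$ so by the last bullet of the remark we may work with truncated operators throughout). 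Thus
\[
\k_g^* T_h u=\sum_{p\ge 0}\Big[\Big(\sum_{q}S_{q-N}h\,\Delta_q u\Big)_p\circ\k\Big]_p ,
\]
and one must match this against $T_{h^*}\k_g^* u=\sum_{r}S_{r-N}h^*\,\Delta_r\big(\sum_p[u_p\circ\k]_p\big)$. The coupling between the frequency of $u$ and the frequency after composition with $\k$ is governed precisely by Proposition~\ref{bigC}: when the output frequency $2^j$ and input frequency $2^q$ are separated by more than $N_0$ on either side (after applying $S_p\k'$), the symbol $\la S_p\k'(y)\eta-\xi\ra$ stays bounded below by $1$, which lets one integrate by parts in the oscillatory integral defining the composition and gain arbitrary decay — this is what makes the sums essentially diagonal and produces the smoothing remainder $\R_{conj}$ mapping $H^s\to H^{s-m+\eps}$.

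\textbf{Key steps in order.} First I would write $u_q\circ\k$ as an oscillatory integral $\int e^{i(\k(x)-y)\cdot\eta}u_q(y)\,dy\,d\eta/(2\pi)^d$, insert the paraproduct cutoffs, and use the change of variables $y\mapsto$ (affine interpolation) to introduce the matrix $\R(x,y)={}^t\!\int_0^1\partial_x\k(tx+(1-t)y)\,dt$, so that the phase becomes linear: $\k(x)-y=\R(x,y)(x-y)$, hence $(\k(x)-y)\cdot\eta=(x-y)\cdot{}^t\!\R(x,y)\eta$. After the substitution $\eta\mapsto \R(x,y)^{-1}\eta$ (legitimate since $\det\R$ is bounded below by Assumption II together with \eqref{S_pk'}), the composition of a dyadic block is an oscillatory integral with phase $(x-y)\cdot\eta$ and amplitude involving $h(\k(x),\R(x,y)^{-1}\eta)$ times the Jacobian factor $\la\det\partial_y\k(y)\ra/\la\det\R(x,y)\ra$. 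Second, I would perform a Taylor expansion of this amplitude in $y$ around $y=x$ to order $[\rho]$; the zeroth through $[\rho]$-th terms, after the standard $\partial_\xi^j D_y^j$ manipulation of oscillatory integrals, assemble exactly into the formula \eqref{h*} for $h^*=\sum_{j=0}^{[\rho]}h^*_j$, and one checks $h^*\in\Sigma^m_\eps$ with $\eps=\min(\tau,\rho)$ by differentiating under the integral, using $h\in\Gamma^m_\tau$ and $\partial_x\k\in C_*^\rho$ (here the semi-norms $M^m_\tau(h;k_0)$ enter, with $k_0=k_0(d,\tau)$ counting how many $\xi$-derivatives of $h$ the argument consumes). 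Third — the quantitative remainder bound — I would collect: (a) the Taylor remainder of order $[\rho]$, which gives a gain of $\rho$ derivatives hence lands in $H^{s-m+\rho}$; (b) the off-diagonal interactions where $|j-q|>N_0$, handled by the non-stationary phase estimate from Proposition~\ref{bigC} giving arbitrary smoothing; (c) the difference between $S_p\k'$ and $\k'$ in $\R$, controlled by \eqref{auxi:22}; and (d) the commutator terms coming from re-summing $[\,\cdot\,]_p$ and reconciling the two cutoffs $\chi,\psi$ — all of these are estimated using Lemma~\ref{boundSj}, Lemma~\ref{auxi:2}, and the operation bound Theorem~\ref{theo:operation} for $\k_g^*$ itself, yielding the tame structure $\cF(m_0,\lA\k'\rA_{C_*^\rho})\,M^m_\tau(h;k_0)\,(1+\lA\partial^{\alpha_0}\k\rA_{H^{r+1-|\alpha_0|}})\,\lA u\rA_{H^s}$. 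The Sobolev norm of $\k'$ appears (rather than only $C_*^\rho$) precisely because converting the Hölder gain into an $H^{s-m+\eps}$ estimate on the remainder requires $L^2$-type control somewhere, furnished by Assumption~I's second clause.

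\textbf{Main obstacle.} The hard part will be step three, item (d): controlling the remainder \emph{uniformly in $\k$} and in a genuinely \emph{tame} fashion. The naive oscillatory-integral bounds produce constants depending on high $L^\infty$ norms of many derivatives of $\k$, which is unacceptable; the whole point of Proposition~\ref{bigC}, the careful choice of partition size $n_0=n_0(m_0,\lA\k'\rA_{L^\infty})$, and the truncated operators $\dot T$, $\k_g^*$ is to arrange that only $\lA\k'\rA_{C_*^\rho}$ (and one Sobolev norm of a single derivative of $\k$, appearing linearly) enters, with everything else absorbed into the non-decreasing function $\cF$. Keeping track of which norm of $\k$ is consumed at each integration by parts, ensuring the separation conditions of Proposition~\ref{bigC} are actually met for the relevant frequency triples $(\xi,\eta)\in\mathcal{C}_j(n_0)\times\mathcal{C}_q(1)$, and verifying that the re-summation over $p$ of the blocks $[u_p\circ\k]_p$ does not destroy the symbol class $\Sigma^m_\eps$ of $h^*$ — these bookkeeping issues, rather than any single deep estimate, constitute the real work. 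A secondary subtlety is the case $\rho\in\xN$: then $[\rho]=\rho$ and the Taylor remainder gains exactly $\rho$ derivatives with a logarithmic-type loss (cf. the $|\alpha|=\mu$ case of Lemma~\ref{auxi:2}), which is why $\eps=\min(\tau,\rho)$ rather than something larger, and one must be slightly careful that $h^*_{[\rho]}$ is still a legitimate symbol of the stated class.
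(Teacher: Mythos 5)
Your proposal follows essentially the same route as the paper: reduction to diagonal dyadic blocks via the non-stationary phase estimate of Proposition \ref{bigC} (Lemma \ref{lem:main}), the oscillatory-integral representation with the change of variables introducing $\R(x,y)$, regularization of the symbol by replacing $\k$ with $S_p\k$, Taylor expansion of the amplitude to order $[\rho]$ yielding \eqref{h*}, and the final re-summation of the pieces $[u_p\circ\k]_p$ into $T_{h^*}\k_g^*u$, with each remainder controlled quantitatively by an $L^2$-boundedness lemma for rough-symbol pseudo-differential operators (Lemma \ref{lem:pseudo} in the paper). The only ingredient you leave implicit is that last lemma, but your outline of the steps, the sources of the remainder, and the bookkeeping difficulties matches the paper's proof.
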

%%%%%%%%%%%%%%%%%%%%%%%%%%%%
\subsection{Proof of the main results}
\begin{nota}
 To simplify notations, we denote throughout this section $C^\mu=C^\mu_*(\xR^d)$.
\end{nota}
\subsubsection{Technical lemmas}
First, for every $u\in \mathcal{S}'(\xR^d_2)$ we define formally
\bq\label{remainder:global}
\R_gu=\k^*_gu-\sum_{p\ge 0}[u_p\circ S_p\k]_p.
\eq
The remainder $\R_g$ is  $\rho$-regularized as to be shown in the following lemma.
\begin{lemm}\label{defialte:R}
For every $\mu\in \xR$  there exists $\mathcal{F}$ independent of $\k$ such that:
 \[
\forall v\in H^\mu(\xR^d_2),~\lA \R_gv\rA_{H^{\mu+\rho}}\le\cF(m_0, \lA \k'\rA_{C^{\rho}}) \lA v'\rA_{H^{\mu-1}}\left(1+\lA \partial^{\alpha_0}\k\rA_{H^{r+1-\alpha_0}}\right).
\]
\end{lemm}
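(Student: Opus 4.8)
From Definition~\ref{k:global}, the relation \eqref{remainder:global} and linearity of the recoupling operator $[\cdot]_p$,
\[
\R_g v=\sum_{p\ge 0}[g_p]_p,\qquad g_p\defn v_p\circ\k-v_p\circ S_p\k,\quad v_p\defn\Delta_p v.
\]
Each block $[g_p]_p=\sum_{|j-p|\le\widetilde N}\Delta_j g_p$ is spectrally supported in $\{|\xi|\le C2^p\}$, and for $p>\widetilde N$ in a dyadic annulus of size $\sim 2^p$, so $\lA[g_p]_p\rA_{H^{\mu+\rho}}\les 2^{p(\mu+\rho)}\lA g_p\rA_{L^2}$ when $p>\widetilde N$ (and $\lA[g_p]_p\rA_{H^{\mu+\rho}}\les\lA g_p\rA_{L^2}$ with a $p$-dependent constant otherwise); moreover these supports overlap boundedly, so by almost orthogonality $\lA\R_g v\rA_{H^{\mu+\rho}}^2\les\sum_{p\ge 0}\lA[g_p]_p\rA_{H^{\mu+\rho}}^2$. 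Everything thus reduces to an $L^2$ bound on $g_p$ that gains $(\rho+1)$ powers of $2^p$ against $\lA\Delta_p v'\rA_{L^2}$, $v'=\nabla v$, and I would write, via the mean value theorem,
\[
g_p=\int_0^1 (\nabla v_p)(\Theta_{p,t})\cdot(\k-S_p\k)\,\di t,\qquad \Theta_{p,t}\defn t\,\k+(1-t)\,S_p\k,
\]
using $\nabla v_p=\Delta_p\nabla v=\Delta_p v'$ and keeping $g_p$ in this integral form throughout (so that no $L^2$ control of $v_p\circ\k$ or $v_p\circ S_p\k$ individually is ever needed).

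For the high blocks I would first fix a threshold $p_1=p_1(m_0,\lA\k'\rA_{C^\rho})\ge\widetilde N+1$ so large that, for every $p\ge p_1$ and $t\in[0,1]$, the map $\Theta_{p,t}$ is a global diffeomorphism of $\xR^d$ with $\la\det\Theta_{p,t}'\ra\ge m_0/2$ and $\lA\Theta_{p,t}'\rA_{L^\infty}\le C\lA\k'\rA_{L^\infty}$. This combines: the bound $\lA(I-S_p)\k'\rA_{L^\infty}\les 2^{-p\rho}\lA\k'\rA_{C^\rho}$ (Lemma~\ref{boundSj} and Lemma~\ref{HZ}), which makes $\Theta_{p,t}'$ uniformly $L^\infty$-close to $\k'$; the lower bound \eqref{S_pk'} for $\la\det S_p\k'\ra$ together with local Lipschitz continuity of $\det$ on the ball $\{|A|\le C\lA\k'\rA_{L^\infty}\}$ of $d\times d$ matrices; the boundedness of $\lA(I-S_p)\k\rA_{L^\infty}$, which yields properness since then $\L{\Theta_{p,t}(x)}\ge\L{\k(x)}-C\to\infty$ as $|x|\to\infty$; and Hadamard's global inversion theorem. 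Then, for $p\ge p_1$, the change of variables $y=\Theta_{p,t}(x)$ gives $\lA(\nabla v_p)\circ\Theta_{p,t}\rA_{L^2}\les m_0^{-\mez}\lA\Delta_p v'\rA_{L^2}$, while \eqref{auxi:22} applied to $v=\k$ gives (since $\nabla\k=\k'\in C^\rho$) $\lA\k-S_p\k\rA_{L^\infty}\les 2^{-p(\rho+1)}\lA\k'\rA_{C^\rho}$. Hence $\lA g_p\rA_{L^2}\les 2^{-p(\rho+1)}\cF(m_0,\lA\k'\rA_{C^\rho})\lA\Delta_p v'\rA_{L^2}$, so $\lA[g_p]_p\rA_{H^{\mu+\rho}}\les 2^{p(\mu-1)}\cF(m_0,\lA\k'\rA_{C^\rho})\lA\Delta_p v'\rA_{L^2}$, and summing the squares over $p\ge p_1$ (where $\L{2^p}\sim 2^p$) yields $\lA\sum_{p\ge p_1}[g_p]_p\rA_{H^{\mu+\rho}}\les\cF(m_0,\lA\k'\rA_{C^\rho})\lA v'\rA_{H^{\mu-1}}$.

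For the finitely many low blocks $0\le p<p_1$ — where $S_p\k$ need not be proper or injective — I would instead bound $\lA g_p\rA_{L^2}\le\lA\nabla v_p\rA_{L^\infty}\lA\k-S_p\k\rA_{L^2}$, with $\lA\nabla v_p\rA_{L^\infty}\les 2^{pd/2}\lA\Delta_p v'\rA_{L^2}$ (Bernstein, Lemma~\ref{boundSj}) and $\lA\k-S_p\k\rA_{L^2}\les\lA\partial_x^{\alpha_0}\k\rA_{H^{r+1-|\alpha_0|}}$: indeed the reverse Bernstein inequality (Lemma~\ref{auxi:1}) with $\partial_x^{\alpha_0}\k\in H^{r+1-|\alpha_0|}$ gives $\lA\Delta_q\k\rA_{L^2}\les 2^{-q|\alpha_0|}\lA\Delta_q\partial_x^{\alpha_0}\k\rA_{L^2}\les 2^{-q(r+1)}\lA\partial_x^{\alpha_0}\k\rA_{H^{r+1-|\alpha_0|}}$ for $q\ge 1$, and $\sum_{q>p}2^{-q(r+1)}$ converges because $r>-1$. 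Since the number of such blocks, and the frequency scale $2^{p_1}$, are controlled by $m_0$ and $\lA\k'\rA_{C^\rho}$, the triangle inequality and Cauchy--Schwarz over these finitely many $p$ give $\lA\sum_{0\le p<p_1}[g_p]_p\rA_{H^{\mu+\rho}}\les\cF(m_0,\lA\k'\rA_{C^\rho})\lA v'\rA_{H^{\mu-1}}\lA\partial_x^{\alpha_0}\k\rA_{H^{r+1-|\alpha_0|}}$. Adding the two contributions gives the claim, the $1$ in the factor $1+\lA\partial_x^{\alpha_0}\k\rA_{H^{r+1-|\alpha_0|}}$ absorbing the high-frequency part, which does not see the Sobolev regularity of $\k$. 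The step I expect to be most delicate is the high-frequency one: checking, with constants uniform in $\k$, that $\Theta_{p,t}$ is genuinely a global diffeomorphism of $\xR^d$ for all $p\ge p_1$ and $t\in[0,1]$ so that the change of variables is legitimate, and making sure no hypothesis on the low-frequency part of $\k$ leaks into the $p\ge p_1$ estimate — precisely what dictates the choice of $p_1$ through Proposition~\ref{bigC}/\eqref{S_pk'} and the use of $g_p$ in integral form.
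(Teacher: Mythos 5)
Your proof is correct and follows essentially the same route as the paper's: the same Taylor--integral representation of $v_p\circ\k-v_p\circ S_p\k$, the same split at a threshold $p_0=\cF(m_0,\lA\k'\rA_{C_*^{\eps_0}})$ coming from \eqref{S_pk'}, a change of variables along the interpolated map $t\k+(1-t)S_p\k$ combined with $\lA\k-S_p\k\rA_{L^\infty}\lesssim 2^{-p(\rho+1)}\lA\k'\rA_{C_*^\rho}$ for the high blocks, and Bernstein together with the reverse Bernstein bound on $\partial^{\alpha_0}\k$ (using $r+1>0$) for the finitely many low blocks. The only genuine addition is your explicit verification (Jacobian lower bound, properness, Hadamard) that the interpolated map is a global diffeomorphism so that the $L^2$ change of variables in \eqref{est:A_p} is legitimate --- a point the paper asserts without detail.
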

\begin{proof}
By definition, we have
\[
\R_gv=-\sum_{p\ge 0}[v_p\circ S_p\k-v_p\circ \k]_p=-\sum_{p\ge 0}[A_p]_p.
\]
Each term $A_p$ can be written using Taylor's formula:
\[
A_p(x)=\int_0^1v'_p(\k(x)+t(S_p\k(x)-\k(x)))\di t(S_p\k(x)-\k(x)).
\]
\hk 1. Case 1: $p\ge p_0$. Setting  $y(x)=\k(x)+t(S_p\k(x)-\k(x))$, one has as in \eqref{S_pk'} $
\la \det (y')\ra\ge \frac{m_0}{2}$, hence
\bq\label{est:A_p}
\lA \int_0^1v'_p(\k(x)+t(S_p\k(x)-\k(x)))\di t\rA_{L^2}\le \cF(m_0, \lA \k'\rA_{C^{\rho}})\lA v_p'\rA_{L^2}.
\eq
Then by virtue of the estimate \eqref{auxi:22}  we obtain 
\bq\label{def:appr1}
\forall p\ge p_0,\quad \lA A_p\rA_{L^2} \le 2^{-p(\rho+1)}2^{-p(\mu-1)}\cF(m_0, \lA \k'\rA_{C^{\rho}})e_p=2^{-p(\rho+\mu)}\cF(m_0, \lA \k'\rA_{C^{\rho}})e_p
\eq
with $$\sum_{p= p_0}^\infty e_p^2\le \lA v'\rA^2_{H^{\mu-1}}.$$
\hk 2. Case 2: $0\le p<p_0$. We have by the Sobolev embedding $H^{d/2+1}\hookrightarrow L^\infty$
\[
\lA \int_0^1v'_p(\k(x)+t(S_p\k(x)-\k(x)))\di t\rA_{L^\infty}\le \lA v_p'\rA_{L^\infty}\le 2^{p(\frac{d}{2}-s+2)}\lA v'\rA_{H^{s-1}}.
\]
Applying Lemma \ref{auxi:1} we may estimate with $\sum_{p\ge 0}f_p^2\le \cF(m_0, \lA \k'\rA_{C^{\rho}})\lA \partial^{\alpha_0}\k\rA_{H^{r+1-|\alpha_0|}}^2$
\begin{align*}
\lA \k-S_p\k\rA_{L^2}&\le  \sum_{j=p+1}^\infty \lA \Delta_j \k\rA_{L^2}\le  \sum_{j=p+1}^\infty 2^{-j|\alpha_0|}\lA \Delta_j \partial^{\alpha_0}\k\rA_{L^2}\\
& \le  \sum_{j=p+1}^\infty 2^{-j|\alpha_0|}2^{-j(r+1-|\alpha_0|)}f_p\le \cF(m_0, \lA \k'\rA_{C^{\rho}})\lA \partial^{\alpha_0}\k\rA_{H^{r+1-\alpha_0}},
 \end{align*}
where we have used the assumption that $r+1>0$. Therefore,
\bq\label{def:appr2}
\forall p<p_0,\quad \lA A_p\rA_{L^2} \le \cF(m_0, \lA \k'\rA_{C^{\rho}})\lA v'\rA_{H^{\mu-1}}\lA \partial^{\alpha_0}\k\rA_{H^{r+1-\alpha_0}}.
\eq
\hk 3. Finally, noticing that the spectrum of $[A_p]_p$ is contained in an annulus $\{M^{-1}2^p \le |\xi|\le 2^p M\}$ with $M$ depending on $n_0$, the lemma then follows from \eqref{def:appr1}, \eqref{def:appr2}.
\end{proof}
\begin{lemm}\label{lemm:appr}
Let  $\mu>0$ and $\eps=\min(\mu, \rho+1)_{-}$. For every $v\in C^{\mu}(\xR^d_2)$, set
\[
r_p\defn S_p(v\circ \k)-(S_pv)\circ (S_p\k).
\]
Then for every $\alpha\in \xN$ there exists a non-decreasing function $\cF_\alpha$ independent of $\k$ and $v$ such that
\[ \lA  \partial_x^\alpha r_p\rA_{L^\infty}\le 2^{p(|\alpha|-\eps)}\cF_\alpha(\lA \k'\rA_{C^{\rho}})(1+\lA v\rA_{C^\mu}).
\]
\end{lemm}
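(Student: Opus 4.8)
The plan is to estimate $r_p = S_p(v\circ\k) - (S_pv)\circ(S_p\k)$ by inserting the intermediate term $(S_pv)\circ\k$, writing
\[
r_p = \bigl[S_p(v\circ\k) - (S_pv)\circ\k\bigr] + \bigl[(S_pv)\circ\k - (S_pv)\circ(S_p\k)\bigr] =: r_p^{(1)} + r_p^{(2)}.
\]
For $r_p^{(1)}$, the idea is that $S_p(v\circ\k) - (S_pv)\circ\k = (S_p - I)(v\circ\k) - \bigl[(S_p-I)v\bigr]\circ\k$; each term is separately $(S_p-I)$ applied to a function of class $C^\mu$ (using that $v\in C^\mu$ and $\k'\in C_*^\rho$ with $\rho+1>\mu$, so $v\circ\k\in C^{\mu_-}$ — here is precisely where the technical composition-in-Zygmund-spaces issue and hence the $\eps=\min(\mu,\rho+1)_-$ truncation enters), so Lemma \ref{boundSj} part 2 together with Lemma \ref{auxi:1} gives $\|\partial_x^\alpha r_p^{(1)}\|_{L^\infty}\lesssim 2^{p(|\alpha|-\eps)}\cF_\alpha(\|\k'\|_{C^\rho})(1+\|v\|_{C^\mu})$. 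The dependence on $m_0$ and the other quantities does not appear here because no inverse of $\k$ is used — only the forward composition.

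For $r_p^{(2)}$, I would use the Taylor/mean-value bound
\[
\bigl|(S_pv)\circ\k(x) - (S_pv)\circ(S_p\k)(x)\bigr| \le \|\nabla S_pv\|_{L^\infty}\,\|\k - S_p\k\|_{L^\infty},
\]
and more generally for $\partial_x^\alpha$ apply the chain rule, producing sums of products of $\partial^\beta(S_pv)\circ(\text{something})$ with derivatives of $\k$ and $S_p\k$. Since $\mu>0$, Lemma \ref{auxi:2} part 1 controls $\|\partial^\beta S_pv\|_{L^\infty}$ by $2^{p(|\beta|-\mu)}$ times $\|v\|_{C^\mu}$ (with the logarithmic borderline case $|\beta|=\mu$ absorbed into $\cF_\alpha$ and a harmless extra $\eps$), while $\|\k - S_p\k\|_{L^\infty}\lesssim 2^{-p(\rho+1)}\|\k'\|_{C^\rho}$ by Lemma \ref{auxi:2} part 3 (applied with $\mu$ there equal to $\rho+1$), and derivatives of $S_p\k$, $\k$ are bounded using Lemma \ref{boundSj} and $\k'\in C_*^\rho$. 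Collecting powers of $2^p$ one gets at worst $2^{p(|\alpha| - \mu)}$ and $2^{p(|\alpha|-\rho-1)}$, i.e. $2^{p(|\alpha|-\eps)}$ with $\eps=\min(\mu,\rho+1)_-$, with all constants of the form $\cF_\alpha(\|\k'\|_{C^\rho})(1+\|v\|_{C^\mu})$; note a factor $(1+\|v\|_{C^\mu})$ rather than $\|v\|_{C^\mu}$ alone is needed to handle the $|\beta|=0$ term $(S_pv)\circ(\cdots)$ which is only bounded by $\|v\|_{L^\infty}\le\|v\|_{C^\mu}$, but this is harmless.

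The main obstacle I expect is the term $r_p^{(1)}$: one must make rigorous sense of ``$v\circ\k$ is $C^{\mu_-}$ when $v\in C^\mu$ and $\k$ is a diffeomorphism with $\k'\in C_*^\rho$, $\rho+1>\mu$,'' because composition of Zygmund-class functions is genuinely delicate (the endpoint $\mu\in\xN$ or $\mu=\rho+1$ fails without the $\eps$-loss), and one must track that the resulting Zygmund norm of $v\circ\k$ depends on $\k$ only through $\|\k'\|_{C^\rho}$ and not through $m_0$ or lower-frequency information. Once that composition estimate is in hand (it can be proved by a Littlewood–Paley/paralinearization argument, estimating $\Delta_j(v\circ\k)$ via the low-high and high-low decompositions and the bound on $\|\k-S_p\k\|_{L^\infty}$), the rest is a bookkeeping exercise in summing geometric series of powers of $2^p$, which I would not write out in detail.
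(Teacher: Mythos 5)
Your decomposition at $\alpha=0$ is essentially the paper's: the paper writes $r_p=(S_p(v\circ \k)-v\circ \k)+(v-S_pv)\circ\k+(S_pv\circ \k-S_pv\circ S_p\k)$, uses the composition estimate $\lA v\circ\k\rA_{C_*^\eps}\le \cF(\lA\k'\rA_{C_*^\rho})(1+\lA v\rA_{C_*^\mu})$ (which you correctly identify as the source of the $\eps$-truncation), and handles the last term by the mean-value bound you propose. So the $\alpha=0$ case of your argument is fine. The gap is in how you pass to general $\alpha$.

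First, your bound on $\partial_x^\alpha r_p^{(1)}$ cannot come from viewing it as $(S_p-I)$ applied to $C_*^\eps$ functions: writing $(I-S_p)w=\sum_{j>p}\Delta_j w$ and using $\lA\partial^\alpha\Delta_j w\rA_{L^\infty}\le C2^{j(|\alpha|-\eps)}\lA w\rA_{C_*^\eps}$, the tail sum converges only when $|\alpha|<\eps$; for $|\alpha|\ge\eps$ it diverges, and indeed $\partial^\alpha\bigl((I-S_p)w\bigr)$ need not be bounded at all. Second, your intermediate term $(S_pv)\circ\k$ (and likewise $[(I-S_p)v]\circ\k$ and the chain-rule expansion of $r_p^{(2)}$) involves derivatives of $\k$ of order $|\alpha|$, which do not exist as bounded functions once $|\alpha|>1+\rho$, since $\k$ is only $C^{1+\rho}$. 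So neither piece of your decomposition can be differentiated $|\alpha|$ times in the range where the estimate is actually a growing bound. The paper's way around both problems is to prove the estimate only at the two extremes and \emph{interpolate}: for $\alpha=0$ it uses the cancellation argument above, while for $|\alpha|>1+\rho$ it abandons the difference structure entirely and bounds the two original terms \emph{separately} --- $\lA\partial^\alpha S_p(v\circ\k)\rA_{L^\infty}\le C2^{p(|\alpha|-\eps)}\lA v\circ\k\rA_{C_*^\eps}$ by the Bernstein-type estimate \eqref{auxi:21} (legitimate because $S_p(v\circ\k)$ is smooth), and $\lA\partial^\alpha\bigl((S_pv)\circ(S_p\k)\bigr)\rA_{L^\infty}\le C2^{p(|\alpha|-\eps)}(1+\lA\k'\rA_{C_*^\rho})^{|\alpha|}\lA v\rA_{C_*^\mu}$ by Faa-di-Bruno applied to the two \emph{smoothed} factors (this is the computation from Alinhac's Lemme 2.1.1). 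No cancellation is needed there because for $|\alpha|>\eps$ each term individually already satisfies the (growing) bound. Your write-up is missing both the interpolation step and the switch to separate estimates at large $\alpha$, and as written the argument fails for all $|\alpha|\ge\eps$.
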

\begin{proof} We first remark that by interpolation, it suffices to prove the estimate for $\alpha=0$ and all $|\alpha|$ large enough. By definition of $\eps$ we have $v\circ \k\in C^\eps$ with norm bounded by $\cF(\lA \k'\rA_{C^{\rho}})(1+\lA v\rA_{C^\mu})$.\\ 
\hk 1. $\alpha=0$. One writes 
\[
r_p=(S_p(v\circ \k)-v\circ \k) +(v-S_pv)\circ\k+(S_pv\circ \k-S_p v\circ S_p\k)%\defn r^1_p+r^2_p+r^3_p.
\]
and use \eqref{auxi:22} to estimate the first two terms. For  the last term, by Taylor's formula and  \eqref{auxi:22} (consider $\mu>1, =1$ or $<1$) we have
\[
\lA S_pv\circ \k-S_p v\circ S_p\k\rA_{L^\infty}\le \lA S_pv'\rA_{L^\infty}\lA \k-S_p\k\rA_{L^\infty}\le C2^{-p\eps}\lA v\rA_{C^\mu}\lA \k'\rA_{C^\rho}.
\]
Therefore,\[
\lA r_p\rA_{L^\infty}\le C2^{-p\eps}\left(\lA v\circ\k\rA_{C^\eps}+\lA v\rA_{C^\eps}+\lA v\rA_{C^\mu}\lA \k'\rA_{C^\rho}\right).
\]
\hk 2. $|\alpha|>\rho+1$. The estimate  \eqref{auxi:21} implies
\[
\lA S_p(v\circ \k)^{(\alpha)}\rA_{L^\infty}\le C_\alpha 2^{p(|\alpha|-\eps)}\lA v\circ \k\rA_{C^\eps}.
\]
On the other hand, part 2. of the proof of Lemma $2.1.1$, \cite{Alipara} gives
\[
\lA (S_pv\circ S_p\k)^{(\alpha)}\rA_{L^\infty}\le C_\alpha 2^{p(|\alpha|-\eps)}(1+\lA \k'\rA_{C^\rho})^{|\alpha|}\lA v\rA_{C^\mu}.
\]
Consequently, we get the desired estimate for all  $|\alpha|>1+\rho$, which completes the proof.
\end{proof}
\begin{lemm}\label{lem:main}
Let $v\in C^\infty(\xR_2^d), ~\supp \hat v\in \mathcal{C}_q(0)$. Recall that $N_0=2(n_0+1)$ with $n_0$ given by Proposition \ref{bigC}.\\
$(i)$ For $p\ge 0$, $j\ge q+N_0+1$ and $k\in \xN$ there exists $\cF_k$ independent of $\k,~v$ such that
\[
\lA \left(v\circ S_p\k\right)_j \rA_{L^2(\xR^d)}  \le 2^{-jk}2^{p(k-\rho)_+}\lA v\rA_{L^2}\cF_k(m_0, \lA \k'\rA_{C^\rho}).
\]
$(ii)$ For $p\ge p_0$, $0\le \ell\le \ell'\le q-N_0-1$ and $k\in \xN$ there exists $\cF_k$  independent of $\k,~v$ such that
\[
\lA \sum_{j=\ell}^{\ell'}\left(v\circ S_p\k\right)_j \rA_{L^2(\xR^d)} \le 2^{-qk}2^{p(k-\rho)_+}\lA v\rA_{L^2}\cF_k(m_0,\lA \k'\rA_{C^\rho}).
\]
$(iii)$ Set $R_pu=[u_p\circ S_p\k]_p-(u_p\circ S_p\k)$. For any $p\ge p_0$, there exists $\cF_k$  independent of $\k,~u$ such that
\[
\lA R_p u\rA_{L^2}\le 2^{-p\rho}\lA u_p\rA_{L^2}\cF_k(m_0, \lA \k'\rA_{C^\rho}).
\]
\end{lemm}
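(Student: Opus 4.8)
The plan is to prove parts (i) and (ii) by a non-stationary phase analysis of the Fourier integral operator $v\mapsto (v\circ S_p\k)_j$, and then to deduce (iii) from (i)--(ii) by summing the dyadic blocks $(u_p\circ S_p\k)_j$ with $|j-p|>\widetilde N$. For (i), after inserting a smooth cutoff $\vartheta_q$ equal to $1$ on $\mathcal C_q(0)$ and supported in $\mathcal C_q(1)$ (so $v=\vartheta_q(D)v$) and passing to a physical-side representation, one writes $(v\circ S_p\k)_j$ as an oscillatory integral with phase $\Phi(x,y,\xi,\eta)=(x-y)\cdot\xi+S_p\k(y)\cdot\eta$, with $\xi$ cut off by $\varphi_j$ and $\eta$ by $\vartheta_q$. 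The decisive point is that $\nabla_y\Phi={}^tS_p\k'(y)\eta-\xi$, and that on $\supp\varphi_j(\xi)\,\vartheta_q(\eta)$ with $j\ge q+N_0+1$ the quantitative lower bound in the proof of Proposition \ref{bigC} (after, if needed, slightly enlarging $n_0$) gives not merely $|\nabla_y\Phi|\ge1$ but $|\nabla_y\Phi|\ge c\,2^{j}$ with $c=c(m_0,\lA\k'\rA_{L^\infty})>0$.

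One then integrates by parts $k$ times in $y$ with the operator $L_y=|\nabla_y\Phi|^{-2}\nabla_y\Phi\cdot\nabla_y/i$, which preserves $e^{i\Phi}$, so that the amplitude $1$ becomes a symbol $b_k=({}^tL_y)^k[1]$. Since the $y$-derivatives of $\Phi$ of order $m\ge 2$ are ${}^t\nabla^{m}(S_p\k)(y)\,\eta$, and $\lA\nabla^{m}(S_p\k)\rA_{L^\infty}\lesssim 2^{p(m-1-\rho)_+}\lA\k'\rA_{C^\rho}$ by \eqref{auxi:21} applied to $\k'$, a Fa\`a di Bruno bookkeeping — in which the frequency gap $j\ge q+N_0+1$ is used to absorb the powers of $2^{q}$ coming from $|\eta|$ — yields
\[
|\partial_\eta^{\alpha}b_k(y,\xi,\eta)|\;\lesssim\; 2^{-jk}\,2^{p(k-\rho)_+}\,2^{-q|\alpha|}\;\mathcal F_{k,\alpha}(m_0,\lA\k'\rA_{C^\rho})
\]
on the relevant support. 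Carrying out the $\xi$-integral (non-stationary in $\xi$, producing a kernel concentrated at scale $2^{-j}$ in $x-y$) and then the $\eta$-integral (concentrated at scale $2^{-q}$ in $S_p\k(y)-z$) represents $(v\circ S_p\k)_j$ as an integral operator whose kernel $\mathcal K$ obeys
\[
|\mathcal K(x,z)|\;\lesssim\; 2^{-jk}\,2^{p(k-\rho)_+}\,\mathcal F_k(m_0,\lA\k'\rA_{C^\rho})\;2^{qd}\,\langle 2^{q}(S_p\k(x)-z)\rangle^{-N}.
\]
Schur's lemma, together with the change of variables $z\mapsto(S_p\k)^{-1}(z)$ — licit for $p\ge p_0$ since $|\det S_p\k'|\ge m_0/2$ by \eqref{S_pk'}; the finitely many $p<p_0$ being handled directly, as then $2^{p(k-\rho)_+}$ is bounded and $S_p\k$ has uniformly bounded derivatives of every order — then gives (i).

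Part (ii) is the symmetric regime $j\le q-N_0-1$, where Proposition \ref{bigC}, case $(ii)$, applies exactly because $p\ge p_0$ (so that $\det S_p\k'$ stays bounded below and, via the adjugate formula, $(S_p\k')^{-1}$ is uniformly bounded), and furnishes $|\nabla_y\Phi|\gtrsim 2^{q}$ uniformly for all $0\le j\le q-N_0-1$ and also for the multiplier $\phi_{\ell'+1}-\phi_\ell$ in place of $\varphi_j$. Running the same integration by parts with that multiplier — which is why the statement bounds the whole sum $\sum_{\ell\le j\le\ell'}$, estimating the blocks one at a time costing a superfluous factor $\lesssim q$ — gives $|b_k|\lesssim 2^{-qk}2^{p(k-\rho)_+}\mathcal F_k$, and Schur's lemma concludes as before. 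Finally, for (iii) one writes $R_pu=-\sum_{|j-p|>\widetilde N}(u_p\circ S_p\k)_j$; since $\supp\widehat{u_p}\subset\mathcal C_p(0)$ and $\widetilde N\ge N_0$, part (i) with $q=p$ applies to the tail $j>p+\widetilde N$ and part (ii) with $q=p$, $\ell=0$, $\ell'=p-\widetilde N-1$ to the block $0\le j<p-\widetilde N$. Fixing once and for all an integer $k>\rho$, the first contributes $\sum_{j>p+\widetilde N}2^{-jk}2^{p(k-\rho)}\mathcal F_k\lesssim 2^{-\widetilde Nk}2^{-p\rho}\mathcal F_k$ and the second $2^{-pk}2^{p(k-\rho)}\mathcal F_k=2^{-p\rho}\mathcal F_k$, whose sum is $\lA R_pu\rA_{L^2}\lesssim 2^{-p\rho}\lA u_p\rA_{L^2}\mathcal F_k(m_0,\lA\k'\rA_{C^\rho})$.

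I expect the main obstacle to be the bookkeeping in the integration-by-parts step: one must see simultaneously that the decay is the \emph{full} $2^{-jk}$, which forces using the quantitative bound $|\nabla_y\Phi|\gtrsim2^{j}$ rather than the qualitative $|\nabla_y\Phi|\ge1$ of Proposition \ref{bigC}, and that the \emph{only} loss in $p$ is $2^{p(k-\rho)_+}$, which requires estimating every term of $({}^tL_y)^k[1]$ against the Bernstein bounds for the derivatives of $S_p\k$, uniformly in the partition size $n_0$. In addition, one must operate at the level of operator kernels and Schur's lemma rather than with pointwise estimates, because $v\circ S_p\k$ need not decay at infinity — and this is exactly where the diffeomorphism property of $S_p\k$ for $p\ge p_0$ (Proposition \ref{bigC}, \eqref{S_pk'}) enters.
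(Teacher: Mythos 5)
Your proposal is correct and follows essentially the same route as the paper: the paper's own proof of Lemma \ref{lem:main} simply declares that $(i)$ and $(ii)$ follow \emph{mutatis mutandis} from Alinhac's Lemma $2.1.2$ by non-stationary phase integration by parts, records only the two points you also isolate as decisive --- the lower bound $\la S_p\k'(y)\eta-\xi\ra\ge 1$ from Proposition \ref{bigC} and the telescoping identity $\sum_{j=\ell}^{\ell'}\varphi(2^{-j}\xi)=\phi(2^{-\ell'-1}\xi)-\phi(2^{-\ell-1}\xi)$ explaining why $(ii)$ bounds the whole sum --- and obtains $(iii)$ from $(i)$--$(ii)$ with $k>\rho$ exactly as you do. Your write-up merely fills in the details the paper delegates to \cite{Alipara} (the quantitative gradient bound $\gtrsim 2^j$, the Fa\`a di Bruno/Bernstein bookkeeping giving $2^{p(k-\rho)_+}$, and the Schur-lemma conclusion using $\la\det S_p\k'\ra\ge m_0/2$ for $p\ge p_0$), so no further comparison is needed.
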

\begin{proof}
First, it is clear that $(iii)$ is a  consequence of $(i)$ and $(ii)$ both applied with $k>\rho$. The proof of $(i)$ and $(ii)$ follows {\it mutadis mutandis} that of Lemma $2.1.2$, \cite{Alipara}, using the technique of integration by parts with non-stationary phase. We only explain how to obtain the non-stationariness here. Let $\widetilde \varphi=1$ on $\mathcal{C}(0)$ and $\supp\widetilde\varphi\subset \mathcal{C}(1)$. The phase of the integral (with respect to $y$) appearing in the expression of $(v\circ S_p\k)_j $ and $\sum_{j=\ell}^{\ell'}(v\circ S_p\k)_j$ is 
\[
S_p\k(y)\eta-y\xi
\]
where,
\begin{itemize}
\item in case $(i)$, $(\eta, \xi)\in \supp (\widetilde\varphi(2^{-q}\cdot)\times \supp\varphi(2^{-j}\cdot)$,\\
 \item in case $(ii)$, $(\eta, \xi)\in \supp (\widetilde\varphi(2^{-q}\cdot)\times \supp \phi(2^{-\iota}\cdot)$ with $\iota=\ell$ or $\ell'+1$, which comes from the fact that 
\[
\sum_{j=\ell'}^\ell \varphi(2^{-j}\xi)=\phi(2^{-\ell}\xi)-\phi(2^{-\ell'-1}).
\]
\end{itemize}  
In both cases, 
\[
\la \partial_y(S_p\k(y)\eta-y\xi)\ra=\la S_p\k'(y)\eta-\xi\ra \ge 1
\]
by virtue of Proposition \ref{bigC}.
%Remark also that the dependence on $m_0$ appears only when one makes the change of variable $x\mapsto y=S_k\k$ to evaluate the $L^2$-norm of some function of the form $w\circ S_p\k$.
\end{proof}
\subsubsection{Proof of Theorem \ref{theo:operation}}
By definition \ref{k:global} of the global paracomposition $
\k_g^*u=\sum_{p= 0}^\infty[u_p\circ \k]_p.$ Since each $[u_p\circ \k]_p$ is spectrally localized in a dyadic cell depending on $n_0=\cF(m, \|\k'\|_{L^\infty})$, the theorem follows from Lemma \ref{boundSj} after making the change of variables $y=\k (x)$.
\subsubsection{Proof of Theorem \ref{theo:linearization}}
Using the dyadic partition $1=\sum_{p\ge 0}u_p$  and the fact that $S_p\to Id$ in $\mathcal{S}'$ we have in $\mathcal{D}'(\xR_1^d)$
\[
u\circ \k=\sum_{p\ge 0}u_p\circ\k=\sum_{p\ge 0}\sum_{q\ge 0}\left(u_p\circ S_{q+1}\k-u_p\circ S_q\k\right)+\sum_{p\ge 0}u_p\circ S_0\k.
\]
Denoting by $S$ the first right-hand side term, one has by Fubini,
\[
S=\sum_{q\ge 0}\sum_{0\le p\le q}(u_p\circ S_{q+1}\chi-u_p\circ S_q\k)+\sum_{q\ge 0}\sum_{p\ge q+1}(u_p\circ S_{q+1}\k-u_p\circ S_q\k)=:(I)+(II).
\]
For $(I)$ we take the sum in $p$ first and notice that $S_0=\Delta_0$ to get
\[
(I)=\sum_{q\ge 0}(S_qu\circ S_{q+1}\k-S_qu\circ S_q\k).
\]
For $(II)$ we write
\[
(II)=\sum_{p\ge 1}\sum_{0\le q\le p-1}(u_p\circ S_{q+1}\k-u_p\circ S_q\k)=\sum_{p\ge 1}(u_p\circ S_p\k-u_p\circ S_0\k).
\]
Summing up, we derive
\bq\label{ucircchi}
u\circ \k=\sum_{p\ge 0}u_p\circ S_p\k+\sum_{q\ge 0}(S_qu\circ S_{q+1}\k-S_qu\circ S_q\k)=:A+B.
\eq
Thanks to lemma \ref{defialte:R}, there hold
\bq\label{writeA}
A=\sum_{p\ge 0}u_p\circ S_p\k=\k^*_gu+\R_gu,\quad\text{with}
\eq
\bq\label{linear:R1}
\lA \R_gu\rA_{H^{s+\rho}}\le \cF(m_0, \lA \k'\rA_{C^{\rho}})\lA u'\rA_{H^{s-1}}\left(1+\lA \partial^{\alpha_0}\k\rA_{H^{r+1-\alpha_0}}\right).
\eq
On the other hand, $B=\sum_{q\ge 0}B_q$ with 
\[
B_q\defn S_{q}u\circ S_{q+1}\k-S_{q}u\circ S_q\k=r_{q+1}\k_{q+1}+S_{q-N+1}(u'\circ \k)\k_{q+1}
\]
where 
\[
r_{q+1}=\int_0^1(S_{q}u')(tS_{q+1}\k+(1-t)S_q\k)\di t-S_{q-N+1}(u'\circ \k).
\]
By definition of truncated paradifferential operators
\bq\label{Tdot:1}
\sum_{q\ge 0}S_{q-N+1}(u'\circ \k)\k_{q+1}=\sum_{p\ge 1}S_{p-N}(u'\circ \k)\k_{p}=\dot T_{u'\circ \k}\k.
\eq
Thus, it remains to estimate
\[
\sum_{q\ge 0}r_{q+1}\k_{q+1}=\sum_{q\ge 1}r_q\k_q.
\]
\hk $(i)$ {\it Case 1}: $0< \sigma <1,~\sigma+\rho>1$\\
In this case, we see that $u\circ \k\in C^{\sigma}$, hence $(u\circ \k)'\in C^{\sigma-1}$ with norm bounded by $\cF(m_0, \lA \k'\rA_{C^\rho})\lA u\rA_{C^\sigma}$. Then, using \eqref{tame:H<0} with $\alpha=1-\sigma,~\beta=\rho_{-}$ yields
\[
\lA u'\circ \k\rA_{C^{\sigma-1}}=\lA (\k')^{-1}(u\circ \k)'\rA_{C^{\sigma-1}}\le  \cF(m_0, \lA \k'\rA_{C^\rho})\lA (\k')^{-1}\rA_{C^{\rho_{-}}}\lA (u\circ \k)'\rA_{C^{\sigma-1}}.
\]
By writing $(\k')^{-1}=\frac{1}{\det (\k')}\text{adj}(\k')$ we get easily that 
$
 \lA (\k')^{-1}\rA_{C^{\rho_{-}}}\le \cF(m_0, \lA \k'\rA_{C^\rho})
$
and hence 
\[
\lA u'\circ \k\rA_{C^{\sigma-1}}\le \cF(m_0, \lA \k'\rA_{C^\rho})\lA u\rA_{C^\sigma}.\]
Now,  we claim that
\bq\label{notC1}
\forall q\ge 1,~ \forall \alpha\in \xN^d,~ \lA \partial_x^\alpha r_p\rA_{L^\infty}\le 2^{q(|\alpha|+1-\sigma)}\cF_\alpha(m_0, \lA \k'\rA_{C^\rho})\lA u\rA_{C^\sigma}.
\eq
 Since $\sigma-1<0$ it follows from \eqref{auxi:21'} that
\[
\lA \partial_x^\alpha S_{q-N}(u'\circ \k)\rA_{L^\infty}\le C_\alpha 2^{q(|\alpha|+1-\sigma)}\lA u'\circ \k\rA_{C^{\sigma-1}}\le 2^{q(|\alpha|+1-\sigma)}\cF_\alpha(m_0, \lA \k'\rA_{C^\rho})\lA u\rA_{C^\sigma}.
\]
Thus, to obtain \eqref{notC1} it remains to prove 
\bq\label{notC1:1}
\forall q\ge 1,~ \forall \alpha\in \xN^d,~ \lA \partial_x^\alpha (S_qu'(S_q\k))\rA_{L^\infty}\le 2^{q(|\alpha|+1-\sigma)}\cF_\alpha(m_0, \lA \k'\rA_{C^\rho})\lA u\rA_{C^\sigma}.
\eq
By interpolation, this will follow from the corresponding estimates for $\alpha=0$ and $|\alpha|>1+\rho$. Again, since $\sigma-1<0$ we have \eqref{notC1:1} for $\alpha=0$.\\
 Now, consider $|\alpha|>1+\rho$. By the Faa-di-Bruno formula $((S_qu')\circ (S_q\k))^{(\alpha)}$ is a finite sum of  terms of the following form 
\[
A=(S_qu')^{(m)}\prod_{j=1}^t [(S_q\k)^{(\gamma_j)}]^{s_j},
\]
where $1\le |m|\le |\alpha|,~ |\gamma_j|\ge 1,~|s_j|\ge1,~\sum_{j=1}^t|s_j|\gamma_j=\alpha,~\sum_{j=1}^ts_j=m$.\\
By virtue of \eqref{auxi:21},  one gets
\begin{align*}
\lA (S_q\k)^{(\gamma_j)}\rA_{L^\infty}=\lA (S_q\k')^{(\gamma_j-1)}\rA_{L^\infty}&\le
\begin{cases}
C2^{q(|\gamma_j|-1-\rho)}\lA  \k'\rA_{C^{\rho}},\quad &\text{if}~ |\gamma_j|-1>\rho\\
C\lA \k'\rA_{C^\rho}, \quad &\text{if}~|\gamma_j|-1<\rho\\
C_\eta2^{q\eta}\lA \k'\rA_{C^{\rho}},\forall \eta>0 &\text{if}~|\gamma_j|-1=\rho.
\end{cases}
\\
& \le C_\alpha 2^{q(|\gamma_j|-1)(1-\frac{\rho}{|\alpha|-1})}\lA \k'\rA_{C^\rho}.
\end{align*}
Consequently,
\bq\label{est:Pik}
\lA \prod_{j=1}^t [(S_q\k)^{(\gamma_j)}]^{s_j}\rA_{L^\infty}\le  C_\alpha 2^{q(|\alpha|-|m|)(1-\frac{\rho}{|\alpha|-1})}\lA \k'\rA^{|m|}_{C^\rho}.
\eq
Combining \eqref{est:Pik} with the estimate (applying \eqref{auxi:21} since $m+1>\sigma$)
\[
\lA (S_qu')^{(m)}\rA_{L^\infty}\le C_m 2^{q(m+1-\sigma)}\lA u\rA_{C^{\sigma}}
\]
yields
\[
\lA \partial_x^\alpha (S_qu'(S_q\k))\rA_{L^\infty}\le 2^{qM}\cF_\alpha(\lA \k'\rA_{C^\rho})\lA u\rA_{C^{\sigma}}
\]
with 
\[
M=(m+1-\sigma)+(|\alpha|-m)(1-\frac{\rho}{|\alpha|-1}) \le |\alpha|+1-\sigma,
\]
which concludes the proof the claim  \eqref{notC1:1}.\\
On the other hand, according to Lemma \ref{auxi:1} $(ii)$ for any $q\ge 1, \alpha\in \xN$ there holds
\bq
\begin{aligned}\label{noteC1:2}
\lA \partial_x^\alpha \k_q\rA_{L^2}&\le C_\alpha 2^{q|\alpha|}\lA  \k_q\rA_{L^2}\le  C_\alpha 2^{q(|\alpha|-|\alpha_0|)}\lA  \partial^{\alpha_0}\k_q\rA_{L^2}\\
&\le C_\alpha  2^{q(|\alpha|-|\alpha_0|)}2^{-q(r+1-|\alpha_0|)}a_p=C_\alpha  2^{q(|\alpha|-r-1)}a_p,
\end{aligned}
\eq
with 
\[
\sum_{q\ge 1} a_q^2\le \cF(m_0, \lA\k'\rA_{C^\rho})\lA \partial_x^{\alpha_0}\k \rA_{H^{r+1-\alpha_0}}^2.
\]
We deduce from \eqref{notC1:1} and \eqref{noteC1:2} that
\[
\forall \alpha\in \xN^d,~\forall q\ge 1,~ 
\lA \partial_x^\alpha (r_q\k_q)\rA_{L^2}\le 2^{q(|\alpha|-r-\sigma)}\cF_\alpha(\lA \k\rA_{C^\rho})\lA u\rA_{C^{\sigma}}a_q.
\]
By the assumption $ r+\sigma>0$  we conclude 
\bq\label{linear:R2}
\|\sum_{q\ge 1}r_q\k_q\|_{H^{r+\sigma}}\le \lA u\rA_{C^{\sigma_{-}}}\cF(\lA \k\rA_{C^\rho})\lA \partial_x^{\alpha_0}\k \rA_{H^{r+1-\alpha_0}}.
\eq
Combining \eqref{writeA}, \eqref{linear:R1}, \eqref{Tdot:1}, \eqref{linear:R2} we obtain the assertion $(i)$ of Theorem \ref{theo:linearization}.\\
$(ii)$ {\it Case 2}: $\sigma>1$. This case was studied in \cite{Alipara}. One writes
\[
S_{q-N}(u'\circ \k)=t_q+S_{q-1}u'\circ S_{q-1}\k+s_q.
\]
with
\[
t_q=S_{q-N}(u'\circ \k)-S_{q-1}(u'\circ \k),~s_q=S_{q-1}(u'\circ \k)-S_{q-1}u'\circ S_{q-1}\k.
\]
Plugging this into $r_q$ gives 
\[
r_q=z_q-t_q-s_q
\]
with
\begin{align*}
z_q&=\int_0^1(S_{q-1}u')(tS_{q}\k+(1-t)S_{q-1}\k)\di t-S_{q-1}u'\circ S_{q-1}\k\\
&=\k_q\int_0^1t\int_0^1((S_{q-1}u'))'(S_{q-1}\k+st\k_q)\di s\di t.
\end{align*}
Now we estimate the $L^\infty$-norm of derivatives of $r_q$. Since
\[t_q=-\sum_{j=q-N+1}^{q-1}(u'\circ \k)_j
\]
we get with $\eps=\min(\sigma-1, \rho+1)_{-}$ 
\bq\label{stz}
\forall q\ge 1, \forall \alpha\in \xN,~ \lA  \partial_x^\alpha t_q\rA_{L^\infty}\le 2^{p(|\alpha|-\eps)}\cF_\alpha(\lA \k'\rA_{C^{\rho}})(1+\lA u\rA_{C^\sigma}).
\eq
Applying Lemma \ref{lemm:appr} we have the same estimate as \eqref{stz} for $s_q$. Finally, following exactly part d) of the proof of Lemma $3.1$, \cite{Alipara} with the use of Lemma \ref{auxi:2} one obtains the same bounds for $z_q$.\\
We conclude by using \eqref{noteC1:2}  that
\[
\| \sum_{q\ge 1}r_q\k_q\|_{H^{r+1+\eps}}\le (1+\lA u\rA_{C^\sigma})\lA \partial_x^{\alpha_0}\k \rA_{H^{r+1-\alpha_0}}\cF(\lA \k'\rA_{C^{\rho}}),
\]
which combines with \eqref{linear:R1} gives the  assertion $(ii)$ of Theorem \ref{theo:linearization}.
%%%%%%%%%%%%%%%%%%%%%%%%%
\subsubsection{Proof of Theorem \ref{theo:conjugation}}
We recall first the following lemma in \cite{Alipara}.
\begin{lemm}[\protect{\cite[Lemme d), page 111]{Alipara}}]\label{lem:pseudo}
Let $K\subset \xR^d$ be a compact set. Let $a(x, y, \eta)$ be a bounded function;  $C^\infty$ in $\eta$ and its support w.r.t $\eta$ is contained in $K$; its derivatives w.r.t $\eta$ are bounded. For every $p\in \xN$, define the associated pseudo-differential operator
\[
A_pv(x)=\int e^{i(x-y)\xi}a(x, y, 2^{-p\xi})v(y)\di y\di\xi.
\]
 Then, there exist  a constant $C>0$ independent of $a, p$  and an integer $k_1=k_1(d)$ such that with 
\[
M=\sup_{|\alpha|\le k_1}\lA \partial_\eta^\alpha a\rA_{L^\infty(\xR^d\times\xR^d\times \xR^d)}
\]
we have
\[
\forall v\in L^2(\xR^d),~\lA A_pv\rA_{L^2}\le CM\lA v\rA_{L^2}.
\]
\end{lemm}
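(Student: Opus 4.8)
The plan is to exhibit a pointwise bound on the Schwartz kernel of $A_p$ and then conclude by Schur's test. Since $a(x,y,\cdot)$ is bounded with support in the fixed compact set $K$, the integral defining $A_p$ is in fact absolutely convergent, and one has $A_pv(x)=\int K_p(x,y)v(y)\,\di y$ with
\[
K_p(x,y)=\int e^{i(x-y)\xi}\,a(x,y,2^{-p}\xi)\,\di\xi .
\]
The change of variables $\xi=2^p\eta$ gives $K_p(x,y)=2^{pd}\,b\big(x,y,2^p(x-y)\big)$, where $b(x,y,z):=\int e^{iz\eta}a(x,y,\eta)\,\di\eta$ is, up to a constant, the inverse Fourier transform of $a$ in its third slot. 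This already reduces everything to decay estimates on $b$.

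Next I would use that all $\eta$-derivatives of $a$ are bounded, together with the compactness of the $\eta$-support, to obtain rapid decay of $b$ in $z$: integrating by parts in $\eta$ via $(1-\Delta_\eta)^M e^{iz\eta}=(1+|z|^2)^M e^{iz\eta}$ (no boundary terms, since $a(x,y,\cdot)$ has compact support) yields, for every $M\in\xN$,
\[
(1+|z|^2)^{M}\,|b(x,y,z)|\le |K|\,\sup_{|\alpha|\le 2M}\lA\partial_\eta^\alpha a\rA_{L^\infty},
\]
$|K|$ denoting the Lebesgue measure of $K$. Choosing $k_1=2M$ to be the smallest even integer strictly larger than $d$ and using $1+|z|^2\ge\tfrac12(1+|z|)^2$, this translates into the kernel estimate
\[
|K_p(x,y)|\le C_d\,M\,\frac{2^{pd}}{(1+2^p|x-y|)^{k_1}},\qquad M:=\sup_{|\alpha|\le k_1}\lA\partial_\eta^\alpha a\rA_{L^\infty},
\]
with $C_d$ depending only on $d$ and $|K|$.

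Finally, the decisive observation is that the dilation $w=2^p(x-y)$ has Jacobian $2^{pd}$, which exactly cancels the prefactor above; hence
\[
\sup_x\int_{\xR^d}|K_p(x,y)|\,\di y=\sup_y\int_{\xR^d}|K_p(x,y)|\,\di x\le C_d\,M\int_{\xR^d}\frac{\di w}{(1+|w|)^{k_1}}=:C\,M,
\]
the remaining integral being finite precisely because $k_1>d$, and the bound being uniform in $p$. Schur's lemma then gives $\lA A_pv\rA_{L^2}\le CM\lA v\rA_{L^2}$ for $v\in\mathcal S$, and the extension to all $v\in L^2$ is immediate since $K_p(x,\cdot)\in L^1$ uniformly in $x$. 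The argument is entirely classical and I do not expect a genuine obstacle; the one point deserving attention is the uniformity in $p$, which is forced by the scaling structure of the symbol $a(x,y,2^{-p}\xi)$ — the dyadic frequency dilation is matched by the $2^{pd}$ in the kernel — together with the fact that $K$ is kept fixed, so that the seminorms $\lA\partial_\eta^\alpha a\rA_{L^\infty}$ controlling the decay of $b$ are themselves dilation-invariant.
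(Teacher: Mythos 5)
Your argument is correct: the paper does not actually prove this lemma but cites it from Alinhac, and your proof is precisely the classical one underlying that reference — write the Schwartz kernel as $2^{pd}\,b(x,y,2^p(x-y))$ with $b$ the inverse Fourier transform of $a$ in $\eta$, get rapid decay of $b$ from the compact $\eta$-support and bounded $\eta$-derivatives up to order $k_1>d$, and conclude by Schur's test, the $2^{pd}$ prefactor being absorbed by the dilation Jacobian uniformly in $p$. The only cosmetic point is the double use of the letter $M$ (integration-by-parts order versus the symbol seminorm), which does not affect the argument.
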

Now we quantify the proof of Lemma $3.3$  in \cite{Alipara}.  Let  $m,~s\in \xR$,  $\tau>0$, $\eps=\min(\tau, \rho)$ and $h(x, \xi)\in \Gamma^m_{\tau}$, homogeneous in $\xi$. We say that a quantity $Q$ is controllable if  $\lA Q\rA_{H^{s-m+\eps}}$ is bounded by the right-hand side of  \eqref{main:remainder} and therefore can be neglected. Also, by $A\sim B$ we mean that $A-B$ is controllable.\\
{\bf Step 1.} First, by Lemma \ref{defialte:R} we have
\bq\label{pre}
\k^*_g(T_hu)\sim \sum_{p\ge 0}\left [\Delta_pT_hu\circ S_p\k\right]_p.
\eq
Then with $v^q= (S_{q-N}h)(x, D)u_q$, it holds that
\[
\k^*_g(T_hu)\sim\sum_{p\ge 0}\sum_{q\ge 1}\left [\Delta_p v^q\circ S_p\k\right]_p.
\]
One can see easily that if $N$ is chosen larger than $n$ enough then the spectrum of $v^q$ is contained in the annulus 
\[
\left\{\xi\in \xR^d: 2^{p-M_1}\le |\xi|\le 2^{p+M_1}\right\}
\]
with $M_1=M_1(N, n_0)$. This implies 
\[
\Delta_pv^q=0~\text{if}~|p-q|>M:=M_1+n_0+1=M(N, n_0)
\]
and thus, 
\[
\k^*_g(T_hu)=\sum_{|p-q|\le M}\left [\Delta_p v^q\circ S_p\k\right]_p.
\]
Set
\begin{align*}
S_1&=\sum_{|p-q|\le M}\Big( \left [\Delta_p v^q\circ S_p\k\right]_p-\left [\Delta_p v^q\circ S_p\k\right]_q\Big), \\
S_2&=\sum_{|p-q|\le M}\Big( \left [\Delta_p v^q\circ S_p\k\right]_q-\left [\Delta_p v^q\circ \k\right]_q\Big).
\end{align*}
We shall prove that $S_1,~S_2$ are controllable so that 
\bq\label{def:kT}
\k^*_g(T_hu)\sim\sum_{p, q\ge 0}\left [\Delta_p v^q\circ \k\right]_q=\sum_{q\ge 0}\left [ v^q\circ \k\right]_q=\sum_{p\ge 0} \left [(S_{p-N}h)\Delta_p u\circ \k\right]_p.
\eq
The estimate for $S_2$ is proved along the same lines as in the proof of Lemma \ref{defialte:R}. We now consider $S_1$. If we choose $\widetilde N\gg M+N_0$ in the definition of $[\cdot]_p$ then 
\[S_1=\sum_{\substack{p, q\\|p-q|\le M}}\sum_{\substack{j\\N_0<|j-p|\le \widetilde N}}\Delta_j(\Delta_pv^q\circ S_p\k)-\sum_{\substack{p, q\\|p-q|\le M}}\sum_{\substack{j\\N_0<|j-q|\le \widetilde N}}\Delta_j(\Delta_pv^q\circ S_p\k)=S_{1,1}-S_{1,2}.
\]
Each of $S_{1,1}$ and $S_{1,2}$ is treated in the same way. Let us consider $S_{1,1}=\sum_ja_j^1+\sum_ja_j^2,$
\[
a_j^1=\sum_{\substack{p\\ p<j-N_0\\|p-j|\le \widetilde N}}\sum_{\substack{q\\ |q-p|\le M}}\Delta_j(\Delta_pv^q\circ S_p\k),\quad a_j^2=\sum_{\substack{p\\ p>j+N_0\\|p-j|\le \widetilde N}}\sum_{\substack{q\\ |q-p|\le M}}\Delta_j(\Delta_pv^q\circ S_p\k).
\]
Since for all $q$ 
\[
\lA v^q\rA_{L^2}\le M^m_0(h)\lA \Delta_qu\rA_{H^m},
\]
by  virtue of Lemma \ref{lem:main} $(i)$ (applied with $k>\rho$) one has
\begin{align*}
\lA a_j^1\rA_{L^2}&\le \sum_{p< j-N_0, |p-j|\le \widetilde N, |q-p|\le M}C_k2^{-jk}2^{p(k-\rho)}M^m_0(h)\lA \Delta_qu\rA_{H^m}\cF_k(m_0, \lA \k'\rA_{C^\rho})\\
&\le \sum_{|q-j|\le M+\widetilde N}C_k2^{-j\rho+mq}M^m_0(h)\lA \Delta_qu\rA_{L^2}\cF_k(m_0, \lA \k'\rA_{C^\rho})\\
&\le C_k2^{-j(\rho-m+s)}M^m_0(h)\cF_k(m_0, \lA \k'\rA_{C^\rho})\sum_{|q-j|\le M+\widetilde N}b_q.
\end{align*}
with $\lA b\rA_{\ell^2}\le C\lA u\rA_{H^s}$.
Then, thanks to the spectral localization of $a_j^1$ we conclude that
\[
\| \sum_j a_j^1\|_{H^{s-m+\rho}}\le M^m_0(h)\lA u\rA_{H^s}\cF_k(m_0, \lA \k'\rA_{C^\rho}).
\]
For the second sum $\sum a_j^2$ we apply Lemma \ref{lem:main} $(ii)$.\\
{\bf Step 2.} Recall from \eqref{def:kT} that 
\[
\k_g^* T_hu=\sum_{p\ge 0}[A_p]_p,\quad A_p=\big( (S_{p-N}h)(x, D)u_p\big)\circ \k.
\]
One writes
\[
A_p(y)=\int e^{i(\k(y)-y')\xi}\widetilde\varphi(2^{-p}\xi)(S_{p-N}h)(\k(y), \xi)u_p(y')\di y'\di\xi 
\]
where $\widetilde \varphi$ is a cutt-off function analogous to $\varphi$ and equal to $1$ on the support of $\varphi$.\\
In the expression of $A_p$ we make two changes of variables
\[
y'=\k(z),\quad \xi={}^t\R^{-1}\eta, \quad 
\R=\R(y, z):=\int_0^1\k'(ty+(1-t)z)\di t
\]
to derive
\[
A_p(y)=\int e^{i(y-z)\eta}\widetilde\varphi(2^{-p}. {}^t\R^{-1})(S_{p-N}h)(\k(y), {}^t\R^{-1}\eta)u_p(\k(z))\frac{\la \k'(z) \ra}{\la \det \R\ra}\di z\di\eta.
\]
The rest of the proof follows the same method as in the classical case classical pseudodifferential calculus except that we shall regularize first the symbol $a_p(y, z, \eta)$ of $A_p$: set
\[
b_p(y, z, \eta)=\widetilde\varphi(2^{-p}. {}^t\R_p^{-1})(S_{p-N}h)(S_p\k(y), {}^t\R_p^{-1}\eta)\frac{\la S_p\k'(z) \ra}{\la \det \R_p\ra}
\]
with 
\[
\R_p=\R_p(y, z)=\int_0^1S_p\k'(ty+(1-t)z)\di t.
\]
Thanks to the homogeneity of $S_{p-N}h$ we write $a_p(y, z, \eta)=2^{pm}\widetilde a_p(y, z, 2^{-p}\eta)$ and similarly for $\widetilde b_p$. Then due to the presence of the cut-off function $\widetilde \varphi$ one can prove without any difficulty that
\[
\forall k\in \xN,~\sup_{|\alpha|\le k}\la \partial_\eta^\alpha(\widetilde a_p-\widetilde b_p)(y, z, \eta)\ra\le C_k2^{-p\rho}\cF_k(m_0, \lA \k'\rA_{C^\rho})M^m_0(h; k+1).
\]
Therefore, in view of Lemma \ref{lem:pseudo} we see that in $\k^*_gT_hu$ the replacement of $a_p$ by $b_p$ gives rise to a controllable remainder.\\
{\bf Step 3.} 
 Next, we expand $b_p(y, z, \eta)$ by Taylor's formula w.r.t $z$ up to order $\ell=[\rho]$, at $z=y$ to have
\[
b_p(y, z, \eta)=b^0_p(y, \eta)+b^1_p(y,\eta)(z-y)+...+b^{\ell}_p(y, \eta)(z-y)^{\ell}+r_p^{\ell+1}(y, z, \eta)(z-y)^{\ell+1}
\]
where $b^j$ is the $j^{th}$-derivative of $b_p$ with respect to $z$, taken at $z=y$ and 
\[
r_p^{\ell+1}(y, z, \eta)=C\int_0^1b_p^{\ell+1}(y, y+t(z-y), \eta)\di t(z-y)^{\ell+1}.
\]
In the pseudo-differential operator $\R_p^{\ell+1}$ with symbol $r_p^{\ell+1}$ we integrate by parts w.r.t $\eta$ $\ell+1$ times to obtain a sum of symbols of the form
$2^{p(m-\ell-1)}\widetilde r_p(y ,z, 2^{-p}\eta)$,
\[
\widetilde r_p(y ,z, \eta)=C\int_0^1\partial_z^\alpha\partial_\eta^\beta\widetilde b_p(y, y+t(z-y), \eta)\di t,~|\alpha|=|\beta|=\ell+1.
\]
For $|\alpha|=\ell+1,~|\gamma|=\ell+1+k,~k\in \xN$ it holds
\[
\la \partial_z^\alpha\partial_\eta^\gamma\widetilde b_p(y, z, \eta)\ra\le C_k2^{p(\ell+1-\rho)}M^m_0(h, [\rho]+1+k)\cF_k(m_0, \lA \k'\rA_{C^\rho}).
\]
Lemma \ref{lem:pseudo} then gives for some $k_1=k_1(d)\in \xN$,
\[
\lA \R_p^{\ell+1}\rA_{L^2}\le 2^{p(\ell+1-\rho)}2^{p(m-\ell-1)}\lA u_p\rA_{L^2}M^m_0(h, [\rho]+1+k_1)\cF_k(m_0, \lA \k'\rA_{C^\rho}).
\]
Therefore, the remainder $\sum_p[R_p^{\ell+1}]_p$ is controllable.\\
{\bf Step 3.}  We write
\[
B_p^ju(y)=\int e^{i(y-z)\eta}b_p^j(y, \eta)(z-y)^ju_p(\k(z))\di z\di\eta
\]
and integrate by parts $j$ times w.r.t $\eta$ to get
\[
B_p^ju=\int e^{i(y-z)\eta}c_p^j(y, \eta) u_p(\k(z))\di z\di\eta.
\]
  The key point here is: in the expression above we shall replace $u_p\circ \k$, $p\ge 0$ by its "recoupe" $[u_p\circ \k]_p$ which will enter  $T_{h^*}\k_g^*u$. Therefore, one has to estimate the $L^2$-norm of the difference  
\[
W_p\defn  u_p\circ \k-[u_p\circ \k]_p
\]
as 
\[
\lA W_p\rA_{L^2}\le 2^{-p\rho}\cF(m_0, \lA \k'\rA_{C^\rho})\lA u_p\rA_{L^2}.
\]
For $0\le p<p_0$. We treat separately each term in $W_p$ by making the change of variables $x\mapsto \k(x)$ to have
\[
\lA W_p\rA_{L^2}\le \cF(m_0, \lA \k'\rA_{C^\rho})\lA u_p\rA_{L^2}.
\]
For $p\ge p_0$ we write
\[
W_p=(u_p\circ \k-u_p\circ S_p\k)+(u_p\circ S_p\k-[u_p\circ S_p\k]_p)+([u_p\circ S_p\k]_p-[u_p\circ \k]_p).
\]
 The second term is estimated using directly Lemma \ref{lem:main} $(iii)$. The first and the last term are treated exactly as in the first part (case 1.) of the proof of Lemma \ref{defialte:R} (see \eqref{def:appr1}). \\
Again, by virtue of Lemmma \ref{lem:pseudo} we conclude that: in $\k_gT_hu$ the replacement of $u_p\circ\k$ by $[u_p\circ \k]_p$ is $(\rho+j-m)-$regularized and controllable. \\
{\bf Step 4.} Set 
\[
C_p^ju(y)=\int e^{i(y-z)\eta}c_p^j(y, \eta) [u_p\circ \k]_p(z)\di z\di\eta.
\]
We observe that if the cut-off function $\widetilde\varphi$ is chosen appropriately then all the terms in $c_p^j$ relating to $\partial^\alpha\widetilde\varphi$ is $1$ if $\alpha=0$ and is $0$ if $\alpha\ne 0$, on the spectrum of $[u_p\circ \k]_p$. Therefore, comparing to the classical calculus \eqref{h*} for $S_{p-N}h$ we can prove that 
\[
\sup_{\substack{|\alpha|\le k\\ 0<c_1\le |\eta|\le c_2}}\la\partial^\alpha_\eta\left(c_p^j-S_{p-N}h^*_{j}\right)(y, \eta) \ra\le C_k2^{-p\eps_j}M^m_\tau(h; k+j+1)\cF_k(m_0, \lA \k'\rA_{C^\rho})
\]
with $\eps_j=\min(\tau, \rho-j)$. \\
Then,  Lemma \ref{lem:pseudo} implies that in $\k^*_gT_hu$ our replacement of $C_p^ju$ by 
\[
D_ju(y)=\int e^{i(y-z)\eta}(S_{p-N}h^*_{j})(y, \eta) [u_p\circ \k]_p(z)\di z\di\eta
\]
leaves a controllable remainder of order $m-j-\eps_j\le m-\eps$. \\
{\bf Step 5.} We have proved in step 4 that 
\[
\k^*_gT_hu\sim\sum_{j=0}^{[\rho]}\sum_p\left[D_p^ju \right]_p=\sum_{j=0}^{[\rho]}\sum_p\left[(S_{p-N}h^*_j)(x, D) [u_p\circ \k]_p\right]_p.
\]Now, notice that if in the definition of $\k_g^*T_hu$ in \eqref{pre} we had chosen instead of $[\cdot]_p$ a larger piece $[\cdot]'_p$ corresponding to $\overline N\gg \widetilde N$ (remark that such a replacement is controllable  according to Lemma \ref{defialte:R} and Lemma \ref{lem:main}) we would have obtained
\[
\k^*_gT_hu=\sum_{j=0}^{[\rho]}\sum_p\left[(S_{p-N}h^*_{j})(x, D) [u_p\circ \k]_p\right]'_p=\sum_{j=0}^{[\rho]}\sum_p\sum_{|k-p|\le \overline{N}}\Delta_k\left(S_{p-N}h^*_{j})(x, D) [u_p\circ \k]_p\right).
\]
Remark that the spectrum of $(S_{p-N}h^*_{j})(x, D) [u_p\circ \k]_p$ is contained in the annulus
\[
\left\{\xi\in \xR^d: 2^{p-M}\le |\xi|\le 2^{p+M} \right\}
\]
for some $M=M(\widetilde{N}, N)>0$. Therefore, if we choose $\overline{N}\gg M(\widetilde{N}, N)$ then 
\[
\Delta_k \left(S_{p-N}h^*_{j})(x, D) [u_p\circ \k]_p\right)=0~\text{if}~|k-p|>\overline{N}
\]
and hence
\bq\label{kT:sum}
\k^*_gT_hu=\sum_{j=0}^{[\rho]}\sum_p(S_{p-N}h^*_{j})(x, D) [u_p\circ \k]_p.
\eq
Finally, we write for $0\le j\le [\rho]$ 
\begin{align*}
T_{h^*_{j}}\k^*u&=\sum_p(S_{p-N}h^*_{j})(x, D) \Delta_p\sum_q[u_q\circ \k]_q\\
&=\sum_p(S_{p-N}h^*_{j})(x, D) \Delta_p\sum_{q}\sum_{\substack{k\\|k-q|\le \widetilde N}}\Delta_k(u_q\circ \k)\\
&=\sum_{\substack{p\\k, q:|k-p|\le N_0, |k-q|\le \widetilde N}}(S_{p-N}h^*_{j})(x, D) \Delta_p\Delta_k(u_q\circ \k).
\end{align*}
In the sum above, the replacement of $(S_{p-N}h^*_j)(x, D)$ by $(S_{q-N}h^*_j)(x, D)$ leaves a controllable remainder, so
\begin{align*}
T_{h^*_j}\k^*u
&=\sum_{\substack{k\\|p-k|\le N_0\\|q-k|\le \widetilde N}}(S_{q-N}h^*_j)(x, D) \Delta_p\Delta_k(u_q\circ \k)=\sum_{\substack{k, q\\|q-k|\le \widetilde N}}(S_{q-N}h^*_j)(x, D) \Delta_k(u_q\circ \k)\\
&=\sum_q(S_{q-N}h^*_j)(x, D)[u_q\circ \k]_q.
\end{align*}
Therefore, we conclude in view of \eqref{kT:sum} that
$
\k^*_gT_hu\sim\sum_{j=0}^{[\rho]}T_{h^*_j}\k^*u.
$
%%%%%%%%%%%%%%%%%%%%%%%%%%%%%%%%%%
\section{The semi-classical Strichartz estimate}\label{section:semi}
\subsection{Para-change of variable}
First of all, let us recall the symmetrization of \eqref{ww} to a paradifferential equation proved in \cite{NgPo1} for rough solutions. This symmetrization requires the introduction of the following symbols:
\begin{itemize}
\item $\gamma=\left(1+(\partial_x\eta)^2 \right)^{-\frac{3}{4}}|\xi|^\tdm$,\\
\item $\omega=-\frac{i}{2}\partial_x\partial_\xi\gamma$,\\
\item $q=\left(1+(\partial_x\eta)^2 \right)^{-\frac{1}{2}}$,\\
\item $p=\left(1+(\partial_x\eta)^2 \right)^{-\frac{5}{4}}|\xi|^\mez+p^{(-\mez)} $, where $p^{(-\mez)}=F(\partial_x\eta, \xi)\partial_x^2\eta$, 
$F\in C^\infty(\xR\times \xR\setminus\{0\}; \xC)$ is homogeneous of order $-1/2$ in $\xi$.
\end{itemize}
\begin{theo}[\protect{\cite[Proposition~4.1]{NgPo1}}]\label{symmetrization}
Assume that $(\eta, \psi)$ is a solution to \eqref{ww} and satisfies
\begin{equation}\label{assume:eta, psi}
\left\{
\begin{aligned}
&(\eta, \psi)\in C^0([0, T]; H^{s+\mez}(\xR)\times H^s(\xR))\cap L^4([0, T]; W^{r+\mez,\infty}(\xR)\times W^{r,\infty}(\xR)),\\
&s>r>\tdm+\mez.
\end{aligned}
\right.
 \end{equation}
Define 
\[
U:=\psi-T_B\eta,\quad \Phi=T_p\eta+T_qU,
\]
then $\Phi$ solves the problem
\bq\label{eq:para}
\partial_t\Phi+T_V\partial_x\Phi+iT_{\gamma}\Phi=f
\eq
and  there exists a function $\cF:\xR^+\times \xR^+\to \xR^+$, non-decreasing in each argument, independent of $(\eta, \psi)$ such that for a.e. $t\in [0, T]$,
\begin{equation}\label{est:RHS}
		\lA f(t)\rA_{H^s}\leq\cF\lp\lA\eta(t)\rA_{H^{s+\mez}},\lA\psi(t)\rA_{H^s}\rp\lp1+\lA\eta(t)\rA_{W^{r+\mez,\infty}}+\lA\psi(t)\rA_{W^{r,\infty}}\rp.
	\end{equation}
\end{theo}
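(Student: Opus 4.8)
Since Theorem \ref{symmetrization} is \cite[Proposition~4.1]{NgPo1}, what follows is a sketch. The plan is in three stages: paralinearize every nonlinear term of \eqref{ww}, using Alinhac's good unknown $U=\psi-T_B\eta$ to tame the Dirichlet--Neumann operator; read off the resulting $2\times 2$ paradifferential system for $(\eta,U)$; and then symmetrize and diagonalize that system through the change of unknown $\Phi=T_p\eta+T_qU$, choosing $\gamma$, $q$ and $p$ so that \eqref{eq:para} holds, all errors being collected into $f$.

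The first and decisive step is the paralinearization of $G(\eta)\psi$ at the regularity \eqref{assume:eta, psi}. Flattening the strip of thickness $h$ below $\Sigma_t$ and running the elliptic estimates for the potential $\phi$ of \eqref{phi} there, one obtains $G(\eta)\psi=T_\lambda U-T_V\partial_x\eta+R_1$, where $\lambda$ is an elliptic symbol of order $1$ (principal part $|\xi|$ in $d=1$, plus a first-order $\partial_x\eta$-dependent correction) and $R_1$ obeys the \emph{tame} bound of \eqref{est:RHS} --- linear in $\lA\eta\rA_{W^{r+\mez,\infty}}+\lA\psi\rA_{W^{r,\infty}}$; it is exactly this tame version of the Dirichlet--Neumann estimates, valid for $r>\tdm+\mez$, that \cite{NgPo1} supplies. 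In the same spirit I would paralinearize the curvature, $H(\eta)=\partial_x\bigl(\partial_x\eta\,(1+(\partial_x\eta)^2)^{-\mez}\bigr)$, getting $-H(\eta)=T_\ell\eta+R_2$ with $\ell=(1+(\partial_x\eta)^2)^{-\tdm}|\xi|^2$ at principal order, and expand the two quadratic terms of the Bernoulli law by Bony's product rule together with \eqref{BV}. Substituting into \eqref{ww}, replacing $\psi$ by $U+T_B\eta$ (so that $\partial_t\psi=\partial_tU+T_B\partial_t\eta+T_{\partial_tB}\eta$, the last term again tame) and simplifying the zeroth-order products, one reaches the system $\partial_t\eta+T_V\partial_x\eta-T_\lambda U=F_1$ and $\partial_tU+T_V\partial_xU+T_{g+\ell}\eta=F_2$, with $F_1,F_2$ obeying \eqref{est:RHS}.

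Next I would look for elliptic symbols $q\in\Sigma^0$ and $p\in\Sigma^{\mez}$ so that $\Phi=T_p\eta+T_qU$ solves \eqref{eq:para}: differentiating $\Phi$ in time, eliminating $\partial_t\eta$ and $\partial_tU$ by the two evolution equations, and matching the result against $-T_V\partial_x\Phi-iT_\gamma\Phi+f$ with the help of the symbolic calculus (Theorem \ref{theo:sc}) produces, at leading order, two symbol identities which, multiplied together, yield the dispersion relation $\gamma^2=\lambda\ell$, hence $\gamma=(1+(\partial_x\eta)^2)^{-\tq}|\xi|^{\tdm}$; these same identities fix $q=(1+(\partial_x\eta)^2)^{-\mez}$ and the principal part $(1+(\partial_x\eta)^2)^{-\frac{5}{4}}|\xi|^{\mez}$ of $p$. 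Pushing the matching to the next order --- i.e. demanding that the top-order operator be exactly $iT_\gamma$, equivalently that $iT_\gamma$ be skew-adjoint modulo order $0$ --- fixes the subprincipal symbol $\omega=-\frac{i}{2}\partial_x\partial_\xi\gamma$ of $\gamma$ and the order-$(-\mez)$ correction $p^{(-\mez)}=F(\partial_x\eta,\xi)\partial_x^2\eta$ of $p$. All the leftover pieces --- the commutators $[T_V,T_p]$, $[T_V,T_q]$, $T_pT_\lambda-T_\gamma T_q$, $T_qT_\ell+iT_\gamma T_p$, the order-$0$ gravity term $g\eta$, the remainders $R_1,R_2$ carried through $T_p,T_q$, and the time-derivative contributions $T_{\partial_tp}\eta$, $T_{\partial_tq}U$ (handled via $\partial_t\eta=G(\eta)\psi$ and the first equation) --- are of order strictly below $\tdm$ and, by the symbolic calculus and Bony's estimates \eqref{Bony1}--\eqref{Bony3}, each is controlled in $H^s$ by the right-hand side of \eqref{est:RHS}; their sum is $f$.

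The hard part is entirely the first step: obtaining the paralinearization of $G(\eta)\psi$ with a \emph{tame} remainder at the low regularity \eqref{assume:eta, psi}, with the H\"older norm $\lA\eta\rA_{W^{r+\mez,\infty}}+\lA\psi\rA_{W^{r,\infty}}$ entering only linearly and never the $H^s$ norm at top order. Because of the half-derivative gap between $H^s$ and $C^{s-\mez}_*$, this forces the elliptic estimates for $\phi$ in the flattened strip, and every paraproduct splitting along the way, to be arranged so that this gain is never lost --- the technical core of \cite{NgPo1}. Given that input, the curvature and quadratic terms are handled identically, and the symmetrization of the last step is a long but routine bookkeeping of orders in the paradifferential calculus.
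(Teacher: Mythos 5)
The paper does not prove this statement --- it is imported verbatim from \cite[Proposition~4.1]{NgPo1} --- so there is no internal proof to compare against; your sketch faithfully reproduces the strategy of that reference (itself following \cite{ABZ1}): tame paralinearization of $G(\eta)\psi$ via the good unknown $U=\psi-T_B\eta$, paralinearization of the curvature, and symmetrization of the resulting $2\times2$ system through $\Phi=T_p\eta+T_qU$ with $\gamma^2=\lambda\ell$. The symbols you derive ($\gamma$, $q$, the principal part of $p$, and $\omega=-\frac{i}{2}\partial_x\partial_\xi\gamma$) agree with those listed before the theorem, and you correctly identify the tame low-regularity Dirichlet--Neumann estimate as the genuinely hard input.
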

We assume throughout this section that $(\eta, \psi)$ is a solution to \eqref{ww} with regularity \eqref{assume:eta, psi}. We shall apply our results on the paracomposition in the preceding section to reduce further equation \eqref{eq:para} by adapting the method in \cite{ABZ2}. Define for every $(t, x)\in [0, T]\times \xR$
\[
\chi(t, x)=\int_0^x\sqrt{1+\lp\partial_y\eta(t, y) \rp^2}\di y
\]
then for each $t\in I:=[0, T]$, the mapping $x\mapsto \chi(t, x)$ is a diffeomorphism from $\xR$ to itself. Introduce then for each $t\in I$ the inverse $\k(t)$ of $\chi(t)$.\\
 Concerning the underlying dyadic partitions, we shall write 
\[
\eta(t),~\psi(t): \xR_2\to \xR_1, \quad \k(t): \xR_1\to \xR_2,
\]
where, $\xR_2$ is equipped with the dyadic partition \eqref{partition} of size $n=0$ and $\xR_1$ is equipped with the one of size $n=n_0$ determined in Proposition \ref{bigC}: $n_0=\mathcal{F}_1(m_0, \lA \k'\rA_{L^\infty})$. Since
\[
\k'(x)=\frac{1}{(\partial_x\chi)\circ \k}=\frac{1}{\sqrt{1+(\partial_x\eta)\circ \k(x))^2}},
\]
we get
\bq\label{est:m0}
m_0:=\left(1+\lA \partial_x\eta \rA_{L^\infty_tL^\infty_x}^2\right)^{-1/2}\le \k'(x)\le 1,\quad\forall x\in \xR.
\eq
Therefore, up to a constant of the form $\cF(\lA  \partial_x\eta \rA_{L^\infty_tL^\infty_x})$  we will not distinguish between $\xR_1$ and $\xR_2$ in the rest of this article.\\
\hk As mentioned in the introduction of our paracomposition results, we shall consider the linearized part of $\k_g^*$ as a new definition for paracomposition. More precisely,
we set
\bq\label{defi:u}
u=\k^*\Phi:=\Phi\circ \k-\dot T_{(\partial_x\Phi)\circ \k}\k,
\eq
where, for any function $g: I\times \xR_2\to \xC$ we have denoted
\[
(g\circ \k)(t, x)=g(t, \k(t, x)),\quad \forall (t, x)\in I\times \xR_1.
\]
Let us first gather various estimates that will be used frequently in the sequel. To be concise, we denote 
\[
\mathcal{N}=\cF(\lA \eta\rA_{L^\infty_tH^{s+\mez}_x}, \lA \psi\rA_{L^\infty_tH^s_x})
\]
where $\cF$ is non-decreasing in each argument, independent of $\eta,~\psi$ and $\cF$ may change from line to line. 
\begin{lemm}\label{est:various}
The following estimates hold
\begin{enumerate}
\item $\lA\Phi\rA_{L^\infty_tH^s_x}\le \cN$,
\item $\lA \Phi(t)\rA_{C^r_{*,x}}\le \cN\left(1+\lA  \eta(t)\rA_{W^{r+\mez,\infty}}+\lA  \psi(t)\rA_{W^{r,\infty}}\right)$,
\item $\lA \partial_x\chi-1\rA_{L^\infty_tH^{s-\mez}_x}\le \cN$,
\item $\lA \partial_t\chi\rA_{L^\infty_{t,x}}\le \cN$,
\item $\lA \partial_t\k\rA_{L^\infty_{t,x}}\le \cN$,
\item $\lA \partial_x\k\rA_{L^\infty_tW^{(s-1)_{-},\infty}_x}\le \cN$,
\item $\lA \partial_x\k-1\rA_{L^\infty_tH^{s-\mez}_x}\le \cN$,
\item  $\lA \partial_t\partial_x\chi(t)\rA_{L^\infty_x}\le \cN\left(1+\lA  \psi(t)\rA_{C^r_*}\right)$
\end{enumerate}
\end{lemm}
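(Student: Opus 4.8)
The plan is to split the eight bounds into three groups: those on $\Phi$ (items 1--2), those on the diffeomorphism $\chi$ (items 3, 4, 8), and those on its inverse $\k$ (items 5, 6, 7). For items 1--2 I would use the formula $\Phi=T_p\eta+T_qU$, $U=\psi-T_B\eta$, where $p$ has order $\mez$ and $q,B$ have order $0$. Item 1 then follows from the boundedness of paraproducts on Sobolev spaces, $\lA T_p\eta\rA_{H^s}\lesssim M^{1/2}_0(p)\lA\eta\rA_{H^{s+\mez}}$ and $\lA T_qU\rA_{H^s}\lesssim M^0_0(q)\lA U\rA_{H^s}$: the symbol seminorms involve only $\lA\partial_x\eta\rA_{L^\infty}$ and (through $p^{(-\mez)}$) $\lA\partial_x^2\eta\rA_{L^\infty}$, both $\le\cN$ since $\eta\in H^{s+\mez}\hookrightarrow C^s_*$ and $s>2$, while $\lA U\rA_{H^s}\le\cN$ is obtained from the Sobolev estimates $\lA B\rA_{H^{s-1}}+\lA G(\eta)\psi\rA_{H^{s-1}}\le\cN$ of \cite{NgPo1}. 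Item 2 is the same computation in Zygmund spaces, using the action of paraproducts on $C^\sigma_*$ and the tame Hölder estimates for $B$ and $G(\eta)\psi$ from \cite{NgPo1}; now $\lA\eta\rA_{C^{r+\mez}_*}=\lA\eta\rA_{W^{r+\mez,\infty}}$ is admissible on the right, and the remaining $\eta$-dependence of the symbols and of $B$ is absorbed into $\cN$ via Sobolev norms.

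For item 3, I would simply write $\partial_x\chi-1=\sqrt{1+(\partial_x\eta)^2}-1=F(\partial_x\eta)$ with $F$ smooth and $F(0)=0$, and invoke the Moser nonlinear estimate in $H^{s-\mez}$, which is an algebra because $s-\mez>\mez$. Items 4 and 8 force in the evolution via $\partial_t\eta=G(\eta)\psi$, hence $\partial_t\partial_x\eta=\partial_x\bigl(G(\eta)\psi\bigr)$. Item 8 reduces, after writing $\partial_t\partial_x\chi=\tfrac{\partial_x\eta}{\sqrt{1+(\partial_x\eta)^2}}\,\partial_x(G(\eta)\psi)$ with prefactor bounded by $1$, to the tame estimate $\lA\partial_x(G(\eta)\psi)\rA_{L^\infty}\le\cN\bigl(1+\lA\psi\rA_{C^r_*}\bigr)$ of \cite{NgPo1}; the key point is that this estimate places the top-order dependence on $\eta$ in $H^{s+\mez}$ (absorbed in $\cN$) and keeps only a low-order factor of $\psi$ in $C^r_*$, which is what allows $s$ to be barely above $2$ (a purely Sobolev bound would require $\psi$ well above $H^2$). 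Item 4 is the delicate one. Using $\chi(t,0)=0$ one has $\partial_t\chi(t,x)=\int_0^x\tfrac{\partial_y\eta}{\sqrt{1+(\partial_y\eta)^2}}\,\partial_y(G(\eta)\psi)\,\di y$, and integrating by parts the $y$-derivative that falls on $\tfrac{\partial_y\eta}{\sqrt{1+(\partial_y\eta)^2}}$ produces exactly the mean curvature $H(\eta)$:
\[\partial_t\chi(t,x)=\Bigl[\tfrac{\partial_y\eta}{\sqrt{1+(\partial_y\eta)^2}}\,G(\eta)\psi\Bigr]_{y=0}^{y=x}-\int_0^x H(\eta)\,G(\eta)\psi\,\di y.\]
The boundary term is controlled by $\lA G(\eta)\psi\rA_{L^\infty}\le\cN$ (since $G(\eta)\psi\in H^{s-1}\subset L^\infty$), and the integral is bounded \emph{uniformly in $x$} by $\lA H(\eta)\rA_{L^2}\lA G(\eta)\psi\rA_{L^2}\le\cN$, because the product of two $L^2$ functions lies in $L^1(\xR)$ and both $H(\eta)\in H^{s-\tdm}$ and $G(\eta)\psi\in H^{s-1}$ are $\cN$-bounded.

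For the bounds on $\k$, differentiating $\chi(t,\k(t,x))=x$ gives $\partial_t\k=-(\partial_t\chi\circ\k)\,\k'$ and $\partial_x\k=(1/\partial_x\chi)\circ\k$. Item 5 is then immediate from $\lA\k'\rA_{L^\infty}\le1$ (by \eqref{est:m0}) and item 4. For item 6, write $\k'=g\circ\k$ with $g=(1+(\partial_x\eta)^2)^{-\mez}$; since $\partial_x\eta\in H^{s-\mez}\hookrightarrow C^{(s-1)_-}_*$ one has $g\in C^{(s-1)_-}_*$ of norm $\le\cN$, and since $m_0\le\k'\le1$ I would run a finite bootstrap: the composition estimate bounding $g\circ\k$ in $C^\mu_*$ (for $\mu>1$) requires only $\k'\in C^{\mu-1}_*$ together with the lower bound, so starting from $\k'\in L^\infty$ and gaining at most one derivative at each step one reaches $\k'\in C^{(s-1)_-}_*$ after $\lceil(s-1)_-\rceil$ steps, all constants depending on $m_0$ and $\lA\k'\rA_{L^\infty}$, hence on $\cN$. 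Item 7 is the Sobolev analogue: $\partial_x\k-1=(g-1)\circ\k$ with $g-1\in H^{s-\mez}$ (Moser, vanishing at $0$, using item 3), and the composition estimate for Sobolev functions under the diffeomorphism $\k$ — whose hypotheses on $\partial_x\k$ are now furnished by item 6 — yields $\lA\partial_x\k-1\rA_{H^{s-\mez}}\le\cN$.

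I expect item 4 to be the main obstacle: a direct estimate of the integrand $\partial_t\sqrt{1+(\partial_y\eta)^2}$ only places it in $L^2$, not $L^1$, so that $\int_0^x$ could a priori grow in $x$, and the integration by parts exposing the mean curvature is precisely what allows one to pair two $L^2$ factors and obtain the $x$-uniform bound. A secondary technical point is the bootstrap in item 6 (needed because $(s-1)_->1$ while each composition estimate gains only one derivative), and, throughout, one must check that the tame estimates quoted from \cite{NgPo1} for $B$, $G(\eta)\psi$ and $\Phi$ indeed confine all high-order $\eta$-dependence to Sobolev norms controlled by $\cN$; everything else — the mapping properties of paraproducts on $H^s$ and $C^\sigma_*$, the Moser estimates, and the composition estimates for Sobolev/Hölder functions with a diffeomorphism of $\xR$ — is standard.
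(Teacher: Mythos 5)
Your proposal is correct and, for seven of the eight items, follows the same route as the paper: items 1--3 are the straightforward paraproduct/Moser bounds, item 8 is the H\"older estimate for $\partial_x G(\eta)\psi$ from \cite{NgPo1} combined with the bounded prefactor $F_0(\partial_x\eta)$, item 5 follows from $\partial_t\k=-\frac{\partial_t\chi}{\partial_x\chi}\circ\k$, and items 6--7 are the same bootstrap on the recurrence $\partial_x\k=1+G\circ\k$ (the paper carries out the fractional Sobolev step by hand via the Gagliardo double integral rather than citing a general composition estimate, but the content is identical).

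The one place you diverge is item 4, and there your diagnosis of the difficulty is off even though your fix is valid. You claim that a direct estimate of the integrand $\partial_t\sqrt{1+(\partial_y\eta)^2}$ ``only places it in $L^2$, not $L^1$,'' forcing the integration by parts that exposes $H(\eta)$. But the integrand already factors as a product of two $L^2(\xR)$ functions: $\partial_t\partial_y\eta\cdot F_0(\partial_y\eta)$ with $F_0(z)=z/\sqrt{1+z^2}$, where $\lA\partial_t\partial_y\eta\rA_{L^2}\le\lA G(\eta)\psi\rA_{H^1}\le\cN$ (using $s-1>1$) and $\lA F_0(\partial_y\eta)\rA_{L^2}\le\cN$ by the Moser estimate \eqref{F(u):H} since $F_0(0)=0$. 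Cauchy--Schwarz on $[0,x]$ then gives the $x$-uniform bound directly; this is exactly what the paper does. Your integration by parts, which instead pairs $H(\eta)\in H^{s-\tdm}\subset L^2$ with $G(\eta)\psi\in H^{s-1}\subset L^2$ and controls the boundary term by $\lA G(\eta)\psi\rA_{L^\infty}\le\cN$, is a correct alternative --- it is the same Cauchy--Schwarz mechanism with the $y$-derivative moved to the other factor --- but the detour through the mean curvature is unnecessary. No gap, just a longer path on this item.
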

\begin{proof}
The estimates 1., 2., 3.  can be deduced  straightforwardly from the definition of $\Phi$ and the regularity of $(\eta, \psi)$ given in \eqref{assume:eta, psi}.\\
\hk 4. By definition of $\chi$, 
\bq\label{for:dtchi}
\partial_t\chi(t, x)=\int_0^x\partial_t\partial_y\eta(t, y)\partial_y\eta(t, y)\left(1+\lp\partial_y\eta(t, y) \rp^2\right)^{-\mez}\di y
\eq
so by H\"older's inequality we get
\[
\la \partial_t\chi(t, x)\ra\le \lA \partial_x\partial_t\eta(t )\rA_{L^2_x}\lA F_0\lp\partial_y\eta(t) \rp\rA_{L^2_x}
\]
where $F_0(z)=\frac{z}{\sqrt{z^2+1}}$.  Using \eqref{F(u):H} and Sobolev's embedding give
$
 \lA F_0\lp\partial_y\eta(t) \rp\rA_{L^2_x}\le \cN.
$
On the other hand, using the first equation in \eqref{ww} and the fact that $s>2$ we get
\[
 \lA \partial_x\partial_t\eta(t )\rA_{L^2_x}\le  \lA G(\eta)\psi(t)\rA_{H^1_x}\le \lA G(\eta)\psi(t)\rA_{H^{s-1}_x}\le \cN.
\]
\hk 5. This follows from 4. by using the formula $\partial_t\k=-\frac{\partial_t\chi}{\partial_x\chi}\circ \k$ and noticing that ${\partial_x\chi}\ge 1$.\\
\hk 6. With $F(z)=\frac{1}{\sqrt{1+z^2}}-1$ and $G:=F\circ (\partial_x\eta)$ we have
\bq\label{recurrence:dk}
\partial_x\k=\frac{1}{(\partial_x\chi)\circ \k}=1+F\circ(\partial_x\eta)\circ \k=1+G\circ \k
\eq
 From 3. and Sobolev's embedding, $\partial_x\eta\in L^\infty_tC^{s-1}_* \subset L^\infty_tW^{(s-1)_{-}
,\infty}_x.$
This together with the fact that $F\in C^\infty_b(\xR)$ implies $G\in L^\infty_tW^{(s-1)_{-},\infty}_x$ and 
\bq\label{G:Holder}
\lA G\rA_{L^\infty_tW^{(s-1)_{-}
,\infty}_x}\le \cN.
\eq
Then, bootstrap the recurrence relation \eqref{recurrence:dk} we deduce that $\partial_x\k\in L^\infty_tW^{[(s-1)_{-}],\infty}_x$ and
\bq\label{dk:Holder0}
\lA \partial_x\k\rA_{L^\infty_tW^{[(s-1)_{-}]
,\infty}_x}\le \cN.
\eq
Now, set $\mu =(s-1)_{-}-[(s-1)_{-}]\in (0, 1)$. Again, by \eqref{recurrence:dk}
\bq\label{dGk}
\partial_x^{[(s-1)_{-}]}(\partial_x\k)=\partial^{[(s-1)_{-}]}_x(G\circ \k)
\eq
is a finite combination of terms of the form 
\bq\label{dGK:A}
A=[(\partial^qG)\circ \k]\prod_{j=1}^m\partial_x^{\gamma_j}\k,\quad 1\le q\le [(s-1)_{-}],~\gamma_j\ge 1,~\sum_{j=1}^m\gamma_j=[(s-1)_{-}].
\eq
Using \eqref{dk:Holder0} and \eqref{G:Holder} it follows easily that A belongs to $W^{\mu, \infty}(\xR^d)$ with norm bounded by $\cN$ and thus 6. is proved.\\
\hk 7. First, the nonlinear estimate \eqref{F(u):H} implies that $G=F\circ \partial_x\eta$ defined in the proof of 6. satisfies
\bq\label{G:Sobolev}
\lA G\rA_{L^\infty_tH^{s-\mez}_x}\le \cN.
\eq
Then changing the variable $x\mapsto \chi(x)$ in \eqref{recurrence:dk}  gives
\[
\lA \partial_x\k-1\rA_{L^\infty_tL^2_x}\le \lA G\rA_{L^\infty_tL^2_x}\lA \chi'\rA^\mez_{L^\infty_{t,x}}\le \cN.
\]
Now using \eqref{recurrence:dk}, \eqref{dk:Holder0} and induction we get
\bq\label{dk:Sobolev0}
\lA \partial_x\k-1\rA_{L^\infty_tH^{[(s-1)_{-}]}_x}\le \cN.
\eq
Next, set $\mu=(s-\mez)-[(s-1)_{-}]\in [\mez, \mez+\eps]$, $\eps$ arbitrarily small (so that $\mu\in [\mez, 1)$). To obtain 7. we are left with the estimate for $\partial_x^{[(s-1)_{-}]}\partial_x\k$ in $H^\mu$-norm. This amounts to estimating
\bq\label{doubleint}
\iint_{\xR^2} \frac{|\partial_x^{[(s-1)_{-}]}(G\circ \k)(x)-\partial_x^{[(s-1)_{-}]}(G\circ \k)(y)|^2}{|x-y|^{1+2\mu}}\di x\di y
\eq
where $\partial_x^{[(s-1)_{-}]}(G\circ \k)$ is a finite linear combination of terms of the form A in \eqref{dGK:A}. Inserting $A$ into  \eqref{doubleint} one estimates successively the difference of each factor in $A$ under the double integral while the others are estimated in $L^\infty$-norm. This is done using \eqref{G:Sobolev}, \eqref{dk:Sobolev0} for Sobolev-norm estimates and \eqref{G:Holder}, \eqref{dk:Holder0} for H\"older-norm estimates. \\
\hk 8. By definition of $\chi$, it holds with $F_0(z)=\frac{z}{\sqrt{1+z^2}}$
\[
\partial_t\partial_x\chi(t,x)=F_0(\partial_x\eta)\partial_x\partial_t\eta=F_0(\partial_x\eta)\partial_xG(\eta)\psi.
\]
Then, applying the Holder estimate for the Dirichlet-Neumann operator in Proposition $2.10$, \cite{NgPo1} we get
\[
\lA \partial_xG(\eta)\psi\rA_{L^\infty}\le \lA \partial_xG(\eta)\psi\rA_{C_*^{r-2}}\le \cN\left(1+\lA  \psi(t)\rA_{C_*^r}\right)
\]
and hence the result.
\end{proof}
The main task here is to apply Theorem \ref{theo:linearization} and Theorem \ref{theo:conjugation} to convert the highest order paradifferential operator $T_\gamma$  to the Fourier multiplier $|D_x|^\tdm$.
\begin{prop}\label{parareduce}
The function $u$ defined by \eqref{defi:u} satisfies the equation
\bq\label{eq:u}
\left(\partial_t+T_W\partial_x+i|D_x|^{\tdm}\right)u=f
\eq
where 
\bq\label{defi:W}
W=(V\circ \kappa)(\partial_x\chi\circ\kappa)+\partial_t\chi\circ \kappa
\eq
and for a.e. $t\in [0, T]$,
\bq\label{est:RHS}
\lA f(t)\rA_{ H^{s-\mez}}\le \cF(\lA \eta\rA_{L^\infty_tH^{s+\mez}_x}, \lA \psi\rA_{L^\infty_tH^s_x})\left(1+\lA  \eta(t)\rA_{W^{r+\mez,\infty}}+\lA  \psi(t)\rA_{W^{r,\infty}}\right).
\eq
\end{prop}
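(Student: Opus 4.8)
\emph{Overall strategy.} I would apply the linearised paracomposition $\k^*$ of \eqref{defi:u} to the symmetrised equation \eqref{eq:para} and recognise in the result the three terms $\partial_t u$, $i|D_x|^{\tdm}u$, $T_W\partial_x u$, everything else being $f$. Two devices are used throughout: Theorem~\ref{theo:linearization}, which lets one pass from $\k^*g$ to the global paracomposition $\k^*_gg$ modulo a $\rho$-smoother remainder (here $\rho=(s-1)_-$, with $\partial_x\k\in C^{(s-1)_-}_*$ and $\partial_x^2\k\in H^{s-\tdm}$ by Lemma~\ref{est:various}(6),(7)); and Theorem~\ref{theo:conjugation}, to conjugate the paradifferential operators in \eqref{eq:para}. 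The norms of $\chi,\k$ that enter are those of Lemma~\ref{est:various}, the ellipticity constant being $m_0$ from \eqref{est:m0}. The source $f$ will be $\k^*f_0$ --- which is in $H^s$ (Theorem~\ref{theo:operation} for $\k^*_g$, Theorem~\ref{theo:linearization} for $\k^*-\k^*_g$, and $f_0\in H^s$ by Theorem~\ref{symmetrization}) --- plus a collection of remainders to be estimated in $H^{s-\mez}$; the tame factor $1+\lA\eta(t)\rA_{W^{r+\mez,\infty}}+\lA\psi(t)\rA_{W^{r,\infty}}$ enters only through $\lA\Phi(t)\rA_{C^r_*}$ (Lemma~\ref{est:various}(2)) and $\lA f_0(t)\rA_{H^s}$.

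\emph{Time derivative and transport.} With $a=(\partial_x\Phi)\circ\k$, the Leibniz rule gives
\[
\partial_t u=\k^*(\partial_t\Phi)+((\partial_x\Phi)\circ\k)\partial_t\k-\dot T_{((\partial_x^2\Phi)\circ\k)\partial_t\k}\k-\dot T_{(\partial_x\Phi)\circ\k}\partial_t\k=:\k^*(\partial_t\Phi)+\R_1,
\]
the decisive point being that the two terms carrying $\partial_t\partial_x\Phi$ cancel, so $\R_1$ carries no time derivative of $\Phi$ (only $\partial_x\Phi,\partial_x^2\Phi,\partial_t\k$); this is why one works with the linearised $\k^*$ (cf. Question~3 of Section~\ref{section:paracomposition}). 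Using the equation, $\k^*(\partial_t\Phi)=-\k^*(T_V\partial_x\Phi)-i\k^*(T_\gamma\Phi)+\k^*f_0$. For the transport term, conjugate $T_V\partial_x\Phi=T_{iV\xi}\Phi$ by Theorem~\ref{theo:conjugation} ($h=iV\xi\in\Gamma^1_{r-1}$, $\eps=r-1$, remainder in $H^{s-1+(r-1)}\subset H^s$ as $r>2$): the principal symbol from \eqref{h*} is $i(V\circ\k)(\partial_x\chi\circ\k)\xi$ (using $\k'(x)^{-1}=\partial_x\chi(\k(x))$), so $\k^*(T_V\partial_x\Phi)=T_{(V\circ\k)(\partial_x\chi\circ\k)}\partial_x u$ modulo $H^{s-\mez}$. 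The elementary identities $\partial_t\k=-(\partial_t\chi\circ\k)\partial_x\k$ and $(\partial_x\chi\circ\k)\partial_x\k=1$ (which also give $\partial_t\k+W\partial_x\k=V\circ\k$) show that the remaining summand $\partial_t\chi\circ\k$ of $W$ in \eqref{defi:W} is recovered from $\R_1$: writing $f=\partial_t u+i|D_x|^{\tdm}u+T_W\partial_x u$ and using $T_{\partial_t\chi\circ\k}\partial_x u=T_W\partial_x u-T_{(V\circ\k)(\partial_x\chi\circ\k)}\partial_x u$, one is reduced to proving $\R_1+T_{\partial_t\chi\circ\k}\partial_x u\in H^{s-\mez}$ (together with the conjugation/linearisation remainders).

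\emph{Highest-order term.} Apply Theorem~\ref{theo:conjugation} to (the principal homogeneous part of) $h=\gamma\in\Gamma^{\tdm}_\tau$, $\tau=(s-1)_-$ (so $\eps=(s-1)_-$ and $\R_{conj}\Phi\in H^{s-\tdm+\eps}\subset H^{s-\mez}$ since $s>2$): $\k^*_g(T_\gamma\Phi)=T_{\gamma^*}\k^*_g\Phi+\R_{conj}\Phi$, $\gamma^*=\sum_{j=0}^{[\rho]}\gamma^*_j$ by \eqref{h*}. The crucial point is that, because $\chi$ is chosen so that $\k'(x)=\big(1+(\partial_x\eta(\k(x)))^2\big)^{-\mez}$,
\[
\gamma^*_0(x,\xi)=\gamma\big(\k(x),\k'(x)^{-1}\xi\big)=\big(1+(\partial_x\eta(\k(x)))^2\big)^{-\tq}\big(\k'(x)^{-1}|\xi|\big)^{\tdm}=|\xi|^{\tdm}
\]
identically; the operator with symbol $|\xi|^{\tdm}$ equals $|D_x|^{\tdm}$ up to a smoothing operator, and $\gamma^*_j$ ($j\ge1$) together with the subprincipal part of $\gamma$ have order $\le\mez$, contributing to $H^{s-\mez}$. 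Passing from $\k^*_g\Phi$ to $u$ via $\R_{line}\Phi$ (Theorem~\ref{theo:linearization}(ii), $\sigma=r>1$) again costs only $H^{s-\mez}$. Hence $-i\k^*(T_\gamma\Phi)=-i|D_x|^{\tdm}u$ modulo $H^{s-\mez}$.

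\emph{Remainder estimates (the obstacle).} It remains to check that every leftover lies in $H^{s-\mez}$ with bound $\cN(1+\lA\eta(t)\rA_{W^{r+\mez,\infty}}+\lA\psi(t)\rA_{W^{r,\infty}})$; this is the heart of the matter. The leftovers are: the linearisation remainders $\R_{line}$ applied to $f_0$, $T_\gamma\Phi$, $T_{iV\xi}\Phi$ and $\Phi$ (handled via Theorem~\ref{theo:linearization} and Lemma~\ref{est:various}(2),(6),(7)); the conjugation remainders and the lower-order conjugated symbols; and $\R_1+T_{\partial_t\chi\circ\k}\partial_x u$, which when expanded (using $\partial_t\k=-(\partial_t\chi\circ\k)\partial_x\k$, $\partial_x(\Phi\circ\k)=((\partial_x\Phi)\circ\k)\partial_x\k$, and $\partial_x u=\partial_x(\Phi\circ\k)-\dot T_{\partial_x a}\k-\dot T_a\partial_x\k$) produces Bony remainders, paraproduct commutators, and $\dot T$-terms against $\k,\partial_x\k,\partial_t\k$. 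The delicate point is that several of these pieces are only \emph{a priori} of class $H^{s-1}$ --- a one-derivative loss, traceable to $V$, $\partial_t\chi\circ\k$ and hence $W$ being only of Sobolev class $H^{s-1}$ --- and the improvement to $H^{s-\mez}$ (the optimal $\mez$-loss of Remark~\ref{rema:mez}) comes from cancellations among the order-one pieces. These rest on: (i) $\dot T_a(\mathrm{id})=0$, whence $\dot T_a\k=\dot T_a(\k-\mathrm{id})$ with $\k-\mathrm{id}$ one derivative smoother; (ii) $\partial_t\k+W\partial_x\k=V\circ\k$; and (iii) a paraproduct commuting with composition by $\k$ modulo $\rho$-smoother operators, a by-product of the machinery of Section~\ref{section:paracomposition}. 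The bookkeeping is tight --- most remainders are only barely in $H^{s-\mez}$, failing at the endpoint $s=2$ --- but once each leftover is shown to be $O_{H^{s-\mez}}\big(\cN(1+\lA\eta(t)\rA_{W^{r+\mez,\infty}}+\lA\psi(t)\rA_{W^{r,\infty}})\big)$, summation gives the stated estimate and the proposition.
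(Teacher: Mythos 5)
Your overall strategy coincides with the paper's own proof: apply the linearised $\k^*$ to \eqref{eq:para}, commute with $\partial_t$ by Leibniz (your $\R_1$ is exactly the paper's $-A_1-A_2$ of Step 1), identify $A_2$ with $T_{\partial_t\chi\circ\k}\partial_x u$ modulo controllable terms via Bony decompositions and the identities $\partial_t\k=-(\partial_t\chi\circ\k)\partial_x\k$, $(\partial_x\chi\circ\k)\partial_x\k=1$, conjugate $T_V\partial_x$ and $T_\gamma$ via Theorem \ref{theo:conjugation}, and use that $\gamma\big(\k(x),\k'(x)^{-1}\xi\big)=|\xi|^{3/2}$ by the very choice of $\chi$. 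Your parameter choices ($\rho=(s-1)_{-}$ and $\tau$ equal to the full H\"older regularity of the symbols, versus the paper's $\rho=\tau=1$) are admissible alternatives: both keep the bounds tame, since $\lA\k'\rA_{C^{(s-1)_{-}}_*}\le\cN$ by Lemma \ref{est:various}, and yours in fact lands some remainders in a slightly better space.

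There is, however, one concrete gap in the part you defer. Among the leftovers is $\R_{line}(T_\gamma\Phi)$, needed to pass from $\k^*_g(T_\gamma\Phi)$ (to which Theorem \ref{theo:conjugation} applies) to $\k^*(T_\gamma\Phi)$ (which is what actually appears when you conjugate the equation). The only linearisation estimate you invoke explicitly is Theorem \ref{theo:linearization}~$(ii)$ with $\sigma>1$; but $T_\gamma\Phi(t)$ lies only in $C^{r-3/2}_*$, and since $r>2$ may be arbitrarily close to $2$, the exponent $r-3/2$ can be below $1$, so $T_\gamma\Phi$ need not be $C^1$ and part $(ii)$ is inapplicable. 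One must use part $(i)$ with $\sigma=1/2$ (checking $\rho+\sigma>1$), which gives exactly $\tilde s=s-\mez$: this term, together with $\k^*(T_\omega\Phi)$, is where the half-derivative is actually saturated — not in the transport and time-derivative terms, as your closing paragraph suggests (Remark \ref{rema:mez} records that $\k^*(\partial_t+T_V\partial_x)\Phi=(\partial_t+T_W\partial_x)u$ modulo a tame $H^s$ error). Without part $(i)$ of the linearisation theorem — one of the genuinely new points of Section \ref{section:paracomposition} — the argument stalls precisely at this term.
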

\begin{proof} We proceed in 4 steps. We shall say that $A$ is controllable if for a.e. $t\in [0, T]$, $\|A(t)\|_{H^{s-\mez}}$ is bounded by  the right-hand side of \eqref{est:RHS} denoted by $\RHS$.\\
{\bf Step 1.} Let us first prove that for some controllable remainder $R_1$, 
\bq\label{dtPhi}
\k^*(\partial_t \Phi)=\lp\partial_t+T_{(\partial_t\chi)\circ \k}\partial_x\rp u+R_1.
\eq
By definition of $\k^*$ we have
\[
\k^*(\partial_t \Phi)=\partial_t\Phi\circ \k-\dot T_{(\partial_x\partial_t\Phi)\circ \k}\k=\partial_t(\Phi\circ \k)-(\partial_x\Phi\circ k)\partial_t\k-\dot T_{(\partial_x\partial_t\Phi)\circ \k}\k.
\]
Therefore,
\bq\label{dtPhi1}
\k^*(\partial_t \Phi)=\partial_t (\k^*\Phi)+A_1+A_2
\eq
\[
A_1=\dot T_{(\partial_x^2\Phi\circ\k)\partial_t\k}\k,\quad A_2=\dot T_{(\partial_x\Phi)\circ \k}\partial_t\k-(\partial_x\Phi\circ \k)\partial_t\k.
\]
\hk 1. Since the truncated paradifferential operator $\dot T_{(\partial_x^2\Phi\circ\k)\partial_t\k}\k$  involves only the high frequency part of $\k$ we have
\bq\label{dtPhi2}
\lA  A_1\rA_{H^{s+\mez}_x}\le \mathcal{N}\lA (\partial_x^2\Phi\circ\k)\partial_t\k\rA_{L^\infty_x}\lA \partial_x^2\k\rA_{H^{s-\tdm}}.
\eq
From Lemma \ref{est:various} $2., 5.$ there holds
\[
\lA (\partial_x^2\Phi\circ\k)\partial_t\k\rA_{L^\infty_x}\le \cN\left(1+\lA  \eta(t)\rA_{W^{r+\mez,\infty}}+\lA  \psi(t)\rA_{W^{r,\infty}}\right).
\]
On the other hand, Lemma \ref{est:various} $7.$ gives $\lA\partial_x^2\k\rA_{H^{s-\tdm}}\le \cN$, hence $A_1$ is controllable.\\
\hk 2. To study $A_2$, one uses $\partial_t\k=-ab$ with $a=(\partial_t\chi)\circ \k,~b=\partial_x\k$. Set $c=(\partial_x\Phi)\circ \k$ then 
\[
\partial_x(\k^*\Phi)=bc-\dot T_cb-\dot T_{\partial_xc}\k,
\]
hence
\begin{align*}
A_2&=-\dot T_c(ab)+abc=\dot T_{ab}c+\dot R(c, ab)=\dot T_a\dot T_b c+ R_2+\dot R(c, ab)\\
&=\dot T_a(bc-\dot T_cb)-\dot T_a\dot R(b, c)+ R_2+\dot R(c, ab)\\
&=\dot T_a(\partial_x(\k^*\Phi))+\dot T_a\dot T_{\partial_xc}\k-\dot T_a\dot R(b, c)+ R_2+\dot R(c, ab)
\end{align*}
where $R_2=\dot T_{ab}c-\dot T_a\dot T_b c$.\\
$(i)$  The symbolic calculus Theorem \ref{theo:sc}  implies for a.e. $t\in [0, T]$
\[
\lA R_2(t)\rA_{H^s}\le K \big(\lA a(t)\rA_{W^{1,\infty}}\lA b(t)\rA_{L^\infty}+\lA a(t)\rA_{L^\infty}\lA b(t)\rA_{W^{1,\infty}}\big) \lA c(t)\rA_{H^{s-1}}.
\]
Now, from Lemma \ref{est:various} 6. and the fact that $s-1>1$ one gets $\| b\|_{L^\infty_tW^{1,\infty}_x}\le\cN$. On the other hand, Lemma \ref{est:various} 4., 8. give, respectively
\[
\lA a(t)\rA_{L^\infty}\le\cN,\quad \lA a(t)\rA_{W^{1,\infty}}\le \RHS.
\]
Applying Lemma $3.2$ in \cite{ABZ1} and Lemma \ref{est:various} 1., 6. yield
\bq\label{c:H}
\lA c(t)\rA_{L^\infty_tH^{s-1}_x}\le \cN.
\eq
Therefore, $\lA R_2(t)\rA_{H^s}$ is controllable.\\
$(ii)$ In views of Lemma \ref{est:various} 2., 4., 7. the term $\dot T_a\dot T_{\partial_xc}\k$ can be estimated by 
\[
\lA \big(\dot T_a \dot T_{\partial_xc}\k\big)(t)\rA_{H^s} \le  \mathcal{N}\lA a(t)\rA_{L^\infty}\lA \partial_xc(t)\rA_{L^\infty}\lA \partial_x^2\k(t)\rA_{H^{s-2}}\le \RHS.
\]
$(iii)$ The estimate 7. in Lemma \ref{est:various} and Sobolev's embedding imply that $\lA b\rA_{L^\infty_tC^{s-1}_*}\le \cN$. Then according to \eqref{Bony2} and the fact that $s>2$ we obtain
\[
\lA \dot T_a\dot R(b, c)(t)\rA_{H^s}\le  \mathcal{N}\lA a(t)\rA_{L^\infty}\lA b(t)\rA_{C^{s-1}_*}\lA c(t)\rA_{H^{s-1}}\les \cN.
\] 
By the same argument, to estimate $\| \dot R(ab, c)(t)\|_{H^s}$ it remains to bound $\| (ab)(t)\|_{C_*^1}$ which is in turn bounded by $\| (ab)(t)\|_{W^{1,\infty}}$. From Lemma \ref{est:various} 1. and 4. we have 
\[
\lA a(t)\rA_{L^\infty}+\lA b(t)\rA_{L^\infty}\le \cN.
\]
On the other hand, the estimate 6. (or 7.) of that lemma gives 
$
\|\partial_xb\|_{L^\infty}\le \cN.
$ Finally, we write $\partial_xa=[(\partial_t\partial_x\chi)\circ \k]\partial_x\k$ and use Lemma \ref{est:various} 8. to get $\|\partial_xa\|_{L^\infty}\le \RHS$.\\
We have proved that modulo a controllable remainder, $A_2=\dot T_{\partial_t\chi\circ\k}u$. Consequently, modulo a controllable remainder, $A_2=T_{\partial_t\chi\circ \k}u.$
Then putting together this and \eqref{dtPhi1}, \eqref{dtPhi2} we end up with the claim \eqref{dtPhi}.\\
{\bf Step 2.} With the definitions of $\R_{line}$ and $\R_{conj}$ in Theorem \ref{theo:linearization} and Theorem \ref{theo:conjugation} we write for any $h\in \Gamma^m_\tau$
\bq\label{kkgu}
\k^*T_h\Phi=T_{h^*}\k^*\Phi-\R_{line}(T_h\Phi)+T_{h^*}\R_{line}\Phi+\R_{conj}\Phi.
\eq
 It follows from Lemma \ref{est:various} 7. that 
\[
\lA \partial_x\k-1\rA_{L^\infty_tC^{s-1}_*}\le \lA \partial_x\k-1\rA_{L^\infty_tH^{s-\mez}_x}\le \cN.
\]
Therefore, $\k$ satisfies condition \eqref{assumption1} with 
\bq\label{justify:k}
\rho=1,~r_1=s-\mez,~\alpha_0=2
\eq
where we have changed the notation in \eqref{assumption1}: $\partial_x^{\alpha_0}\k\in H^{r_1+1-|\alpha_0|}$ to avoid the $r$ used in \eqref{assume:eta, psi} for the H\"older regularity of $\psi$. On the other hand, we have seen from \eqref{est:m0} that $\k'\ge m_0$ and thus the Assumptions I, II on $\k$ are fulfilled.\\
For the transport term, the symbol is $h(x, \xi)=i\xi V(x)$.\\
$(i)$ Now one can apply Theorem \ref{theo:conjugation} with $\tau=\rho=1$ (hence $\eps=\min(\tau, \rho)=1$ ) to have 
\[
h^*(x, \xi)=iV\circ \k(x)\frac{\xi}{\k'(x)}=i(V\circ \k)(\partial_x\chi\circ \k)\xi
\]
and  at a.e. $t\in [0, T]$
\[
\lA \R_{conj}\Phi\rA_{H^s}\le \cF(m_0,\lA\k'\rA_{C_*^\rho})M^1_1(h; k_0) \left(1+\lA \partial^{2}\k\rA_{H^{s-\tdm}}\right)\lA \Phi\rA_{H^s}.
\]
On the right-hand side, we estimate %(at each $t\in [0, T]$, using Theorem \ref{theo:operation} to estimate $u$)
\[
\lA\k'\rA_{C_*^\rho}+\lA u\rA_{H^s}+\lA \partial^{2}\k\rA_{H^{s-\tdm}}\le \cN, \quad
M^1_1(h; k_0)\le \RHS%\cN\left(1+\lA  \eta(t)\rA_{C^{r+\mez}}+\lA  \psi(t)\rA_{C^r}\right).
\]
hence,
\[
\lA \R_{conj}\Phi(t)\rA_{H^s}\le \RHS.
\]
$(ii)$ The term $T_{h^*}\R_{line}\Phi$ is bounded as
\[
\lA T_{h^*}\R_{line}\Phi(t)\rA_{H^s}\le M^1_0(h^*)\lA \R_{line}\Phi(t)\rA_{H^{s+1}}
\]
where $M^1_0(h^*)\le \cN$. Applying Theorem \ref{theo:linearization} $(ii)$ with $\Phi(t)\in C^2_*,~\sigma=r,~\eps=\min(\sigma-1, 1+\rho)_{-}\ge 1$ we have
\[
\tilde s=\min( s+\rho, r_1+1+\eps)=\min(s+1, s-\mez+1+\eps)=s+1,
\]
\[
\lA \R_{line}\Phi(t)\rA_{H^{s+1}}\le \cF(m_0,\lA \k'\rA_{C_*^\rho})\lA \partial_x^2\k\rA_{H^{s-\tdm}}\big(1+\lA \Phi'(t)\rA_{H^{s-1}}+\lA \Phi(t)\rA_{C_*^\sigma}\big)\le \RHS.
\]
(In the last inequality, we have used Lemma \ref{est:various} 1., 2.)\\
Therefore
\bq\label{TRu}
\lA T_{h^*}\R_{line}\Phi(t)\rA_{H^s}\le \RHS.
\eq
In \eqref{kkgu} we are left with the estimate for $\R_{line}(T_h\Phi)$. Notice that since $M^1_0(h)\le \cN$, with $v=T_h\Phi$ one has 
\[
\lA v(t)\rA_{H^{s-1}}\le \cN, \quad \lA v(t)\rA_{C_*^{r-1}}\le \RHS.
\]
Then, by virtue of Theorem \ref{theo:linearization} $(ii)$ applied to $v$ and $\sigma=r-1,~\eps=\min(r-2, 2)_{-}$ we have  
\[
\tilde s=\min(s+1, s-\mez+1+\eps)>s+\mez,
\]
\[
\lA \R_{line}v\rA_{H^{s+\mez}}\le \RHS.
\]
Summing up, we conclude from \eqref{kkgu} that 
\[
\k^*T_h\Phi=T_{h^*}\k^*\Phi+R_2,\quad 
\lA R_2(t)\rA_{H^s}\le \RHS.
\]
{\bf Step 3.} We now conjugate the highest order term $T_\gamma \Phi$ with $\k^*$. This is the point where we really need Theorem \ref{theo:linearization} $(i)$ for non-$C^1$ functions. Recall the formula \eqref{kkgu} and the verifications of Assumptions I, II given by \eqref{justify:k} and \eqref{est:m0}. With $c_0=(1+(\partial_x\eta))^{-1/2}$, we have that $\gamma=c_0|\xi|^{3/2}$ satisfies $M^\tdm_1(\gamma)\le \cN$. Theorem \ref{theo:conjugation} applied with $m=3/2,~\tau=1$ then yields
\[
h^*(x, \xi)=h(\k(x), \frac{\xi}{\k'(x)})=(c_0\circ \k)(x)\frac{|\xi|^\tdm}{\k'(x)}=|\xi|^\tdm
\]
for $1/\k'(x)=(\chi'\circ \k)(x)=(c_0\circ \k)(x)$; and (at a.e. $t\in [0, T]$)
\[
\lA \R_{conj}\Phi\rA_{H^{s-\tdm+1}}\le \cF(m_0,\lA\k'\rA_{C_*^\rho})M^\tdm_1(h; k_0)\left(1+\lA \partial^{2}\k\rA_{H^{s-\tdm}}\right)\lA \Phi\rA_{H^s}\le \cN.
\]
The term $T_{h^*}\R_{line}\Phi(t)$ is estimated exactly as in \eqref{TRu} noticing that $h^*$ now is of order $3/2$ we get 
\[
\| T_{h^*}\R_{line}\Phi(t)\|_{H^{s-\mez}}\le \RHS.
\]
 Consider the remaining term $\R_{line}T_{h}\Phi(t)$. Since $T_h\Phi(t)$ belongs to $C_*^{r-\tdm}$ and $r-\tdm$ can be smaller than $1$, we have to use in this case Theorem \ref{theo:linearization} $(i)$:
\[
\sigma=\mez,~ \rho+\sigma=\tdm>1,~\tilde s=\min((s-\tdm)+1, (s-\mez)+\mez)=s-\mez,
\]
\[
\lA \R_{line}T_h\Phi(t)\rA_{H^{s-\mez}}\le \cF(m_0,\lA \k'\rA_{C_*^\rho})\lA \partial_x^{2}\k\rA_{H^{s-\tdm}}\big(1+\lA T_h\Phi(t)\rA_{H^{s-\tdm}}+\lA T_h\Phi(t)\rA_{C_*^\sigma}\big).
\]
We conclude in this step that 
\[
\k^*T_\gamma \Phi=|D_x|^\tdm \k^*\Phi +R_3,\quad \lA R_3(t)\rA_{H^{s-\mez}}\le \RHS.
\]
{\bf Step 4.} Since $\omega\in \Gamma^\mez_0$ with the semi-norms bounded by $\cN$, one gets by virtue of Theorem \ref{theo:operation} and Theorem \ref{theo:linearization} $(ii)$
\[
\lA \k^*T_\omega \Phi(t)\rA_{H^{s-\mez}}\le \cN.
\]
Similarly, $f(t)\in H^s\hookrightarrow C_*^{s-\mez}$ with $s-\mez>\tdm$ we also have
\[
\lA \k^*f(t)\rA_{H^{s-\mez}}\le \RHS.
\]
Putting together the results in the previous steps, we conclude the proof of Proposition \ref{parareduce}.
\end{proof}
\begin{rema}\label{rema:mez}
In fact, in the above proof, we have proved that 
\[
\k^*(\partial_t +T_V\partial_x)\Phi(t)=(\partial_t +T_W\partial_x)\k^*\Phi(t)+f_1(t)
\]
with 
\[
\lA f_1(t)\rA_{H^s}\le \cN\left(1+\lA  \eta(t)\rA_{W^{r+\mez,\infty}}+\lA  \psi(t)\rA_{W^{r,\infty}}\right).
\]
We loose $\mez$ derivative only in Step 3 and Step 4 when conjugating $\k^*$ with $T_\gamma \Phi$ and also $T_{\omega}$, where in Step 3 we applied Theorem \ref{theo:conjugation} with $\rho=1, \tau=\frac{3}{2}$ and thus $\eps=1$. The  reason is that we want to keep the right-hand side  of \eqref{est:RHS} to be tame. On the other hand, if we apply the mentioned theorem with $\rho=\frac{3}{2}$ then it follows that 
\[
\k^*T_\gamma \Phi=|D_x|^\tdm \k^*\Phi +R_3
\]
with 
\[
\lA R_3(t)\rA_{ H^s}\le \cF(\lA \eta\rA_{L^\infty_tH^{s+\mez}_x}, \lA \psi\rA_{L^\infty_tH^s_x})\cF_1\left(1+\lA  \eta(t)\rA_{W^{r+\mez,\infty}}+\lA  \psi(t)\rA_{W^{r,\infty}}\right).
\]
If we assume more regularity: $s>2+\mez$ then by Sobolev's embedding $\lA R_3(t)\rA_{ H^s}\le \cN$ and we see again the result proved in \cite{ABZ1} (cf. Proposition $3.3$) (after performing in addition another change of variable to suppress the $\mez$ order terms).
\end{rema}
In the next paragraphs, we shall prove Strichartz estimates for $u$ solution to \eqref{eq:u}. To have an independent result, let  us restate the problem as follows. Let $I=[0,T],~s_0\in \xR$ and
\bq\label{regu:Wf}
\begin{aligned}
&W\in L^\infty([0, T]; L^{\infty}(\xR))\cap L^4([0, T]; W^{1, \infty}(\xR)),\\
 &f\in L^4(I; H^{s_0-\mez}(\xR)).
\end{aligned}
\eq
 If  $u\in L^\infty(I, H^{s_0}(\xR))$ is a solution  to the problem 
\bq\label{eq:u'}
\left(\partial_t+T_W\partial_x+i|D_x|^{\tdm}\right)u=f
\eq
we shall derive the semi-classical Strichartz estimate for $u$ (with a gain of $\frac{1}{4}-\eps$ derivatives). Remark that the same problem was considered in \cite{ABZ2} at the following regularity level
\[
W\in L^\infty([0, T];H^{s-1}(\xR)),\quad f\in L^\infty(I; H^s(\xR)),\quad s>2+\mez.
\]
We shall in fact examine the proof in \cite{ABZ2} to show that our regularity \eqref{regu:Wf} is sufficient. It turns out that for the semi-classical Strichartz estimate, {\it the loss of $\mez$ derivatives in the source term $f$ is optimal}.\\
Remark also that $u$ is defined on $\xR$ equipped with a dyadic partition of size $n_0$. Then as remarked before, up to a constant of the form  $\cF(\lA  \partial_x\eta \rA_{L^\infty_tL^\infty_x})$, which will appear in our final Strichartz estimate, we shall work as if $n_0=0$.
\subsection{Frequency localization}\label{frequency-localization}
\hk To prove Strichartz estimates for equation \eqref{eq:u'}, we will adapt the proof of Theorem $1.1$ in \cite{ABZ2}: microlocalize the solution using Littlewood-Paley theory and establish dispersive estimates for those dyadic pieces.\\
The first step consists in conjugating \eqref{eq:u'} with the dyadic operator $\Delta_j$ to get the equation satisfied by $\Delta_j u$:
\bq\label{LDu}
 \Big(\partial_t+\mez(T_W\partial_x+\partial_xT_W)+i|D_x|^{\tdm}\Big)\Delta_j u = \Delta_j f + \mez \Delta_j (T_{\partial_x W}u) +\mez \big( [T_W, \Delta_j]\partial_xu +\partial_x[T_W, \Delta_j] u\big).
\eq
After localizing $u$ at frequency $2^j$ one can replace the paradifferential operator $T_W$ by the paraproduct with $S_{j-N}(W)$ as follows
\begin{lemm}[\protect{\cite[Lemma~4.9]{ABZ4}}]\label{TS}
For all $j\ge 1$ and for some integer $N$, we have
\begin{align*}
 T_{W}\partial_x \Delta_j u&= S_{j-N}(W)\partial_x\Delta_ju + R_ju\\
\partial_xT_{W} \Delta_j u= &=\partial_xS_{j-N}(W) \Delta_j u + R'_j u
 \end{align*}
where $R_ju, R'_ju$ have spectrum contained in an annulus $\{c_12^{j}\le |\xi|\le  c_22^{j}\}$ and satisfies the following estimate for all $s_0\in \xR$:
$$
\Vert R_ju \Vert_{H^{s_0}(\xR)} + \Vert R'_ju \Vert_{H^{s_0}(\xR^d)} \leq   C(s_0)\Vert W \Vert_{W^{1,\infty}(\xR^d)} \Vert u \Vert_{H^{s_0}(\xR^d)}.
$$
\end{lemm}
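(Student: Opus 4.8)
The plan is to exploit the paraproduct form of $T_W$ together with the frequency localization of $\Delta_j u$. First I would work with the truncated operator $\dot T_W v=\sum_{p\ge1}S_{p-N}(W)\Delta_p v$ from \eqref{Tdot}, the difference $T_W-\dot T_W$ being a fixed smoothing operator supported away from frequency $0$ whose contribution to $T_W\partial_x\Delta_ju$ and to $\partial_xT_W\Delta_ju$ is easily absorbed into $R_ju$ and $R'_ju$. Applying $\dot T_W$ to $\Delta_j u$ and observing that $\supp\widehat{\Delta_p\Delta_ju}\subset\supp\varphi_p\cap\supp\varphi_j=\emptyset$ unless $|p-j|\le N_0$, I would reduce everything to the finite sum $\dot T_W\Delta_ju=\sum_{|p-j|\le N_0}S_{p-N}(W)\Delta_p\Delta_ju$, bearing in mind that $\sum_{|p-j|\le N_0}\Delta_p\Delta_ju=\Delta_ju$.

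The key step is to replace, for each of these finitely many $p$, the operator $S_{p-N}(W)$ by the single operator $S_{j-N}(W)$. I would write $S_{p-N}(W)-S_{j-N}(W)=\pm\sum_k\Delta_kW$, a sum of at most $N_0$ dyadic blocks with indices $k$ within $N+N_0$ of $j$; since $\lA\Delta_kW\rA_{L^\infty}\lesssim 2^{-k}\lA\partial_xW\rA_{L^\infty}$ by Lemma \ref{boundSj}, a short computation (treating the cases $j$ large and $j$ in the bounded range $j\lesssim N$ separately) gives $\lA S_{p-N}(W)-S_{j-N}(W)\rA_{L^\infty}\le C(N,N_0)\,2^{-j}\lA W\rA_{W^{1,\infty}(\xR)}$ for all $j\ge1$ and $|p-j|\le N_0$. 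The second thing to notice is that this difference has spectrum in $\{|\xi|\lesssim 2^{j-N+N_0}\}$, so with $N$ chosen large relative to $N_0$ (and to the dyadic size $n_0$) its product with anything spectrally supported in a $2^j$-annulus is again spectrally supported in a slightly larger $2^j$-annulus. This is precisely the point that lets me trade $H^{s_0}$ norms for $L^2$ norms on such products, which is otherwise delicate because multiplication by an $L^\infty$ function is not bounded on $H^{s_0}$ when $s_0\neq0$.

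Granting these two facts, the two identities follow by routine estimation. For the first one I would put $R_ju=\sum_{|p-j|\le N_0}\bigl(S_{p-N}(W)-S_{j-N}(W)\bigr)\partial_x\Delta_p\Delta_ju$; each summand has spectrum in a fixed $2^j$-annulus, so that, using Bernstein's inequality on its factors (cf. Lemma \ref{boundSj} and Lemma \ref{auxi:1}), $\lA R_ju\rA_{H^{s_0}}\lesssim 2^{js_0}\sum_{|p-j|\le N_0}\lA S_{p-N}(W)-S_{j-N}(W)\rA_{L^\infty}\lA\partial_x\Delta_p\Delta_ju\rA_{L^2}\lesssim 2^{js_0}\,2^{-j}\lA W\rA_{W^{1,\infty}}\,2^{j}\lA\Delta_ju\rA_{L^2}\le C(s_0)\lA W\rA_{W^{1,\infty}}\lA u\rA_{H^{s_0}}$, the $2^{-j}$ gained from the telescoped difference exactly compensating the $2^{j}$ cost of $\partial_x$. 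For the second identity I would take $R'_ju=\partial_x\sum_{|p-j|\le N_0}\bigl(S_{p-N}(W)-S_{j-N}(W)\bigr)\Delta_p\Delta_ju$, so that $\partial_xT_W\Delta_ju=\partial_x\bigl(S_{j-N}(W)\Delta_ju\bigr)+R'_ju$; since the sum inside $\partial_x$ is localized at frequency $\sim2^j$, $\lA R'_ju\rA_{H^{s_0}}\lesssim 2^{j}\,2^{js_0}\,2^{-j}\lA W\rA_{W^{1,\infty}}\lA\Delta_ju\rA_{L^2}$, again of the required size; the annulus localization of $R_ju$ and $R'_ju$ is built into their construction.

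The hard part is not any individual estimate but the bookkeeping of the spectral supports: one has to fix the paraproduct cutoff $N$ large enough, in terms of $N_0=2(n_0+1)$, so that simultaneously the restriction $|p-j|\le N_0$ really holds in $\dot T_W\Delta_ju$, the telescoped low-frequency factor $S_{p-N}(W)-S_{j-N}(W)$ sits at frequencies genuinely $\ll2^j$, and the remainders $R_ju$, $R'_ju$ wind up supported in an annulus $\{c_12^j\le|\xi|\le c_22^j\}$ with $c_1,c_2$ independent of $j$ and of $W$. Once $N$ is pinned down this way, the rest is the standard interplay between the $L^\infty$-bounds on dyadic blocks and Bernstein's inequality.
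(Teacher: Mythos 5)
Your argument is correct and is essentially the standard one: the paper does not prove this lemma itself but imports it from \cite{ABZ4} (Lemma 4.9), where the proof proceeds exactly as you do — restrict to the finitely many blocks $|p-j|\le N_0$ surviving in $\dot T_W\Delta_j u$, telescope $S_{p-N}(W)-S_{j-N}(W)$ into dyadic blocks of $W$ to gain the factor $2^{-j}$ that offsets the derivative, and use the spectral localization of the remainder in a $2^j$-annulus to convert $L^2$ bounds into $H^{s_0}$ bounds. Your bookkeeping of the cutoff $N$ versus $N_0=2(n_0+1)$ and the separate treatment of the bounded range $j\lesssim N$ are the right points to be careful about, and they are handled correctly.
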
 
From now on, we always consider the high frequency part of $u$, that is $\Delta_ju$ with $j\ge 1$. Combining \eqref{LDu} and Lemma \ref{TS} leads to
\begin{multline}\label{replaceT:S}
 \Big(\partial_t +\mez( S_{j-N}(W) \partial_x + \partial_x   S_{j-N}(W))+i|D_x|^{\tdm}\Big)\Delta_j u =\\
   \Delta_j f+\mez \Delta_j (T_{\partial_x W}u)+\mez \big( [T_W, \Delta_j] \partial_x u +\partial_x  [T_W, \Delta_j] \big)u +R_j u + R'_ju.
 \end{multline}
Next, as in \cite{BaCh}, \cite{TataruNS}, \cite{ABZ4} we smooth out the symbols (see for instance Lemma $4.4$, \cite{ABZ4})
\begin{defi}\label{Sdelta}
Let $\delta >0$ and $U\in \mathcal{S}'(\xR)$. For any $j\in \xZ,~j\ge -1$ we define 
\[
S_{\delta j}(U)=\psi(2^{-\delta j}D_x)U.
\]
%Then for all $\alpha\in \xN$ it holds that
%\bq
% |D_x^{\alpha}S_{\delta j}(U)(x)|\le C_{\alpha}2^{j\delta\alpha  }\Vert U\Vert_{L^{\infty}(\xR)}. \label{est:c_2}
%\eq
\end{defi}
Let $\chi_0\in C_0^{\infty}(\xR)$, $\supp \chi\subset \{\frac{1}{4}\le |\xi|\le 4\}$, $\xi=1$ in $\{\frac{1}{2}\le |\xi|\le 2\}$. Define 
\begin{equation}\label{Ldelta}
\begin{cases}
a(\xi)=\chi_0(\xi)|\xi|^{\frac{3}{2}},~h=2^{-j},\\
\mathcal{L}_\delta = \partial_t +\mez( S_{(j-N)\delta}(W)\cdot \partial_x +\partial_x \cdot  S_{\delta (j-N)}(W)) +i\chi_0(h\xi)|D_x|^{\frac{3}{2}}.
\end{cases}
\end{equation}
Using \eqref{replaceT:S}, we have
\begin{equation}\label{Ldj}
\mathcal{L}_\delta  \Delta_ju = F_j,\quad\text{where}
\end{equation}
  \begin{multline}\label{Fj}
F_j = \Delta_jf+\mez \Delta_j (T_{\partial_x W}u)+ \mez \big( [T_W, \Delta_j] \partial_x u +\partial_x[T_W, \Delta_j] u\big) +R_j u + R'_ju +\\ \mez \Big\{ \big(S_{(j-N)\delta}(W)  - S_{(j-N)}(W)\big)\partial_x \Delta_ju +\partial_x  \big(S_{(j-N)\delta}(W)  - S_{(j-N)}(W)\big) \Delta_ju\Big\}.
\end{multline}
%%%%%%%%%%%%%%%%%%%%%%%%%%%%%%%%%%%%
\subsection{Semi-classical parametrix and dispersive estimate}\label{parametrix}
 Recall that $\varphi$ is the cut-off function employed to defined the dyadic partition of size $n=0$ in paragraph \ref{subsection:dyadic}. To simplify the presentation, let us rescale the existence time to $T=1$ and set $
h=2^{-j},~j\ge 1$,
\[
E_0=L^\infty([0, T]; L^{\infty}(\xR)), \quad E_1=L^4([0, T]; W^{1,\infty}(\xR).
\]
The main result of this paragraph is the following semi-classical dispersive estimate for the operator $\mathcal{L}_\delta$.
 \begin{theo}\label{dispersive}
 Let $\delta<\frac{1}{2}$ and $t_0 \in \xR$. For any  $u_0 \in L^1(\xR^d)$ set $u_{0,h} = \varphi(hD_x)u_0$. 
Denote by $S(t,t_0)u_{0,h}$ solution of the problem
\begin{equation*}
\mathcal{L}_{\delta}u_h(t,x) =0,  \quad u_{h}(t_0,x) = u_{0,h}(x).
\end{equation*}
Then there exists  $\mathcal{F}: \xR^+ \to \xR^+$  such that  
\bq\label{dispersive:est}
\Vert S(t, t_0)u_{0,h } \Vert_{L^\infty(\xR^d)} 
\leq  \mathcal{F}(\Vert W\Vert_{E_0} )\,   h^{-\frac{1}{4}}\vert t-t_0 \vert^{-\frac{1}{2}} \Vert u_{0,h } \Vert_{L^1(\xR^d)}
\eq
for all $0<\vert t-t_0 \vert\leq h^{\mez}$ and $0<h\le 1$.
\end{theo}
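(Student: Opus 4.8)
The plan is to build a semiclassical parametrix for $\mathcal L_\delta$ by a WKB ansatz, to deduce \eqref{dispersive:est} from a stationary–phase bound on the parametrix kernel, and then to pass from the parametrix to the genuine flow $S(t,t_0)$ by an $L^2$ energy argument. Throughout I set $h=2^{-j}$ and $W_j=S_{(j-N)\delta}(W)$; the only properties of $W_j$ I use are the Bernstein bounds $\lA\partial_x^kW_j\rA_{L^\infty}\le C_k\,2^{k(j-N)\delta}\lA W\rA_{E_0}=C_k\,h^{-k\delta}\lA W\rA_{E_0}$, so that every constant below depends on $\lA W\rA_{E_0}$ alone. For the final reduction, note that $\mez(W_j\partial_x+\partial_xW_j)$ and $i\chi_0(hD_x)|D_x|^{\tdm}$ are skew-adjoint on $L^2(\xR)$, so $\mathcal L_\delta$ obeys the energy estimate and $\lA\widetilde u(t)-S(t,t_0)v\rA_{L^2}\le\int_{t_0}^t\lA\mathcal L_\delta\widetilde u\rA_{L^2}$ whenever $\widetilde u(t_0)=v$. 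Hence, if $\widetilde S(t,t_0)u_{0,h}$ is a family with $\widetilde S(t_0,t_0)u_{0,h}=u_{0,h}$ modulo $\mathcal O_{L^2}(h^\infty)$ and $\lA\mathcal L_\delta\widetilde S(\cdot,t_0)u_{0,h}\rA_{L^2}\le C_Kh^K\lA u_{0,h}\rA_{L^2}$ for arbitrarily large $K$, then — using that both $S(t,t_0)u_{0,h}$ and $\widetilde S(t,t_0)u_{0,h}$ are, up to $\mathcal O_{L^2}(h^\infty)$, spectrally localized in $\{|\xi|\sim h^{-1}\}$ (a commutator argument), Bernstein's inequality, and $\lA u_{0,h}\rA_{L^2}\les h^{-\mez}\lA u_{0,h}\rA_{L^1}$ — the difference $(S-\widetilde S)(t,t_0)u_{0,h}$ is $\les C_K|t-t_0|h^{K-1}\lA u_{0,h}\rA_{L^1}$ in $L^\infty$, which on $0<|t-t_0|\le h^{\mez}$ is $\ll h^{-\uq}|t-t_0|^{-\mez}\lA u_{0,h}\rA_{L^1}$ for $K$ large. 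So it suffices to produce $\widetilde S$ and to prove \eqref{dispersive:est} for it.

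\textbf{WKB ansatz.} I would look for $\widetilde S(t,t_0)u_{0,h}(x)=\frac{1}{2\pi}\int_{\xR}e^{i\Psi(t,x,\xi)}a(t,x,\xi)\varphi(h\xi)\widehat{u_0}(\xi)\,\di\xi$ with $\Psi(t_0,x,\xi)=x\xi$ and $a(t_0,\cdot)\equiv1$ on $\supp\varphi(h\,\cdot)$. Inserting this into $\mathcal L_\delta$ and expanding the symbol $\chi_0(h\eta)|\eta|^{\tdm}$ about $\eta=\partial_x\Psi$, the leading order forces the eikonal equation
\[
\partial_t\Psi+W_j(t,x)\,\partial_x\Psi+\chi_0(h\,\partial_x\Psi)\,|\partial_x\Psi|^{\tdm}=0,
\]
and the lower orders give a finite cascade of linear transport equations, along the associated Hamiltonian flow, for the terms of an expansion $a=\sum_{0\le\ell<K}a_\ell$ (the effective small parameter is $h$, since each step down trades the leading symbol size $h^{-\tdm}$ for $\partial_\eta\big(\chi_0(h\eta)|\eta|^{\tdm}\big)\sim h^{-\mez}$ acting on an $\mathcal O(1)$ derivative of the amplitude), with $a_0(t_0)\equiv1$ on $\supp\varphi(h\,\cdot)$ and $a_\ell(t_0)=0$ for $\ell\ge1$. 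With enough terms, the defect $\mathcal L_\delta\widetilde S(\cdot,t_0)u_{0,h}$ is $\mathcal O_{L^2}(h^K)\lA u_{0,h}\rA_{L^2}$, as required above. (Equivalently, one may first conjugate the transport term away by the flow of $W_j\partial_x$, reducing the eikonal to the nearly–flat dispersive one.)

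\textbf{Eikonal, amplitudes and stationary phase.} I would solve the eikonal by characteristics for $H(t,x,\xi)=W_j(t,x)\xi+\chi_0(h\xi)|\xi|^{\tdm}$, with $X(t_0)=y$, $\Xi(t_0)=\xi$ and $h\xi\in\supp\varphi$ (so $\chi_0(h\xi)=1$). On $|t-t_0|\le h^{\mez}$, from $\lA\partial_xW_j\rA_{L^\infty}\le Ch^{-\delta}\lA W\rA_{E_0}$ and $|\Xi|\sim h^{-1}$ along the flow one gets $|\Xi(t)-\xi|/|\xi|\le Ch^{\mez-\delta}\lA W\rA_{E_0}\to0$ (this is where $\delta<\mez$ enters), so the bicharacteristic stays where $\chi_0(h\Xi)$ is bounded below, and there $\partial_\xi^2\big(\chi_0(h\xi)|\xi|^{\tdm}\big)=\tfrac34\chi_0(h\xi)|\xi|^{-\mez}+\mathcal O(h^{\mez})\asymp h^{\mez}$. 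Consequently $y\mapsto X(t,y,\xi)$ is a global diffeomorphism of $\xR$, $\Psi$ is globally well defined and smooth, the transport equations (zero-order coefficient $=-\mez\,\partial_x(\text{flow field})$, so in particular $a_0(t,X(t,y,\xi))=|\partial_yX(t,y,\xi)|^{-\mez}$) have solutions bounded, with finitely many $x$- and $\xi$-derivatives, by $\cF(\lA W\rA_{E_0})$, and one gets the non-degeneracy
\[
\big|\partial_\xi^2\Psi(t,x,\xi)\big|=|t-t_0|\cdot\tfrac34|\xi|^{-\mez}\big(1+\mathcal O(h^{\mez-\delta}\lA W\rA_{E_0})\big)\asymp|t-t_0|\,h^{\mez},
\]
with $\partial_\xi^k\Psi$ ($k\ge3$) relatively small, uniformly for $h\xi\in\supp\varphi$, $0<|t-t_0|\le h^{\mez}$. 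The kernel of $\widetilde S(t,t_0)$ is $K_h(t,t_0,x,y)=\frac{1}{2\pi}\int e^{i(\Psi(t,x,\xi)-y\xi)}a(t,x,\xi)\varphi(h\xi)\,\di\xi$; the $\xi$-integral runs over $\{|\xi|\sim h^{-1}\}$, the phase has second $\xi$-derivative $\asymp|t-t_0|h^{\mez}$ with relatively small higher derivatives, and the amplitude and its $\xi$-derivatives are controlled by $\cF(\lA W\rA_{E_0})$, so the stationary-phase (van der Corput) lemma gives $\sup_{x,y}|K_h(t,t_0,x,y)|\le\cF(\lA W\rA_{E_0})(|t-t_0|h^{\mez})^{-\mez}=\cF(\lA W\rA_{E_0})h^{-\uq}|t-t_0|^{-\mez}$ on $0<|t-t_0|\le h^{\mez}$. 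This is \eqref{dispersive:est} for $\widetilde S$, and combined with the first paragraph it proves the theorem.

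\textbf{Main obstacle.} The hard part is the eikonal/amplitude analysis of the third paragraph. Since $W$ is assumed only bounded (no derivative control in the $E_0$-norm), the transport coefficient in the eikonal is genuinely rough; after the smoothing $S_{(j-N)\delta}$ each $x$-derivative of $W_j$ costs a factor $h^{-\delta}$, and one must show that, over the time window $|t-t_0|\le h^{\mez}$, this perturbation neither creates a caustic in the bicharacteristic flow nor spoils the dispersive curvature $\partial_\xi^2\Psi\asymp|t-t_0|h^{\mez}$ responsible for the gain of $\uq$ derivatives — and that all constants depend on $\lA W\rA_{E_0}$ only. The role of the hypothesis $\delta<\mez$ is precisely to leave the margin that makes the losses incurred by differentiating $W_j$ negligible at this time scale.
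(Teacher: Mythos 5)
Your overall architecture (WKB parametrix, stationary phase on the kernel, $L^2$ energy/Duhamel argument to pass to the true flow) is the same as the paper's, but the way you construct the phase is genuinely different and, as written, contains a gap that is fatal precisely in the regime the theorem is needed for. You propose to solve the full nonlinear eikonal equation $\partial_t\Psi+W_j\partial_x\Psi+\chi_0(h\partial_x\Psi)|\partial_x\Psi|^{3/2}=0$ by characteristics and to control the Hamiltonian flow perturbatively. Controlling $\partial_y X$, $\partial_\xi^2\Psi$ and the amplitude requires \emph{two} $x$-derivatives of $W_j$, each costing $h^{-\delta}$. Concretely, in the variational system one has $\partial_x^2 H=(\partial_x^2W_j)\,\xi\sim h^{-1-2\delta}$ and $\partial_\xi^2H\sim h^{1/2}$, so over the time window $|t-t_0|\le h^{1/2}$ the quantity $\partial_y\Xi$ grows to size $h^{-1/2-2\delta}$ and feeds back into $\partial_yX$ a contribution of size $h^{1/2}\cdot h^{1/2}\cdot h^{-1/2-2\delta}=h^{1/2-2\delta}$ (the same $h^{1/2-2\delta}$ appears when you propagate $\partial_x^2\Psi$ through the term $a_h''(\partial_x\Psi)(\partial_x^2\Psi)^2$). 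This is $o(1)$ only for $\delta<\frac14$; for $\delta\in[\frac14,\frac12)$ your claims that $y\mapsto X(t,y,\xi)$ stays a diffeomorphism, that the amplitudes are bounded by $\cF(\lA W\rA_{E_0})$, and that $\partial_\xi^2\Psi=|t-t_0|\tfrac34|\xi|^{-1/2}(1+\mathcal O(h^{1/2-\delta}))$ do not follow — the correct error exponent from your scheme is $\frac12-2\delta$, not $\frac12-\delta$. Since the Strichartz application (Theorem \ref{strich2}) takes $\delta=\frac12-\eps$, this is not a removable technicality.

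The paper avoids the nonlinear eikonal altogether. After rescaling $t=h^{1/2}\sigma$, the transport term is formally $O(h^{1/2})$, and the phase is sought as a perturbation of the free phase, $\varphi=x\xi-\sigma a(\xi)+h^{1/2}\psi$, where $\psi$ solves the \emph{linear} transport equation $\partial_\sigma\psi+a'(\xi)\partial_x\psi=-\xi W_h$ along the unperturbed rays (equation \eqref{eq:psi}). The quadratic eikonal error $a''(\xi)(\partial_x\psi)^2$, together with $W_h\partial_x\psi$, is not required to vanish: it is pushed into the amplitude transport equations as the purely imaginary zeroth-order coefficient $if$ with $f$ real (see \eqref{remainder:r}--\eqref{parametrix:f}), which by Lemma \ref{est:transport} does not affect the sup-norm of the amplitude. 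In this scheme every object ever only sees single powers of $h^{-\delta}$ per derivative ($S^0_\delta$ symbol classes), so the whole construction closes for all $\delta<\frac12$. If you want to keep a characteristics-based construction you would have to reproduce this linearization (or an equivalent wave-packet/FBI argument); as it stands, your third paragraph is the missing ingredient rather than a provable step. A secondary remark: your constants depend on $\lA W\rA_{E_0}$ only because you pay $h^{-\delta}$ for each derivative of $W_j$; the paper instead pays one derivative of $W$ (hence its bounds involve $\lA W\rA_{E_1}$ as well, cf.\ \eqref{Vh} and \eqref{est:psi}), which is harmless for the application but means the two bookkeepings are not interchangeable inside the eikonal analysis.
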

We make the change of temporal variables $t=h^\mez\sigma$ and set 
\bq\label{change:t}
\quad W_h(\sigma, x)=S_{(j-N)\delta}(W)(\sigma h^\mez, x),%\quad U_h(\sigma, x)=u_h(\sigma h^\mez, x),
\eq
and denote the obtained semiclassical operator by
\bq
\label{L}
L_{\delta}=h\partial_\sigma +h^\mez W_h(h\partial_x) +\mez h\partial_xW_h +ia(hD_x).
\eq
For this new differential operator, we shall prove the the corresponding (classical) dispersive estimate:
 \begin{theo}\label{dispersive'}
 Let $\delta<\frac{1}{2}$ and $\sigma_0 \in [0, 1]$. For any  $u_0 \in L^1(\xR^d)$ and  $u_{0,h} = \varphi(hD_x)u_0$. 
Denote by $\widetilde S(\sigma, \sigma_0)u_{0,h}$ solution of the problem
\begin{equation*}
L_{\delta}U_h(\sigma,x) =0,  \quad U_{h}(\sigma_0,x) = u_{0,h}(x).
\end{equation*}
Then there exists  $\mathcal{F}: \xR^+ \to \xR^+$  such that  
\bq\label{dispersive:estimate}
\Vert \widetilde S(\sigma, \sigma_0)u_{0,h } \Vert_{L^\infty(\xR^d)} 
\leq  \mathcal{F}(\Vert W\Vert_{E_0} )\,   h^{-\frac{1}{2}}\vert \sigma-\sigma_0 \vert^{-\frac{1}{2}} \Vert u_{0,h } \Vert_{L^1(\xR^d)}
\eq
for all $\sigma\in [0, 1]$.
\end{theo}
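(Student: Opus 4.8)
The plan is to adapt the parametrix construction of \cite{ABZ2} (in the spirit of Bahouri--Chemin \cite{BaCh} and Tataru \cite{TataruNS}), the genuinely new features being the low regularity of $W$ and the fact that all constants must depend only on $\lA W\rA_{E_0}$. Since $u_{0,h}=\varphi(hD_x)u_0$ is spectrally supported in $\{|\xi|\sim h^{-1}\}$, one writes $\widetilde S(\sigma,\sigma_0)u_{0,h}(x)=\int_{\xR}K_h(\sigma,\sigma_0,x,y)u_{0,h}(y)\,\di y$, and it suffices to prove $\lA K_h(\sigma,\sigma_0,\cdot,\cdot)\rA_{L^\infty_{x,y}}\le\cF(\lA W\rA_{E_0})\,h^{-\mez}|\sigma-\sigma_0|^{-\mez}$ for $\sigma,\sigma_0\in[0,1]$. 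Rescaling the frequency variable to $\zeta=h\xi$, which then ranges over $\supp\varphi\subset\{\mez<|\zeta|\le 2\}$ — a region on which $\chi_0\equiv1$, so that $a(\zeta)=|\zeta|^{\tdm}$ — one looks for an approximate solution
\[
U_h^{\mathrm{app}}(\sigma,x)=\frac{1}{2\pi h}\int_{\xR}e^{\frac{i}{h}\phi(\sigma,x,\zeta)}\,b(\sigma,x,\zeta)\,\varphi(\zeta)\,\widehat u_0(\zeta/h)\,\di\zeta ,
\]
where the phase $\phi$ solves the eikonal (Hamilton--Jacobi) equation associated with $L_\delta$ (cf.\ \eqref{L}) with $\phi(\sigma_0,x,\zeta)=x\zeta$, and the amplitude $b=b_0+h^{\mez}b_1+\dots+h^{\frac N2}b_N$ is built, by solving a cascade of transport equations along the bicharacteristics, so that $L_\delta U_h^{\mathrm{app}}$ is a small remainder. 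Because the first-order term $h^{\mez}W_h(h\partial_x)+\mez h\partial_xW_h$ of $L_\delta$ is of relative size $O(h^{\mez})$ with respect to $ia(hD_x)$, and the free Hamilton flow of $a(\zeta)$ has no caustic at times $|\sigma-\sigma_0|\le1$ for $|\zeta|\sim1$ (indeed $a''(\zeta)=\tq|\zeta|^{-\mez}\ne0$ there), the eikonal equation has a smooth solution on all of $\sigma\in[0,1]$, of the form $\phi(\sigma,x,\zeta)=x\zeta-(\sigma-\sigma_0)a(\zeta)+O_{\lA W\rA_{E_0}}(h^{\mez-C\delta})$.

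\textbf{Solving the eikonal/transport equations under low regularity.} Here $W_h$ is the time-rescaled, frequency-truncated coefficient of \eqref{change:t}, truncated at frequency $2^{\delta j}=h^{-\delta}$; hence, by Bernstein's inequality, $\lA\partial_x^kW_h\rA_{L^\infty_{\sigma,x}}\le C_k\,h^{-\delta k}\lA W\rA_{E_0}$ for every $k$, so that after the regularization $S_{(j-N)\delta}$ all norms of $W_h$ are controlled by $\lA W\rA_{E_0}$ at the price of a factor $h^{-\delta}$ per spatial derivative. The transport equations for the $b_\ell$ are linear ODEs in $\sigma$ whose coefficients are built from finitely many $(x,\zeta)$-derivatives of $\phi$ (hence of $W_h$); integrating them on $\sigma\in[0,1]$, each successive amplitude $b_\ell$ costs one further spatial derivative off $W_h$, i.e.\ a factor $h^{-\delta}$, but is accompanied by the gain $h^{\mez}$ coming from the relative size of the perturbation — and the net factor $h^{\mez-\delta}$ is positive \emph{precisely because $\delta<\mez$}. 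In particular $b_0\equiv1$ up to corrections $O_{\lA W\rA_{E_0}}(h^{\mez-C\delta})$, so $b$ and all its $(x,\zeta)$-derivatives are bounded by $\cF(\lA W\rA_{E_0})$, while for $N$ large the remainder obeys $\lA L_\delta U_h^{\mathrm{app}}(\sigma)\rA_{L^2}\le\cF(\lA W\rA_{E_0})\,h^{1+(N+1)(\mez-\delta)}\lA u_{0,h}\rA_{L^2}$ uniformly on $[0,1]$. Crucially, the $\zeta$-Hessian of the phase is only \emph{multiplicatively} perturbed: $\partial_\zeta^2\phi(\sigma,x,\zeta)=-(\sigma-\sigma_0)a''(\zeta)\big(1+O_{\lA W\rA_{E_0}}(h^{\mez-C\delta})\big)$, because $\partial_\zeta$ acting through the (barely $\zeta$-sensitive) bicharacteristics produces only small factors.

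\textbf{Dispersive decay and control of the error.} For the main term one applies one-dimensional van der Corput/non-stationary phase in $\zeta$ to $K_h^{\mathrm{app}}(\sigma,\sigma_0,x,y)=\frac{1}{2\pi h}\int e^{\frac{i}{h}(\phi(\sigma,x,\zeta)-y\zeta)}b(\sigma,x,\zeta)\varphi(\zeta)\,\di\zeta$. By the previous step $|\partial_\zeta^2(\phi-y\zeta)|\gtrsim|\sigma-\sigma_0|$ on $\supp\varphi$ for all $\sigma\in(0,1]$ (once $h$ is small), so van der Corput, together with the uniform bounds on $b\varphi$ and its $\zeta$-derivatives, yields, whenever $h\le|\sigma-\sigma_0|$,
\[
|K_h^{\mathrm{app}}|\le\cF(\lA W\rA_{E_0})\,h^{-1}\Big(\tfrac{h}{|\sigma-\sigma_0|}\Big)^{\mez}=\cF(\lA W\rA_{E_0})\,h^{-\mez}|\sigma-\sigma_0|^{-\mez},
\]
while for $0<|\sigma-\sigma_0|<h$ the trivial bound $|K_h^{\mathrm{app}}|\lesssim h^{-1}\cF(\lA W\rA_{E_0})\le\cF(\lA W\rA_{E_0})h^{-\mez}|\sigma-\sigma_0|^{-\mez}$ suffices, and the range $|x-y|\gg1$ is disposed of by non-stationary phase (rapid decay). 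Finally, the error $e_h:=U_h-U_h^{\mathrm{app}}$ solves $L_\delta e_h=-L_\delta U_h^{\mathrm{app}}$ with $e_h(\sigma_0)=0$; since the transport part $\mez(S_{(j-N)\delta}(W)\partial_x+\partial_xS_{(j-N)\delta}(W))$ is skew-adjoint on $L^2$ and $a(hD_x)$ is self-adjoint, the $L^2$-energy identity for $L_\delta$ carries \emph{no} Gronwall factor, so $\lA e_h(\sigma)\rA_{L^2}\le h^{-1}\int_{\sigma_0}^\sigma\lA L_\delta U_h^{\mathrm{app}}(\tau)\rA_{L^2}\,\di\tau\le\cF(\lA W\rA_{E_0})h^{(N+1)(\mez-\delta)}\lA u_{0,h}\rA_{L^2}$; combining this with the Bernstein bounds $\lA v\rA_{L^\infty}\lesssim h^{-\mez}\lA v\rA_{L^2}$ and $\lA u_{0,h}\rA_{L^2}\lesssim h^{-\mez}\lA u_{0,h}\rA_{L^1}$, and choosing $N$ large enough, gives $\lA e_h(\sigma)\rA_{L^\infty}\le\cF(\lA W\rA_{E_0})h^{-\mez}\lA u_{0,h}\rA_{L^1}\le\cF(\lA W\rA_{E_0})h^{-\mez}|\sigma-\sigma_0|^{-\mez}\lA u_{0,h}\rA_{L^1}$ for $\sigma\in(0,1]$. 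Adding the two contributions and absorbing the (compact) range of $h$ bounded away from $0$ into $\cF$ yields \eqref{dispersive:estimate}.

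\textbf{The main obstacle.} The analytic heart of the argument is the WKB construction under the sole regularity $W\in E_0\cap E_1$: after the regularization $S_{(j-N)\delta}$ one pays $h^{-\delta}$ per spatial derivative of the coefficient, and it is the balance between these losses and the gain $h^{\mez}$ carried by the lower-order term — which is exactly what forces $\delta<\mez$ — that makes the asymptotic expansion close while keeping every constant tame and dependent only on $\lA W\rA_{E_0}$; the remaining ingredients (eikonal integration in one space dimension, van der Corput, the skew-adjoint $L^2$ estimate) are then routine. One should note that the $\mez$-derivative loss in the source term of \eqref{eq:u} is compatible with this scheme precisely because the estimate is required only with the (sharp, semi-classical) gain $\uq-\eps$.
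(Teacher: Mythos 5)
Your overall strategy coincides with the paper's: a WKB parametrix with phase $x\xi-(\sigma-\sigma_0)a(\xi)+h^{\mez}\psi$, an amplitude hierarchy in which the loss $h^{-\delta}$ per derivative of $W_h$ is traded against the gain $h^{\mez}$ carried by the lower-order term (whence $\delta<\mez$), and an $L^2$ control of the error via the skew-adjointness of the symmetrized transport term plus Bernstein. The gap is in the dispersive-decay step. You apply van der Corput uniformly in $\sigma\in(0,1]$ by asserting that the $\zeta$-Hessian of the phase is a \emph{multiplicative} perturbation, $\partial_\zeta^2\phi=-(\sigma-\sigma_0)a''(\zeta)\bigl(1+O(h^{\mez-C\delta})\bigr)$. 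The construction does not give this. Because $W$ is only $L^4$ in time, integrating the transport equation for $\psi$ via H\"older yields $\la\partial_\xi^2\psi\ra\le C\la\sigma-\sigma_0\ra^{\tq}h^{-\delta}\lA W\rA_{E_1}$ (the paper's \eqref{est:psi}; the linear-in-$\sigma$ bound \eqref{est:psi'} holds only under $L^\infty_tW^{1,\infty}_x$). The Hessian perturbation is therefore \emph{additive}, of size $O(\la\sigma-\sigma_0\ra^{\tq}h^{\mez-\delta})$, and it is dominated by $\la\sigma-\sigma_0\ra\,a''$ only when $\la\sigma-\sigma_0\ra\gtrsim h^{2-4\delta}$. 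For $\delta\in(\uq,\mez)$ — precisely the range needed in Theorem \ref{strich2}, where one takes $\delta=\mez-\eps$ — one has $h^{2-4\delta}\gg h$, so the window $h\le\la\sigma-\sigma_0\ra\le h^{2-4\delta}$ is covered neither by your van der Corput bound nor by the trivial kernel bound $\la K\ra\lesssim h^{-1}$ (which only beats $h^{-\mez}\la\sigma-\sigma_0\ra^{-\mez}$ when $\la\sigma-\sigma_0\ra\le h$).

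The paper closes this with a scaling argument that your proof omits: setting $\tau=\sigma/\sigma_1$, $\bar x=x/\sigma_1$, $\bar h=h/\sigma_1$ reduces the estimate at time $\sigma_1$ to the estimate at time $\tau=1$ for a rescaled operator $\bar L_\delta$ whose coefficient $\bar W_h$ has $E_0,E_1$ norms bounded uniformly in $\sigma_1$; at $\tau=1$ the Hessian is $-a''(\zeta)+O(\bar h^{\mez-\delta})$, uniformly nondegenerate, and the stationary-phase bound of Proposition 4.8 of \cite{ABZ2} applies directly. Without this (or an equivalent device handling small $\la\sigma-\sigma_0\ra$), your kernel estimate is unjustified exactly where it matters. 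A secondary inaccuracy: you claim that $b$ and all its $(x,\zeta)$-derivatives are $O(1)$, whereas by \eqref{est:cj} each derivative costs $h^{-\delta}$, so the amplitudes lie only in $S^0_\delta(\mathcal{O})$ and the stationary-phase step must be carried out for that symbol class (this is what the cited proposition of \cite{ABZ2} does). The remaining ingredients of your argument match the paper's proof.
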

Theorem \ref{dispersive'} will imply Theorem \ref{dispersive}. Indeed,  the relation
\[
 L_\delta u_h(\sigma, x)=h^\tdm\mathcal{L}_{\delta}u_{h}(\sigma h^\mez, x),
\]
yields
\[
 \widetilde S(\sigma, \sigma_0)u_{0, h}(x)= S(h^\mez\sigma, h^\mez \sigma_0)u_{0, h} ( x).
\]
If Theorem \ref{dispersive'} were proved then via the relation $t=\sigma h^\mez$,
\begin{align*}
\Vert S(t, t_0)u_{0, h}\Vert_{L^\infty_x}&=\Vert \widetilde S(\sigma, \sigma_0)u_{0, h}\Vert_{L^\infty_x}\\
&\le \mathcal{F}(\Vert W\Vert_{E_0} )\,   h^{-\frac{1}{2}}\vert \sigma-\sigma_0 \vert^{-\frac{1}{2}} \Vert u_{0,h } \Vert_{L^1(\xR^d)}\\
&\le\mathcal{F}(\Vert W\Vert_{E_0} )\,   h^{-\frac{1}{4}}\vert t-t_0 \vert^{-\frac{1}{2}} \Vert u_{0,h } \Vert_{L^1(\xR^d)}
\end{align*}
which proves Theorem \ref{dispersive}.\\
To prove Theorem \ref{dispersive'}, we use the WKB method to construct a parametrix of the following integral form
\bq
\label{parametrix-form}
\widetilde U_h(\sigma,x)=\frac{1}{2\pi h}\iint e^{\frac{i}{h}(\varphi(\sigma, x, \xi, h)-z\xi)}\widetilde b(\sigma, x, z, \xi, h)u_{0, h}(z)dzd\xi
\eq 
where \\
$(i)$ the phase $\varphi$ satisfies $\varphi(\sigma=0)=x\xi$,\\
$(ii)$ the amplitude $\widetilde b$ has the form
\bq
\label{b-form}
\widetilde b(\sigma, x, \xi, h)=b(\sigma, x, \xi, h)\zeta(x-z-\sigma a'(\zeta))
\eq
with $\zeta\in C_0^{\infty}(\xR),~\zeta(s)=1$ if $|s|\le 1$ and $\zeta(s)=0$ if $|s|\ge 2$.\\
We shall work with the following class of symbols.
\begin{defi}
For small $h_0$ to be fixed, we set
\[
\mathcal{O}=\left\{(\sigma, x, \xi, h)\in \xR^4: h\in (0, h_0), |\sigma|<1, 1<|\xi|<3\right\}.
\]
If $m\in \xR$ and $\rho\in \xR^+$, we denote by $S_{\rho}^m(\mathcal{O})$  the set of all functions $f$ defined on $\mathcal{O}$ which are $C^\infty$ with respect to $(\sigma, x, \xi)$ and satisfy
\[
\vert \partial_x^\alpha f(\sigma, x, \xi, h)\vert \le C_\alpha h^{m-\alpha\rho}
,\quad \forall \alpha\in \xN,~\forall (\sigma, x, \xi, h)\in \mathcal{O}.
\]
\end{defi}
\begin{rema}
Recall that 
\[
W_h(\sigma, x)=S_{(j-N)\delta}(W)(\sigma h^\mez, x)\equiv\phi(2^{-(j-N)\delta} D_x)W(\sigma h^\mez, x).
\]
Hence, for any $\alpha\in \xN$, there hold
\bq
\label{Vh}
\begin{aligned}
&\vert \partial_x^\alpha W_h(\sigma, x)\vert\le C_{\alpha}h^{-\delta\alpha}\Vert W(\sigma h^\mez, \cdot)\Vert_{L^{\infty}}, \\
&\vert \partial_x^{\alpha+1} W_h(\sigma, x)\vert\le C_{\alpha}h^{-\delta\alpha}\Vert W(\sigma h^\mez, \cdot)\Vert_{W^{1,\infty}}.
\end{aligned}
\eq
\end{rema}
The following result for transport problems is elementary.
\begin{lemm}\label{est:transport}
If $v$ is a solution of the problem 
\[
\left(\partial_\sigma+m(\xi)\partial_x+if\right)v(\sigma, x, \xi)=g(\sigma, x, \xi),\quad u\arrowvert_{\sigma=0}=z\in \xC,
\]
where $f$ be real-valued, then $v$ satisfies
\[
\la v(\sigma, x, \xi)\ra\le |z|+\int_0^\sigma \la g(\sigma', x+(\sigma'-\sigma)a'(\xi), \xi)\ra\di \sigma'.
\]
\end{lemm}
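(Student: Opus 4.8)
The plan is to integrate the equation along the bicharacteristic line issued from the point $(\sigma,x)$ and to exploit that $f$ is real-valued so that the associated integrating factor has modulus one. Writing $m=m(\xi)$ (which, consistently with the statement, is $a'(\xi)$ in the applications), fix $\xi$ and introduce the one-variable function
\[
w(\sigma')=v\bigl(\sigma',\,x+m(\sigma'-\sigma),\,\xi\bigr),\qquad \sigma'\in[0,\sigma].
\]
Differentiating in $\sigma'$ and using the transport equation, $w$ solves the scalar linear ODE
\[
w'(\sigma')+i\,f\bigl(\sigma',x+m(\sigma'-\sigma),\xi\bigr)\,w(\sigma')=g\bigl(\sigma',x+m(\sigma'-\sigma),\xi\bigr),\qquad w(0)=v(0,x-m\sigma,\xi)=z,
\]
the last equality because the datum at $\sigma'=0$ is the constant $z$.

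Next I would introduce the integrating factor
\[
E(\sigma')=\exp\Bigl(i\int_0^{\sigma'}f\bigl(\tau,x+m(\tau-\sigma),\xi\bigr)\,\di\tau\Bigr),
\]
which satisfies $\la E(\sigma')\ra=1$ for every $\sigma'$ precisely because $f$ is real-valued. Since $\frac{\di}{\di\sigma'}(E w)=E g$, integrating from $0$ to $\sigma$ gives
\[
w(\sigma)=E(\sigma)^{-1}z+\int_0^\sigma E(\sigma)^{-1}E(\sigma')\,g\bigl(\sigma',x+m(\sigma'-\sigma),\xi\bigr)\,\di\sigma'.
\]
Taking absolute values, using $\la E(\sigma)^{-1}\ra=\la E(\sigma')\ra=1$ and $w(\sigma)=v(\sigma,x,\xi)$, yields
\[
\la v(\sigma,x,\xi)\ra\le\la z\ra+\int_0^\sigma\la g\bigl(\sigma',x+m(\sigma'-\sigma),\xi\bigr)\ra\,\di\sigma',
\]
which is the asserted estimate; the case $\sigma<0$ is identical after integrating over $[\sigma,0]$.

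There is essentially no obstacle here: this is just the method of characteristics for a scalar linear transport equation, and the only point that must be respected is that the integrating factor have modulus one — which is exactly why the real-valuedness of $f$ is imposed (with an imaginary part of the wrong sign the pointwise bound would fail). No regularity beyond what is needed to run the characteristic ODE is used, so the lemma applies directly to the transport problems arising in the WKB construction of the parametrix.
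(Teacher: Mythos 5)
Your proof is correct and is exactly the elementary method-of-characteristics argument the paper alludes to without writing out (the paper only states ``The following result for transport problems is elementary'' and gives no proof). You also correctly identify the paper's notational slip, $m(\xi)$ versus $a'(\xi)$, and correctly isolate the role of $f$ being real-valued, namely that the integrating factor is unimodular.
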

The existence of the parametrix is given in the following Proposition.
\begin{prop}\label{parametrix:prop}
There exists a phase $\varphi$ of the form 
\[
\varphi(\sigma, x, \xi, h)=x\xi-\sigma a(\xi)+h^{\mez}\psi(\sigma, x, ,\xi, h)
\]
with $\partial_x\psi\in S^0_{\delta}(\mathcal{O})$ and there exists a symbol $b\in S^0_{\delta}(\mathcal{O})$ such that with the amplitude $\widetilde b$ defined by \eqref{b-form}, we have
\bq\label{L:parametrix}
L_{\delta}\left(e^{\frac{i}{h}\phi}\widetilde b\right)=e^{\frac{i}{h}\phi}r_h,
\eq
where for any $N\in \xN$ there holds
\begin{multline}
\label{est:r}
\sup_{\sigma\in [0,1]}\left \|\iint e^{\frac{i}{h}(\varphi(\sigma, x, \xi, h)-z\xi)}r(\sigma, x, z, \xi, h)u_{0, h}(z)dzd\xi\right\|_{H^1{(\xR_x)}}\cr 
\le h^N\cF_{N}\left(\Vert W\Vert_{E_0}+\Vert W\Vert_{E_1}\right)\Vert u_{0,h}\Vert_{L^1(\xR)}.
\end{multline}
\end{prop}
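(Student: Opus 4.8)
The plan is a WKB (geometric optics) construction carried out to all orders, in the spirit of \cite{ABZ2}. \textbf{Step 1 (deriving the eikonal and transport equations).} First I would apply $L_\delta$ to $e^{\frac ih\varphi}\widetilde b$. The part $h\partial_\sigma+h^\mez W_h(h\partial_x)+\mez h\partial_xW_h$ is computed directly; for the Fourier multiplier $ia(hD_x)$ I would use the classical expansion of $a(hD_x)$ acting on an oscillatory function (write it as an oscillatory integral in $(y,\xi)$ and Taylor-expand the phase at the critical point $y=x,\ \xi=\partial_x\varphi$), giving
\[
a(hD_x)\bigl(e^{\frac ih\varphi}\widetilde b\bigr)=e^{\frac ih\varphi}\Bigl(a(\partial_x\varphi)\widetilde b+\tfrac hi a'(\partial_x\varphi)\partial_x\widetilde b+\tfrac h{2i}a''(\partial_x\varphi)(\partial_x^2\varphi)\widetilde b+\cdots\Bigr)
\]
modulo a symbol that is $O(h^\infty)$ on $\mathcal O$, the support condition $1<|\xi|<3$ together with $\partial_x\psi\in S^0_\delta$ making this expansion legitimate. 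With the ansatz $\varphi=x\xi-\sigma a(\xi)+h^\mez\psi$ one has $\partial_\sigma\varphi=-a(\xi)+h^\mez\partial_\sigma\psi$ and $\partial_x\varphi=\xi+h^\mez\partial_x\psi$, so the coefficient of $\widetilde b$ at order $h^0$, namely $i\partial_\sigma\varphi+ia(\partial_x\varphi)$, equals $ih^\mez(\partial_\sigma\psi+a'(\xi)\partial_x\psi)+O(h)$: the genuinely $h$-independent term vanishes by the choice of the free phase $x\xi-\sigma a(\xi)$. Imposing that the full coefficient of $\widetilde b$ vanish gives the exact eikonal equation
\[
\partial_\sigma\psi+\Bigl(\int_0^1 a'(\xi+th^\mez\partial_x\psi)\,dt\Bigr)\partial_x\psi+\xi\,W_h+h^\mez W_h\,\partial_x\psi=0,\qquad \psi|_{\sigma=0}=0,
\]
a quasilinear transport equation. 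Once $\psi$ is fixed, the remaining terms amount to a transport operator in $x$ acting on $\widetilde b$, of order $h$; I would then expand $b=\sum_{k=0}^{M-1}h^{k/2}b_k$ and require $b_0,b_1,\dots$ to solve the resulting cascade of \emph{linear} transport equations, with $b_0|_{\sigma=0}=1$ and $b_k|_{\sigma=0}=0$ for $k\ge1$, so that $\widetilde U_h|_{\sigma=0}=u_{0,h}$ up to a non-stationary (hence negligible) contribution coming from $\zeta(x-z)-1$.

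\textbf{Step 2 (solving and estimating).} All of these equations I would solve by the method of characteristics on $\sigma\in[0,1]$, for $h\le h_0$ small. Since $1<|\xi|<3$ and $a\in C^\infty$ there, and since $h^\mez\partial_x\psi$ and all its derivatives are small by the bootstrap below, the characteristic field $\int_0^1 a'(\xi+th^\mez\partial_x\psi)\,dt$ is an $O(h^\mez)$-perturbation of the free field $a'(\xi)$; the characteristics issuing from $\sigma=0$ therefore reach $\sigma=1$ and remain $O(h^\mez)$-close to the free lines $s\mapsto x-(\sigma-s)a'(\xi)$ — which is precisely why the amplitude cutoff in \eqref{b-form} is centered at $x-z-\sigma a'(\xi)$. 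Differentiating the transport equations in $x$ (and in $\xi$) and invoking the bounds \eqref{Vh} for $W_h$, a Gr\"onwall argument over $\sigma\in[0,1]$ yields $\partial_x\psi\in S^0_\delta(\mathcal O)$ and $b_k\in S^{-k\delta}_\delta(\mathcal O)$ — hence $h^{k/2}b_k\in S^0_\delta(\mathcal O)$ because $\delta<\mez$ — with all seminorms bounded by $\cF(\lA W\rA_{E_0}+\lA W\rA_{E_1})$; only $\lA W\rA_{E_0}$ enters the pointwise bounds, while $\lA W\rA_{E_1}$ is needed precisely where a genuine $x$-derivative of $W$ is unavoidable.

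\textbf{Step 3 (the remainder).} With $\varphi$ and $b=\sum_{k<M}h^{k/2}b_k$ so chosen, $L_\delta(e^{\frac ih\varphi}\widetilde b)=e^{\frac ih\varphi}r$, where $r(\sigma,x,z,\xi,h)$ is a sum of three types of terms: (a) the truncation term, carrying a factor $h^{1+M/2}$ times a symbol supported in $\xi\in(1,3)$ whose $S^{\cdot}_\delta$-order grows with $M$ since $\delta<\mez$; (b) the tail of the Taylor/stationary-phase expansion of $a(hD_x)$, which is $O(h^\infty)$ on $\mathcal O$; (c) terms where at least one derivative hits the cutoff $\zeta(x-z-\sigma a'(\xi))$, hence supported where $|x-z-\sigma a'(\xi)|\ge1$. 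For (a) and (b), substituting into $\iint e^{\frac ih(\varphi-z\xi)}r\,u_{0,h}(z)\,dz\,d\xi$ and measuring in $H^1(\xR_x)$ costs at most one power of $h^{-1}$ (an $x$-derivative landing on the phase, $\tfrac1h\partial_x\varphi=O(h^{-1})$) times the bounded $\xi$-integral times $\lA u_{0,h}\rA_{L^1}$, so the bound is $O(h^{M/2})$; choosing $M>2N$ gives \eqref{est:r} for these pieces. For (c), on the support of a differentiated $\zeta$ one has $|\partial_\xi(\varphi-z\xi)|=|x-z-\sigma a'(\xi)+h^\mez\partial_\xi\psi|\ge\mez$ for $h$ small, so the phase is non-stationary in $\xi$; integrating by parts $K$ times in $\xi$ (legitimate, the amplitude being smooth and compactly $\xi$-supported) produces a gain $h^K$ against a loss at most $h^{-K\delta}$ from the symbol, i.e.\ a net $h^{K(1-\delta)}$, which beats any $h^N$ for $K$ large.

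\textbf{Main obstacle.} The delicate point is Step 2: solving the $h$-dependent quasilinear eikonal equation globally on $\sigma\in[0,1]$ and, above all, propagating the symbol-class bounds uniformly in $h$. The hypothesis $\delta<\mez$ is essential here — it guarantees that $h^\mez$ times any number of $x$-derivatives of $W_h$, each costing a factor $h^{-\delta}$ by \eqref{Vh}, still tends to $0$, so that the perturbation of the free flow is genuinely lower order at every order and the Gr\"onwall bootstrap closes; and the low regularity of $W$ (merely $W\in E_0\cap E_1$, with no Sobolev control) forces the bookkeeping to distinguish carefully between the $L^\infty$ and the $W^{1,\infty}$ norms of $W$ throughout, which is what ultimately produces the tame dependence on $\lA W\rA_{E_0}+\lA W\rA_{E_1}$ in \eqref{est:r}.
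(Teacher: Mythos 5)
Your overall architecture (WKB phase plus amplitude cascade, non-stationary phase in $\xi$ on the support of the differentiated cutoff $\zeta$) is the same as the paper's, but at the central step you take a genuinely different — and substantially harder — route. You propose to solve the \emph{full quasilinear} eikonal equation, with characteristic field $\int_0^1a'(\xi+th^{\mez}\partial_x\psi)\,dt$ depending on the unknown, and to propagate the $S^0_\delta$ bounds by a bootstrap/Gr\"onwall over $\sigma\in[0,1]$. The paper deliberately does not do this: it takes $\psi$ to solve only the \emph{linear} free-transport problem $\partial_\sigma\psi+a'(\xi)\partial_x\psi=-\xi W_h$, $\psi|_{\sigma=0}=0$, and pushes the leftover $O(h)$ terms — precisely $f=W_h\partial_x\psi+a''(\xi)(\partial_x\psi)^2$ — into the amplitude equations as a zeroth-order coefficient $if$ with $f$ \emph{real}, so that Lemma \ref{est:transport} gives the $L^\infty$ bounds for $b_0,b_1,\dots$ with no growth and no iteration on the coefficients. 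This design is exactly what makes the low regularity $W\in E_0\cap E_1$ harmless: every transport equation that occurs has the fixed, $h$-independent field $a'(\xi)$, the source terms are integrated in $\sigma$ (whence the $|\sigma|^{3/4}$ via H\"older in time, see \eqref{est:psi}), and one never has to differentiate an $h$-dependent, $W$-dependent characteristic flow. Your sketch leaves the uniform-in-$h$ solvability of the quasilinear problem in the exotic class $S^0_\delta$ — with coefficients whose $x$-derivatives are only $L^4$ in the rescaled time — as an assertion; this is the one step of your argument that is not routine, and it is the step the paper's construction is built to avoid. I would either carry out that bootstrap in detail or switch to the linear eikonal.

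Two smaller discrepancies. First, your amplitude expansion $b=\sum h^{k/2}b_k$ does not match the structure of the error: each correction term has the form $h^{\mu_0}\sum_l e_l(h^\delta\partial_x)^lb_{j-1}$ with $\mu_0=\mez(\mez-\delta)$ (see \eqref{remainder:r}), so the natural expansion parameter is $h^{\mu_0}$, not $h^{1/2}$; this is only bookkeeping (any positive gain per step suffices after taking $M$ large), but your claimed orders $b_k\in S^{-k\delta}_\delta$ are not the ones that actually come out. Second, for the $H^1_x$ bound in \eqref{est:r} the delicate term is $r_4$, where the extra $x$-derivative can land on $W_h$: since $\lA W(\sigma h^{\mez},\cdot)\rA_{W^{1,\infty}}$ is \emph{not} bounded pointwise in $\sigma$ (only $L^4$ in time), one must bound $\partial_xW_h$ by $h^{-\delta}\lA W(\sigma h^{\mez},\cdot)\rA_{L^\infty}$ rather than by the $W^{1,\infty}$ norm in order for the $\sup_{\sigma\in[0,1]}$ in \eqref{est:r} to close; your remark that $\lA W\rA_{E_1}$ is used "where a genuine $x$-derivative is unavoidable" points the wrong way for this particular term, where the point is precisely to trade the derivative for $h^{-\delta}$ and the $E_0$ norm.
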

\begin{proof} We proceed in several steps.\\
{\bf Step 1.} Construction of the phase $\varphi$.\\
We find $\varphi$ under the form 
\bq\label{form:varphi}
\varphi(t, x, \xi, h)=x\xi-\sigma a(\xi)+h^{\mez}\psi(\sigma, x, ,\xi, h)
\eq
where $\psi$ solves the following transport problem
\bq\label{eq:psi}
\begin{cases}
\partial_{\sigma}\psi+a'(\xi)\partial_x\psi=-\xi W_h,\\
\psi\arrowvert_{\sigma=0}=0.
\end{cases}
\eq
Differentiating \eqref{eq:psi} with respect to $x$ and $\xi$ then using Lemma \ref{est:transport}  together with \eqref{Vh} and H\"older's inequality we derive 
\bq\label{est:psi}
\vert \partial_{\xi}^k\partial_x^\alpha \psi(\sigma, x, \xi, h)\vert\le C_{k \alpha}|\sigma|^{\frac{3}{4}}h^{-\delta(\alpha+k-1)^+}\Vert W\Vert_{L^4([0, T], W^{1,\infty}_x)}, 
\eq
for every $(\alpha, k)\in \xN^2$, for every $(\sigma, x, \xi, h)\in \mathcal{O}$; where $m^+=\max\{m ,0\}$. \\
Remark that in \cite{ABZ2} where $W\in L^{\infty}([0, T], W^{1,\infty}(\xR))$, one has the better estimate 
\bq\label{est:psi'}
\vert \partial_{\xi}^k\partial_x^\alpha \psi(\sigma, x, \xi, h)\vert\le C_{k \alpha}|\sigma|h^{-\delta(\alpha+k-1)^+}\Vert W\Vert_{L^\infty([0, T], W^{1,\infty}_x)}.
\eq
However, \eqref{est:psi} is enough to get $\partial_x\psi\in S^0_{\delta}(\mathcal{O})$. Consequently, the estimates from $(4.17)$ to $(4.30)$ in \cite{ABZ2} still hold  and thus we have by $(4.29)$, \cite{ABZ2}
\bq\label{remainder:r}
r=h\left(\partial_\sigma b+a'(\xi)\partial_x b+if b+h^{\mu_0}\sum_{l=0}^{M_1}e_l(h^\delta\partial_x)^lb \right)\zeta+i\sum_{j=1}^4r_j
\eq
with $e_l\in S^0_{\delta}(\mathcal{O})$,
\bq\label{parametrix:f}
\mu_0=\mez(\mez-\delta)>0, \quad f=W_h\partial_x\psi+a''(\xi)(\partial_x\psi)^2~(\text{real valued});
\eq 
and with
\[
\rho(x, y)=\int_0^1\partial_x\varphi(\sigma, \lambda x+(1-\lambda)y, \xi, h)d\lambda,
\]
the remainders $r_i's$ are then given by 
\bq
r_1=ch^{M-1}\iiint_0^1e^{\frac{i}{h}(x-y)\eta}\kappa_0(\eta)(1-\lambda)^{M-1}\partial_{y}^M\left\{a^{(M)}(\lambda\eta+(\rho(x, y))\widetilde b(y)\right\}d\lambda dy d\eta,
\eq
\bq
r_2=\sum_{k=0}^{M-1}c_{k, M}h^{M+k}\iint_0^1z^M\hat\kappa_0(z)(1-\lambda)^{M-1}\partial_{y}^{M+k}\left\{a^{(k)}((\rho(x, y))\widetilde b(y)\right\}_{y=x-\lambda hz}d\lambda dz.
\eq
\bq
r_3=\sum_{k=0}^{M-1}\sum_{j=1}^kc'_{j,k}h^k\partial_y^{k-j}\left\{(\partial_\xi^ka)(\rho(x, y))b(y) \right\}\arrowvert_{y=x}\zeta^{(j)}.
\eq
\bq
r_4=\frac{1}{i}h\left\{-a'(\xi)+h^\mez W_h \right\}b\zeta'
\eq
where $c, c_{k, M}, c'_{jk}$ are constants and $\kappa_0\in C^\infty_0(\xR),~\kappa=1$ in a neighborhood of the origin.\\
Now, combining \eqref{est:psi} with the fact that $W_h\in S_{\delta}^0(\mathcal{O})$ (by \eqref{Vh}) we obtain the following estimate for $f$
\bq
\label{est:f}
\vert \partial_{\xi}^k\partial_x^\alpha f(\sigma, x, \xi, h)\vert\le |\sigma|^{\frac{3}{4}}h^{-\delta(\alpha+k)}\cF_{k \alpha}\left(\Vert W\Vert_{L^4([0, T], W^{1,\infty}_x)}\right)\Vert W\Vert_{L^\infty([0, T], L^{\infty}_x)},
\eq
 $\forall (\alpha, k)\in \xN^2, ~\forall (\sigma, x, \xi, h)\in \mathcal{O}$.\\
{\bf Step 2.} Construction of the amplitude $b$. According to the WKB method, ones find  $b$ under the form 
\bq\label{b:form}
b=\sum_{j=0}^{M-1}h^{j\mu_0}b_j
\eq
where $b_0$ solves
\[
\begin{cases}
\partial_\sigma b_0+a'(\xi)\partial_xb_0+ifb_0=0,\\
b_0\arrowvert_{\sigma=0}=\chi_1(\xi)
\end{cases}
\]
and $b_j's,~ j\ge 1$ solves
\[
\begin{cases}
\partial_\sigma b_j+a'(\xi)\partial_xb_j+ifb_j=-\sum_{l=0}^{M_1}e_l(h^\delta\partial_x)^lb_{j-1},\\
b_j\arrowvert_{\sigma=0}=0.
\end{cases}
\]
Owing to Lemma \ref{est:transport} and the estimate \eqref{est:f}, one can use induction for the preceding transport problems (see Lemma $4.7$, \cite{ABZ2}) to have
\bq
b_j(\sigma, x, \xi, h)=\chi_1(\xi)c_j(\sigma, x, \xi, h),\quad\forall 0\le j\le J-1
\eq
and the $c_j$ satisfies $\forall (\alpha, k)\in \xN^2,~\forall (\sigma, x, \xi, h)\in \mathcal{O}$,
\bq
\label{est:cj}
\vert \partial_{\xi}^k\partial_x^\alpha c_j(\sigma, x, \xi, h)\vert\le h^{-\delta(\alpha+k)}\cF_{jk \alpha  }\left(\Vert W\Vert_{E_0}+\Vert W\Vert_{E_1}\right).
\eq
{\bf Step 3.} Estimate for the remainder $r$. \\
Plugging \eqref{b:form} into \eqref{remainder:r} we obtain 
$r=\sum_{j=0}^5r_j$ with $r_5=h^{M\mu_0}b_{M-1}\zeta.$
We want to prove \eqref{est:r}, i.e, for a.e. $t\in [0, T]$ and for all $j=1,..5$,
\begin{multline}
\label{est:r'}
\left \|\iint e^{\frac{i}{h}(\varphi(\sigma, x, \xi, h)-z\xi)}r(\sigma, x, z, \xi, h)u_{0, h}(z)dzd\xi\right\|_{H^1{(\xR_x)}}\cr 
\le h^N\cF_N\left(\Vert W\Vert_{E_0}+\Vert W\Vert_{E_1}\right)\Vert u_{0,h}\Vert_{L^1(\xR)}.
\end{multline}
Let us denote the function inside the norm on the left-hand side by $F_h^j$. 
The proofs for $\Vert F_h^j\Vert_{H^1_x}$, $j=1,2,3, 5$ remain unchanged compared to section those in $4.1.1$, \cite{ABZ2} (using integration by parts). The only point that we need to take care is the estimate for $\|F_4\|_{H^1}$ since $r_4$ contains $W_h$ which is  less regular than it was in \cite{ABZ2}. Recall that
\[r_4=\frac{1}{i}h\left\{-a'(\xi)+h^\mez W_h \right\}b\zeta'.
\]
On the support of all derivatives of $\zeta$ one has $\vert x-z-\sigma a'(\xi)\vert\ge 1$. Now, by \eqref{est:psi} 
\[
h^\mez\partial_x\psi\le Ch^\mez|\sigma|^{\frac{3}{4}}\le ch^\mez
\]
 hence using \eqref{form:varphi} we deduce that 
\[
\vert \partial_\xi(\varphi(\sigma, x, \xi, h)-z\xi)\vert=\vert x-z-\sigma a'(\xi)-h^\mez\partial_\xi\psi\vert\ge \mez
\]
 for $h$ small enough. Therefore, we can integrate by parts $N$ times in the integral defining $F_4$  using the vector filed
\[
L=\frac{h}{i\partial_\xi(\varphi(\sigma, x, \xi, h)-z\xi)}\partial_\xi.
\]
Taking into account the fact that for all $\alpha\in \xN$, on the support of $\zeta$,  $\langle x-z-\sigma a'(\xi)\rangle\le C$ and  (due to \eqref{Vh}, \eqref{est:cj} and \eqref{est:psi})
\begin{align*}
&\vert \partial^\alpha_\xi r_4(\sigma, x, \xi, h)\vert \le C(1+\|W_h(\sigma)\|_{L^{\infty}_x})h^{1-\alpha \delta}\cF_{\alpha  }\left(\Vert W\Vert_{E_0}+\Vert W\Vert_{E_1}\right),\\
&\vert \partial^{\alpha+1}_\xi (\varphi(\sigma, x, \xi, h)-z\xi)\vert \le C(1+\Vert W\Vert_{E_1})h^{-\alpha \delta}
\end{align*}
 we obtain 
\begin{align*}
\Vert F_h^4(\sigma)\Vert_{L^2_x}&\le h^{1+N(1-\delta)}(1+\|W_h(\sigma)\|_{L^{\infty}_x})\cF_{\alpha  }\left(\Vert W\Vert_{E_0}+\Vert W\Vert_{E_1}\right)\times \\
&\quad \times\int|u_{0, h}(z)|dz\int |\chi_1(\xi)|d\xi\\
&\le h^{1+N(1-\delta)}\cF_N\left(\Vert W\Vert_{E_0}+\Vert W\Vert_{E_1}\right)\Vert u_{0, h}(z)\Vert_{L^1_x}.
\end{align*}
Similarly, one gets
\begin{align*}
\Vert \partial_xF_h^4(\sigma)\Vert_{L^2_x}&\le h^{1+N(1-\delta)}(1+\|\partial_xW_h(\sigma)\|_{L^{\infty}_x})\cF_N\left(\Vert W\Vert_{E_0}+\Vert W\Vert_{E_1}\right)\Vert u_{0, h}(z)\Vert_{L^1_x}\\
&\le h^{1+N(1-\delta)}(1+h^{-\delta}\|W(\sigma h^\mez)\|_{L^{\infty}_x})\cF_N\left(\Vert W\Vert_{E_0}+\Vert W\Vert_{E_1}\right)\Vert u_{0, h}(z)\Vert_{L^1_x}\\
&\le h^{(N+1)(1-\delta)}\cF_N\left(\Vert W\Vert_{E_0}+\Vert W\Vert_{E_1}\right)\Vert u_{0, h}(z)\Vert_{L^1_x}.
\end{align*}
Therefore, we end up with 
\[
\sup_{\sigma\in [0, 1]}\Vert F_h^4(\sigma)\Vert_{H^1(\xR)}\le  h^{N(1-\delta)}\cF_N(\|W\|_{E_0}+\|W\|_{E_1})\Vert u_{0, h}(z)\Vert_{L^1_x},
\]
which concludes the proof.
\end{proof}
Having established the previous Proposition, we turn to prove Theorem \ref{dispersive'}.\\
{\bf Proof of Theorem \ref{dispersive'}}
Without loss of generality, we take $\sigma_0=0$. By a scaling argument, it suffices to prove the dispersive estimate \eqref{dispersive:estimate} for the operator $\widetilde S$ for $\sigma=1$. Indeed, let $\sigma_1\in (0, 1]$, making the following changes of variables
\[
\tau=\frac{\sigma}{\sigma_1},~\bar x=\frac{x}{\sigma_1},~\bar h=\frac{h}{\sigma_1}
\]
we see that the operator $L_\delta$ becomes 
\[
\bar L_\delta=\bar h\partial_\tau+\bar h^\mez \bar W_h(\bar h\partial_{\bar x})+\mez \bar h^\tdm(\partial_{\bar x} \bar W_h)+i\vert\bar hD_{\bar x}\vert^\tdm
\]
where 
\[
\bar W_h(\tau, \bar x)=\sigma_1^\mez W_h(\sigma_1 \tau, \sigma_1\bar x).
\]
Observe that there exists $C>0$ independent of $\sigma_1\in (0, 1]$ for which there holds
\[
\lA \bar W_h\rA_{E_0}+\lA \bar W_h\rA_{E_1}\le C.
\]
Suppose that the dispersive estimate \eqref{dispersive:estimate} for $L_\delta$ were proved for $\sigma=1$, it then would imply the same estimate for $\bar L_\delta$ for $\tau= 1$. Calling $\bar S$ the propagator of $\bar L_\delta$, we have for all $\sigma \in [0 ,1]$
\[
\widetilde S(\sigma, 0)u_0(x)=(\bar S(\frac{\sigma}{\sigma_1})\bar u)(\frac{x}{\sigma_1}),\quad  \bar u(\frac{x}{\sigma_1})=u_0(x).
\]
Taking $\sigma=\sigma_1$ then it would follow that 
\[
\lA \widetilde S(\sigma_1)u_0\rA_{L^\infty(\xR)}=\lA \bar S(\frac{\sigma_1}{\sigma_1})\bar u\rA_{L^\infty(\xR)}\le \frac{C}{\bar h^\mez}
\lA \bar u_0\rA_{L^1(\xR)}\le \frac{C \sigma_1^\mez}{h^\mez \sigma_1}
\lA  u_0\rA_{L^1(\xR)} \le \frac{C}{| h \sigma_1|^\mez}
\lA u_0\rA_{L^1(\xR)},
\]
which is the estimate \eqref{dispersive:estimate} for $L_\delta$  for $\sigma=\sigma_1$. Therefore, it suffices to prove \eqref{dispersive:estimate} for $\sigma=1$.\\
 Now, combining \eqref{parametrix-form} and Proposition \ref{parametrix:prop} yields 
\bq\label{dispersive:1}
L_\delta \widetilde U_h(\sigma, x)=F_h(\sigma, x)
\eq
with
\bq
\label{est:Fh}
\sup_{\sigma\in [0, 1]}\Vert F_h(\sigma)\Vert_{H^1_x(\xR))}\le C_Nh^N\cF_{N}\left(\Vert W\Vert_{E_0}+\Vert W\Vert_{E_1}\right)\Vert u_{0,h}\Vert_{L^1(\xR)}.
\eq
Using integration by parts we can show that $\widetilde U_h$ is a good parametrix at the initial time (see $(4.53)$, \cite{ABZ2}) in the following sense
\bq\label{decomposeU0}
\widetilde U_h(0, \cdot)=u_{0,h}+v_{0,h},\quad \Vert v_{0,h}\Vert_{H^1(\xR)}\le C_Nh^N\Vert u_{0,h}\Vert_{H^1(\xR)}.
\eq
Combining \eqref{dispersive:1}, \eqref{decomposeU0} and the Duhamel formula gives
\[
S(\sigma, 0)u_{0,h}=R_1+R_2+R_3
\]
where
\[
\begin{cases}
R_1=\widetilde U_h(\sigma, x),\\
R_2=-S(\sigma, 0)v_{0,h},\\
R_3=-\int_0^{\sigma} S(\sigma, r)[F_h(r, x)]dr.
\end{cases}
\]
We shall successively estimate $R_i$. First, by Sobolev's inequalities and \eqref{decomposeU0},
\[
\Vert R_2(\sigma)\Vert_{L^\infty_x}\le C \Vert S(\sigma, 0)v_{0,h}\Vert_{H^1_x}=C\Vert v_{0,h}\Vert_{H^1_x}\le C_Nh^N\Vert u_{0,h}\Vert_{L^1}.
\]
Next, for $R_3$ we estimate
\[
\Vert R_3(\sigma)\Vert_{L^\infty_x}\le \int_0^{\sigma} \Vert S(\sigma, r)[F_h(r, x)]\Vert_{H^1_x}dr \le  \int_0^{\sigma}\Vert F_h(r, x)\Vert_{H^1_x}dr.
\]
Then, by virtue of the estimate \eqref{est:Fh} we deduce that
\[
\Vert R_3(\sigma)\Vert_{L^\infty_x}\le h^N\cF_N\left(\Vert W\Vert_{E_0}+\Vert W\Vert_{E_1}\right)\Vert u_{0,h}\Vert_{L^1(\xR)}.
\]
Finally, from \eqref{parametrix-form} we have
\[
\widetilde U_h(\sigma, x)=\int K(\sigma, x, z, h)u_{0,h}(z)dz
\]
with
\[
K(\sigma, x, z, h)=\frac{1}{2\pi h}\int e^{\frac{i}{h}(\varphi(\sigma, x, \xi, h)-z\xi)}\widetilde b(\sigma, x, z, \xi, h)d\xi.
\]
Because $\sigma=1$ is fixed, the proof of Proposition $4.8$, \cite{ABZ2} still works and we obtain for some $\cF:\xR^+\to \xR^+$ independent of all parameters
\[
\vert K(1, x, z, h)\vert\le \frac{1}{h^\mez}\cF\left(\Vert W\Vert_{E_0}+\Vert W\Vert_{E_1}\right).
\]
This gives
\[
\Vert R_1(1)\Vert_{L^\infty_x}=\Vert \widetilde U_h(1)\Vert_{L^\infty_x}\le h^{-\mez}\cF\left(\Vert W\Vert_{E_0}+\Vert W\Vert_{E_1}\right)\Vert u_{0,h}\Vert_{L^1}.
\]
The proof is complete.
\subsection{The semi-classical Strichartz estimate}
Combining the dispersive estimate \eqref{dispersive:est}  with the usual $TT^*$ argument and Duhamel's formula, we derive the Strichartz estimate on small time interval $[0, h^{\mez}]$.
\begin{coro}\label{semicl}
 Let $I_h=[0, h^{\mez}]$ and $u$ be a solution to the problem
\[
\mathcal{L}u(t, x)=f(t, x), \quad u(0, x)=0
\] 
with $\supp \hat{f}\subset \{c_1h^{-1}\le |\xi|\le c_2h^{-1}\}$. Then there exists  $\mathcal{F}: \xR^+ \to \xR^+$ (independent of $u,~f,~W,~h$) such that  
\[
\Vert u \Vert_{L^{4}(I_h, L^{\infty}(\xR ))} 
\leq h^{-\frac{1}{8}}\cF\left(\Vert W\Vert_{E_0}+\Vert W\Vert_{E_1}\right)\Vert f \Vert_{L^1(I_h, L^2 (\xR))} .
\]
%where $F_h$ is given by \eqref{Fj}.
\end{coro}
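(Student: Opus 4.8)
The plan is to deduce the Strichartz estimate from the semi-classical dispersive estimate of Theorem~\ref{dispersive} by the standard $TT^\ast$ argument together with fractional integration in time. Three structural facts are used. First, the dispersive bound \eqref{dispersive:est} is available for \emph{all} $t,t_0\in I_h$, since there $|t-t_0|\le h^{\mez}$. Second, the propagator $S(t,t_0)$ of $\mathcal{L}$ is unitary on $L^2(\xR)$: writing $\mathcal{L}=\partial_t+A(t)$ with $A(t)=\mez\bigl(S_{(j-N)\delta}(W)\partial_x+\partial_x S_{(j-N)\delta}(W)\bigr)+i\chi_0(hD_x)|D_x|^{\tdm}$, the operator $A(t)$ is skew-adjoint (the first summand is of the form $b\partial_x+\mez b'$ with $b$ real, hence skew-adjoint, the second is $i$ times a real Fourier multiplier), so $S(t,t_0)^\ast=S(t_0,t)$ and $S(t,t_0)S(t_0,s)=S(t,s)$. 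Third, the frequency localization $\supp\widehat f\subset\{c_1h^{-1}\le|\xi|\le c_2h^{-1}\}$ lets us take the cut-off $\varphi$ in Theorem~\ref{dispersive} equal to $1$ on that band, so the dispersive estimate applies to every datum encountered below.

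I would first establish the homogeneous estimate
\[
\Vert S(\cdot,t_0)v_0\Vert_{L^4(I_h;L^\infty(\xR))}\le \cF(\Vert W\Vert_{E_0})\,h^{-\frac18}\,\Vert v_0\Vert_{L^2(\xR)},\qquad t_0\in I_h,
\]
for data $v_0$ localized at frequency $\sim h^{-1}$. Let $T\colon v_0\mapsto S(\cdot,t_0)v_0$, viewed as a map into $L^4_t(I_h;L^\infty_x)$. By unitarity, $TT^\ast g(t)=\int_{I_h}S(t,s)g(s)\,ds$, and \eqref{dispersive:est}, after taking the $L^\infty_x$-norm inside the $s$-integral, gives
\[
\Vert TT^\ast g(t)\Vert_{L^\infty_x}\le \cF(\Vert W\Vert_{E_0})\,h^{-\frac14}\int_{I_h}|t-s|^{-\frac12}\,\Vert g(s)\Vert_{L^1_x}\,ds.
\]
Since convolution with $|t-s|^{-1/2}$ maps $L^{4/3}(\xR)$ to $L^4(\xR)$ by the Hardy--Littlewood--Sobolev inequality ($\tfrac14=\tfrac34-\tfrac12$), it follows that $\Vert TT^\ast\Vert_{L^{4/3}(I_h;L^1)\to L^4(I_h;L^\infty)}\le\cF\,h^{-1/4}$, hence $\Vert T\Vert_{L^2\to L^4(I_h;L^\infty)}=\Vert TT^\ast\Vert^{1/2}\le\cF^{1/2}\,h^{-1/8}$, which is the desired homogeneous bound.

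For the inhomogeneous problem, Duhamel's formula gives $u(t)=\int_0^t S(t,s)f(s)\,ds$. Writing this as $\int_0^{h^{1/2}}\mathbf{1}_{\{s<t\}}S(t,s)f(s)\,ds$ and using Minkowski's integral inequality in $s$,
\[
\Vert u\Vert_{L^4(I_h;L^\infty)}\le\int_0^{h^{1/2}}\bigl\Vert S(\cdot,s)f(s)\bigr\Vert_{L^4([s,h^{1/2}];L^\infty_x)}\,ds\le\int_0^{h^{1/2}}\cF\,h^{-\frac18}\,\Vert f(s)\Vert_{L^2_x}\,ds,
\]
where the homogeneous estimate was applied on the subinterval $[s,h^{1/2}]\subset I_h$ with initial time $s$. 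This is exactly $\Vert u\Vert_{L^4(I_h;L^\infty)}\le \cF(\Vert W\Vert_{E_0})\,h^{-1/8}\Vert f\Vert_{L^1(I_h;L^2)}$, which implies the stated bound with $\cF(\Vert W\Vert_{E_0}+\Vert W\Vert_{E_1})$ since $\cF$ is non-decreasing. Note it is precisely because the source belongs to $L^1_tL^2_x$ --- not merely $L^{4/3}_tL^2_x$ --- that no Christ--Kiselev lemma is needed.

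There is no genuine obstacle in this corollary: the difficulty of the scheme is concentrated upstream, in the WKB parametrix of Proposition~\ref{parametrix:prop} and the dispersive estimate of Theorem~\ref{dispersive}. The only points requiring a little care are the unitarity of $S$ on $L^2$ (so that the $TT^\ast$ identity is exact), the observation that $|t-s|\le h^{1/2}$ on $I_h$ (so that \eqref{dispersive:est} is applicable throughout), and keeping track of frequency supports so that the propagator is always applied to data localized at frequency $\sim h^{-1}$.
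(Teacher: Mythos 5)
Your proof is correct and is exactly the argument the paper intends: the paper gives no details for this corollary, merely asserting that it follows from "the usual $TT^*$ argument and Duhamel's formula" applied to the dispersive estimate of Theorem \ref{dispersive}, and your write-up (unitarity of the propagator, $TT^*$ with Hardy--Littlewood--Sobolev giving the $h^{-1/8}$ homogeneous bound, then Duhamel with Minkowski, noting that the $L^1_tL^2_x$ source norm makes Christ--Kiselev unnecessary) is the standard and correct way to fill that in.
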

Finally, we glue these estimates together both in frequency and in time to obtain the semi-classical  Strichartz estimate for $u$ on $[0, T]$. 
\begin{theo}\label{strich2}
Let $I=[0,T]$ and $s_0\in \xR$. Let $W\in E_0\cap E_1$ and $f\in L^4(I; H^{s_0-\mez}(\xR))$. If $u\in L^\infty(I, H^{s_0}(\xR))$ is a solution  to the problem 
\[
\left(\partial_t+T_W\partial_x+i|D_x|^{\tdm}\right)u=f,
\]
 then for every $\eps>0$, there exists $\cF_{\eps}$ (independent of $u,~f,~W$) such that 
\bq\label{Strichartz}
 \Vert  u\Vert_{L^4(I; C_*^{s_0- \frac{1}{4}-\eps}(\xR))} \leq \cF_\eps\left(\Xi\right) \Big(\Vert f\Vert_{L^4(I; H^{s_0-\frac{1}{2}-\eps}(\xR))}
+ \Vert u\Vert_{L^\infty(I; H^{s_0}(\xR))}\Big),
\eq
where 
\[
\Xi=\Vert W\Vert_{E_0}+\Vert W\Vert_{E_1}+\lA  \partial_x\eta \rA_{L^\infty_tL^\infty_x}.
\]
 \end{theo}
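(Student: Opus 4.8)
The plan is to run the standard Littlewood--Paley plus finite-propagation decomposition of Bahouri--Chemin and Tataru, in the form adapted in \cite{ABZ4, ABZ2}, using the small-time dispersive Strichartz bound of Corollary \ref{semicl} as the only genuinely dispersive input. Since the parametrix and Theorems \ref{dispersive}--\ref{dispersive'} are already in hand, what is left is the gluing of estimates in frequency and in time, where the bookkeeping of the powers of $h=2^{-j}$ has to be done carefully. The mechanism producing the loss $\eps$ (so that the gain is $\uq-\eps$ rather than $\uq$) lies entirely in this last part.

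\emph{Dyadic reduction and time slicing.} Low frequencies are harmless ($\lA\Delta_0u\rA_{L^\infty}\lesssim\lA u\rA_{L^2}$), so fix $j\ge1$, set $h=2^{-j}$ and recall from \eqref{Ldj}--\eqref{Fj} that $\Delta_ju$ solves $\mathcal{L}_\delta\Delta_ju=F_j$, where $F_j$ collects $\Delta_jf$, the paraproduct term $\Delta_j(T_{\partial_xW}u)$, the commutators $[T_W,\Delta_j]\partial_xu$ and $\partial_x[T_W,\Delta_j]u$, the remainders $R_ju,R'_ju$ of Lemma \ref{TS}, and the symbol-smoothing differences $(S_{(j-N)\delta}(W)-S_{j-N}(W))\partial_x\Delta_ju$ and its symmetric counterpart. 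Fix $\delta\in(\mez-\eps,\mez)$ and cut $I=[0,T]$ into $\lesssim Th^{-1/2}$ intervals $I_h^k=[t_k,t_{k+1}]$ of length $h^{1/2}$. On each $I_h^k$ write $\Delta_ju=v_k+w_k$, with $v_k$ the free evolution of the datum $\Delta_ju(t_k)$ and $w_k$ the Duhamel term with zero datum and source $F_j$. The dispersive estimate \eqref{dispersive:est} together with the usual $TT^*$ argument gives $\lA v_k\rA_{L^4(I_h^k;L^\infty)}\le h^{-1/8}\cF(\lA W\rA_{E_0})\lA\Delta_ju(t_k)\rA_{L^2}$, while Corollary \ref{semicl} (translated to start at $t_k$, uniformly in $k$) gives $\lA w_k\rA_{L^4(I_h^k;L^\infty)}\le h^{-1/8}\cF\lA F_j\rA_{L^1(I_h^k;L^2)}$. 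Bounding $\lA F_j\rA_{L^1(I_h^k;L^2)}\le h^{3/8}\lA F_j\rA_{L^4(I_h^k;L^2)}$ by H\"older, raising to the fourth power and summing over $k$ (using $\#\{k\}\lesssim Th^{-1/2}$ for the $v_k$'s and $\sum_k\lA F_j\rA_{L^4(I_h^k;L^2)}^4=\lA F_j\rA_{L^4_tL^2}^4$ for the $w_k$'s) yields
\[
\lA\Delta_ju\rA_{L^4(I;L^\infty)}\lesssim T^{1/4}\cF\,h^{-1/4}\lA\Delta_ju\rA_{L^\infty_tL^2}+\cF\,h^{1/4}\lA F_j\rA_{L^4_tL^2}.
\]

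\emph{Estimating $F_j$.} Here only $W\in E_0\cap E_1$ is used. Lemma \ref{TS} gives $\lA R_ju\rA_{L^2}+\lA R'_ju\rA_{L^2}\lesssim\lA W(t)\rA_{W^{1,\infty}}\lA\widetilde\Delta_ju\rA_{L^2}$; the classical paraproduct--commutator estimate gives the same bound for $[T_W,\Delta_j]\partial_xu$ and $\partial_x[T_W,\Delta_j]u$; and $\lA\Delta_j(T_{\partial_xW}u)\rA_{L^2}\lesssim\lA\partial_xW(t)\rA_{L^\infty}\lA\widetilde\Delta_ju\rA_{L^2}$. For the smoothing differences, $\lA(I-S_m)W(t)\rA_{L^\infty}\lesssim2^{-m}\lA W(t)\rA_{W^{1,\infty}}$ with $m=(j-N)\delta$, hence these terms are $\lesssim2^{j(1-\delta)}\lA W(t)\rA_{W^{1,\infty}}\lA\Delta_ju\rA_{L^2}$. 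Taking $L^4$ in time turns every $\lA W(t)\rA_{W^{1,\infty}}$ into $\lA W\rA_{E_1}$, so that
\[
h^{1/4}\lA F_j\rA_{L^4_tL^2}\lesssim h^{1/4}\lA\Delta_jf\rA_{L^4_tL^2}+\cF(\lA W\rA_{E_1})\big(h^{1/4}\lA\widetilde\Delta_ju\rA_{L^4_tL^2}+2^{j(3/4-\delta)}\lA\Delta_ju\rA_{L^4_tL^2}\big).
\]

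\emph{Summation in $j$ and conclusion.} Multiply the two displays by $2^{j(s_0-1/4-\eps)}$ and write $2^{js_0}\lA\Delta_ju\rA_{L^\infty_tL^2}=d_j$ (so $\lA d\rA_{\ell^2}\lesssim\lA u\rA_{L^\infty_tH^{s_0}}$ by Minkowski) and $2^{j(s_0-1/2-\eps)}\lA\Delta_jf\rA_{L^4_tL^2}=\phi_j$ (so $\lA\phi\rA_{\ell^2}\lesssim\lA f\rA_{L^4_tH^{s_0-1/2-\eps}}$). Then the homogeneous and commutator contributions become $\lesssim2^{-j\eps}d_j$, the $\Delta_jf$ contribution becomes $\phi_j$, and the symbol-smoothing contribution becomes $T^{1/4}2^{j(1/2-\delta-\eps)}d_j$, which is $\ell^2_j$-summable precisely because $\delta>\mez-\eps$. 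Summing in $\ell^2_j$ and using that the Zygmund norm is a supremum over $j$ (absorbed by $\ell^2\hookrightarrow\ell^\infty$) gives the claimed estimate with right-hand side $\cF_\eps(\lA W\rA_{E_0}+\lA W\rA_{E_1})\big(\lA f\rA_{L^4_tH^{s_0-1/2-\eps}}+\lA u\rA_{L^\infty_tH^{s_0}}\big)$. Finally, since $u$ lives on $\xR$ equipped with the size-$n_0$ dyadic partition and $n_0=\cF(m_0,\lA\k'\rA_{L^\infty})$, converting back to the standard ($n=0$) Zygmund norm costs a constant depending on $\lA\partial_x\eta\rA_{L^\infty_tL^\infty_x}$, which is why $\Xi$ contains that last term. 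The only real obstacle is this last step: $\delta$ must sit in the window $(\mez-\eps,\mez)$ --- strictly below $\mez$ for Theorems \ref{dispersive}--\ref{dispersive'} to apply, strictly above $\mez-\eps$ for the symbol-smoothing error to be $j$-summable --- and this is exactly the trade-off turning the optimal gain $\uq$ into $\uq-\eps$; one also has to verify in the previous step that no term of $F_j$ secretly requires more than $W^{1,\infty}$ regularity of $W$.
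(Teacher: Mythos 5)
Your proposal is correct and follows essentially the same route as the paper's proof: the same decomposition \eqref{Ldj}--\eqref{Fj}, the same use of Corollary \ref{semicl} on time slabs of length $h^{1/2}$, the same bounds on the pieces of $F_j$ (with the smoothing difference costing $2^{j(1-\delta)}\lA W\rA_{W^{1,\infty}}$), and the same choice of $\delta$ just below $\mez$ producing the $\eps$-loss. The only (cosmetic) difference is that you split each slab into free evolution plus Duhamel term and invoke the homogeneous $TT^*$ Strichartz bound for the former, whereas the paper multiplies by a smooth temporal cutoff $\chi_{h,k}$ so that everything is reduced to the zero-data inhomogeneous estimate, the initial data reappearing as the source term $h^{-\mez}\chi'u_h$.
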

\begin{proof} Throughout this proof, we denote 
$
\cF=\cF(\Vert W\Vert_{E_0}+\Vert W\Vert_{E_1})
$ and $\RHS$ the right-hand side of \eqref{Strichartz}. Remark first that by \eqref{Ldj} we have
$
\mathcal{L}_{\delta}u_h=F_h,$
where $F_h$ is given by \eqref{Fj}.\\
{\bf Step 1.} Let $\chi \in C_0^\infty(0,2)$ equal to one on $[ \mez, \frac{3}{2}].$ For $0 \leq k \leq [Th^{-1}]-2$ define
$$
I_{h,k} = [kh^{\mez}, (k+2)h^{\mez}], \quad \chi_{h,k}(t) = \chi\Big( \frac{t-kh^{\mez}}{h^{\mez}}\Big), \quad  u_{h,k} = \chi_{h,k}(t)u_h.
$$
Then
\begin{equation*} 
\mathcal{L}_{\delta}u_{h,k} = \chi_{h,k}F_h   +h^{-\mez}\chi'\Big( \frac{t-kh^{\mez}}{h^{\mez}}\Big)u_h, \quad u_{h,k}(kh,\cdot)=0.
\end{equation*}
Applying Corollary \ref{semicl} to each $u_{h,k}$ on the interval $I_{h,k}$   we obtain, since $\chi_{h,k}(t) = 1$ for $(k+\mez)h\leq t \leq (k+\frac{3}{2})h$, 
\begin{equation*} 
\begin{aligned}
\Vert &u_h\Vert_{L^4(  (k+\mez)h^{\mez},(k+\frac{3}{2})h^{\mez});L^{\infty}(\xR))}  \\
& \leq h^{-\frac{1}{8}}\mathcal{F}.\Big( \Vert   F_h \Vert_{L^1( I_{h,k}; L^2(\xR))} 
+ h^{-\mez} \Vert \chi'\Big( \frac{t-kh^{\mez}}{h^{\mez}}\Big)u_h \Vert_{L^1(I_{h,k}; L^2(\xR))}\Big)\\
&\leq h^{-\frac{1}{8}}\mathcal{F}. \Big(h^{\frac{3}{8}} 
\Vert   F_h \Vert_{L^4( I_{h,k}; L^2(\xR))} 
+ \Vert u_h\Vert_{L^\infty(I; L^2(\xR))}\Big).
\end{aligned}
\end{equation*}
Raising to the power $4$ both sides of the preceding estimate, summing in $k$ from $0$ to  $[Th^{-\mez}]-2$ and then taking the power $1/4$ we get
\begin{equation}\label{global3}
\Vert u_h\Vert_{L^4(I;L^{\infty}(\xR))}  
  \leq \mathcal{F}. \Big(
 h^{\frac{1}{4}}\Vert F_h \Vert_{L^4(I; L^2(\xR))} 
+ h^{-\frac{1}{4}}\Vert u_h\Vert_{L^\infty(I; L^2(\xR))}\Big).
\end{equation}
Set $\nu=\mez-\delta$. Multiplying both sides of the above inequality by $h^{-s_0+\frac{1}{4}+\nu}$ and taking into account the fact that $u_h$ and $F_h$ are spectrally supported in annulus of size $h^{-1}$, it follows that
\begin{equation}\label{global4}
\Vert u_h\Vert_{L^4(I; L^\infty(\xR))} h^{-s_0+\frac{1}{4}+\nu}
  \leq \mathcal{F}.\Big(
\Vert   F_h \Vert_{L^4(I; H^{s_0-1+\delta}(\xR))} 
+ \Vert u_h\Vert_{L^\infty(I; H^{s_0-\nu}(\xR))}\Big).
\end{equation}
{\it Step 2.} We now estimate $\Vert   F_h \Vert_{L^4(I; H^{s_0-1+\delta}(\xR))}$, where recall from \eqref{Fj} that
  \begin{multline}
F_h=\Delta_jf+\mez \Delta_j (T_{\partial_x W}u)+ \mez \big( [T_W, \Delta_j] \partial_x u +\partial_x[T_W, \Delta_j] u\big) +R_j u + R'_ju +\\ \mez \big\{ \big(S_{(j-N)\delta}(W)  - S_{(j-N)}(W)\big)\partial_x \Delta_ju +\partial_x  \big(S_{(j-N)\delta}(W)  - S_{(j-N)}(W)\big) \Delta_ju\big\}.
\end{multline}
%First, using the tame estimate \eqref{est:RHS} in Theorem \ref{parareduce} gives
%\[
%\lA \Delta_jf\rA_{L^4(I, H^{s-\mez}(\xR))}\le \RHS.
%\]
Since $W\in L^4(I, W^{1,\infty}(\xR))$, we can apply the symbolic calculus Theorem \ref{theo:sc} $(i)$, $(ii)$ to have
\bq\label{TV-D}
\begin{aligned}
&\Vert \Delta_j (T_{\partial_xW}u)\Vert_{L^4(I; H^{s_0-1+\delta}(\xR))} +\Vert [ T_{W} \partial_x+ \partial_xT_{W}, \Delta_j]u\Vert_{L^4(I; H^{s_0-1+\delta}(\xR))} \\
&\quad \leq C\Vert W \Vert_{L^4(I; W^{1, \infty}(\xR))} \Vert u \Vert_{L^\infty(I; H^{s_0-1+\delta}(\xR))}.
\end{aligned}
\eq
Next, remark that the spectrum of $\Lambda_j\defn\big(S_{(j-N)\delta}(W)  - S_{j-N}(W)\big)\partial_x \Delta_ju $ is contained in a ball of radius $C\, 2^j$ we can write for fixed $t$
\begin{equation*}
\begin{aligned}
\Vert \Lambda_j(t, \cdot) \Vert_{H^{s_0-1+\delta}(\xR)} &\le C\, 2^{j(s_0-1+\delta)} 
\Vert  (S_j(W) - S_{j \delta}(W)) \partial_x\Delta_ju_h)(t,\cdot) \Vert_{L^2(\xR)}\\
& \le C\, 2^{j(s_0-1+\delta)}\Vert  (S_j(W) - S_{j \delta}(W)(t,\cdot) \Vert_{L^\infty(\xR)} 
2^{j(1-s_0)}  \Vert u_h(t,\cdot) \Vert_{H^{s_0}(\xR)}. 
\end{aligned}
\end{equation*}
 According to the convolution formula,
$$
(S_j(W) - S_{j \delta}(W))(t,x) 
= \int_{\xR^d} {\check\phi}(z) \big(W(t,x-2^{-j }z) - W(t,x-2^{-j\delta} z) \big) \,dz.
$$
where $\check\phi$ is the inverse Fourier transform of the Littlewood-Paley function $\phi$. It follows that
$$
\Vert (S_j(W) - S_{j \delta}(W))(t,\cdot) \Vert_{L^\infty(\xR)} 
\leq C\, 2^{-j \delta} \Vert W(t,\cdot) \Vert_{W^{1,\infty}(\xR)}.
$$
Therefore, we obtain 
\bq\label{smoothS:1}
\Vert \big(S_{j\delta}(W)  - S_j(W)\big)\partial_x \Delta_ju \Vert_{L^4(I; H^{s_0-1+\delta}(\xR))} \leq C\,  
\Vert W\Vert_{E_1}  \Vert u_h \Vert_{L^\infty(I; H^{s_0}(\xR))}.
\eq
Similarly, it also holds that
\bq\label{smoothS:2}
\Vert \partial_x  \big(S_{j\delta}(W)  - S_j(W)\big) \Delta_ju \Vert_{L^4(I; H^{s_0-1+\delta}(\xR))} \leq C\,  
\Vert W\Vert_{E_1}  \Vert u_h \Vert_{L^\infty(I; H^{s_0}(\xR))}.
\eq
Now, combining \eqref{TV-D}, \eqref{smoothS:1}, \eqref{smoothS:2} and Lemma \ref{TS} and the fact that $0<\delta<\mez$ we conclude 
\bq\label{est:Fj}
\Vert F_h\Vert_{L^2(I; H^{s_0-1+\delta}(\xR))} \leq C\,
\Vert f_h\Vert_{L^4(I; H^{s_0-1+\delta}(\xR))}+\Vert W\Vert_{E_1}\Vert u_h \Vert_{L^\infty(I; H^{s_0}(\xR))}.
\eq
Now, combining this estimate with \eqref{global4} we derive 
\begin{equation}\label{global5}
\Vert u_h\Vert_{L^4(I; L^\infty(\xR))} h^{-s_0+\frac{1}{4}+\nu} 
  \leq \cF. \Big(
\Vert   f_h \Vert_{L^4(I; H^{s_0-1+\delta}(\xR))} 
+ \Vert u_h\Vert_{L^\infty(I; H^{s_0}(\xR))}\Big).
\end{equation}
Finally, for every given $\eps$ we choose $\delta=\mez-\eps=\mez-\nu$ to get
\[
\lA u\rA_{L^4(I; C_*^{s_0-\frac{1}{4}-\eps}(\xR))}=\sup_h\Vert u_h\Vert_{L^4(I; L^\infty(\xR))} h^{-s_0+\frac{1}{4}+\eps}\le \RHS. 
\]
\end{proof}
%%%%%%%%%%%%%%%%%%%%
\section{Proof of Theorems \ref{theo:Strichartz}, \ref{theo:apriori}, \ref{theo:Cauchy}}
Throughout this section, we assume that $(\eta, \psi)$ is a solution to the gravity-capillary water waves system \eqref{ww} having the regularity given by \eqref{assume:eta, psi}. For any real number $\sigma$, let us define the Sobolev-norm and the Strichartz-norm of the solution:
\begin{align}\label{MN}
&M_\sigma(T)=\Vert (\eta, \psi)\Vert_{L^{\infty}([0, T]; H^{\sigma+\mez}\times H^\sigma)}, \quad M_\sigma (0)=\Vert (\eta, \psi)\arrowvert_{t=0}\Vert_{H^{\sigma+\mez}\times H^\sigma},\\
&N_\sigma(T)=\Vert (\eta, \psi)\Vert_{L^4([0, T]; W^{\sigma+\mez, \infty}\times W^{\sigma, \infty})}.
\end{align}
From the Strichartz estimate \eqref{Strichartz} we have for any $\eps>0$
\bq\label{Strichartz'}
 \Vert  u\Vert_{L^4(I; W^{s - \frac{1}{4}-\eps, \infty})} \leq \cF_\eps\left(\Vert W\Vert_{E_0}+\Vert W\Vert_{E_1}+\lA  \partial_x\eta \rA_{L^\infty_tL^\infty_x}\right) \Big(\Vert f\Vert_{L^4(I; H^{s-\mez})}
+ \Vert u\Vert_{L^\infty(I; H^s)}\Big).
\eq
We shall estimate the norms of $W$ and $u$ appearing on the right-hand side of  \eqref{Strichartz'} in terms of of $M_s$ and $N_s$.
\begin{lemm}\label{est:u-final}
We have 
\[
\lA u\rA_{L^\infty([0, T]; H^s)}\le \cF(M_s(T)).
\]
\end{lemm}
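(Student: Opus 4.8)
The plan is to exploit the definition \eqref{defi:u} of $u=\k^*\Phi=\Phi\circ\k-\dot T_{(\partial_x\Phi)\circ\k}\k$ together with the identity $u=\k^*_g\Phi+\R_{line}\Phi$, which is nothing but the definition \eqref{def:Rline} of $\R_{line}$ read for $\Phi$. This reduces the lemma, at a.e.\ $t\in[0,T]$, to two bounds: $\lA\k^*_g\Phi\rA_{H^s}$, handled by the Operation Theorem \ref{theo:operation}, and $\lA\R_{line}\Phi\rA_{H^s}$, handled by the Linearization Theorem \ref{theo:linearization} $(ii)$. The point of the whole computation is to verify that every quantity appearing on the right-hand sides of those two theorems is controlled by $M_s(T)$ alone, with no Strichartz norm entering.

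First I would collect the structural facts about $\k(t)$. By \eqref{est:m0} and the discussion around \eqref{justify:k}, $\k(t)$ satisfies Assumptions I and II with $\rho=1$, $r_1=s-\mez$, $\alpha_0=2$; moreover $\lA\k'\rA_{L^\infty}\le 1$, and $m_0=(1+\lA\partial_x\eta\rA_{L^\infty_tL^\infty_x}^2)^{-1/2}$ is bounded from below by $\cF(M_s(T))^{-1}$ because $\partial_x\eta\in H^{s-\mez}\hookrightarrow L^\infty$ (here $s>2$). Using items 1 and 7 of Lemma \ref{est:various} together with the Sobolev embedding $H^{s-\mez}\hookrightarrow C^{s-1}_*\subset C^1_*$ (legitimate since $s>2$), one gets $\lA\Phi\rA_{H^s}+\lA\k'\rA_{C^1_*}+\lA\partial_x^2\k\rA_{H^{s-\tdm}}\le\cN$.

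With these in hand, Theorem \ref{theo:operation} gives directly $\lA\k^*_g\Phi\rA_{H^s}\le\cF(m_0,\lA\k'\rA_{L^\infty})\lA\Phi\rA_{H^s}\le\cN$. For the linearization remainder I would apply Theorem \ref{theo:linearization} $(ii)$ to $\Phi$ with H\"older exponent $\sigma=s-\mez$, coming from $H^s\hookrightarrow C^{s-\mez}_*$ and allowed because $s-\mez>1$. With $\rho=1$ this gives $\eps=\min(\sigma-1,\rho+1)_{-}\ge 0$ and $\tilde s=\min(s+\rho,r_1+1+\eps)=\min(s+1,\,s+\mez+\eps)\ge s$, so $H^{\tilde s}\hookrightarrow H^s$, and Theorem \ref{theo:linearization} $(ii)$ bounds $\lA\R_{line}\Phi\rA_{H^s}$ by $\cF(m_0,\lA\k'\rA_{C^1_*})\lA\partial_x^2\k\rA_{H^{s-\tdm}}\big(1+\lA\partial_x\Phi\rA_{H^{s-1}}+\lA\Phi\rA_{C^{s-\mez}_*}\big)\le\cN$, using item 1 of Lemma \ref{est:various} (which controls $\lA\partial_x\Phi\rA_{H^{s-1}}$ and, via $H^s\hookrightarrow C^{s-\mez}_*$, $\lA\Phi\rA_{C^{s-\mez}_*}$), item 7 again, and the lower bound on $m_0$. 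Adding the two estimates and recalling that $\cN$ is a non-decreasing function of $M_s(T)$ completes the proof after taking the supremum over $t\in[0,T]$.

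I do not expect a genuine obstacle: the statement consolidates the Operation and Linearization theorems of Section \ref{section:paracomposition} with Lemma \ref{est:various}. The one place calling for care is the choice of the H\"older exponent in Theorem \ref{theo:linearization} $(ii)$: one must use the Sobolev-embedding exponent $\sigma=s-\mez>1$, so that the remainder lands in $H^{\tilde s}$ with $\tilde s\ge s$, and resist using the $L^4_t$ H\"older regularity $r$ of $\Phi$ furnished by item 2 of Lemma \ref{est:various}, which would drag the Strichartz norm back into the estimate and defeat the purpose of the lemma.
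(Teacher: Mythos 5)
Your proposal is correct and follows exactly the paper's route: decompose $u=\k^*_g\Phi+\R_{line}\Phi$ and invoke Theorem \ref{theo:operation} for the first term and Theorem \ref{theo:linearization} $(ii)$ for the second, with all quantities controlled by Lemma \ref{est:various}. Your explicit choice of the Sobolev-embedding exponent $\sigma=s-\mez$ (rather than $\sigma=r$) is the right reading of the paper's two-line proof, since it keeps the Strichartz norm out of the bound.
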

\begin{proof}
By definition \eqref{defi:u}, $u$ is given by
\[
u=\Phi\circ \k-\dot T_{(\partial_x\Phi)\circ \k}\k=\k^*_g \Phi-\R_{line}\Phi.
\]
Lemma \ref{est:u-final} then follows from Theorem \ref{theo:operation} and Theorem \ref{theo:linearization} $(ii)$.
\end{proof}
\begin{lemm}\label{est:W}
We have
\[ 
\Vert W\Vert_{E_0}\le \cF(M_s(T)), \quad \Vert W\Vert_{E_1}\le \cF(M_s(T))(1+N_r(T)).
\]
\end{lemm}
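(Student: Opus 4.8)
The plan is to estimate $W$ directly from its definition \eqref{defi:W}, namely
\[
W=(V\circ \kappa)(\partial_x\chi\circ\kappa)+\partial_t\chi\circ \kappa,
\]
by controlling each of the two terms in $E_0=L^\infty_tL^\infty_x$ and $E_1=L^4_tW^{1,\infty}_x$ separately, and then invoking composition estimates with the diffeomorphism $\kappa$, all of whose relevant norms are already bounded by $\cN=\cF(M_s(T))$ thanks to Lemma \ref{est:various}. The key observation is that composition with $\kappa$ preserves $L^\infty$ and $W^{1,\infty}$ norms up to factors depending only on $\lA \partial_x\k\rA_{L^\infty}$ (for $L^\infty$) and additionally on $\lA \partial_x^2\k\rA_{L^\infty}$, which is controlled by $\cN$ via Lemma \ref{est:various} $6.$ (for $W^{1,\infty}$, using the chain rule $\partial_x(g\circ\k)=(\partial_xg\circ\k)\partial_x\k$). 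So everything reduces to estimating $V$, $\partial_x\chi$ and $\partial_t\chi$ on $\xR_2$ before composition.

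First I would handle the $E_0$ bound. The factor $\partial_x\chi=\sqrt{1+(\partial_x\eta)^2}$ is bounded in $L^\infty_tL^\infty_x$ by $\cN$ directly from the Sobolev embedding $H^{s-\mez}\hookrightarrow L^\infty$ applied to $\partial_x\eta$ (using $s-\mez>1$), and $\partial_t\chi$ is bounded in $L^\infty_{t,x}$ by $\cN$ by Lemma \ref{est:various} $4.$ For $V=\partial_x\psi-B\partial_x\eta$ with $B$ given by \eqref{BV}, one uses the standard elliptic estimates for the Dirichlet--Neumann operator (Lemma $3.2$ of \cite{ABZ1} was already invoked in the proof of Proposition \ref{parareduce} to get $\lA(\partial_x\Phi)\circ\k\rA_{H^{s-1}}\le\cN$, and the same tools give $\lA V\rA_{L^\infty_tH^{s-1}_x}\le\cN$), followed by Sobolev embedding $H^{s-1}\hookrightarrow L^\infty$. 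Composing with $\kappa$ and multiplying the two bounded factors then yields $\lA W\rA_{E_0}\le\cF(M_s(T))$.

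Next I would handle the $E_1$ bound, which is the step carrying the $N_r(T)$ factor and is the main obstacle. Differentiating $W$ in $x$ produces terms of the form $(\partial_x V\circ\k)(\partial_x\chi\circ\k)\partial_x\k$, $(V\circ\k)(\partial_x^2\chi\circ\k)\partial_x\k$ and $(\partial_t\partial_x\chi\circ\k)\partial_x\k$. The factors involving only $\k$ and $\chi$ are harmless: $\partial_x\k$ is bounded in $L^\infty$ by $\cN$ (Lemma \ref{est:various} $6.$), $\partial_x^2\chi=F_0(\partial_x\eta)\partial_x^2\eta$ is bounded in $L^\infty_tL^\infty_x$ since $\partial_x^2\eta\in L^\infty_tH^{s-\tdm}_x\hookrightarrow L^\infty$ (because $s-\tdm>\mez$), and $V\circ\k$ is bounded in $L^\infty$ as above. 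The two genuinely new pieces are $\partial_xV$ and $\partial_t\partial_x\chi$, which are exactly where the Strichartz norm $N_r$ enters: Lemma \ref{est:various} $8.$ already gives $\lA \partial_t\partial_x\chi(t)\rA_{L^\infty_x}\le\cN(1+\lA\psi(t)\rA_{C^r_*})$, whose $L^4_t$ norm is $\le\cN(1+N_r(T))$; and for $\partial_x V$ one invokes the H\"older (Strichartz-type) elliptic estimate for $G(\eta)\psi$ from Proposition $2.10$ of \cite{NgPo1} — the same estimate used to prove Lemma \ref{est:various} $8.$ — together with the formula $V=\partial_x\psi-B\partial_x\eta$ and the Hölder bound on $B$, to get $\lA \partial_xV(t)\rA_{L^\infty_x}\le\cN(1+\lA\eta(t)\rA_{W^{r+\mez,\infty}}+\lA\psi(t)\rA_{W^{r,\infty}})$, whose $L^4_t$ norm is $\le\cN(1+N_r(T))$. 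Collecting the three contributions, taking $L^\infty$ norms on the bounded factors and $L^4_t$ norms on the $N_r$-factors, and using that products of an $L^\infty_{t,x}$ function with an $L^4_tL^\infty_x$ function land in $L^4_tL^\infty_x$, gives $\lA W\rA_{E_1}\le\cF(M_s(T))(1+N_r(T))$, which completes the proof. The only subtlety to be careful about is that the Hölder elliptic estimates for $G(\eta)$ require exactly the regularity $s>r>\tdm+\mez$ assumed in \eqref{assume:eta, psi}, so that all the indices $r-2$, $r-\tdm$, $s-\tdm$ appearing above are nonnegative and the embeddings are legitimate.
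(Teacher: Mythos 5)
Your proof is correct and follows essentially the same route as the paper: bound the $E_0$ norm directly via Sobolev embedding and Lemma \ref{est:various} 4., then compute $\partial_x W$ by the product rule to get exactly the same three terms, and control the key factor $\partial_x V$ via the Hölder estimate for the Dirichlet--Neumann operator (Proposition $2.10$ of \cite{NgPo1}) and $\partial_t\partial_x\chi$ via Lemma \ref{est:various} 8. The paper treats only the first term of $\partial_x W$ explicitly and states the others are similar; you simply fill in those details.
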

\begin{proof}
Recall from \eqref{defi:W} that $W$ is given by
\[
W=(V\circ \kappa)(\partial_x\chi\circ\kappa)+\partial_t\chi\circ \kappa.
\]
 First, by Sobolev's embedding and Lemma \ref{est:various} 4., $\| W\|_{L^\infty_tL^\infty_x}\le \cF(M_s(T))$. To estimate $\lA W\rA_{E_1}$ we compute
\[
\partial_xW=(\partial_xV\circ \kappa)(\partial_x\chi\circ\kappa)\partial_x\k+(V\circ \kappa)(\partial_x^2\chi\circ \kappa)\partial_x\kappa+(\partial_t\partial_x\chi\circ \kappa)\partial_x\kappa.
\]
Using the expression \eqref{BV} for $V$ together with the H\"older estimate for the Dirichlet-Neumann operator proved in Proposition $2.10$, \cite{NgPo1}, we obtain for a.e. $t\in [0, T]$
\bq\label{dxV}
\lA \partial_xV(t)\rA_{L^{\infty}_x}\le \cF(\lA \eta(t)\rA_{H^{s+\mez}}, \lA \psi(t)\rA_{H^s})\left(1+\lA  \psi(t)\rA_{W^{r, \infty}}\right).
\eq
On the other hand, Lemma \ref{est:various} 3. gives $
\| \partial_x\chi\|_{L^\infty_tL^\infty_x}\le \cF(M_s(T))$, hence 
\bq\label{W(ii):1}
\Vert(\partial_xV\circ \kappa)(\partial_x\chi\circ\kappa)\partial_x\k\Vert_{L^4_tL^{\infty}_x}\le \cF(M_s(T))(1+N_r(T)).
\eq
The other two terms in the expression of $\partial_xW$ are treated in the same way.
\end{proof}
\begin{coro}
For every $0<\mu<\frac{1}{4}$, there exists $\cF:\xR^+\to \xR^+$ such  that
\bq \label{strichartz-norm-u}
 \Vert  u\Vert_{L^4(I; W^{s -\mez+\mu, \infty}(\xR))} \le \cF(M_s(T)+N_r(T)).
\eq
\end{coro}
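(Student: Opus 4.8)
The plan is to concatenate the results already at our disposal: Proposition~\ref{parareduce} describes the equation solved by $u=\k^*\Phi$, Lemmas~\ref{est:u-final} and~\ref{est:W} control the data $\lA u\rA_{L^\infty(I;H^s)}$, $\lA W\rA_{E_0}$ and $\lA W\rA_{E_1}$ entering the abstract Strichartz estimate, and Theorem~\ref{strich2} then delivers the gain of $\uq-\eps$ derivatives. The only choice to be made is that of the loss parameter $\eps$, which is precisely where the restriction $\mu<\uq$ is used.

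Concretely, I would first recall from Proposition~\ref{parareduce} that $u$ solves $\lp\partial_t+T_W\partial_x+i|D_x|^{\tdm}\rp u=f$ with $W$ given by \eqref{defi:W} and $f$ obeying the tame bound \eqref{est:RHS}; integrating that bound in time over $I=[0,T]$ and recognising the definition of $N_r(T)$ gives
\[
\lA f\rA_{L^4(I;H^{s-\mez})}\le \cF(M_s(T))\bigl(1+N_r(T)\bigr).
\]
Next, Lemma~\ref{est:W} yields $\lA W\rA_{E_0}\le \cF(M_s(T))$ and $\lA W\rA_{E_1}\le\cF(M_s(T))(1+N_r(T))$, while $\eta\in L^\infty_tH^{s+\mez}_x$ together with $s>2$ and the Sobolev embedding $H^{s-\mez}(\xR)\hookrightarrow L^\infty(\xR)$ gives $\lA\partial_x\eta\rA_{L^\infty_tL^\infty_x}\le\cF(M_s(T))$; hence the quantity $\Xi=\lA W\rA_{E_0}+\lA W\rA_{E_1}+\lA\partial_x\eta\rA_{L^\infty_tL^\infty_x}$ appearing in Theorem~\ref{strich2} satisfies $\Xi\le\cF(M_s(T)+N_r(T))$. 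Finally, Lemma~\ref{est:u-final} gives $\lA u\rA_{L^\infty(I;H^s)}\le\cF(M_s(T))$, so all the hypotheses of Theorem~\ref{strich2} are met with $s_0=s$.

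Then, given $0<\mu<\uq$, I would set $\eps:=\uq-\mu>0$, so that $s-\uq-\eps=s-\mez+\mu>0$. Applying Theorem~\ref{strich2} with $s_0=s$ and this $\eps$, and using the trivial inclusion $\lA f\rA_{L^4(I;H^{s-\mez-\eps})}\le\lA f\rA_{L^4(I;H^{s-\mez})}$, we obtain
\[
\lA u\rA_{L^4(I;C_*^{s-\mez+\mu})}\le \cF_\eps(\Xi)\Bigl(\lA f\rA_{L^4(I;H^{s-\mez})}+\lA u\rA_{L^\infty(I;H^s)}\Bigr).
\]
Inserting the three bounds above and absorbing everything into a single non-decreasing function of $M_s(T)+N_r(T)$ shows that the right-hand side is $\le\cF(M_s(T)+N_r(T))$. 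Since $s-\mez+\mu>0$, the embedding $C_*^{s-\mez+\mu}(\xR)\hookrightarrow W^{s-\mez+\mu,\infty}(\xR)$ (the second inequality in Lemma~\ref{HZ}, valid also at integer indices) then gives exactly \eqref{strichartz-norm-u}.

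I do not expect a genuine obstacle here: the estimate is a direct assembly of Proposition~\ref{parareduce}, Lemmas~\ref{est:u-final}--\ref{est:W} and Theorem~\ref{strich2}. The only point to watch is that the $\uq$-gain in the semi-classical Strichartz estimate is approached but not attained, so one must choose $\eps\in(0,\uq-\mu]$; this is exactly why the hypothesis $\mu<\uq$ is imposed, and it is also the feature that propagates, through the subsequent sections, into the $\frac14$ derivative improvement over the energy threshold.
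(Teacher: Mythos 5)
Your proposal is correct and follows essentially the same route as the paper: the paper's proof of this corollary is precisely the concatenation of the Strichartz estimate \eqref{Strichartz'} (i.e.\ Theorem \ref{strich2} applied with $s_0=s$ and $\eps=\uq-\mu$) with Lemma \ref{est:u-final}, Lemma \ref{est:W} and the tame bound \eqref{est:RHS} on $f$ from Proposition \ref{parareduce}. Your additional remarks on controlling $\lA\partial_x\eta\rA_{L^\infty_tL^\infty_x}$ by $M_s(T)$ and on passing from $C_*^{s-\mez+\mu}$ to $W^{s-\mez+\mu,\infty}$ are exactly the (implicit) steps the paper uses.
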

\begin{proof}
 In views of the Strichartz estimate \eqref{Strichartz'} and Lemma \ref{est:u-final}, Lemma \ref{est:W}, there holds
\bq
 \Vert  u\Vert_{L^4(I; W^{s -\mez+\mu, \infty}(\xR))} \le \cF(M_s(T)+N_r(T))\Big(\Vert f\Vert_{L^4(I; H^{s-\frac{1}{2}}(\xR))}
+ 1\Big).
\eq
On the other hand, from the estimate \eqref{est:RHS} we have
\[
\Vert f\Vert_{L^4(I; H^{s-\mez}(\xR))}\le \cF(M_s(T))(1+N_r(T)),
\]
which concludes the proof.
\end{proof}
Having established the estimate \eqref{strichartz-norm-u} for $u$, we now go back from $u$ to the original unknown $(\eta, \psi)$. To this end, we proceed in 2 steps:
\[
u=k^*\Phi \longrightarrow \Phi \longrightarrow (\eta, \psi).
\] 
Fix $\mu \in (0, \frac{1}{4})$.\\
{\bf Step 1.}  By definition \eqref{defi:u}, $\Phi\circ\kappa=u+\dot T_{\partial_x\Phi\circ\kappa}\kappa$. It is easy to see that 
\[
\lA \dot T_{\partial_x\Phi\circ\kappa}\kappa\rA_{L^\infty_t H^{s+\mez}_x}\le \cF(M_s(T))
\] 
and thus by Sobolev's embedding and the estimate \eqref{strichartz-norm-u}  it holds
\[
\Vert \Phi\circ\kappa\Vert_{L^4(I; W^{s -\mez+\mu, \infty})} \le \cF(M_s(T)+N_s(T)).
\]
 We then may estimate 
\begin{align*}
\Vert \Phi(t)\Vert_{W^{s -\mez+\mu, \infty}}&=\Vert \Phi\circ\kappa\circ\chi(t)\Vert_{W^{s -\mez+\mu, \infty}}\\
& \le \Vert \Phi(t)\circ\kappa(t)\Vert_{W^{s -\mez+\mu, \infty}_x}\cF(\Vert \chi'(t)\Vert_{W^{s -\tdm+\mu, \infty}})\\
&\le  \Vert \Phi(t)\circ\kappa(t)\Vert_{W^{s -\mez+\mu, \infty}}\cF(M_s(T)),
\end{align*}
which implies
\[
\Vert \Phi\Vert_{L^4(I; W^{s -\mez+\mu, \infty})} \le \cF(M_s(T)+N_s(T)).
\]
{\bf Step 2.}
By definition of $\Phi$ and the inequality $\|\cdot\|_{C^\sigma}\le C_\sigma\|\cdot\|_{W^{\sigma, \infty}}$ for any $\sigma >0$, the preceding estimate gives
\bq\label{Teta,U}
\Vert T_p\eta\Vert_{L^4(I; C_*^{s -\mez+\mu})} +\Vert T_q(\psi-T_B\eta)\Vert_{L^4(I; C_*^{s -\mez+\mu})}\le \cF(M_s(T)+N_s(T)).
\eq
1. Since 
\[
\sup_{t\in [0, T]}M^{-1/2}_0(p^{(-1/2)}(t))+\sup_{t\in [0, T]}M^{1/2}_1(p^{(1/2)}(t))\le \cF(M_s(T))
\]
it follows from \eqref{esti:quant1'} that 
\begin{align*}
\Vert T_{p^{(-1/2)}}\eta\Vert_{L^4(I; C_*^{s -\mez+\mu})}
\le \cF(M_s(T))\Vert \eta\Vert_{L^4(I; C_*^{s -1+\mu})}\le \cF(M_s(T)).
\end{align*}
%Set $c_1=(1+|\partial_x\eta|^2)^{-\frac{5}{4}}$ we have $p^{(1/2)}=c_1|\xi|^{1/2}\in \Gamma^{\mez}_1$.
Consequently, we have
\[
\Vert T_{p^{(1/2)}}\eta\Vert_{L^4(I; C_*^{s -\mez+\mu})}
\le \cF(M_s(T)+N_s(T)).
\]
 Since $p^{(1/2)}\in \Gamma^{1/2}_1$ is elliptic, applying \eqref{esti:quant2'} yields $
\eta=T_{1/p^{(1/2)}} T_{p^{(1/2)}}\eta+R\eta$, where $R$ is of order $-1$ and for any $\sigma \in \xR$
\[
\sup_{t\in [0, T]}\|R(t)\|_{C_*^\sigma\to C_*^{\sigma+1}}\le \cF(M_s(T)).
\] 
Thus,
\bq\label{eta:L4}
\Vert \eta\Vert_{L^4(I; C_*^{s +\mu})}\le \cF(M_s(T)+N_r(T)).
\eq
Likewise, we deduce from \eqref{Teta,U} that 
\[
\Vert \psi-T_B\eta \Vert_{L^4(I; C_*^{s -\mez+\mu})}\le \cF(M_s(T)+N_r(T)).
\]
Owing to \eqref{eta:L4} and the fact that $\lA B\rA_{L^\infty_tL^\infty_x}\le \cF(M_s(T))$, we obtain
\[
\Vert \psi\Vert_{L^4(I; C_*^{s -\mez+\mu})}\le \cF(M_s(T)+N_r(T)).
\]
In summary, we have proved that for all $(\eta, \psi)$ solution to \eqref{ww} with
\bq\label{reg:proof}
\left\{
\begin{aligned}
&(\eta, \psi)\in C^0([0, T]; H^{s+\mez}(\xR)\times H^s(\xR))\cap L^4([0, T]; W^{r+\mez,\infty}(\xR)\times W^{r,\infty}(\xR)),\\
&s>r>\tdm+\mez
\end{aligned}
\right.
\eq
there holds for any $\mu<\frac{1}{4}$,
\[
\Vert \eta\Vert_{L^4(I; C_*^{s +\mu})}+\Vert \psi\Vert_{L^4(I; C_*^{s -\mez+\mu})}\le \cF(M_s(T)+N_r(T))
\]
and thus (since $\mu<\frac{1}{4}$ is arbitrary)
\bq\label{strichartz:original}
N_{s-\mez+\mu}(T)\le \cF(M_s(T)+N_r(T)),
\eq
where $M_\sigma(T), N_\sigma(T)$ are respectively the Sobolev-norm and the Strichartz norm defined in \eqref{MN}. \eqref{strichartz:original} is the semi-classical Strichartz estimate announced in Theorem \ref{theo:Strichartz}.\\
 Of course, \eqref{strichartz:original} is meaningful only if $r<s-\mez+\mu$. Under this constrain, using an interpolation argument (see \cite{ABZ4}, page $88$, for instance) we deduce easily that
\[
N_r(T)\le \cF\left(T\big(M_s(T)+N_r(T)\big)\right).
\]
On the other hand,  in Theorem $1.1$ \cite{NgPo1} it was proved the following  energy estimate at the regularity \eqref{reg:proof} 
\[
M_s(T)\le \cF\Big(\cF(M_s(0))+T\cF(M_s(T)+N_r(T))\Big).
\]
Consequently, one gets a closed a priori estimate for the mixed norm
$M_s(T)+N_r(T)$ as in Theorem \ref{theo:apriori}:
\bq\label{est:apriori}
M_s(T)+N_r(T)\le \cF\Big(\cF(M_s(0))+T\cF(M_s(T)+N_r(T))\Big).
\eq
Finally, by virtue of the contraction estimate for two solution $(\eta_j, \psi_j)$ $j=1,2$ in the norm $M_{s-1, T}+N_{r-1, T}$ established in Theorem $5.9$, \cite{NgPo1} (whose proof makes use of Theorem \ref{strich2}) one can use the standard regularized argument (see section $6$, \cite{NgPo1}) to solve uniquely the Cauchy problem for system \eqref{ww} with initial data  $(\eta_0, \psi_0)\in H^{s+\mez}(\xR)\times H^s(\xR)$ with $s>2+\mez-\mu$ for any $\mu<\frac{1}{4}$. The proof of Theorem \ref{theo:Cauchy} is complete.
%%%%%%%%%%%%%%%%%%%
\appendix
\section{Appendix 1: Paradifferential calculus}
\begin{defi}\label{spaces}
1. (Zygmund spaces) Let 
\[
1=\sum_{p=0}^\infty\Delta_p
\]
be a Littlewood-Paley partition. For any real number~$s$, we define the Zygmund class~$C^{s}_*({\mathbf{R}}^d)$ as the 
space of tempered distributions~$u$ such that
$$
\lA u\rA_{C^{s}_*}\defn \sup_q 2^{qs}\lA \Delta_q u\rA_{L^\infty}<+\infty.
$$
2. (H\"older spaces) For~$k\in\xN$, we denote by $W^{k,\infty}({\mathbf{R}}^d)$ the usual Sobolev spaces.
For $\rho= k + \sigma$, $k\in \xN, \sigma \in (0,1)$ denote 
by~$W^{\rho,\infty}({\mathbf{R}}^d)$ 
the space of functions whose derivatives up to order~$k$ are bounded and uniformly H\"older continuous with 
exponent~$\sigma$. 
\end{defi}
Let us review notations and results about Bony's paradifferential calculus (see \cite{Bony, MePise}). Here we follow the presentation by M\'etivier in \cite{MePise} and \cite{ABZ3}.
\begin{defi}
1. (Symbols) Given~$\rho\in [0, \infty)$ and~$m\in\xR$,~$\Gamma_{\rho}^{m}({\mathbf{R}}^d)$ denotes the space of
locally bounded functions~$a(x,\xi)$
on~${\mathbf{R}}^d\times({\mathbf{R}}^d\setminus 0)$,
which are~$C^\infty$ with respect to~$\xi$ for~$\xi\neq 0$ and
such that, for all~$\alpha\in\xN^d$ and all~$\xi\neq 0$, the function
$x\mapsto \partial_\xi^\alpha a(x,\xi)$ belongs to~$W^{\rho,\infty}({\mathbf{R}}^d)$ and there exists a constant
$C_\alpha$ such that,
\begin{equation*}%\label{para:10}
\forall\la \xi\ra\ge \mez,\quad 
\lA \partial_\xi^\alpha a(\cdot,\xi)\rA_{W^{\rho,\infty}(\xR^d)}\le C_\alpha
(1+\la\xi\ra)^{m-\la\alpha\ra}.
\end{equation*}
Let $a\in \Gamma_{\rho}^{m}({\mathbf{R}}^d)$, we define for every $n\in \xN$ the semi-norm
\begin{equation}\label{defi:norms}
M_{\rho}^{m}(a; n)= 
\sup_{\la\alpha\ra\le n~}\sup_{\la\xi\ra \ge 1/2~}
\lA (1+\la\xi\ra)^{\la\alpha\ra-m}\partial_\xi^\alpha a(\cdot,\xi)\rA_{W^{\rho,\infty}({\mathbf{R}}^d)}.
\end{equation}
When $n=[d/2]+1$ we denote $M_{\rho}^{m}(a; n)=M_{\rho}^{m}(a)$.\\
2. (Classical symbols) For any $m\in \xR$ and $\rho>0$ we denote by  $\Sigma_\rho^m(\xR^d)$ the class of classical symbols $a(x, \xi)$ such that 
\[
a(x, \xi)=\sum_{0\le j\le[\rho]}a^{(m-j)}
\]
where each $a^{(m-j)}\in \Gamma^{m-j}_{\rho-j}$ is homogeneous of degree $m-j$ with respect to $\xi$.
\end{defi}
\begin{defi}\label{defi:oper}
 (Paradifferential operators) Given a symbol~$a$, we define
the paradifferential operator~$T_a$ by
\begin{equation}\label{eq.para}
\widehat{T_a u}(\xi)=(2\pi)^{-d}\int \chi(\xi-\eta,\eta)\widehat{a}(\xi-\eta,\eta)\psi(\eta)\widehat{u}(\eta)
\, d\eta,
\end{equation}
where
$\widehat{a}(\theta,\xi)=\int e^{-ix\cdot\theta}a(x,\xi)\, dx$
is the Fourier transform of~$a$ with respect to the first variable; 
$\chi$ and~$\psi$ are two fixed~$C^\infty$ functions such that:\\
$(i)$ $\psi$ is identical to $0$ near the origin and identical to $1$ away from the origin,\\
$(ii)$ there exists $0<\eps_1<\eps_2<1$ such that 
\bq\label{cutoff-chi:1}
\chi (\eta, \xi)=
\begin{cases}
1\quad &\text{if}~|\eta|\le \eps_1(1+|\xi|),\\
0\quad &\text{if}~|\eta|\ge \eps_2(1+|\xi|)
\end{cases}
\eq
and for any $(\alpha, \beta)\in \xN^2$ there exists $C_{\alpha, \beta}>0$ such that 
\bq\label{cutoff-chi:2}
\forall (\eta, \xi)\in \xR^d\times \xR^d, \la \partial_\eta^\alpha\partial_\xi^\beta \chi(\eta, \xi)\ra \le C_{\alpha, \beta}(1+|\xi|)^{-\alpha-\beta}.
\eq
\end{defi}
%\begin{rema}\label{rem:psi=1}
%The cut-off function $\psi$ is taken into account in the above definition because $a(x, \xi)$ is not necessarily smooth at $\xi=0$. Therefore, whenever $a\in C^\infty(\xR^d\times \xR^d)$ one can omit $\psi$; in particular, when $a=a(x)$.
%\end{rema}
\begin{defi}\label{defi:order}
An operator~$T$ is said to be of  order~$\leo m\in \xR$ (or equivalently, $-m$-regularized) if, for all~$\mu\in\xR$,
it is bounded from~$H^{\mu}$ to~$H^{\mu-m}$ and from $C^\mu_*$ to $C^{\mu-m}_*$. 
\end{defi}
Symbolic calculus for paradifferential operators is summarized in the following theorem.
\begin{theo}\label{theo:sc}(Symbolic calculus, \cite{MePise})
Let~$m\in\xR$ and~$\rho\in [0, \infty)$. Denote by $\overline{\rho}$ the smallest integer that is not smaller than $\rho$ and $n_1=[d/2]+\overline\rho+1$.\\
$(i)$ If~$a \in \Gamma^m_0({\mathbf{R}}^d)$, then~$T_a$ is of order~$\leo m$. 
Moreover, for all~$\mu\in\xR$ there exists a constant~$K$ such that
\begin{gather}\label{esti:quant1}\lA T_a \rA_{H^{\mu}\rightarrow H^{\mu-m}}\le K M_{0}^{m}(a),\\ \label{esti:quant1'}
 \lA T_a \rA_{C_*^{\mu}\rightarrow C_*^{\mu-m}}\le K M_{0}^{m}(a).
\end{gather}
$(ii)$ If~$a\in \Gamma^{m}_{\rho}({\mathbf{R}}^d), b\in \Gamma^{m'}_{\rho}({\mathbf{R}}^d)$ with $\rho>0$. Then 
$T_a T_b -T_{a \sharp b}$ is of order~$\leo m+m'-\rho$ where
\[
a\sharp b:=\sum_{|\alpha|<\rho}\frac{(-i)^{\alpha}}{\alpha !}\partial_{\xi}^{\alpha}a(x, \xi)\partial_x^{\alpha}b(x, \xi).
\] 
Moreover, for all~$\mu\in\xR$ there exists a constant~$K$ such that
\begin{gather}\label{esti:quant2}
\lA T_a T_b  - T_{a \sharp b}   \rA_{H^{\mu}\rightarrow H^{\mu-m-m'+\rho}}%&
\le 
K M_{\rho}^{m}(a; n_1)M_{0}^{m'}(b)+K M_{0}^{m}(a)M_{\rho}^{m'}(b; n_1),\\  \label{esti:quant2'}
\lA T_a T_b  - T_{a \sharp b}   \rA_{C_*^{\mu}\rightarrow C_*^{\mu-m-m'+\rho}}%&
\le 
K M_{\rho}^{m}(a; n_1)M_{0}^{m'}(b)+K M_{0}^{m}(a)M_{\rho}^{m'}(b; n_1).
\end{gather}
$(iii)$ Let~$a\in \Gamma^{m}_{\rho}({\mathbf{R}}^d)$ with $\rho >0$. Denote by 
$(T_a)^*$ the adjoint operator of~$T_a$ and by~$\overline{a}$ the complex conjugate of~$a$. Then 
$(T_a)^* -T_{a^*}$ is of order~$\leo m-\rho$ where
\[
a^*=\sum_{|\alpha|<\rho}\frac{1}{i^{|\alpha|}\alpha!}\partial_{\xi}^{\alpha}\partial_x^{\alpha}\overline{a}.
\]
Moreover, for all~$\mu$ there exists a constant~$K$ such that
\begin{gather}\label{esti:quant3}
\lA (T_a)^*   - T_{\overline{a}}   \rA_{H^{\mu}\rightarrow H^{\mu-m+\rho}}\le 
K M_{\rho}^{m}(a; n_1),\\ \label{esti:quant3'}
\lA (T_a)^*   - T_{\overline{a}}   \rA_{C_*^{\mu}\rightarrow C_*^{\mu-m+\rho}}\le 
K M_{\rho}^{m}(a; n_1).
\end{gather}
\end{theo}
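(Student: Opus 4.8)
The plan is to follow the now-classical route of Bony and Meyer, in the form presented in \cite{MePise}, reducing every assertion to an $L^2$ (resp. $L^\infty$) estimate on a single spectrally localized dyadic block and then reassembling by Littlewood--Paley theory. For part $(i)$ I would start from the expansion \eqref{T_au:expand}, writing $T_a u=\sum_{p}S_{p-N}(a)(x,D)\,\Delta_p u$ modulo a harmless smoothing term, and note that each summand has spectrum in an annulus $\{2^{p-c}\le|\xi|\le 2^{p+c}\}$. Hence, by the characterizations of $H^\mu$ and $C^\mu_*$, it suffices to bound $\lA S_{p-N}(a)(x,D)\Delta_p u\rA_{L^2}$ by $C\,2^{pm}M^m_0(a)\,\lA\Delta_pu\rA_{L^2}$ (and the analogous $L^\infty$ bound), which in turn follows from examining the kernel of $S_{p-N}(a)(x,D)$ acting on frequencies $\sim2^p$ and controlling $\lA\widehat a(\cdot,\xi)\rA_{L^1_\theta}$ after the low-pass cut; only $[d/2]+1$ $\xi$-derivatives of $a$ enter, which is exactly the seminorm $M^m_0(a)$. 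The $H^\mu$ and $C^\mu_*$ statements are treated in parallel, the only difference being whether the dyadic blocks are summed in $\ell^2$ via Plancherel or estimated by a supremum, with Lemma \ref{boundSj} supplying the uniform bounds on $S_j,\Delta_j$.

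For the composition statement $(ii)$ I would represent $T_aT_b$ as an oscillatory integral and Taylor-expand its symbol in the frequency variable up to order $[\rho]$, matching exactly the terms defining $a\sharp b$, so that $T_aT_b-T_{a\sharp b}$ is an oscillatory integral whose amplitude carries $[\rho]+1$ $\xi$-derivatives of $a$ (producing a gain $2^{-p([\rho]+1)}$ in frequency) against $[\rho]+1$ $x$-derivatives of $b$ in the Hölder class $\rho$; keeping $\rho-[\rho]$ derivatives in reserve yields the asserted $(m+m'-\rho)$-regularization. Spectral localization again reduces matters to a block-by-block estimate, and splitting the remainder symmetrically (derivatives falling on $a$ or on $b$) produces the two terms $M^m_\rho(a;n_1)M^{m'}_0(b)+M^m_0(a)M^{m'}_\rho(b;n_1)$, with the value $n_1=[d/2]+\overline\rho+1$ accounting for the $[d/2]+1$ derivatives of part $(i)$ plus the $\overline\rho$ extra ones consumed in the integrations by parts. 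Part $(iii)$ is entirely analogous: $(T_a)^*$ is an oscillatory integral with amplitude $\overline{a(y,\xi)}$ evaluated off the diagonal, and Taylor-expanding in $y$ about $y=x$ to order $[\rho]$ reproduces $a^*$ with a remainder of order $m-\rho$ bounded by $M^m_\rho(a;n_1)$.

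The main technical obstacle is \emph{not} the algebra of these Taylor expansions but the bookkeeping required to (a) control the contribution of frequencies near the origin, where $\Gamma^m_\rho$ only constrains $a(\cdot,\xi)$ for $|\xi|\ge1/2$ — this is precisely why the cut-offs $\chi,\psi$ in Definition \ref{defi:oper} and the truncation \eqref{Tdot} are arranged so that only frequencies $\gtrsim1$ are seen by the non-smoothing part — and (b) track the \emph{exact} number of $\xi$-derivatives of the symbols that appear at each stage, so as to obtain the quantitative bounds \eqref{esti:quant1}--\eqref{esti:quant3'} rather than mere continuity; this is the only point where one must be careful. Since the conclusions are verbatim those of \cite{MePise}, I would in the write-up invoke that reference for the routine parts and reproduce only the seminorm-tracking, which is what the rest of the paper (e.g. the estimates in Proposition \ref{parareduce}) actually uses.
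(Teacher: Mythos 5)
Your proposal is correct and follows exactly the standard Bony--Meyer--M\'etivier argument; the paper itself offers no proof of this theorem but simply quotes it from \cite{MePise}, which is precisely the reference your sketch reduces to. The block-by-block reduction, the Taylor expansion of the composed/adjoint symbols to order $[\rho]$, and the tracking of the $n_1=[d/2]+\overline\rho+1$ derivatives in the seminorms are all as in the cited source, so there is nothing to reconcile.
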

%\begin{prop}\label{regu<0}
%Let~$\rho<0$,~$m\in \xR$ and~$a\in \dot{ \Gamma}^m_\rho$. Then the operator~$T_a$ is of order~$m-\rho$:
%\begin{equation}\label{niSbis}
%\| T_a \|_{H^s \rightarrow H^{s-(m- \rho)}}\leq C M_{\rho}^{m}(a),\qquad
%\| T_a \|_{C^s_* \rightarrow C^{s-(m- \rho)}_*}\leq C M_{\rho}^{m}(a).
%\end{equation}
%\end{prop}
\begin{defi}(Paraproducts and Bony's decomposition)
Let $1=\sum_{j=0}^\infty \Delta_j$ be a dyadic partition of unity as in \eqref{partition} and $N\in \xN$ be sufficiently large such that the function $\chi$ defined in \eqref{choose:chi}:
\[
\chi(\eta, \xi)=\sum_{p= 0}^\infty \phi_{p-N}(\eta)\varphi_p(\xi)
\]
satisfies conditions \eqref{cutoff-chi:1} and \eqref{cutoff-chi:2}.\\
Given $a,~b\in \mathcal{S}'$ we define formally the paraproduct 
\bq\label{def:paraproduct}
TP_au=\sum_{p=N+1}^\infty S_{p-N}a\Delta_pu
\eq
 and the remainder 
\bq\label{R:Bony}
R(a,u)=\sum_{ j, k\ge 0, |j-k|\le N-1}\Delta_j a\Delta_k u
\eq
then we have (at least formally) the Bony's decomposition
\[
au=TP_au+TP_ua+R(a, u).
\]
\end{defi}
We shall use frequently various estimates about paraproducts (see Chapter 2, ~\cite{BCD} and \cite{ABZ3}) which are recalled here.
\begin{theo}\label{pproduct}
\begin{enumerate}
\item  Let~$\alpha,\beta\in \xR$. If~$\alpha+\beta>0$ then
\begin{align}
&\lA R(a,u) \rA _{H^{\alpha + \beta-\frac{d}{2}}({\mathbf{R}}^d)}
\leq K \lA a \rA _{H^{\alpha}({\mathbf{R}}^d)}\lA u\rA _{H^{\beta}({\mathbf{R}}^d)},\label{Bony1} \\ 
&\lA R(a,u) \rA _{H^{\alpha + \beta}({\mathbf{R}}^d)} \leq K \lA a \rA _{C^{\alpha}_*({\mathbf{R}}^d)}\lA u\rA _{H^{\beta}({\mathbf{R}}^d)}\label{Bony2},\\
&\lA R(a,u) \rA _{C_*^{\alpha + \beta}({\mathbf{R}}^d)}\leq K \lA a \rA _{C^{\alpha}_*({\mathbf{R}}^d)}\lA u\rA _{C_*^{\beta}({\mathbf{R}}^d)}.\label{Bony3}
\end{align}
\item Let~$s_0,s_1,s_2$ be such that 
$s_0\le s_2$ and~$s_0 < s_1 +s_2 -\frac{d}{2}$, 
then
\begin{equation}\label{boundpara}
\lA TP_a u\rA_{H^{s_0}}\le K \lA a\rA_{H^{s_1}}\lA u\rA_{H^{s_2}}.
\end{equation}
\item  Let~$m>0$ and~$s\in \xR$. Then
\begin{align}
&\lA TP_a u\rA_{H^{s-m}}\le K \lA a\rA_{C^{-m}_*}\lA u\rA_{H^{s}}\label{pest1},\\ 
&\lA TP_a u\rA_{C_*^{s-m}}\le K \lA a\rA_{C^{-m}_*}\lA u\rA_{C_*^{s}}\label{pest2}.
\end{align}
\end{enumerate}
\end{theo}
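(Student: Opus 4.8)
The plan is to deduce every estimate from the Littlewood--Paley characterizations of $H^s$ and $C^s_*$ (Definition~\ref{spaces}), the Bernstein-type bounds for $S_p,\Delta_p$ collected in Lemma~\ref{boundSj}, and the two standard block-reassembly principles: \emph{(A)} if $f=\sum_k f_k$ with each $\widehat{f_k}$ supported in an \emph{annulus} $\{2^{k-1}\le\la\xi\ra\le 2^{k+1}\}$ and $(2^{ks}\lA f_k\rA_{L^2})_k\in\ell^2$, then $f\in H^s$ for \emph{every} $s\in\xR$ (and likewise with $L^\infty$ and $C^s_*$); \emph{(B)} the same conclusion when $\widehat{f_k}$ is supported only in a \emph{ball} $\{\la\xi\ra\le C2^k\}$, but now only for $s>0$. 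With these, each inequality becomes a geometric summation over dyadic scales; alternatively one may simply invoke Chapter~2 of \cite{BCD} and \cite{ABZ3}.

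For the paraproduct bounds \eqref{boundpara}, \eqref{pest1}, \eqref{pest2} I would start from \eqref{def:paraproduct}, $TP_a u=\sum_{p\ge N+1}S_{p-N}a\,\Delta_p u$; since $N$ is large (as in \eqref{choose:chi}), $\widehat{S_{p-N}a}$ lives in $\{\la\xi\ra\le 2^{p-N+1}\}$ and $\widehat{\Delta_p u}$ in the annulus of size $\sim 2^p$, so each summand is spectrally localized in an annulus of size $\sim 2^p$ and principle~\emph{(A)} applies with no positivity constraint on the output index. For \eqref{pest1}/\eqref{pest2} with $a\in C^{-m}_*$, $m>0$, sum the dyadic pieces: $\lA S_{p-N}a\rA_{L^\infty}\le\sum_{q\le p-N}\lA\Delta_q a\rA_{L^\infty}\les 2^{pm}\lA a\rA_{C^{-m}_*}$ (geometric, convergent because $m>0$); multiplying by $\lA\Delta_p u\rA_{L^2}\les 2^{-ps}c_p\lA u\rA_{H^s}$ with $(c_p)\in\ell^2$ yields block decay $2^{p(m-s)}c_p$, and \emph{(A)} closes it (the $C^s_*$-version \eqref{pest2} is identical with $L^\infty$ replacing $L^2$). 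For \eqref{boundpara} I would split on $s_1$: if $s_1>\frac{d}{2}$ then $\lA S_{p-N}a\rA_{L^\infty}\les\lA a\rA_{H^{s_1}}$, the block decays like $2^{-ps_2}c_p$, and this is summable against the output index when $s_0\le s_2$; if $s_1\le\frac{d}{2}$ then Bernstein gives $\lA S_{p-N}a\rA_{L^\infty}\les 2^{p(d/2-s_1)}e_p\lA a\rA_{H^{s_1}}$ with $(e_p)\in\ell^2$, the block decays like $2^{p(d/2-s_1-s_2)}e_pc_p$, and this is summable when $s_0<s_1+s_2-\frac{d}{2}$, together precisely the hypotheses of item~2.

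For the remainder estimates \eqref{Bony1}--\eqref{Bony3} I would regroup \eqref{R:Bony} as $R(a,u)=\sum_k\widetilde\Delta_k a\,\Delta_k u$, with $\widetilde\Delta_k=\sum_{|j-k|\le N-1}\Delta_j$. The structural point is that $\widehat{\widetilde\Delta_k a\,\Delta_k u}$ is supported in a \emph{ball} of radius $\les 2^k$ (convolution of two supports of size $\les 2^k$), so one is forced into principle~\emph{(B)} and hence into the positivity requirements. For \eqref{Bony3} one has $\lA\widetilde\Delta_k a\,\Delta_k u\rA_{L^\infty}\les 2^{-k(\alpha+\beta)}\lA a\rA_{C^\alpha_*}\lA u\rA_{C^\beta_*}$ and, since $\Delta_j R(a,u)=\sum_{k\gtrsim j}\Delta_j(\widetilde\Delta_k a\,\Delta_k u)$, the hypothesis $\alpha+\beta>0$ makes the geometric sum in $k$ converge, giving $C^{\alpha+\beta}_*$; \eqref{Bony2} is the same with one factor measured in $L^2$, the product decaying like $2^{-k(\alpha+\beta)}u_k$ with $(u_k)\in\ell^2$, so Young's inequality $\ell^2\ast\ell^1\subset\ell^2$ gives $H^{\alpha+\beta}$. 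For \eqref{Bony1} both factors sit in $L^2$: Cauchy--Schwarz gives $\lA\widetilde\Delta_k a\,\Delta_k u\rA_{L^1}\les 2^{-k(\alpha+\beta)}b_k$ with $b_k=a_ku_k\in\ell^1$, hence $R(a,u)\in B^{\alpha+\beta}_{1,1}$ using only $\alpha+\beta>0$, and the embedding $B^{\alpha+\beta}_{1,1}\hookrightarrow H^{\alpha+\beta-d/2}$ (valid for all indices, via Bernstein scale by scale) finishes it.

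The main obstacle is not any single computation but the careful bookkeeping of which summands are annulus-localized (the paraproducts, where all output indices are allowed) versus ball-localized (the remainder, where positivity $\alpha+\beta>0$ of the output index is required), together with the correct distribution of the two factors among $L^1,L^2,L^\infty$ in \eqref{Bony1}--\eqref{Bony3}. The one genuinely non-routine point is that \eqref{Bony1} in the full range $\alpha+\beta>0$ must be obtained via the $\ell^1$-summed (Besov $B^{\alpha+\beta}_{1,1}$) bound followed by the Sobolev embedding, since the naive $L^1\to L^2$ Bernstein gives the estimate only under the stronger hypothesis $\alpha+\beta>\frac{d}{2}$. Because the cutoffs $\chi,\psi$ and the dyadic partitions are fixed once and for all in Section~\ref{subsection:dyadic} with $N$ large, every constant produced this way is uniform, so no further difficulty arises.
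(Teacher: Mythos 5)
Your proposal is correct, and it is essentially the standard Littlewood--Paley argument from Chapter~2 of \cite{BCD}, which is exactly what the paper invokes for this theorem (it is recalled without proof, with a citation to \cite{BCD} and \cite{ABZ3}). You correctly isolate the one non-routine point, namely that \eqref{Bony1} in the full range $\alpha+\beta>0$ requires passing through the $\ell^1$-summed Besov bound $B^{\alpha+\beta}_{1,1}$ followed by the Bernstein embedding into $H^{\alpha+\beta-d/2}$, rather than the direct block-by-block $L^2$ estimate which would need $\alpha+\beta>\frac{d}{2}$.
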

\begin{prop}\label{productrule}
\begin{enumerate}
\item  If~$u_j\in H^{s_j}({\mathbf{R}}^d)$ ($j=1,2$) with $s_1+s_2> 0$ then 
\begin{equation}\label{pr}
\lA u_1 u_2 \rA_{H^{s_0}}\le K \lA u_1\rA_{H^{s_1}}\lA u_2\rA_{H^{s_2}},
\end{equation}
if $s_0\le s_j$, $j=1,2$, and $s_0< s_1+s_2-d/2$.
\item If $s\ge 0$ then 
\bq\label{tame:S}
\lA u_1u_2\rA_{H^s}\le K(\lA u_1\rA_{H^s}\lA u_2\rA_{L^{\infty}}+\lA u_2\rA_{H^s}\lA u_1\rA_{L^{\infty}}).
\eq
\item If $s\ge 0$ then 
\bq\label{tame:H}
\lA u_1u_2\rA_{C_*^s}\le K(\lA u_1\rA_{C_*^s}\lA u_2\rA_{L^{\infty}}+\lA u_2\rA_{C_*^s}\lA u_1\rA_{L^{\infty}}).
\eq
\item Let $\beta>\alpha> 0$. Then
\bq\label{tame:H<0}
\lA u_1u_2\rA_{C_*^{-\alpha}}\le K\lA u_1\rA_{C_*^{\beta}}\lA u_2\rA_{C_*^{-\alpha}}.
\eq
\end{enumerate}
\end{prop}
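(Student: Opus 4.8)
The plan is to deduce the four estimates \eqref{pr}, \eqref{tame:S}, \eqref{tame:H} and \eqref{tame:H<0} from Bony's decomposition $u_1u_2=TP_{u_1}u_2+TP_{u_2}u_1+R(u_1,u_2)$, treating the three pieces separately and invoking the paraproduct and remainder bounds of Theorem \ref{pproduct}. For \eqref{pr}: since $s_0\le s_2$ and $s_0<s_1+s_2-d/2$, estimate \eqref{boundpara} gives $\lA TP_{u_1}u_2\rA_{H^{s_0}}\le K\lA u_1\rA_{H^{s_1}}\lA u_2\rA_{H^{s_2}}$, and exchanging the roles of the two factors (now using $s_0\le s_1$) yields the same bound for $TP_{u_2}u_1$; for the remainder, the hypothesis $s_1+s_2>0$ permits the use of \eqref{Bony1}, giving $\lA R(u_1,u_2)\rA_{H^{s_1+s_2-d/2}}\le K\lA u_1\rA_{H^{s_1}}\lA u_2\rA_{H^{s_2}}$, and since $s_0<s_1+s_2-d/2$ the embedding $H^{s_1+s_2-d/2}\hookrightarrow H^{s_0}$ closes the argument.

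For the tame estimates \eqref{tame:S} and \eqref{tame:H} the factors carry, besides an $H^s$ (resp. $C^s_*$) norm, only an $L^\infty$ bound. I would use that $v\mapsto TP_av$ is bounded on $H^\sigma$ and on $C^\sigma_*$ for every $\sigma$, with operator norm $\les\lA a\rA_{L^\infty}$ (the limiting case $m\downarrow 0$ of \eqref{pest1}--\eqref{pest2}, a standard fact, cf. \cite{BCD}); this gives $\lA TP_{u_1}u_2\rA_{H^s}\les\lA u_1\rA_{L^\infty}\lA u_2\rA_{H^s}$ and $\lA TP_{u_2}u_1\rA_{H^s}\les\lA u_2\rA_{L^\infty}\lA u_1\rA_{H^s}$, and likewise in $C^s_*$. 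For the remainder one writes $R(u_1,u_2)=\sum_k\Delta_ku_1\,\widetilde\Delta_ku_2$ with $\widetilde\Delta_k=\sum_{|j-k|\le N}\Delta_j$; each summand is spectrally supported in a ball of radius $\sim 2^k$, so when $s>0$ the Bernstein inequality for such sums gives $\lA R(u_1,u_2)\rA_{H^s}\les\big(\sum_k2^{2ks}\lA\Delta_ku_1\,\widetilde\Delta_ku_2\rA_{L^2}^2\big)^{1/2}\le\lA u_1\rA_{L^\infty}\lA u_2\rA_{H^s}$, and with $\ell^\infty_k$, $L^\infty_x$ in place of $\ell^2_k$, $L^2_x$ for the $C^s_*$ version; the borderline case $s=0$ is covered directly by Hölder's inequality. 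Summing the three pieces and symmetrizing gives \eqref{tame:S} and \eqref{tame:H}.

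For \eqref{tame:H<0}: since $\beta>0$ one has $u_1\in L^\infty$, so the $m\downarrow 0$ paraproduct bound gives $\lA TP_{u_1}u_2\rA_{C^{-\alpha}_*}\les\lA u_1\rA_{L^\infty}\lA u_2\rA_{C^{-\alpha}_*}\les\lA u_1\rA_{C^\beta_*}\lA u_2\rA_{C^{-\alpha}_*}$; since $-\alpha<0$, \eqref{pest2} with $m=\alpha$ gives $\lA TP_{u_2}u_1\rA_{C^{\beta-\alpha}_*}\les\lA u_2\rA_{C^{-\alpha}_*}\lA u_1\rA_{C^\beta_*}$, which embeds into $C^{-\alpha}_*$ because $\beta-\alpha>-\alpha$; finally $\beta-\alpha>0$ is exactly the condition under which \eqref{Bony3} applies, giving $\lA R(u_1,u_2)\rA_{C^{\beta-\alpha}_*}\le K\lA u_1\rA_{C^\beta_*}\lA u_2\rA_{C^{-\alpha}_*}$, again with $C^{\beta-\alpha}_*\hookrightarrow C^{-\alpha}_*$; summing the three contributions yields \eqref{tame:H<0}. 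The main --- though mild --- obstacle in every item is the remainder term: it is the sole place where the positivity hypotheses genuinely intervene (via \eqref{Bony1}, \eqref{Bony3}, or the ball-Bernstein inequality), and it is the reason one cannot lower $s$ below $0$ in \eqref{tame:S}--\eqref{tame:H} nor take $\beta\le\alpha$ in \eqref{tame:H<0}.
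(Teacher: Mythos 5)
The paper itself gives no proof of Proposition~\ref{productrule}: it is stated in Appendix~1 and attributed to the literature (``see Chapter~2, \cite{BCD} and \cite{ABZ3}''), so there is nothing in the paper to compare your argument against. Your reconstruction --- Bony decomposition $u_1u_2=TP_{u_1}u_2+TP_{u_2}u_1+R(u_1,u_2)$, paraproduct bounds for the two $TP$ pieces, and \eqref{Bony1}/\eqref{Bony3} or ball--Bernstein for the remainder --- is exactly the standard route (it is essentially Corollaries 2.85--2.86 and related statements in \cite{BCD}), and the way you track which hypothesis ($s_1+s_2>0$, $s>0$, $\beta-\alpha>0$) is consumed by the remainder term is the right way to see why the positivity assumptions cannot be relaxed.

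Two small points of precision. First, the uniform boundedness $\lA TP_a\rA_{H^\sigma\to H^\sigma}\lesssim\lA a\rA_{L^\infty}$ is \emph{not} literally a limiting case of \eqref{pest1} as $m\downarrow 0$: the $m\to 0$ limit of that estimate would involve $\lA a\rA_{C^0_*}$, which is a strictly weaker norm than $\lA a\rA_{L^\infty}$, and $TP_a$ is in fact \emph{not} bounded on $H^\sigma$ for general $a\in C^0_*$ (one loses a factor $p$ in $\lA S_{p-N}a\rA_{L^\infty}$). The correct reference is the $L^\infty$ paraproduct bound (BCD, Thm.~2.82). Second, the endpoint $s=0$ of \eqref{tame:H}: the H\"older argument settles $H^0=L^2$, but for $C^0_*=B^0_{\infty,\infty}$ the remainder $R(u_1,u_2)$ is genuinely borderline --- the sum $\sum_{k\ge q-N}\lA\Delta_ku_1\rA_{L^\infty}\lA\widetilde\Delta_ku_2\rA_{L^\infty}$ does not converge from $C^0_*\times L^\infty$ control alone. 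This is really an imprecision in the proposition's statement as recalled in the paper (``$s\ge 0$'' should read ``$s>0$'' in the Zygmund case), and wherever the paper invokes \eqref{tame:H} it does so with $s>0$, so nothing downstream is affected; it is worth noting so that you do not inadvertently rely on the $s=0$ Zygmund endpoint.
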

\begin{theo}
\begin{enumerate}
\item Let~$s\ge 0$ and consider~$F\in C^\infty(\xC^N)$ such that~$F(0)=0$. 
Then there exists a non-decreasing function~$\mathcal{F}\colon\xR_+\rightarrow\xR_+$ 
such that, for any~$U\in H^s({\mathbf{R}}^d)^N\cap L^\infty(\xR^d)$,
\begin{equation}\label{F(u):H}
\lA F(U)\rA_{H^s}\le \mathcal{F}\bigl(\lA U\rA_{L^\infty}\bigr)\lA U\rA_{H^s}.
\end{equation}
\item Let~$s\ge 0$ and consider~$F\in C^\infty(\xC^N)$ such that~$F(0)=0$. 
Then there exists a non-decreasing function~$\mathcal{F}\colon\xR_+\rightarrow\xR_+$ 
such that, for any~$U\in C_*^s({\mathbf{R}}^d)^N$,
\begin{equation}\label{F(u):C}
\lA F(U)\rA_{C_*^s}\le \mathcal{F}\bigl(\lA U\rA_{L^\infty}\bigr)\lA U\rA_{C_*^s}.
\end{equation}
\end{enumerate}
\end{theo}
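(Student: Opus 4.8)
The plan is to prove both parts by one Littlewood--Paley telescoping argument (this is the classical Moser/composition estimate): I would establish \eqref{F(u):H} and then read off \eqref{F(u):C} from the same computation, replacing $L^2$ by $L^\infty$ and $\ell^2$-summation by suprema. The case $s=0$ is elementary: since $F(0)=0$ and $F\in C^\infty$, one has $|F(z)|\le\big(\sup_{|w|\le R}|F'(w)|\big)|z|=:\cF(R)|z|$ for $|z|\le R$, so the pointwise bound $|F(U(x))|\le\cF(\lA U\rA_{L^\infty})\,|U(x)|$ gives \eqref{F(u):H} at once, and $\lA F(U)\rA_{C_*^0}\le C\lA F(U)\rA_{L^\infty}$ handles \eqref{F(u):C} as well (under the hypothesis $U\in L^\infty$ implicit in the statement through the appearance of $\lA U\rA_{L^\infty}$). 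From now on $s>0$.

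First I would telescope along the Littlewood--Paley filtration, using $F(0)=0$ to absorb the bottom term:
\[
F(U)=F(S_0U)+\sum_{q\ge0}\big(F(S_{q+1}U)-F(S_qU)\big)=\sum_{j\ge0}m_j\,\Delta_jU,
\]
where, by the fundamental theorem of calculus, $m_j=\int_0^1 F'\big(S_{j-1}U+t\Delta_jU\big)\,dt$ for $j\ge1$ and $m_0=\int_0^1 F'(t\Delta_0U)\,dt$ (for a vector-valued $U$, $m_j$ becomes a row vector and $m_j\Delta_jU$ a sum of $N$ such terms, which changes nothing). Convergence of the series in $\mathcal{S}'$ is clear since $F(S_qU)\to F(U)$ a.e.\ with uniform $L^\infty$ bounds. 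The two facts about $m_j$ that I need follow from the uniform boundedness of $S_k,\Delta_k$ on $L^\infty$ and the Bernstein-type estimates of Lemma \ref{boundSj}, combined with the chain rule: a uniform bound $\lA m_j\rA_{L^\infty}\le\cF(\lA U\rA_{L^\infty})$, and for every multi-index $\alpha$
\[
\lA \partial_x^\alpha m_j\rA_{L^\infty}\le C_\alpha\,2^{j|\alpha|}\,\cF(\lA U\rA_{L^\infty}).
\]

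Next I would estimate the frequency blocks $\Delta_k(m_j\Delta_jU)$. Trivially $\lA \Delta_k(m_j\Delta_jU)\rA_{L^2}\les\cF(\lA U\rA_{L^\infty})\,\lA \Delta_jU\rA_{L^2}$; on the other hand, since $\Delta_jU$ is spectrally localised in an annulus of size $2^j$, Bernstein together with the derivative bounds on $m_j$ (via the Leibniz rule) gives, for every $N\in\xN$,
\[
\lA \Delta_k(m_j\Delta_jU)\rA_{L^2}\les 2^{-kN}\lA m_j\Delta_jU\rA_{\dot H^N}\les 2^{(j-k)N}\,\cF(\lA U\rA_{L^\infty})\,\lA \Delta_jU\rA_{L^2}.
\]
Combining, $\lA \Delta_k(m_j\Delta_jU)\rA_{L^2}\les\cF(\lA U\rA_{L^\infty})\,\min\{1,2^{(j-k)N}\}\,\lA \Delta_jU\rA_{L^2}$. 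Fixing $N>s$ and putting $c_j=2^{js}\lA \Delta_jU\rA_{L^2}$ (so $\lA(c_j)\rA_{\ell^2}\le C\lA U\rA_{H^s}$), this yields $2^{ks}\lA \Delta_kF(U)\rA_{L^2}\les\cF(\lA U\rA_{L^\infty})\sum_{j\ge0}K(k-j)\,c_j$, where $K(\ell)=2^{\ell s}\min\{1,2^{-\ell N}\}$ satisfies $K\in\ell^1(\xZ)$ (the tail $\ell\le0$ is summable because $s>0$, the tail $\ell\ge0$ because $N>s$). Young's inequality for sequences then closes the bound: $\lA(2^{ks}\lA\Delta_kF(U)\rA_{L^2})\rA_{\ell^2}\les\cF(\lA U\rA_{L^\infty})\,\lA K\rA_{\ell^1}\,\lA U\rA_{H^s}$, which is \eqref{F(u):H}. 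For \eqref{F(u):C} I would rerun the same steps with $L^2$ replaced by $L^\infty$ and $c_j$ by $d_j=2^{js}\lA \Delta_jU\rA_{L^\infty}$ (so $\sup_j d_j=\lA U\rA_{C_*^s}$), using $\ell^1\ast\ell^\infty\to\ell^\infty$ in place of Young on $\ell^2$.

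The hard part is the frequency localisation of $m_j\Delta_jU$, in particular controlling the output blocks $\Delta_kF(U)$ with $k\gg j$ — the high frequencies the nonlinearity manufactures from low-frequency input. This is exactly where the uniform derivative estimates $\lA\partial_x^\alpha m_j\rA_{L^\infty}\les 2^{j|\alpha|}\cF(\lA U\rA_{L^\infty})$ enter (morally, they say $m_j$ behaves like a function band-limited at frequency $2^j$), and the resulting gain $2^{(j-k)N}$ must be taken with $N>s$ to beat the weight; the complementary regime $j\gg k$ costs nothing once $s>0$. Everything else — the telescoping, the chain-rule bounds on $m_j$, and the summation of the convolution kernel $K$ — is routine Littlewood--Paley bookkeeping and carries over verbatim to the Zygmund estimate.
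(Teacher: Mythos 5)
Your argument is correct, and it is the standard proof of this Moser-type composition estimate: Meyer's first linearization $F(U)=\sum_j m_j\Delta_j U$ with $m_j=\int_0^1F'(S_{j-1}U+t\Delta_jU)\,dt$, the derivative bounds $\lA\partial^\alpha m_j\rA_{L^\infty}\le C_\alpha 2^{j|\alpha|}\cF(\lA U\rA_{L^\infty})$ via Fa\`a di Bruno and Bernstein, and the $\ell^1\ast\ell^2$ (resp.\ $\ell^1\ast\ell^\infty$) convolution bound. The paper itself gives no proof --- the theorem sits in the appendix reviewing paradifferential calculus and is quoted from M\'etivier's notes and Bahouri--Chemin--Danchin, where essentially your argument appears --- and every ingredient you use is available in the text (part 1 of Lemma \ref{boundSj} for the uniform Bernstein bounds, Lemma \ref{auxi:1} for the reverse Bernstein inequality giving the gain $2^{(j-k)N}$). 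One caveat: your shortcut at $s=0$ establishes the Zygmund estimate only in the form $\lA F(U)\rA_{C^0_*}\le \cF(\lA U\rA_{L^\infty})\lA U\rA_{L^\infty}$, and since $\lA U\rA_{L^\infty}$ is not controlled by $\lA U\rA_{C^0_*}$ this is not literally \eqref{F(u):C} at $s=0$; the telescoping argument also degenerates there, as your kernel $K(\ell)=\min\{1,2^{-\ell N}\}$ is no longer in $\ell^1(\xZ)$. This endpoint is degenerate (the cited references state the Zygmund version for $s>0$, which is all the paper ever uses), but your proof, like the standard one, really covers $s>0$ together with the $s=0$ Sobolev case.
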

\begin{theo}\protect{\cite[Theorem~2.92]{BCD}}\label{paralin}(Paralinearization)
Let $r,~\rho$ be positive real numbers and $F$ be a $C^{\infty}$ function on $\xR$ such that $F(0)=0$. Assume that $\rho$ is not an integer. For any $u\in H^{\mu}(\xR^d)\cap C_*^{\rho}(\xR^d)$ we have
\[
\lA F(u)-TP_{F'(u)}u\rA_{ H^{\mu+\rho}(\xR^d)}\le C(\lA u\rA_{L^{\infty}(\xR^d)})\lA u\rA_{C_*^{\rho}(\xR^d)}\lA u\rA_{H^{\mu}(\xR^d)}.
\]
\end{theo}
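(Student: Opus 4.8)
This is the classical Littlewood--Paley telescoping argument of Meyer; let me describe how I would carry it out with the dyadic calculus of Section~2 and the composition estimates \eqref{F(u):H}, \eqref{F(u):C}. First I would reduce to the case where $F$ and all its derivatives are bounded on $\xR$: since $u\in L^\infty$, modifying $F$ outside $\lb-2\lA u\rA_{L^\infty},2\lA u\rA_{L^\infty}\rb$ by a smooth compactly supported function changes neither $F(u)$ nor $F'(u)$, so henceforth all constants are of the form $\cF(\lA u\rA_{L^\infty})$. Using $S_qu\to u$ (boundedly in $L^\infty$, whence $F(S_qu)\to F(u)$) and the convention $S_{-1}u=0$, Taylor's formula gives
\[
F(u)=\sum_{q\ge0}\big(F(S_qu)-F(S_{q-1}u)\big)=\sum_{q\ge0}m_q\,\Delta_qu,\qquad m_q\defn\int_0^1F'\big(S_{q-1}u+t\Delta_qu\big)\,dt .
\]
Since $TP_{F'(u)}u=\sum_{q>N}S_{q-N}F'(u)\,\Delta_qu$, with $N$ the fixed cut-off index of the paraproduct, this yields
\[
F(u)-TP_{F'(u)}u=F(S_Nu)+\sum_{q>N}\big(m_q-S_{q-N}F'(u)\big)\Delta_qu=:F(S_Nu)+\sum_{q>N}R_q .
\]
The first term is a finite-band object: $F(S_Nu)=\big(\int_0^1F'(tS_Nu)\,dt\big)S_Nu$ has spectrum rapidly decaying above $2^N$, so $\lA F(S_Nu)\rA_{H^{\mu+\rho}}\le\cF(\lA u\rA_{L^\infty})\lA u\rA_{H^\mu}$ directly (after subtracting the linear part $F'(0)S_Nu$ one even gets a factor $\lA u\rA_{C^\rho_*}$, using $\lA u\rA_{L^\infty}\les\lA u\rA_{C^\rho_*}$). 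The real content is the estimate of $\sum_{q>N}R_q$.

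\textbf{The amplitude estimate.} Set $g_q\defn m_q-S_{q-N}F'(u)$. The key point to establish is
\[
\lA\partial^\alpha g_q\rA_{L^\infty}\le\cF_\alpha(\lA u\rA_{L^\infty})\,\lA u\rA_{C^\rho_*}\,2^{q(|\alpha|-\rho)},\qquad\forall\alpha\in\xN^d .
\]
For $\alpha=0$ I would split $g_q=(m_q-F'(u))+(F'(u)-S_{q-N}F'(u))$. For the first bracket, $\lA S_{q-1}u+t\Delta_qu-u\rA_{L^\infty}\le\lA u-S_qu\rA_{L^\infty}+\lA\Delta_qu\rA_{L^\infty}\les2^{-q\rho}\lA u\rA_{C^\rho_*}$ because $u\in C^\rho_*$ with $\rho>0$, so a mean value estimate for $F'$ gives the bound. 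For the second bracket, $F'(u)-S_{q-N}F'(u)=G(u)-S_{q-N}G(u)$ with $G\defn F'-F'(0)$, $G(0)=0$, and \eqref{F(u):C} gives $\lA G(u)\rA_{C^\rho_*}\le\cF(\lA u\rA_{L^\infty})\lA u\rA_{C^\rho_*}$, whence $\lA G(u)-S_{q-N}G(u)\rA_{L^\infty}\les2^{-q\rho}\lA G(u)\rA_{C^\rho_*}$. For $|\alpha|\ge1$ one differentiates the compositions defining $m_q$ and $S_{q-N}F'(u)$ and uses that $S_{q-1}u+t\Delta_qu$ and $S_{q-N}u$ are spectrally supported in $\{\la\xi\ra\les2^q\}$ together with the Bernstein-type bounds of Lemma~\ref{auxi:2}; this is the one place where $\rho\notin\xN$ is used, since otherwise the borderline case $|\alpha|=\rho$ of \eqref{auxi:21} carries an extra logarithmic factor $q$ which would spoil the summation. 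Two consequences I would use: $\lA g_q\rA_{L^\infty}\les\cF(\lA u\rA_{L^\infty})\lA u\rA_{C^\rho_*}2^{-q\rho}$, and $g_q$ has rapidly decaying high frequencies, $\lA\Delta_kg_q\rA_{L^\infty}\le C_M\cF(\lA u\rA_{L^\infty})\lA u\rA_{C^\rho_*}2^{-M(k-q)_+}2^{-q\rho}$ for every $M$, while $\lA\Delta_kg_q\rA_{L^\infty}\les\cF(\lA u\rA_{L^\infty})\lA u\rA_{C^\rho_*}2^{-k\rho}$ for $k\le q$.

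\textbf{Summation.} Writing $\lA\Delta_qu\rA_{L^2}\le c_q2^{-q\mu}\lA u\rA_{H^\mu}$ with $\lA c\rA_{\ell^2}\le1$, and using that $\Delta_qu$ is spectrally localized in $\{\la\xi\ra\sim2^q\}$ while $g_q$ is essentially localized in $\{\la\xi\ra\les2^q\}$, one estimates $\lA\Delta_k(g_q\Delta_qu)\rA_{L^2}$ by splitting into $k\gtrsim q$ (forcing the rapidly small high-frequency part of $g_q$), $k\sim q$ (use $\lA g_q\rA_{L^\infty}$) and $k\lesssim q$ (only the frequency-$\sim2^q$ part of $g_q$ contributes, again $\les2^{-q\rho}$ in $L^\infty$); in every case
\[
\lA\Delta_kR_q\rA_{L^2}\le C_M\,\cF(\lA u\rA_{L^\infty})\lA u\rA_{C^\rho_*}\lA u\rA_{H^\mu}\,2^{-M(k-q)_+}\,c_q\,2^{-q(\mu+\rho)} .
\]
Multiplying by $2^{k(\mu+\rho)}$ and summing over $q$ gives, for each $k$, a discrete convolution of $(c_q)_q\in\ell^2$ with a sequence decaying like $2^{-(q-k)(\mu+\rho)}$ for $q\ge k$ and like $2^{-(k-q)(M-\mu-\rho)}$ for $q<k$; this sequence lies in $\ell^1$ precisely because $\mu+\rho>0$ (the only sign condition needed) and $M$ is large, so $\big(2^{k(\mu+\rho)}\lA\Delta_k\sum_{q>N}R_q\rA_{L^2}\big)_k\in\ell^2$, i.e. $\sum_{q>N}R_q\in H^{\mu+\rho}$ with the asserted bound. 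Together with the estimate for $F(S_Nu)$ this proves the theorem.

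\textbf{Main obstacle.} The genuinely delicate step will be the summation: because $F'$ is nonlinear, $m_q$ and hence $g_q$ are \emph{not} bandlimited, so the pieces $R_q=g_q\Delta_qu$ are not supported in a single dyadic annulus and the usual square-function lemma cannot be invoked verbatim; one must quantify the near-localization of $g_q$ through the decay of its high frequencies, which is exactly what the derivative bounds --- and the hypothesis $\rho\notin\xN$ --- provide. The remaining care is purely bookkeeping: always subtracting $F'(0)$ before applying \eqref{F(u):C} so that exactly one factor $\lA u\rA_{C^\rho_*}$ appears, and disposing of the $O(1)$ lowest-frequency blocks by hand.
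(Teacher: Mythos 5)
The paper does not prove this statement: it is recalled verbatim from \cite{BCD} (Theorem 2.92) with no internal proof, so the only comparison available is with that reference, and your argument is precisely the standard one given there — Meyer's telescoping $F(u)=\sum_q m_q\Delta_q u$, the derivative bounds on $g_q=m_q-S_{q-N}F'(u)$, and the almost-orthogonality summation — and it is correct. The one loose point is that for $1\le|\alpha|<\rho$ direct differentiation of $m_q$ yields only an $O(1)$ bound rather than $O(2^{q(|\alpha|-\rho)})$ (one must interpolate between $\alpha=0$ and large $|\alpha|$, as the paper itself does in the proof of Lemma \ref{lemm:appr}), but since your summation only invokes the cases $\alpha=0$ and $|\alpha|$ large, this does not affect the argument.
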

\section{Appendix 2}\label{Appendix2}
\subsection{Proof of Lemma \ref{lem1:phi}}
Let $f_n\in C(\xR^d)$,  $g\in C^\infty(\xR^d)$  be two nonnegative functions satisfying
\[
f_n(t)=
\begin{cases}
1,~\text{if}~|t|\le 2^{-n}+\frac{1}{4},\\
0,~\text{if}~|t|> 2^{n+1}-\frac{1}{4}
\end{cases}
\]
and
\[
 g(t)=0,~\text{if}~|t|\ge \frac{1}{4},\quad \int_{\xR^d}g(t)\di t=1.
\]
We then define $\phi_{(n)}=f_n*g$. It is easy to see that $\phi_{(n)}\ge 0$ and satisfies condition \eqref{phi:cd1}. To verify condition \eqref{phi:cd2} we use $\partial^\alpha\phi_{(n)}=f_n*\partial^\alpha g$ to have
\[
\begin{aligned}
 x^\beta \partial^\alpha\phi_{(n)}(x)&=\int_{\xR^d}  x^\beta f_n(x-y)\partial^\alpha g(y)\di y\\
&=\sum_{\beta_1+\beta_2=\beta}\int_{\xR^d}  (x-y)^{\beta_1} f_n(x-y)y^{\beta_2}\partial^\alpha g(y)\di y,\\
&=\sum_{\beta_1+\beta_2=\beta}\left( (\cdot)^{\beta_1}f_n\right)*\left( (\cdot)^{\beta_2}\partial^\alpha g\right)(x).
\end{aligned}
\] 
Each term on the right-hand side is estimated by
\[
\lA \left( (\cdot)^{\beta_1}f_n\right)*\left( (\cdot)^{\beta_2}\partial^\alpha g\right)\rA_{L^1}\le \lA (\cdot)^{\beta_1}f_n\rA_{L^1}\lA (\cdot)^{\beta_2}\partial^\alpha g\rA_{L^1}
\]
 where $\lA (\cdot)^{\beta_2}\partial^\alpha g\rA_{L^1}$ is independent of $n$. It remains to have a uniformly bound  with respect to $n$ for $\lA (\cdot)^{\beta_1}f_n\rA_{L^1}$. To this end, one can choose the following piecewise affine functions
\[
f_n(t)=
\begin{cases}
1,~\text{if}~|t|\le 2^{-n}+\frac{1}{4},\\
0,~\text{if}~|t|> 2^{-n}+\frac{1}{2},\\
-4(|t|-2^{-n}-\mez), ~\text{if}~2^{-n}+\frac{1}{4}\le |t|\le 2^{-n}+\frac{1}{2}.
\end{cases}
\]
\subsection{Proof of Lemma \ref{boundSj}}
\hk 1. Let $1\le p\le q\le \infty$. Remark first that the estimates for $\Delta_j$ follows immediately from those of $S_j$ since $\Delta_0=S_0$ and $\Delta_j=S_j-S_{j-1},~\forall j\ge 1$. By definition \ref{SD} we have for each $n\in \xN$, $S_ju=f_{j}*u$ where $f_{j}$ is the inverse Fourier transform of $\phi_j$, where $\phi\equiv \phi_{(n)}$. With $r$ satisfying 
\[
\frac{1}{p}+\frac{1}{r}=1+\frac{1}{q}
\]
 we get by Young's inequality
\[
\lA \partial^\alpha S_j\rA_{L^p\to L^q}\le \lA \partial^\alpha f_j\rA_{L^r}.
\]
The problem then reduces to showing that  
\[
\lA \partial^\alpha f_j\rA_{L^r}\le C_\alpha 2^{j(|\alpha|+\frac{d}{p}-\frac{d}{q})}
\]
which in turn reduces to 
\[
\lA \partial^\alpha \mathfrak{F}^{-1}(\phi_{(n)})(x)\rA_{L^r}\le C_\alpha,
\]
which is true by virtue of \eqref{phi:cd2}.\\
\hk 2. The boundedness of the operators $2^{j\mu}\Delta_j,~j\ge 1$ from $W^{\mu, \infty}(\xR^d)$ to $L^\infty(\xR^d)$ is proved in Lemma $4.1.8$, \cite{MePise}. Following that proof we see that 
\[
\lA2^{j\mu} \Delta_j\rA_{W^{\mu,\infty}\to L^\infty}\le 2^{j\mu} \int_{\xR^d}|x|^\mu |g_j(x)|dx:=I,
\]
where  $g_j$ is the inverse Fourier transform of $\varphi_j=\phi_j-\phi_{j-1}$. Owing to \eqref{phi:cd2} it holds that
\[
\forall \alpha\in \xN^d,~\exists C_\alpha>0, \forall (j, n)\in \xN^*\times \xN, \int |x^\alpha g_j(x)|\di x\le C_\alpha 2^{-j|\alpha|}. 
\]
Thus, if $\mu\in \xN$ we have the result. If $\mu=\delta n+(1-\delta)(n+1)$ for some $\delta\in (0, 1),~n\in \xN$ we use H\"older's inequality to estimate
\[
I\le 2^{j\mu}\left(\int |x|^n|g_j(x)|\di x\right)^\delta \left(\int |x|^{n+1}|g_j(x)|\di x\right)^{1-\delta}\le C_\mu 2^{j\mu}2^{-jn\delta-j(n+1)(1-\delta)} =C_\mu,
\]
which concludes the proof.

\end{document}